\documentclass[reqno]{amsart}
\usepackage{mathrsfs}
\usepackage[all]{xy}

\usepackage{mathtools}
\usepackage{mathbbol}
\usepackage{wasysym}
\usepackage{amssymb}

\usepackage{MnSymbol}
\usepackage{comment}
\usepackage{enumerate}
\usepackage{xcolor}

\usepackage{ushort}

\usepackage{changepage}
\usepackage{algorithm}
\usepackage[noend]{algpseudocode}
\makeatletter
\def\BState{\State\hskip-\ALG@thistlm}
\makeatother

\usepackage{ifpdf}
\ifpdf 
  \usepackage[pdftex]{graphicx}
  \DeclareGraphicsExtensions{.pdf,.png,.jpg,.jpeg,.mps}
  \usepackage{pgf}
\else 
  \usepackage{graphicx}
  \DeclareGraphicsExtensions{.eps,.bmp}
  \usepackage{pgf}
  \usepackage{pstricks}
\fi
\usepackage{epic,bez123}
\usepackage{wrapfig}

\usepackage{soul}
\usepackage{url}
\usepackage{tikz}

\newtheorem{thm}{Theorem}[section]
\newtheorem{prop}[thm]{Proposition}
\newtheorem{cor}[thm]{Corollary}
\newtheorem{lem}[thm]{Lemma}

\newtheorem{claim}[thm]{Claim}

\newtheorem{mainthm}{}

\newtheorem{conv}[thm]{Convention}

\theoremstyle{definition}
\newtheorem{defn}[thm]{Definition}

\theoremstyle{remark}

\newtheorem{rem}[thm]{Remark}

\newtheorem*{ack}{Acknowledgments}

\newcommand{\act}{\curvearrowright}

\newcommand{\p}{\mathcal P}
\newcommand{\f}{\mathbb H}

\newcommand{\e}[1]{\omega({#1})}

\newcommand{\ax}{\mathrm{Ax}}

\newcommand{\proj}{\mathrm{Pr}}
\newcommand{\dist}{\mathbf{d}} 

\newcommand{\kmorse }{\kappa_{\mathrm{Morse}}}
\newcommand{\ktrack }{\kappa_{\mathrm{track}}}

\newcommand{\lab }{\textbf{Lab}}

\newcommand{\len }{\mathfrak l}

\newcommand{\inj }{\mbox{Inj}}

\usepackage[bookmarks=true, pdfauthor={YANG Wenyuan}]{hyperref}

\usepackage[margin=1.2in]{geometry}
\usepackage[normalem]{ulem}

\numberwithin{equation}{section}

\newcommand{\ywy}[1]{{\color{red}{#1}}} 

\begin{document}

\title[Sublinear projection tracking in acylindrically hyperbolic groups]{Sublinear projection tracking in acylindrically hyperbolic groups}

\author{Lihuang Ding}
\address{Beijing International Center for Mathematical Research,
Peking University, Beijing 100871, P.R. China}
\email{shanquan2@stu.pku.edu.cn}

\author{Wenyuan Yang}
\address{Beijing International Center for Mathematical Research,
Peking University, Beijing 100871, P.R. China}
\email{wyang@math.pku.edu.cn}

\thanks{W.Y. was supported by National Key R \& D Program of China (2025YFA1017500) and  NSFC (no. 12131009, no.12326601).}
 
\subjclass[2000]{Primary 20F65, 20F67, 37D40}

\date{July 24, 2026}

\dedicatory{}

\keywords{Morse elements, acylindrically hyperbolic groups, sublinear contraction, growth functions}

\begin{abstract} 
We study projection phenomena in word metrics of finitely generated
acylindrically hyperbolic groups.  For a loxodromic WPD element acting on a
hyperbolic space, we prove that shortest projection in the word metric to
the corresponding cyclic subgroup sublinearly tracks the pullback of
shortest projection to its axis in the hyperbolic space.  As applications, we obtain effective upper bounds for growth
functions and construct proper quotients whose growth rates converge to that
of the original group.  We further prove a growth--cogrowth inequality for
confined subgroups in both acylindrically hyperbolic groups and Morse
local-to-global groups with Morse elements.

\end{abstract}

\maketitle

\section{Introduction}
\subsection{Background and motivation}
A fundamental theme in geometric group theory is to understand the word metric
of a finitely generated group through its isometric actions on auxiliary metric
spaces.  For hyperbolic groups this relation is especially strong: if a group
acts geometrically on a hyperbolic space, then its Cayley graph is
quasi-isometric to that space, and many word-metric phenomena can be read
directly from hyperbolic geometry.  In particular, nearest-point projections to
quasi-convex subsets are coarsely preserved under quasi-isometries.

For relatively hyperbolic groups, the relation between the word metric and the
auxiliary hyperbolic space is weaker than in the geometric action setting, but
it remains well understood.  The coned-off space is typically locally
infinite and the orbit map is no longer a quasi-isometry; nevertheless,
distance formulas recover the word metric, up to controlled error, from the
relative metric together with peripheral projection data, as developed by
Dru\c{t}u--Sapir \cite{DruSapir} and Gerasimov--Potyagailo
\cite{GePo4} and made explicitly by Sisto \cite{Sisto12}.  In particular, projection tracking is essentially
additive in this setting: word-metric projections to peripheral cosets and projections
detected in the coned-off geometry differ by a uniformly bounded error.  From
another perspective, this bounded tracking property may be derived from the strong contraction of loxodromic axes in the word
metric.  This viewpoint underlies the counting applications in
\cite{YANG10,YANG11}.
 
Mapping class groups provide the motivating example of a distance formula
framework.  The Masur--Minsky formula expresses the word metric as a
thresholded sum of subsurface projection distances \cite{MinM2}.  For a
pseudo-Anosov axis, Duchin--Rafi \cite[Theorem 4.2]{DR09} proved that the
projection induced by shortest projection in the curve graph is weakly
contracting in the word metric; consequently, it tracks word metric
shortest projection with logarithmic error.  More generally,
hierarchically hyperbolic groups, introduced by
Behrstock--Hagen--Sisto \cite{BHS19}, admit analogous distance formulas
involving projections to a family of hyperbolic spaces.
Abbott--Behrstock--Durham \cite[Corollary 5.5]{ABDBR} established the
corresponding weak contraction result in this setting, characterizing
generalized loxodromic elements as contracting elements.

The main focus of this paper is the class of acylindrically hyperbolic groups
introduced by Osin \cite{Osin6}.  This broad  class exhibits
negatively curved behaviour and includes non-elementary hyperbolic and
relatively hyperbolic groups, mapping class groups of most finite-type
surfaces, $\mathrm{Out}(F_n)$ for $n\ge 2$, and many groups acting on CAT(0)
spaces. For general acylindrically hyperbolic groups, no comparable distance formula is
available.  The auxiliary hyperbolic space associated with an acylindrical action
may be very far from the Cayley graph, and the orbit map usually loses too much
information to recover the word metric additively.  The main point of this
paper is that along WPD directions, one can still recover a weaker but robust
substitute: the shortest projection in the word metric sublinearly tracks the
pullback of shortest projection to the corresponding axis in the auxiliary
hyperbolic space.

More precisely, let $G$ be a finitely generated group with finite symmetric
generating set $S$, and let $d_S$ be the associated word metric.  Suppose that
$G$ acts by isometries on a metric space $X$ with basepoint $o\in X$.  Recall
that an element $h\in G$ is a \emph{WPD isometry} if for every $r>0$ and every
$x\in X$, there exists $N>0$ such that
\begin{align}\label{WPDDefn}
\left|\{g\in G: d_X(x,gx)\le r,\;
d_X(h^N x,gh^N x)\le r\}\right|<\infty .    
\end{align}
This condition, introduced by Bestvina--Fujiwara \cite{BF2}, is a weak
properness condition along the orbit of $h$.  In an acylindrical action on a
hyperbolic space, every loxodromic element is WPD.

Let $H=\langle h\rangle$, where $h$ is a WPD element whose orbit $Ho$ is a Morse quasi-geodesic
in $X$.  There are then two natural projections from
$G$ to $H$.  The first is the shortest projection in the Cayley graph,
\[
        \proj_H^S(g)=\{h'\in H: d_S(g,h')=d_S(g,H)\},
\]
and the second is obtained by projecting in the space $X$ and pulling back to
$H$:
\[
        \proj_H^X(g)=\{h'\in H: d_X(go,h'o)=d_X(go,Ho)\}.
\]
We prove that these two projections differ by an error that is sublinear in
the word metric.  This shall provide a bridge between the geometry of the
action and the word metric that is well suited to counting applications given below.

\subsection{Sublinear projection tracking}

We first recall the sublinear contraction viewpoint on Morse elements.  A
quasi-geodesic $\gamma$ in $\mathrm{Cay}(G,S)$ is called
\textit{sublinearly contracting} if there exists a sublinear function
$\kappa$ such that
\[
        \mathrm{diam}\big(\proj_\gamma^S(g)\big)
        \le \kappa(d_S(g,\gamma))
\]
for every $g\in G$.  
Arzhantseva--Cashen--Gruber--Hume proved in \cite{ACGH} that the sublinear contraction is equivalent to the Morse
property of $\gamma$.  When $\kappa$ is bounded, this recovers the usual notion
of strong contraction.

In this paper, we need a slightly stronger form of contraction for pulled-back
projections.  Let $H<G$ be a subgroup.  We say that $\proj_H^X$ is
\textit{strongly $\kappa$-contracting in the word metric} if for all
$x,y\in G$ with $d_S(x,y)\le d_S(x,H)$,
\[
        \mathrm{diam}_S\big(\proj_H^X(x)\cup \proj_H^X(y)\big)
        \le \kappa(d_S(x,y)).
\]
This estimate is stronger than ordinary sublinear contraction, since the
right-hand side depends on the length of the perturbation $d_S(x,y)$ rather
than on the distance from $x$ to $H$; see Figure
\ref{fig:sublinearcontracting} for their comparison.

\begin{figure}
    \centering

\tikzset{every picture/.style={line width=0.75pt}} 

\begin{tikzpicture}[x=0.75pt,y=0.75pt,yscale=-0.8,xscale=0.8]

\draw  [color={rgb, 255:red, 74; green, 144; blue, 226 }  ,draw opacity=1 ][dash pattern={on 4.5pt off 4.5pt}] (105,167.5) .. controls (105,126.35) and (138.35,93) .. (179.5,93) .. controls (220.65,93) and (254,126.35) .. (254,167.5) .. controls (254,208.65) and (220.65,242) .. (179.5,242) .. controls (138.35,242) and (105,208.65) .. (105,167.5) -- cycle ;
\draw  [color={rgb, 255:red, 74; green, 144; blue, 226 }  ,draw opacity=1 ][dash pattern={on 4.5pt off 4.5pt}] (393.5,147.5) .. controls (393.5,117.81) and (417.56,93.75) .. (447.25,93.75) .. controls (476.94,93.75) and (501,117.81) .. (501,147.5) .. controls (501,177.19) and (476.94,201.25) .. (447.25,201.25) .. controls (417.56,201.25) and (393.5,177.19) .. (393.5,147.5) -- cycle ;
\draw    (84,242) -- (275,242) ;
\draw  [line width=0.75]  (131,244) .. controls (131,248.67) and (133.33,251) .. (138,251) -- (163.92,251) .. controls (170.59,251) and (173.92,253.33) .. (173.92,258) .. controls (173.92,253.33) and (177.25,251) .. (183.92,251)(180.92,251) -- (217,251) .. controls (221.67,251) and (224,248.67) .. (224,244) ;
\draw  [line width=0.75]  (425,243) .. controls (425,247.67) and (427.33,250) .. (432,250) -- (439.47,250) .. controls (446.14,250) and (449.47,252.33) .. (449.47,257) .. controls (449.47,252.33) and (452.8,250) .. (459.47,250)(456.47,250) -- (469,250) .. controls (473.67,250) and (476,247.67) .. (476,243) ;
\draw [color={rgb, 255:red, 74; green, 144; blue, 226 }  ,draw opacity=1 ] [dash pattern={on 4.5pt off 4.5pt}]  (105.5,179) .. controls (125.98,196.55) and (127.91,216) .. (129.85,240.13) ;
\draw [shift={(130,242)}, rotate = 265.43] [color={rgb, 255:red, 74; green, 144; blue, 226 }  ,draw opacity=1 ][line width=0.75]    (10.93,-3.29) .. controls (6.95,-1.4) and (3.31,-0.3) .. (0,0) .. controls (3.31,0.3) and (6.95,1.4) .. (10.93,3.29)   ;
\draw [color={rgb, 255:red, 74; green, 144; blue, 226 }  ,draw opacity=1 ] [dash pattern={on 4.5pt off 4.5pt}]  (250.5,176) .. controls (236.85,191.6) and (225.58,217.66) .. (226.88,240.27) ;
\draw [shift={(227,242)}, rotate = 265.03] [color={rgb, 255:red, 74; green, 144; blue, 226 }  ,draw opacity=1 ][line width=0.75]    (10.93,-3.29) .. controls (6.95,-1.4) and (3.31,-0.3) .. (0,0) .. controls (3.31,0.3) and (6.95,1.4) .. (10.93,3.29)   ;
\draw    (363,242) -- (554,242) ;
\draw [color={rgb, 255:red, 74; green, 144; blue, 226 }  ,draw opacity=1 ] [dash pattern={on 4.5pt off 4.5pt}]  (393.5,147.5) .. controls (413.98,165.05) and (422.57,213.5) .. (424.84,239.08) ;
\draw [shift={(425,241)}, rotate = 265.43] [color={rgb, 255:red, 74; green, 144; blue, 226 }  ,draw opacity=1 ][line width=0.75]    (10.93,-3.29) .. controls (6.95,-1.4) and (3.31,-0.3) .. (0,0) .. controls (3.31,0.3) and (6.95,1.4) .. (10.93,3.29)   ;
\draw [color={rgb, 255:red, 74; green, 144; blue, 226 }  ,draw opacity=1 ] [dash pattern={on 4.5pt off 4.5pt}]  (498.5,152) .. controls (484.85,167.6) and (476.43,214.57) .. (476.02,240.08) ;
\draw [shift={(476,242)}, rotate = 270] [color={rgb, 255:red, 74; green, 144; blue, 226 }  ,draw opacity=1 ][line width=0.75]    (10.93,-3.29) .. controls (6.95,-1.4) and (3.31,-0.3) .. (0,0) .. controls (3.31,0.3) and (6.95,1.4) .. (10.93,3.29)   ;
\draw    (179.5,167.5) -- (250.5,176) ;
\draw [shift={(250.5,176)}, rotate = 6.83] [color={rgb, 255:red, 0; green, 0; blue, 0 }  ][fill={rgb, 255:red, 0; green, 0; blue, 0 }  ][line width=0.75]      (0, 0) circle [x radius= 3.35, y radius= 3.35]   ;
\draw [shift={(179.5,167.5)}, rotate = 6.83] [color={rgb, 255:red, 0; green, 0; blue, 0 }  ][fill={rgb, 255:red, 0; green, 0; blue, 0 }  ][line width=0.75]      (0, 0) circle [x radius= 3.35, y radius= 3.35]   ;
\draw    (447.25,147.5) -- (475.82,150.01) -- (498.5,152) ;
\draw [shift={(498.5,152)}, rotate = 5.02] [color={rgb, 255:red, 0; green, 0; blue, 0 }  ][fill={rgb, 255:red, 0; green, 0; blue, 0 }  ][line width=0.75]      (0, 0) circle [x radius= 3.35, y radius= 3.35]   ;
\draw [shift={(447.25,147.5)}, rotate = 5.02] [color={rgb, 255:red, 0; green, 0; blue, 0 }  ][fill={rgb, 255:red, 0; green, 0; blue, 0 }  ][line width=0.75]      (0, 0) circle [x radius= 3.35, y radius= 3.35]   ;

\draw (161,258.4) node [anchor=north west][inner sep=0.75pt]    {$\kappa ( R)$};
\draw (432,255.4) node [anchor=north west][inner sep=0.75pt]    {$\kappa ( R)$};
\draw (168,152.4) node [anchor=north west][inner sep=0.75pt]    {$x$};
\draw (212,153.4) node [anchor=north west][inner sep=0.75pt]    {$R$};
\draw (434,136.4) node [anchor=north west][inner sep=0.75pt]    {$x$};
\draw (464,131.4) node [anchor=north west][inner sep=0.75pt]    {$R$};
\draw (502,144.4) node [anchor=north west][inner sep=0.75pt]    {$y$};
\draw (256,170.9) node [anchor=north west][inner sep=0.75pt]    {$y$};

\end{tikzpicture}
    \caption{Illustration of $\kappa$-contraction (left) and strong $\kappa$-contraction (right).}
    \label{fig:sublinearcontracting}
\end{figure}    

We say that the action $G\curvearrowright X$ has the
\textit{sublinear projection tracking property along $H$} if there exists a
sublinear function $\kappa_{\mathrm{track}}$ such that
\[
        \mathrm{diam}_S\big(\proj_H^S(g)\cup \proj_H^X(g)\big)
        \le \kappa_{\mathrm{track}}(d_S(g,H))
\]
for every $g\in G$.  In other words, the word-metric projection to $H$
sublinearly tracks the projection obtained from the action on $X$; see
Definition \ref{SublinearTrackingDefn}.

The following theorem summarizes the two main technical results of the paper. Its first conclusion is established in Lemma \ref{SublTrackInAcyl} under a weaker hypothesis, while its second conclusion is established in Lemma \ref{SSublContrAcyl}.  The notion of a
strongly $\kappa$-contracting isometry includes strongly
contracting isometries of general metric spaces and, in particular,
loxodromic isometries of hyperbolic spaces. 
\begin{mainthm}[Sublinear projection tracking]\label{SubTrackThm}
Let $G$ be a finitely generated group acting by isometries on a metric space
$X$.  Let $h\in G$ be a WPD isometry such that the orbit
$\langle h\rangle o$ is strongly $\kappa$-contracting in $X$ for some
sublinear function $\kappa$ (Definition \ref{defn:sublinear_contraction_elements}).  Then:
\begin{enumerate}
    \item the action $G\curvearrowright X$ has the sublinear projection
    tracking property along $\langle h\rangle$;

    \item the pulled-back projection $\proj_{\langle h\rangle}^X$ is strongly
    sublinearly contracting in the word metric.
\end{enumerate}
\end{mainthm}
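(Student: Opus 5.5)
The plan is to transfer contraction from $X$ to the word metric via the WPD property. Write $H=\langle h\rangle$, $\gamma=Ho$, and take $\kappa$ to be the strong contraction function of $\gamma$ in $X$. The orbit map $g\mapsto go$ is $L$-Lipschitz, with $L=\max_{s\in S}d_X(o,so)$. Since $h$ is WPD and $\gamma$ is Morse, the map $H\to X$, $h^n\mapsto h^no$, is a quasi-isometric embedding. The cyclic subgroup $H$ is undistorted in $G$, because WPD loxodromic elements generate virtually cyclic, hyperbolically embedded subgroups $E(h)$. So $d_S(h^m,h^n)\asymp |m-n|\asymp d_X(h^mo,h^no)$, up to multiplicative and additive constants.

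\textbf{Step 1: part (2).} Take $x,y$ with $d_S(x,y)=R\le d_S(x,H)$ and join them by a word geodesic $x=g_0,\dots,g_R=y$. The orbit points $g_io$ form an $L$-coarse path of length $\le LR$. We want the $X$-projections of $g_io$ to $\gamma$ to move by at most $\kappa'(R)$ in total. If the path stays at $X$-distance $\gtrsim R$ from $\gamma$, strong $\kappa$-contraction (applied to balls of radius comparable to the distance to $\gamma$) bounds the projection diameter by $O(\kappa(R))$. In the other case some $g_io$ comes $X$-close to $\gamma$ while $g_i$ is word-far from $H$, since $d_S(g_i,H)\ge d_S(x,H)-R$, which may be $0$. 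For this case I would use the standard contraction splitting: the part of the path near $\gamma$ fellow-travels $\gamma$. If its projection length exceeded the threshold $\kappa'(R)$, then $g_i^{-1}$ would coarsely fix a long segment of $\gamma$.

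WPD (in its quantitative form: for fixed $r$, a fixed $N=N(r)$ gives finitely many $g$) converts this into a contradiction. Elements moving $o$ and $h^No$ by at most $r$ form a finite set. Hence a word-path of length $R$ can only track $\gamma$ over a segment of length $\le R$, and its displacement along $\gamma$ is realized, up to a finite ambiguity, by powers of $h$. This is where the hypothesis $R\le d_S(x,H)$ enters: it forces the path's fellow-travelling with $\gamma$ to be "transversal", so its contribution to the projection is small. Sublinearity then follows from the sublinearity of $\kappa$ together with the finiteness given by WPD.

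\textbf{Step 2: part (1) from part (2).} Let $g\in G$ with $r=d_S(g,H)$, and let $h'\in\proj^S_H(g)$. Take a word geodesic from $g$ to $h'$. Because it is a shortest path to $H$, every point $z$ on it satisfies $d_S(z,H)=d_S(z,h')$. Cut the geodesic into dyadic pieces $g=z_0,z_1,\dots,z_k$ near $h'$ with $d_S(z_j,z_{j+1})\le d_S(z_j,H)$, halving distances each time. Applying part (2) to each pair and summing gives a total of about $\sum_j\kappa(r/2^j)$. The final point lies within bounded distance of $h'$, so its $X$-projection is boundedly close to $h'$. For sublinear $\kappa$, the sum $\sum_j \kappa(r2^{-j})$ is again sublinear in $r$: replace $\kappa$ by a concave sublinear majorant, and the dyadic sum is controlled by $\int$-type estimates, which give $o(r)$. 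Finally convert $X$-diameters of projections into $S$-diameters using undistortion of $H$ and the quasi-isometry $H\to\gamma$.

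\textbf{Main obstacle.} The hard step is Step 1 in the regime where the orbit of the word path dips close to $\gamma$ in $X$, so that $X$-contraction gives nothing. There the WPD finiteness must do the work. My first attempt would be a compactness/contradiction argument: assume pairs $(x_n,y_n)$ with projection diameter $\ge\epsilon R_n$. Translate by powers of $h$ and by the finite WPD sets so that long fellow-travelling segments are aligned. Then extract infinitely many distinct elements moving both $o$ and $h^No$ boundedly, contradicting \eqref{WPDDefn}.
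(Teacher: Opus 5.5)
Your Step 2 is fine. It can even be shortened: since $d_S(g,h')=d_S(g,H)$, part (2) applies directly to the pair $(g,h')$, and $h'\in\proj^X_H(h')$.

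The gap is Step 1, in exactly the regime you flag, and the mechanism you propose there does not work. A path vertex $g_i$ does not coarsely fix a long segment of $\gamma$ just because $g_io$ is near $\gamma$. WPD finiteness only applies to elements $u'u^{-1}$, where $u,u'$ realize the same configuration: orbit points $r$-close to two axis points at distance $\ge R_0$, with the same bounded word difference. What this yields is Sisto's separation lemma (Lemma \ref{GeomSepAlongH}). If $g_io,g_jo$ are $r$-close to axis points at $X$-distance $\ge R_0$ and $d_S(g_i,g_j)\le D$, then $d_S(g_i,H)\le M(D)$.

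Turning this into a contradiction needs two ingredients you do not supply.

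\textbf{Word-far from $H$.} The word geodesic must stay word-far from $H$ along its whole length. Under $R\le d_S(x,H)$ it need not: as you note, $d_S(g_i,H)$ may be $0$, and $y$ may lie in $H$. Nothing forces ``transversality''. The remedy is to prove (2) first only for pairs with $d_S([x,y]_S,H)\ge d_S(x,y)$. Then use the same dyadic chain as in your Step 2 to get the general strong contraction; this is Lemma \ref{lem:reduction}.

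\textbf{Quantitative recurrence.} Suppose both $xo,yo$ lie within $LR$ of $\gamma$ (otherwise strong contraction in $X$ already gives $O(\kappa(LR))$), and their projections are $\ge\epsilon R$ apart. You need that the orbit path passes within a uniform $r=r(\epsilon,L)$ of $N\gtrsim \epsilon R/R_0$ pairwise $R_0$-separated points of $\gamma$, at vertices occurring in order along the path. Your dichotomy (``stays at distance $\gtrsim R$'' versus ``comes close'') leaves intermediate distances untreated and yields at most one approach. Moreover, the orbit path need not fellow-travel $\gamma$: it can leave and return repeatedly. The paper gets this from the generalized Morse lemma (Lemma \ref{fractalMorseLem}). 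Its proof sums the sublinear projections of the excursions outside a fixed neighborhood of $\gamma$.

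With both in place, the contradiction is with word length rather than directly with WPD. Every consecutive pair of approach vertices has $d_S(g_i,g_{i+1})>D$, so $R\ge ND\gtrsim \epsilon DR/(6R_0)>R$ once $D$ is large. Your compactness idea can be made into exactly this. Pigeonhole gives a consecutive pair with word gap $\lesssim R_0/\epsilon$, and the $u'u^{-1}$ argument then bounds $d_S(g_i,H)$. But this only works after both missing ingredients are supplied.

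A minor point: undistortion of $H$ needs no appeal to hyperbolically embedded subgroups, which would require hyperbolic $X$. It follows directly from the Lipschitz orbit map and the quasi-geodesic orbit.
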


 

The proof has two main ingredients.  The first is Sisto's geometric separation
result for WPD actions \cite{Sisto16}.  The second is a generalized Morse
lemma proved in this paper.  Roughly speaking, if a Lipschitz path joins two
points on a Morse geodesic, then it must pass near linearly many well-separated
points on any prescribed positive-density subinterval of that geodesic.  

\begin{lem}[Generalized Morse lemma]\label{GenMorseLem}
For every $\theta\in(0,1]$ and $\lambda\ge 1$, there exists a constant
$r=r(\theta,\lambda)>0$ with the following property.  Let $\gamma$ be a Morse
geodesic and let $p$ be a $\theta$-interval of $\gamma$.  Then for every
$R>4r$, every $\lambda$-Lipschitz path with the same endpoints as $\gamma$
intersects the $r$-neighborhoods of a sequence of pairwise $R$-separated
points
\[
        x_1,x_2,\ldots,x_N
\]
on $p$, where $N=\lceil \ell(p)/6R\rceil$.
\end{lem}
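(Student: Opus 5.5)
The proof is a length-versus-progress argument. Let $\kappa$ be a sublinear function for which $\gamma$ is $\kappa$-contracting; this exists by the equivalence of Morse and sublinearly contracting quasi-geodesics \cite{ACGH}. I read the $\lambda$-Lipschitz path $\alpha$ as parametrized on an interval of length $\ell(\gamma)$, so that $\ell(\alpha)\le\lambda\ell(\gamma)$. I read a $\theta$-interval as a subsegment $p\subset\gamma$ with $\ell(p)\ge\theta\,\ell(\gamma)$. Together these give
\[
\ell(\alpha)\le \tfrac{\lambda}{\theta}\,\ell(p).
\]
Without a bound of this kind the statement fails, since a long path can wander arbitrarily far from $\gamma$. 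If the paper's definition of a $\theta$-interval differs, only this inequality is used. As stated, $r$ depends only on $(\theta,\lambda)$. The argument produces $r$ depending also on $\kappa$, that is, on the Morse gauge, and I expect that dependence is implicit.

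\textbf{Step 1: projection sweeps $p$.} Compose $\alpha$ with shortest projection $\proj_\gamma$. Since $\alpha$ starts and ends at the endpoints of $\gamma$, and $\kappa$-contraction makes $\proj_\gamma\circ\alpha$ coarsely continuous with jumps bounded in terms of $\kappa$, the image of $\proj_\gamma\circ\alpha$ meets every point of $p$ up to bounded error. Divide $\alpha$ into \emph{near} parts, which lie in the $r$-neighborhood of $\gamma$, and \emph{far} parts, which lie outside it.

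\textbf{Step 2: far parts are expensive (main step).} I claim that a far excursion whose projection advances by $\Delta$ along $\gamma$ has length at least $c\,\Delta\, r/\kappa(r)$ for a uniform constant $c>0$. The argument chops the excursion into consecutive subpaths of length about the current distance $D\ge r$ to $\gamma$. By $\kappa$-contraction, each such subpath moves the projection by at most $O(\kappa(D))$. Each therefore costs length about $D/\kappa(D)$ per unit of progress, and this ratio is at least $r/\kappa(r)$ up to constants when $t\mapsto t/\kappa(t)$ is essentially increasing; I would arrange this by replacing $\kappa$ with a concave majorant. This is the step I expect to be the main obstacle. Excursions can change depth and re-enter $N_r(\gamma)$ briefly, so the bookkeeping at the transitions needs care.

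\textbf{Step 3: choose $r$.} Since $\kappa$ is sublinear, $r/\kappa(r)\to\infty$, so I choose $r$ with $c\,r/\kappa(r)\ge 4\lambda/\theta$. The total length of far excursions is at most $\ell(\alpha)\le\frac{\lambda}{\theta}\ell(p)$, so by Step 2 their total projection progress along $p$ is at most $\ell(p)/4$. Hence the set $A\subset p$ of points within $O(r)$ of $\alpha$ has measure at least $\ell(p)/2$, after absorbing the bounded jump errors from Step 1 into $r$.

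\textbf{Step 4: extract separated points.} Greedily select points $x_1,x_2,\ldots$ of $A$ along $p$, each at distance at least $R$ beyond the previous one. Each interval $[x_i,x_i+R)$ contains at most $R$ worth of $A$. Hence the number of selected points is at least $\frac{\ell(p)/2}{R}-1$, and this is $\ge\lceil \ell(p)/6R\rceil$ whenever $\ell(p)\gtrsim R$. In the remaining regime where $\ell(p)$ is small relative to $R$ one needs only a single point, which Step 1 supplies. The hypothesis $R>4r$ absorbs the $r$-fuzz, keeping the chosen points pairwise $R$-separated while each still lies within $r$ of $\alpha$.
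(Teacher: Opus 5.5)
Your proposal is correct and follows essentially the paper's proof of Lemma~\ref{fractalMorseLem}: Step~2 is the chopping argument of Lemma~\ref{LipProj2Morse}; the near pieces then cover at least half of $p$ up to $O(r)$ by connectedness, and a maximal $R$-separated subset is extracted. Step~2 as literally stated fails for very short excursions, whose projection may still jump by about $\kappa(r)$. The fix, which the paper uses and which matches the bookkeeping you anticipate, is to apply the estimate only to excursions of length at least of order $r$, where the additive last-chunk error is absorbed into the length, and to merge shorter excursions, which stay within $2r$ of $\gamma$, into the near pieces.
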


This lemma can be viewed as a strengthened quantitative recurrence statement
for Morse geodesics, building on earlier work of Dru\c{t}u--Mozes--Sapir
\cite{DMS} and Aougab--Durham--Taylor \cite{ADT17}.  It is also reminiscent of
Manning's bottleneck characterization of quasi-trees \cite{Man05}, but it
applies to arbitrary Morse geodesics in general metric spaces.

\subsection{Effective growth estimates}

The first application concerns growth functions in the word metric.  For a
subset $A\subseteq G$, define its \textit{relative growth rate} by
\[
        \omega(A,S)=\limsup_{n\to\infty}
        \frac{\log |B_n\cap A|}{n},
\]
where $B_n=\{g\in G:\|g\|_S\le n\}$.  For $A=G$, submultiplicativity of $|B_n|$
implies that the corresponding limit exists.

In many negatively curved groups, one expects growth to be \textit{purely exponential}: 
$$|B_n|\asymp \mathrm{e}^{n\omega(G,S)}$$
For groups with strongly contracting elements, this was proved in
\cite{YANG10}.  The sublinear projection tracking developed here gives an
effective version in the presence of Morse directions that need not be
strongly contracting.

\begin{mainthm}[Effective growth estimate]\label{GrowthBallThm}
Let $G$ be an acylindrically hyperbolic group, and let $K<G$ be a
non-virtually cyclic subgroup containing a generalized loxodromic element.
Then for every finite generating set $S$ of $G$, there exists a sublinear
function $\kappa=\kappa(K,S)$ such that
\[
|B_n\cap K|\le \mathrm{e}^{n \omega(K,S)+\kappa(n)}
\]
for every $n\ge 1$.  In particular,
\[
       \mathrm{e}^{n \omega(G,S)}\le |B_n|\le \mathrm{e}^{n \omega(G,S)+\kappa(n)}
\]
\end{mainthm}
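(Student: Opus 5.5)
The plan is to run the standard "extension lemma + almost-multiplicativity" argument for purely exponential growth from \cite{YANG10}. The bounded error coming from strong contraction will be replaced by the sublinear error supplied by Theorem \ref{SubTrackThm}.

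\textbf{Setup.} Since $K$ is non-virtually cyclic and contains a generalized loxodromic element, I first choose an acylindrical action $G\curvearrowright X$ on a hyperbolic space for which $K$ contains a loxodromic (hence WPD) element $h$. Its orbit $\langle h\rangle o$ is strongly contracting in $X$, so both conclusions of Theorem \ref{SubTrackThm} apply to $H=\langle h\rangle$. Since $K$ is not virtually cyclic, it also contains a loxodromic $k\in K$ independent of $h$. From $h$ and $k$ I build a finite set $F\subset K$ of "transition" elements, each a large power of a loxodromic in $K$, with the following property. For any $g_1,g_2\in K$ there is $f\in F$ such that $g_1fg_2$ is an "admissible" concatenation in the word metric:
\[
\|g_1fg_2\|_S\ge \|g_1\|_S+\|g_2\|_S-\kappa_1(\min\{\|g_1\|_S,\|g_2\|_S\})
\]
for a sublinear $\kappa_1$. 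To get this, I project $g_1^{-1}$ and $g_2$ to $fH$ in $X$, choose $f$ so that these $X$-projections are bounded near the two ends of the axis segment, and then transfer to the word metric. Part (1) of Theorem \ref{SubTrackThm} says that word-metric projections to $H$ lie within $\kappa_{\mathrm{track}}(d_S(\cdot,H))$ of the $X$-projections. Part (2) gives strong sublinear contraction, which yields the sublinear reverse triangle inequality through projections. Combining them gives the inequality above.

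\textbf{Supermultiplicativity up to sublinear error.} Next I show that, for $A_n=B_n\cap K$ and any $m,n$,
\[
|A_{m+n}|\ge \frac{|A_m|\,|A_n|}{C\,\mathrm{e}^{\kappa_2(\min(m,n))}}.
\]
The map $(g_1,g_2)\mapsto g_1f g_2$ sends $A_m\times A_n$ into $A_{m+n+c}$, with $c=\max_{f\in F}\|f\|_S$. I then need its fibres to have size at most $\mathrm{e}^{\kappa_2}$. Given the product $g=g_1fg_2$, the admissibility inequality forces $g_1$ to lie within a sublinear distance of a point at parameter about $m$ on a geodesic from $1$ to $g$. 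There are at most $|F|$ choices of $f$, and a ball of radius $\kappa_1(m)$ has at most $\mathrm{e}^{c'\kappa_1(m)}$ elements, which is sub-exponential. Finally I absorb the shift by $c$ using $|A_{n+c}|\le |S|^c|A_n|$.

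\textbf{Conclusion.} The previous step gives $\log|A_{m+n}|\ge \log|A_m|+\log|A_n|-\kappa_3(\min(m,n))$. A Fekete-type lemma with sublinear defect then shows that $\lim \frac1n\log|A_n|$ exists, equals $\omega(K,S)$, and that
\[
\log|A_n|\le n\,\omega(K,S)+\kappa(n)
\]
with $\kappa$ sublinear. Concretely, iterating dyadically gives $\log|A_{2^jn}|\ge 2^j\log|A_n|-\sum_i 2^{j-i}\kappa_3(2^{i}n)$. Dividing by $2^jn$ and letting $j\to\infty$ yields $\omega\ge \frac1n\log|A_n|-\frac1n\sum_i 2^{-i}\kappa_3(2^in)$, and the error term is $o(1)$ by sublinearity and dominated convergence. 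For $K=G$, the lower bound $\mathrm{e}^{n\omega}\le|B_n|$ is simply submultiplicativity.

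\textbf{Main obstacle.} The hardest step is the admissible extension inequality, that is, turning the $X$-geometry into a word-metric lower bound with only sublinear loss. This is exactly where tracking (1) and strong contraction (2) are both needed. I would first try to argue that a word geodesic from $g_1^{-1}$ to $fg_2$ must pass within $\kappa(\cdot)$ of the segment $[1,f]\subset H$, using Lemma \ref{GenMorseLem} on the Morse axis. The subtle point is that the error must depend on $\min(\|g_1\|_S,\|g_2\|_S)$ and not on $\|f\|_S$; that dependence is what makes the dyadic sum converge.
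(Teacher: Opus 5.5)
\paragraph{The gap: the bounded-connector extension inequality.} The missing step is your two-fold extension inequality. It uses a \emph{fixed finite} set $F$, so the connectors have length at most $c$, and it claims a defect of only $\kappa_1(\min\{\|g_1\|_S,\|g_2\|_S\})$. The tools you invoke do not give this.

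Tracking and strong contraction only place the word-metric projections of $g_1^{-1}$ and $fg_2$ to $E(f)$ within about $\kmorse(\|g_1\|_S)$ of $1$ and about $\kmorse(\|g_2\|_S)$ of $f$ (Lemma~\ref{BddprojViaMorse2}). To force $[g_1^{-1},fg_2]_S$ into $\mathcal N_C(E(f))$ via Lemma~\ref{SubLinearContr}, the connector must dominate these errors. That means $\|f\|_S$ must be at least a constant times $\kmorse(\max\{\|g_1\|_S,\|g_2\|_S\})$, which is exactly hypothesis~\eqref{eq:longMorseInsertion1} of Lemma~\ref{ExtensionViaMorse}. With $\|f\|_S\le c$ and $g_i$ large, these estimates say nothing.

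Sublinear contraction by itself also does not exclude a linear shortcut past a short connector. Take the plane with metric $dy^2+\log^2(y+2)\,dx^2$. The $x$-axis is Morse, since balls at distance $r$ project to sets of diameter $\asymp r/\log r$. The vertical geodesic from the origin to $(0,n)$ meets the axis in one point. Yet a translation by $\ell$ moves $(0,n)$ by at most $\ell\log(n+2)$, so the analogue of $\|g_1fg_2\|_S$ with $g_1^{-1}=g_2\leftrightarrow(0,n)$ is $\ll 2n$. This is why the paper says bounded connectors no longer suffice and takes connectors of length $\asymp\kappa$ of the adjacent word lengths.

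Your fibre count has a separate problem. A length inequality alone does not place $g_1$ near a geodesic $[1,g]_S$ in a non-hyperbolic Cayley graph. You need the fellow-travelling conclusion, as in Lemma~\ref{ExtensionViaMorse}(2).

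\paragraph{Longer connectors do not rescue the iteration.} If the connector has length $\asymp\kmorse(\max(m,n))$, the radius shift and the defect become of order $\kappa(\max(m,n))$. Such a two-fold inequality does not iterate to a sublinear bound for a general sublinear $\kappa$.
\begin{itemize}
\item Appending blocks of size $n$ one at a time accumulates $\sum_{j<k}\kappa(jn)\ge\frac{k-1}{2}\kappa(kn)$. This eventually swamps $k\log|A_n|$.
\item Dyadic doubling leaves exactly your error term $\frac1n\sum_i 2^{-i}\kappa(2^in)=\sum_i\kappa(2^in)/(2^in)$. Here $\min=\max$, so min-dependence would not help anyway. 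This sum is finite only under a Dini-type condition $\sum_i\kappa(2^i)/2^i<\infty$, and it diverges for $\kappa(t)=t/\log t$.
\item Your dominated-convergence step fails. Concavity only gives each term $\le\kappa(n)/n$, which is not a summable majorant.
\end{itemize}
Even granting min-dependence, you would have to iterate linearly, where the defect is $(k-1)\kappa_3(n)$.

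\paragraph{The missing idea: the paper's free-semigroup construction.} This construction uses the hyperbolicity of $X$, not only Theorem~\ref{SubTrackThm}.
\begin{itemize}
\item Fix one scale $m$. Take a $\kappa(m)$-separated $A\subset S_m(\Delta)\cap K$ that is admissible relative to a single $E(f)$.
\item Use a single connector $h=h(m)$ with $\|h\|_S\le\kappa(m)$ and $d_X(o,ho)>L_0\kappa(m)$.
\item Then every word $a_1h\cdots a_nh$ is a local, hence global, uniform quasi-geodesic in $X$ (Lemma~\ref{SublExtensionViaLoxo}).
\item Comparing first letters via fellow-travelling in $X$ and the transfer of Lemma~\ref{LargeIntLem} contradicts the separation of $A$ (Lemma~\ref{FreeSemigroup}).
\end{itemize}
Each junction therefore costs $\kappa(m)$ independently of the number $n$ of letters. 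This gives Proposition~\ref{LimitExistsProp} and a one-scale Fekete argument. The word-metric extension lemmas alone are only local, as the paper remarks after Lemma~\ref{SublExtensionViaLoxo}.
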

As a consequence, the growth rate $\omega(K,S)$ is an actual limit for
such subgroups $K$, recovering a result of Schesler \cite{Sch22} in this
setting.
\begin{rem}
We note that $\kappa=O(\kmorse)$, where $\kmorse$ is the sublinear function
associated with a set of three independent generalized loxodromic elements in
$K$.  When $G$ contains strongly contracting elements, so that
$\kmorse\equiv C$, this recovers the result \cite{YANG10}.
\end{rem}

For mapping class groups, the estimate becomes polynomially sharp.  Indeed, a
theorem of Duchin--Rafi \cite[Theorem 4.2]{DR09} shows that pseudo-Anosov
elements are logarithmically contracting (by Corollary \ref{cor:log_tracking}).  Hence the preceding theorem gives
the following consequence.

\begin{cor}\label{ModGrowthThm}
Let $G=\mathrm{Mod}(\Sigma_g)$ with $g\ge 2$, and let $S$ be a finite
generating set of $G$.  Then there exists a constant $d>0$ such that
\[
        \mathrm{e}^{n \omega(G,S)}\le |B_n| \le n^d \mathrm{e}^{n \omega(G,S)}
\]
for every $n\ge 1$.
\end{cor}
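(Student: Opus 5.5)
The plan is to deduce Corollary \ref{ModGrowthThm} from Theorem \ref{GrowthBallThm} applied with $K=G$. The additional input is that for mapping class groups the sublinear function produced there can be taken logarithmic. The lower bound $\mathrm{e}^{n\omega(G,S)}\le |B_n|$ is already part of Theorem \ref{GrowthBallThm}, and it also follows from submultiplicativity via Fekete's lemma. So only the upper bound $|B_n|\le n^d\mathrm{e}^{n\omega(G,S)}$ needs work.

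First I set up the acylindrical action. Let $G=\mathrm{Mod}(\Sigma_g)$ with $g\ge 2$ act on the curve graph $\mathcal C(\Sigma_g)$. This space is hyperbolic by Masur--Minsky, the action is acylindrical by Bowditch, and pseudo-Anosov elements act loxodromically, hence are WPD and generalized loxodromic. Since $G$ is not virtually cyclic, Theorem \ref{GrowthBallThm} applies with $K=G$. I then pick three independent pseudo-Anosov elements $h_1,h_2,h_3$, for instance conjugates of one pseudo-Anosov by elements that move its fixed points in $\mathcal{PML}$. By the Remark after Theorem \ref{GrowthBallThm}, the function $\kappa$ there satisfies $\kappa=O(\kmorse)$, where $\kmorse$ is the sublinear contraction function attached to these three elements in the word metric.

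Next I bound $\kmorse$. By Duchin--Rafi \cite[Theorem 4.2]{DR09}, the projection $\proj^X_{\langle h_i\rangle}$ pulled back from the curve graph is weakly contracting in the word metric with logarithmic error. Corollary \ref{cor:log_tracking} converts this into logarithmic contraction, and logarithmic projection tracking, for $\langle h_i\rangle$ in $\mathrm{Cay}(G,S)$. Concretely, the sublinear functions feeding into Theorem \ref{SubTrackThm} and into the Morse constants of the $h_i$ can be chosen as $C\log(t+2)$. Taking the maximum over $i=1,2,3$ and running the proof of Theorem \ref{GrowthBallThm}, which only adds and dilates these functions, gives $\kappa(n)\le C'\log(n+2)$ for some constant $C'$ depending on $S$ and the $h_i$.

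Finally, the estimate $|B_n|\le \mathrm{e}^{n\omega(G,S)+C'\log(n+2)}\le n^d\mathrm{e}^{n\omega(G,S)}$ holds for all $n\ge 2$ once $d$ is large. The case $n=1$ is handled by enlarging $d$, or simply by absorbing the finitely many small $n$ into the constant. The main obstacle is justifying that $\kappa=O(\kmorse)$ in the Remark holds with \emph{linear} dependence. If the proof of Theorem \ref{GrowthBallThm} passed $\kmorse$ through a nonlinear composition or an iteration that grows with $n$, the bound could exceed logarithmic. I would check this first by tracing each use of $\kmorse$ in that proof, and I expect each use to be a bounded number of evaluations at arguments $O(n)$. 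Since $\log(Cn)=\log n+O(1)$, the logarithmic order would then be preserved.
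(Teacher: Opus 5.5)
Your route is the paper's own: apply Theorem~\ref{GrowthBallThm} with $K=G$, use the remark that $\kappa=O(\kmorse)$, and feed in Duchin--Rafi. Your planned trace of the linear dependence is exactly what that remark asserts. You also correctly take the tracking function from Corollary~\ref{cor:log_tracking}, rather than from the non-effective Lemma~\ref{SublTrackInAcyl}.

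One step fails as written, however. Enlarging $d$ does nothing at $n=1$, because $1^d=1$, and the statement has no multiplicative constant into which small $n$ could be absorbed. In fact no $d$ works at $n=1$. By submultiplicativity, $\omega(G,S)=\inf_n\frac1n\log|B_n|\le\frac12\log|B_2|$. Since $1\cdot s=s\cdot 1$ for $s\in S$, we have $|B_2|\le|B_1|^2-1$, and so $\mathrm{e}^{\omega(G,S)}<|B_1|$. What your argument (and the paper's) actually yields is $|B_n|\le n^d\mathrm{e}^{n\omega(G,S)}$ for $n\ge 2$. Equivalently, after changing $d$, it gives $|B_n|\le (n+1)^d\mathrm{e}^{n\omega(G,S)}$ for all $n\ge1$. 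The corollary should be read in that form.

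There are two smaller corrections.

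\emph{Duchin--Rafi input.} Duchin--Rafi give weak contraction with a \emph{uniform constant}, not with logarithmic error. That constant is precisely the hypothesis of Corollary~\ref{cor:log_tracking}. A merely logarithmic input would pass through the dyadic summation behind Corollary~\ref{cor:log_tracking} and Lemma~\ref{lem:reduction}. This produces $\sum_{i\le\log_2 n}\log(n/2^i)\asymp(\log n)^2$, and hence only a quasi-polynomial factor.

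\emph{Choice of independent elements.} Conjugates of a single pseudo-Anosov $h$ are never independent in the sense of Definition~\ref{defn:indepMorse}, since they have conjugate powers. With such an $F$, the system $\{kE(f)\}$ contains the distinct members $gE(h)$ and $gE(h)g^{-1}=E(ghg^{-1})$, which lie at Hausdorff distance at most $\|g\|_S$. This violates the bounded intersection assumed in Lemma~\ref{SublBddProj4H}. Choose instead three pseudo-Anosovs with no conjugate powers; such triples exist, as used at the start of Section~\ref{SecEffEst}. Duchin--Rafi applies to each of them, so the rest of your argument is unchanged.
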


In general, a weak contracting element has logarithmic contraction 
(see \cite[Corollary 5.7]{RV21}, also Corollary \ref{cor:log_tracking}). See Remark \ref{rem:contraction_terms} for the
relations among the various notions of contraction used here.
Abbott--Behrstock--Durham \cite[Corollary 5.5]{ABDBR} characterized the weakly
contracting elements of hierarchically hyperbolic groups as precisely their
generalized loxodromic elements.  Consequently, the same type of ball-growth
estimate applies to these groups.

It remains open whether a mapping class group admits a finite generating set
$S$ for which some element is strongly contracting in
$\mathrm{Cay}(G,S)$ and, more strongly, whether this property holds for every
finite generating set. For braid groups modulo their centers, Calvez--Wiest \cite{CW24} proved that
pseudo-Anosov axes are strongly contracting with respect to the classical and
dual Garside generating sets.  By contrast, Rafi--Verberne \cite{RV21}
constructed a generating set and a pseudo-Anosov element whose axis is not
strongly contracting, illustrating the dependence on the generating set. If strong contraction fails for a particular
generating set $S$, it is natural to ask whether the mapping class group
nevertheless has purely exponential growth with respect to $S$.  The estimate
above differs from pure exponential growth only by a polynomial factor, so
resolving this question would determine whether that factor reflects a genuine
geometric phenomenon or merely a limitation of the present method.

Relatedly, it remains an interesting question whether mapping class groups
admit geodesic automatic combings with respect to suitable generating sets.
If such a combing exists, the polynomial correction above should be encoded in
the combinatorial or spectral structure of the corresponding finite-state
automaton; compare Calegari \cite[Lemma 3.4.2]{Calegari}. 

\subsection{Quotients with asymptotically full growth}

The second application concerns the growth rates of quotient groups.  Growth
tightness asks whether a proper quotient of a group must have a strictly
smaller growth rate.  In a subsequent paper \cite{DY26B}, we shall establish the growth tightness for every acylindrically hyperbolic groups.  Here we prove the complementary result (without assuming growth tightness) that no
uniform gap exists: one can find proper quotients whose growth rates approach
that of the original group.

\begin{mainthm}[No uniform growth gap for quotients]\label{NoGapThm}
Let $G$ be a non-elementary acylindrically hyperbolic group.  Then for every
finite generating set $S$ of $G$, there exists a sequence of quotient groups
$G_n$ of $G$ with infinite kernels such that
\[
        \omega(G_n,\bar S_n)\to \omega(G,S),
\]
where $\bar S_n$ is the image of $S$ in $G_n$.
\end{mainthm}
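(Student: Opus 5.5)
We will obtain $G_n$ as quotients $G/\langle\langle h^{N_n}\rangle\rangle$ by large powers of a suitable loxodromic WPD element $h$. The tools are the Dahmani--Guirardel--Osin small cancellation / rotating family technology for hyperbolically embedded subgroups, together with the effective growth estimate of Theorem \ref{GrowthBallThm}.

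\textbf{Choice of $h$ and the quotients.} Fix an acylindrical action $G\curvearrowright X$ on a hyperbolic space and a loxodromic element $h$, so $h$ is WPD. Its elementary closure $E(h)$ is hyperbolically embedded. By Dahmani--Guirardel--Osin, there is a finite set $F\subset E(h)\setminus\{1\}$ such that for every normal subgroup $N\lhd E(h)$ avoiding $F$ the quotient $G/\langle\langle N\rangle\rangle$ is again acylindrically hyperbolic. In particular it is non-elementary, and $E(h)/N$ embeds in it. Set $G_n=G/\langle\langle h^{k_n}\rangle\rangle$ with $k_n\to\infty$ and $k_n$ divisible enough. Then the kernel is infinite, since it contains $h^{k_n}$ of infinite order. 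Trivially $\omega(G_n,\bar S_n)\le\omega(G,S)$, so only a lower bound is needed.

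\textbf{Lower bound via an injective subset.} I would find a subset $A_n\subset G$ on which the projection $\pi_n:G\to G_n$ is injective and length non-increasing, with $\omega(A_n,S)\ge\omega(G,S)-\epsilon_n$ and $\epsilon_n\to0$. The candidate is
\[
A_n=\{g\in G: \text{the word }g \text{ contains no subpath fellow-travelling } \langle h\rangle o \text{ in } X \text{ for length } \ge k_n\ell(h)/10\}.
\]
In the rotating family picture (Dahmani--Guirardel--Osin), the kernel is a free product of conjugates of $\langle h^{k_n}\rangle$. Every nontrivial kernel element $w$, read along a geodesic in $X$ or in the cone-off, must contain a long subsegment fellow-travelling a translate of the axis of $h$, of length comparable to $k_n\ell(h)$. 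This is the Greendlinger-type property. If $\pi_n(a)=\pi_n(b)$ for $a,b\in A_n$, then $a^{-1}b$ lies in the kernel. The word for $a^{-1}b$ is a concatenation of two words with only short fellow-travelling with $h$-axes. By hyperbolicity, a long $h$-fellow-travelling piece of $a^{-1}b$ would force one of the two halves to fellow travel for length at least half as long, which is a contradiction. So $\pi_n|_{A_n}$ is injective, and $|\bar B_m\cap G_n|\ge |B_m\cap A_n|$.

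\textbf{Growth of $A_n$ — the main obstacle.} The hard step is showing that forbidding long $h$-pieces costs only $\epsilon_n\to0$ of growth rate. The difficulty is that the forbidding is measured in $X$, while growth is measured in $d_S$. This is exactly where the sublinear projection tracking along $\langle h\rangle$ (Theorem \ref{SubTrackThm}) and the upper bound $|B_m|\le \mathrm e^{m\omega+\kappa(m)}$ of Theorem \ref{GrowthBallThm} enter.

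I would bound the complement $B_m\setminus A_n$. An element with a long $h$-piece decomposes as $g=g_1\,h^{j}\,g_2$ with $|j|\ge k_n/10$. Using the tracking property, this decomposition can be chosen so that
\[
\|g\|_S\ge \|g_1\|_S+\|h^j\|_S+\|g_2\|_S-\kappa(\|g\|_S).
\]
The key input is the strong contraction of $\proj^X_{\langle h\rangle}$ in the word metric (Theorem \ref{SubTrackThm}(2)), which makes word-metric projections to $\langle h\rangle$ coarsely additive up to sublinear error.

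Counting such triples with the ball estimate, and using that $\|h^j\|_S\ge c|j|$ contributes a factor at most polynomial in $j$ (there are $O(m)$ choices of $j$ for each length), gives
\[
|B_m\setminus A_n|\lesssim \sum m^{O(1)}\,\mathrm e^{(m-ck_n/10)\omega+O(\kappa(m))}\cdot(\#\text{positions}).
\]
A single long piece therefore does not by itself drop the exponent. This means the right object is a word containing many pieces.

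So instead I would take $A_n$ to be a concatenation set built from a separated family: pick a maximal $L$-separated set $Z\subset S_L$ (sphere) of short elements with no long $h$-pieces, which has size $\ge \mathrm e^{L(\omega-\epsilon)}$ by Theorem \ref{GrowthBallThm}. Then form products $z_1z_2\cdots z_t$, inserting a fixed independent loxodromic element $f$ between consecutive factors to guarantee quasi-geodesicity via the Morse/tracking input. The growth of such products is at least $(\omega-\epsilon)L/(L+\|f\|)$, which tends to $\omega$ as $L\to\infty$. Choosing $k_n\gg L$ ensures these products contain no $h$-piece of length $k_n/10$.

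This is the step I expect to require the most care: verifying that products with $f$ inserted are quasi-geodesic in $d_S$ with only sublinear loss, and that they avoid long $h$-pieces in $X$. Both should follow from Lemma \ref{GenMorseLem} and Theorem \ref{SubTrackThm} applied to the independent elements $h$ and $f$.
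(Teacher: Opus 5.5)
Once you drop the complement count, your construction has the same architecture as the paper's proof. You are right to drop it: the elements containing one long $h$-piece still have full growth. The shared architecture is:
\begin{itemize}
\item quotients by $\langle\!\langle h^{kq}\rangle\!\rangle$;
\item a Greendlinger-type property forcing $[o,go]_X$, for $1\ne g$ in the kernel, to run along a translate of $\ax(h)$ for a length $L_h(q)\to\infty$;
\item a thin-triangle step passing a third of this to $[o,ao]_X$ or $[o,bo]_X$;
\item a free semigroup generated by a separated subset of a sphere with an independent loxodromic inserted, whose elements meet every translate of $\ax(h)$ in diameter at most some $D_m$ (Lemma \ref{FreeSemigroupBoundedHIntersection}), so the quotient map is injective on it once $q\gg m$.
\end{itemize}
The gap is in the step you defer, and as set up it fails. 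First, an $L$-separated subset of $S_L$ is far too sparse. In a free group a maximal one has only about $\mathrm e^{L\omega/2}$ elements, which would give growth near $\omega/2$. The separation must be at a sublinear scale $\kappa(L)$, matching the sublinear error in the coincidence argument below; this costs only the factor $\frac16\mathrm e^{-\omega_0\kappa(L)}$ in Lemma \ref{SublExtensionViaLoxo}. Second, a fixed inserted element $f$ does not suffice. Since $f$ is only sublinearly contracting in $d_S$, the projection of $[1,z]_S$ to $E(f)$ is controlled only up to $\kmorse(L)$ (Lemma \ref{BddprojViaMorse2}). Lemma \ref{ExtensionViaMorse} needs a connector of length at least $4\kmorse(n)$ precisely to beat this error. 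The paper inserts $h_m\in\langle f\rangle$ with $\|h_m\|_S\le\kappa(m)$ and $d_X(o,h_mo)>L_0\kappa(m)$ (Lemma \ref{SuitableEDefn}). This is invisible at the exponential scale, since $\log|A|/(m+\kappa(m))\to\omega$ along a suitable sequence $m_i$.

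Third, and most importantly, the property you plan to verify, that arbitrarily long products are quasi-geodesic in $d_S$ with only sublinear loss, is not available. Lemmas \ref{ExtensionViaMorse} and \ref{ExtensionViaMorse2} are local, with thresholds depending on the number of factors; the paper says so in the remark after Lemma \ref{SublExtensionViaLoxo}. Acylindrically hyperbolic groups also need not satisfy the Morse local-to-global property (Remark \ref{rem:AHMorse}). So nothing guarantees that distinct words give distinct elements, and your bound $(\omega-\epsilon)L/(L+\|f\|_S)$ is unsupported.

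The paper never needs global word-metric control. It proves $a_1h_ma_2h_m\cdots$ is quasi-geodesic only in the hyperbolic space $X$, using local-to-global there. It then proves freeness (Lemma \ref{FreeSemigroup}) by comparing the first letters $a_1\ne a_1'$ of two words with the same value:
\begin{itemize}
\item If $a_1E(f)=a_1'E(f)$, admissibility and Corollary \ref{SubLinearCommonEntry} give $d_S(a_1,a_1')=O(\kmorse(m))$.
\item If the cosets differ, say with $d_X(o,a_1o)\ge d_X(o,a_1'o)$, fellow travelling in $X$ forces $[o,a_1o]_X$ to run along $\pi(a_1'E(f))$ for about $d_X(o,h_mo)>\kappa(m)$. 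Lemma \ref{LargeIntLem}, which is exactly where Theorem \ref{SubTrackThm} enters, converts this into $[1,a_1]_S$ entering $\mathcal N_C(a_1'E(f))$. This again gives $d_S(a_1,a_1')=O(\kmorse(m))$.
\end{itemize}
Either way this contradicts $\kappa(m)$-separation. With a bounded connector the $X$-overlap is bounded and the transfer yields nothing, which is why the connector must grow sublinearly. Two minor points: the restriction ``no long $h$-pieces'' on $Z$ is automatic once $k_n\gg L$, and the Dehn filling statement is unnecessary, since only an infinite kernel is required.
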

 
The proof follows the strategy developed in \cite{YANG7,HLY}, which uses
rotating family theory for WPD elements to construct small-cancellation
quotients.  First, one constructs free semigroups whose growth rates converge
to $\omega(G,S)$.  This   is also the crucial ingredient in the
effective growth estimate in \ref{GrowthBallThm}.  One then takes a quotient
by sufficiently large powers of an independent WPD element.  The
large-intersection estimates obtained from sublinear projection tracking
ensure that these free semigroups embed in the resulting quotients, forcing
their growth rates to remain arbitrarily close to that of $G$.

\subsection{Growth-cogrowth inequality}
Suppose that a non-elementary group $G$ is either acylindrically hyperbolic
  or contains a Morse element and satisfies the Morse local-global
assumption introduced by Russell–Spriano–Tran \cite{RST21}. 
The final application is a growth--cogrowth inequality for confined subgroups. This part uses the same sublinear extension philosophy in both settings.  

Recall that a subgroup $H<G$ is \textit{confined} if there exists a
finite subset $P\subset G\setminus\{1\}$ such that every conjugate of $H$
intersects $P$. Such a set $P$ is called a confining subset.  In the presence of Morse
elements, Lemma \ref{lem:E(G)} provides a canonical maximal finite normal
subgroup $E(G)$.  We call a confining subset \textit{non-degenerate} if it is disjoint
from $E(G)$. We refer to \cite{CGYZ} for more relevant discussion and references therein concerning confined subgroups.

\begin{mainthm}[Growth-cogrowth inequality]\label{GrowthCoGrowthFormula}
Suppose that a non-elementary group $G$ is either acylindrically hyperbolic or
contains a Morse element and satisfies the Morse local-global assumption.
Then for every confined subgroup
$H<G$ with a non-degenerate confining subset,
\[
        \omega(H,S)+\frac{\omega(G/H,\bar S)}{2}\ge \omega(G,S),
\]
where $\omega(G/H,\bar S)$ is the growth rate of the  Schreier graph $\mathrm{Cay}(G,S)/H$ in the combinatorial metric.
\end{mainthm}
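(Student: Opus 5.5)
We aim for a counting argument showing that $G$ is covered, up to a subexponential factor, by products of the shape $H$-element times a short ``transversal'' word whose length is controlled by the Schreier growth. Concretely, we fix a Morse (generalized loxodromic/WPD) element $f$ and, from the confining subset $P$ and non-degeneracy, produce a finite set $F\subset G$ of ``extension elements''. Any two elements $g_1,g_2$ can then be concatenated as $g_1 f' g_2$ with $f'\in F$ into an admissible path whose projections have sublinear error, by Theorem \ref{SubTrackThm} in the acylindrical case and by the Morse local-to-global property in the other case. Non-degeneracy is used here. Because $P\cap E(G)=\emptyset$, Lemma \ref{lem:E(G)} lets us choose $p\in P$ with $p\notin E(f)$, so that $f^n p f^n$ is again Morse with uniform constants. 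Confinedness then gives, for every $g\in G$, an element $p_g\in P$ with $g p_g g^{-1}\in H$.

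The key construction is the map
\[
\Phi:(g_1,g_2)\mapsto g_1\,f^{n}\,p_{\,g_1f^n}\,f^{n}\,g_1^{-1}\in H .
\]
Better, we use a ``doubling'' version. For $g\in G$ of length $m$ whose coset $Hg$ sits at Schreier distance $t$ from $H$, we write $g=h\,w$ with $h\in H$ and $|w|\approx t$. The inequality we want is
\[
\omega(G)\le \omega(H)+\tfrac12\omega(G/H).
\]
To get it, we show that for most $g\in B_n$ one can write $g$, up to bounded error, with a coset path of length roughly $t$ and an $H$-part of length roughly $n-t/2$. Alternatively, and this is the route we commit to, we injectively map pairs $(x,y)\in B_n\times B_n$, or more precisely the set of $g\in B_n$, into $H$-elements of length at most $2n+\kappa(n)$ via $g\mapsto g\,f^n p\,f^n\,g^{-1}$. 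The image lies in $H$ for a suitable $p$ from the confining set, using that $H$ intersects $g'P g'^{-1}$ for every $g'$. Injectivity up to a subexponential multiplicity follows from the Morse/contracting property of $f^n p f^n$: the element $g$ is recovered, up to $\kappa(n)$ ambiguity, as the projection point of $1$ onto the axis of $g f^n p f^n g^{-1}$, by the sublinear tracking of Theorem \ref{SubTrackThm}. This alone gives $\omega(G)\le\omega(H)/2\cdot 2$, which is too crude, so we refine by the Schreier part. We split $g=h w$ with $h\in H$ and $w$ a shortest coset representative, $|w|=t$, so $|h|\le n+t$. Then $g f^n p f^n g^{-1}=h\,(w f^n p f^n w^{-1})\,h^{-1}$, and the conjugate $w f^n p f^n w^{-1}$ has length about $2t$. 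Counting over $t$, the number of $g$ is at most the number of choices of $w$, which is $\lesssim e^{t\omega(G/H)}$, times the number of choices of $h$ with $|h|\le n-t+\kappa$. The latter is bounded because the geodesic $h w$ has length $n$, together with the sublinear tracking that forces $|h|+|w|\le n+\kappa(n)$.

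This yields, for some $t\le n$,
\[
|B_n|\lesssim e^{(n-t)\omega(H)+t\,\omega(G/H)+\kappa(n)} .
\]
That bound alone does not give the factor $\tfrac12$. To obtain it we use the doubling: we consider the element $g f^n p f^n g^{-1}\in H$, of length $2(n-t)+2t$, and read off its $H$-length. Its middle part of length $2t$, which lies in $H$, is counted by $\omega(H)$, while the $h$-parts contribute $2(n-t)\omega(H)$. Comparing $|B_n|^2$, via pairs, against $H$-balls of radius $2n$ should give
\[
2\omega(G)\le 2\omega(H)+\omega(G/H).
\]
The main obstacle is this last bookkeeping step. We need to show that the map $g\mapsto g f^n p_g f^n g^{-1}$, restricted to elements with a fixed Schreier length $t$, is at most $e^{t\omega(G/H)+o(n)}$-to-one onto an $H$-ball. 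This requires controlling the choice of $p_g$ (only finitely many), and it needs the strong sublinear contraction from Theorem \ref{SubTrackThm}(2), respectively Morse local-to-global, to ensure that the concatenated path is a uniform quasi-geodesic with only sublinear backtracking.
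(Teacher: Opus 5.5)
There is a genuine gap exactly where the factor $\tfrac12$ has to come from, and the route you commit to cannot produce it. Your conjugation map $g\mapsto g\,u\,g^{-1}$ must use $u=\hat f p\hat f^{-1}$ with $\hat f$ a power of \emph{sublinear} length, not $f^npf^n$, if the image is to lie in $H$ with length $2\|g\|_S+o(\|g\|_S)$. With that correction it is already subexponential-to-one, because $g$ lies sublinearly close to the point of a geodesic $[1,gug^{-1}]_S$ at distance $\|g\|_S$ from $1$. So it yields only $\omega(G,S)\le 2\omega(H,S)$. The multiplicity bound $\mathrm e^{t\omega(G/H,\bar S)+o(n)}$ that you name as the main obstacle would give the even weaker $\omega(G,S)\le 2\omega(H,S)+\omega(G/H,\bar S)$.

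Your coset refinement is also wrong. For $g=hw$ with $w$ a shortest element of $Hg$, nothing forces $\|h\|_S+\|w\|_S\le\|g\|_S+\kappa(n)$. In $F_2$ with $H=\langle\!\langle a\rangle\!\rangle$ and $g=a^kb^ta^k$, one has $w=b^t$ and $\|h\|_S+\|w\|_S=\|g\|_S+2t$. Sublinear tracking along a Morse axis says nothing about such decompositions. In fact your intermediate bound $|B_n|\le \mathrm e^{(n-t)\omega(H,S)+t\omega(G/H,\bar S)+o(n)}$ would imply $\omega(G,S)\le\max\{\omega(H,S),\omega(G/H,\bar S)\}$. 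This is false, for instance, for $H=\langle\!\langle a^2\rangle\!\rangle<F_2$: there $\omega(G/H,\bar S)=\log 2$, and $\omega(H,S)<\log 3$ by Grigorchuk's cogrowth criterion, since $\mathbb Z/2*\mathbb Z$ is non-amenable.

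The missing idea is a quadratic step: Cauchy--Schwarz over cosets, applied to a count of \emph{pairs} of elements lying in a common coset. In counting form,
\[
|S_n|=\sum_{Hg}|Hg\cap S_n|\le\bigl|\{Hg:d_S(H,Hg)\le n\}\bigr|^{1/2}\Bigl(\sum_{g\in S_n}|H\cap S_n(g)|\Bigr)^{1/2},
\]
because $\sum_{Hg}|Hg\cap S_n|^2$ counts the pairs $(g_1,g_2)\in S_n^2$ with $Hg_1=Hg_2$. The paper runs the same argument with Poincar\'e series, $\mathcal P_G(s+t)\le(\phi_s,\varphi_t)\le\|\phi_s\|\,\|\varphi_t\|$; the fact that $\|\varphi_t\|<\infty$ for $t>\omega(G/H,\bar S)/2$ is where the $\tfrac12$ comes from.

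The pair count is bounded by $\mathrm e^{o(n)}|H\cap B_{2n+o(n)}|$ using Lemma~\ref{lem:shadowBaby}. For fixed $g$, the map $h\mapsto g\hat f_hp_h\hat f_h^{-1}g^{-1}h$ is at most $3$-to-one into $H\cap S_n(g,\kappa(n+\|g\|_S))\cap\Omega(g,\kappa(\|g\|_S))$. After that, $g$ is recovered from the image up to subexponential ambiguity (Lemma~\ref{lem:phis_finite}). In pair notation this map is $(g_1,g_2)\mapsto g_1\hat fp\hat f^{-1}g_2^{-1}$, and your conjugation map is only its diagonal $g_1=g_2$.

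Finally, your use of non-degeneracy is insufficient. The element $p_g$ is dictated by confinedness, not chosen by you, so choosing a single $p\notin E(f)$ does not help. You need three independent Morse elements $f$ with $pE(f)\ne E(f)$ for \emph{every} $p\in P$, which is Lemmas~\ref{lem:indepsetF} and \ref{lem:indepsetF2}; this is where the Morse local-to-global assumption enters in type II. That is what makes $(g,p,g^{-1}h)$ admissible for all $g,h$, as in Lemma~\ref{ExtensionLemConfined}.
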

In the literature, particularly for normal subgroups \(H\) of free groups \(G\), the relative growth rate \(\omega(H,S)\) is sometimes called the \textit{cogrowth} associated with the quotient \(G/H\), whereas \(\omega(G/H,\bar S)\) is its ordinary growth rate. This terminology explains the title of this section.

Abbott--Zbinden \cite{AZ26} recently constructed a finitely generated,
non-virtually cyclic Morse local-to-global group containing 
Morse elements that is not acylindrically hyperbolic.  See
Remark \ref{rem:AHMorse} for a comparison of these two classes, which may
overlap, although neither is contained in the other.

Since infinite normal subgroups must be confined, the following corollary is immediate.   

\begin{cor}\label{cor:GrowthCoGrowthFormula}
Under the assumption of \ref{GrowthCoGrowthFormula}, for every infinite normal subgroup
$H<G$,
\[
        \omega(H,S)+\frac{\omega(G/H,\bar S)}{2}\ge \omega(G,S),
\]
where $\omega(G/H,\bar S)$ is the growth rate of the  Schreier graph $\mathrm{Cay}(G,S)/H$ in the combinatorial metric.
\end{cor}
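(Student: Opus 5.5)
The corollary follows directly from the Growth--cogrowth inequality, once we check that every infinite normal subgroup $H$ of $G$ is confined with a non-degenerate confining subset. Then the inequality
\[
\omega(H,S)+\frac{\omega(G/H,\bar S)}{2}\ge \omega(G,S)
\]
is exactly the conclusion of that theorem.

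\emph{Step 1: choose the confining set.} Since $H$ is normal, every conjugate $gHg^{-1}$ equals $H$. So any one-element set $P=\{p\}$ with $p\in H\setminus\{1\}$ is confining, because it meets every conjugate of $H$.

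\emph{Step 2: make the set non-degenerate.} We must choose $p\in H\setminus E(G)$. By Lemma \ref{lem:E(G)}, $E(G)$ is the maximal finite normal subgroup of $G$, so it is finite. Since $H$ is infinite, $H\not\subseteq E(G)$, and we may pick $p\in H\setminus E(G)$. Such a $p$ is automatically nontrivial, since $1\in E(G)$. Thus $P=\{p\}\subset G\setminus\{1\}$ is a confining subset disjoint from $E(G)$, i.e. it is non-degenerate.

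\emph{Step 3: apply the theorem.} The hypotheses of \ref{GrowthCoGrowthFormula} now hold, and the inequality follows. Because $H$ is normal, the Schreier graph $\mathrm{Cay}(G,S)/H$ is the Cayley graph of $G/H$ with respect to $\bar S$. Hence $\omega(G/H,\bar S)$ is the ordinary growth rate of the quotient group.

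The only point needing any care is Step 2. It requires that $E(G)$ be finite in both settings of the theorem (acylindrically hyperbolic, or Morse local-to-global with a Morse element), which is exactly what Lemma \ref{lem:E(G)} provides.
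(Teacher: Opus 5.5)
Your proof is correct and is the argument the paper intends. The paper only remarks that infinite normal subgroups are confined and calls the corollary immediate. Your Step 2 fills in the implicit point: since $E(G)$ is finite by Lemma \ref{lem:E(G)}, you can pick $p\in H\setminus E(G)$, so $\{p\}$ is a non-degenerate confining subset and \ref{GrowthCoGrowthFormula} applies.
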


\begin{rem}
The inequality was first proved by Jaerisch--Matsuzaki \cite{JM17} for normal
subgroups of free groups.  It was subsequently established by Coulon
\cite{Coulon22} for normal subgroups of groups with strongly contracting
elements, and more recently by Choi--Gekhtman--Yang \cite{CGYZ} for confined
subgroups in the same class of groups.  All the ambient groups considered in
these works are acylindrically hyperbolic, but no strongly contracting elements exist in general.
\end{rem}

\begin{rem}
The inequality has two immediate consequences.  First, if
$\omega(G/H,\bar S)<\omega(G,S),$
then
$\omega(H,S)>\frac{\omega(G,S)}{2}.$
Thus, growth tightness of the quotient implies cogrowth tightness of the
subgroup; in this sense, cogrowth tightness may be reduced to growth
tightness; see \cite{DY26B}.

Second, suppose that there exists a sequence of normal subgroups $(H_n)$ such
that
\[
\omega(H_n,S)\longrightarrow\frac{\omega(G,S)}{2}.
\]
Since quotient growth cannot exceed ambient growth, the growth--cogrowth
inequality implies that
\[
\omega(G/H_n,\bar S)\longrightarrow\omega(G,S).
\]
Thus, asymptotic sharpness of the cogrowth lower bound forces the absence of a
uniform positive growth gap between the quotients and the original group.  The
latter   is established independently in  
\ref{NoGapThm}.  Although the existence of a sequence satisfying the above
cogrowth convergence remains conjectural in this generality, any such sequence
would also demonstrate the asymptotic sharpness of the growth--cogrowth
inequality.
\end{rem}

\begin{rem}
The growth--cogrowth inequality is a negative-curvature phenomenon: the presence of Morse
elements is essential.  Indeed, for
\(G=F_2\times F_3\) and \(H=F_2\times\{1\}\), the split generating set
\[
S=(S_2\times\{1\})\cup(\{1\}\times S_3)
\]
satisfies the inequality, since
\[
\omega(G,S)=\log 5,\qquad \omega(H,S)=\log 3,\qquad
\omega(G/H,\bar S)=\log 5.
\]
On the other hand, for the product generating set
\[
S'=(S_2\cup\{1\})\times(S_3\cup\{1\})\setminus\{(1,1)\},
\]
one has
\[
\omega(G,S')=\log 15,\qquad \omega(H,S')=\log 3,\qquad
\omega(G/H,\bar S')=\log 5,
\]
so the inequality fails.   
\end{rem}

\begin{cor}
Under the assumption of \ref{GrowthCoGrowthFormula}, if the Schreier graph $\mathrm{Cay}(G,S)/H$ has subexponential growth, then
\[
        \omega(H,S)=\omega(G,S).
\]
Thus confined subgroups with small cogrowth must have full exponential growth
inside $G$.    
\end{cor}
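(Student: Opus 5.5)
Under the hypothesis that $\mathrm{Cay}(G,S)/H$ has subexponential growth, we have $\omega(G/H,\bar S)=0$. The plan is to read this off from \ref{GrowthCoGrowthFormula} and combine it with the trivial reverse inequality, so the main work is only verifying that the hypotheses of that theorem hold.

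First, the growth rate of the Schreier graph is defined combinatorially: it is the exponential growth rate of the number of vertices of $\mathrm{Cay}(G,S)/H$ within distance $n$ of the base vertex $H$. This ball is the image of $B_n$ in $H\backslash G$. Subexponential growth means this cardinality is $\mathrm{e}^{o(n)}$, hence $\omega(G/H,\bar S)=0$. Since $H$ is assumed confined with a non-degenerate confining subset (implicit in ``under the assumption of \ref{GrowthCoGrowthFormula}''), we may apply \ref{GrowthCoGrowthFormula} to get
\[
\omega(H,S)\ \ge\ \omega(G,S)-\tfrac{1}{2}\,\omega(G/H,\bar S)\ =\ \omega(G,S).
\]

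Second, for the reverse inequality, $H\subseteq G$ gives $|B_n\cap H|\le |B_n|$ for all $n$, so $\omega(H,S)\le \omega(G,S)$ directly from the definition of relative growth rate. Combining the two bounds yields equality.

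There is essentially no obstacle. The only point needing care is the convention for $\omega(G/H,\bar S)$. If it is defined via a $\limsup$ of $\frac1n\log$ of ball sizes, then subexponential growth gives exactly $0$; this value is nonnegative since balls contain the base vertex. All the depth of the result resides in \ref{GrowthCoGrowthFormula} itself, which we take as given.
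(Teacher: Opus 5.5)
Your proposal is correct and is exactly the intended argument: the paper states this corollary without separate proof, as an immediate consequence of \ref{GrowthCoGrowthFormula}. Subexponential growth of the Schreier graph gives $\omega(G/H,\bar S)=0$. The theorem then yields $\omega(H,S)\ge\omega(G,S)$, and the reverse inequality follows trivially from $|B_n\cap H|\le|B_n|$.
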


It remains open whether the equality
$\omega(H,S)=\omega(G,S)$ characterizes the amenability of $G/H$.

\subsection{Strategy of proof}

We briefly explain the common mechanism underlying the applications.  In the
classical strongly contracting setting, Yang's extension lemma \cite{YANG10}
allows one to insert a connector from a fixed finite set to obtain injective
or finite-to-one maps between large subsets of the group.  The inserted
connector forces the resulting path to fellow travel a contracting axis up to
a uniformly bounded error.

In the present setting, the available contraction is only sublinear, so a
bounded connector is no longer sufficient.  Instead, we use a sublinear
extension procedure in which the length of each connector is controlled by a
sublinear function of the adjacent word lengths.  More precisely, given a
tuple
\[
(a_1,a_2,\ldots,a_{m+1}),
\]
we choose connectors $f_i$, for $1\le i\le m$, satisfying
\[
\|f_i\|_S
\asymp
\kappa\bigl(\max\{\|a_i\|_S,\|a_{i+1}\|_S\}\bigr),
\]
where $\kappa$ is sublinear.  These connectors remain negligible at the
exponential growth scale while being sufficiently long to force large
intersections with WPD axes.  See Sections
\ref{SSec:sublinear_extension} and \ref{SSublinearPersists} for details.

The sublinear projection tracking theorem then transfers large-intersection
information from the auxiliary hyperbolic space to word geodesics in the
Cayley graph.  This provides the injectivity and multiplicity bounds required
for the growth estimates, the quotient construction, and the
growth--cogrowth inequality.

\subsection{Organization of the paper}

Section~\ref{SecPrelim} collects background on Morse quasi-geodesics,
sublinear contraction, independent Morse elements, and the sublinear extension
lemma.  Section~\ref{sec:SPTProperAction} proves sublinear projection tracking
for proper actions, which serves as a model case.  Section~\ref{SecSublTrackAcyl}
proves the WPD version, including the generalized Morse lemma
\ref{GenMorseLem}.  Its final subsection establishes the large-intersection
transfer principle between geodesics in the auxiliary space and word geodesics
in the Cayley graph.  Section~\ref{SecEffEst} proves the effective growth
estimate in~\ref{GrowthBallThm}.  Section~\ref{SecNoGrowthGapQuotients}
proves~\ref{NoGapThm} on quotients with asymptotically full growth.
The final section proves the growth-cogrowth inequality in~\ref{GrowthCoGrowthFormula}.

\ack
A preliminary version of this paper was circulated in November 2024 and
contained early versions of \ref{SubTrackThm} and \ref{GrowthBallThm}.  We are
grateful to Denis Osin, Alex Sisto and Abdul Zalloum for helpful
conversations, and especially to Inhyeok Choi for identifying a serious gap in
certain statements, which were consequently removed.  W.Y. owes special thanks to Huabin Ge for his support and sustained interest in this work.    

\section{Preliminaries}\label{SecPrelim}

\subsection{Convention and basic notions}\label{SSubConvention}
A \textit{sublinear}  function $\kappa:\mathbb R_+\to \mathbb R_+$ means a monotone non-decreasing and positive function so that 
$$
\frac{\kappa(r)}{r}\to 0\quad\text{as}\quad r\to\infty.
$$
Following \cite{QRT22}, up to raising to a bounded multiple of $\kappa$  we may assume that $\kappa$ is concave and so has the following  property:  
\begin{equation} \label{Eq:Concave}
\begin{aligned}
\forall a, b>0, t\in [0,1]:&\quad t\kappa(a)+(1-t)\kappa(b)\le \kappa(ta+(1-t)b)\\
\forall a, t>1:& \quad \kappa(ta)\le t\kappa(a)    
\end{aligned}
\end{equation}
 

A map $\pi: (X,d_X)\to (Y,d_Y)$ between two metric spaces is called \textit{$\lambda$-quasi-isometric embedding} for some $\lambda>1$ if 
$$
\forall x,y\in X:\quad \lambda^{-1} d_X(x,y)-\lambda\le d_Y(\pi(x),\pi(y))\le \lambda d_X(x,y)+\lambda
$$
If, in addition, $Y$ is contained in the $D$-neighborhood  $\mathcal N_D(\pi(X))$ of  $\pi(X)$  for some $D>0$, then $\pi$ is called \textit{$\lambda$-quasi-isometry} and $X$ is  called \textit{quasi-isometric} to $Y$. 

A path $p$ in a metric space $(X,d_X)$ is  a continuous map $p:I\subseteq \mathbb R\to X$ from an interval  $I$ to $X$. If $I=[a,b]$ for $a,b\in \mathbb R$, we denote by $p^-=p(a)$ and $p^+=p(b)$ the initial and terminal endpoints. The length of $p$ is denoted by $\ell(p)$. A subpath means the restriction $p\lvert_J$ of $p$ to a sub-interval $J\subseteq I$. In practice, we often refer to the image of $p$ as a path and pick up two points $x,y$ (with  parameters $x=p(s), y=p(t)$ in context) on $p$ to specify the subpath, denoted by $[x,y]_p$. 

A path $p$ is called \textit{$\lambda$-quasi-geodesic} for some $\lambda\ge 1$ if for any subpath $q$ with finite $J\subseteq I$, $$\ell(q)\le \lambda d_X(q^-,q^+)+\lambda$$ If $\ell(p)\le \lambda d_X(p^-,p^+)$, then $p$ is called   \textit{$\lambda$-Lipschitz path}.

Let  $p$ be a geodesic {segment}. Given  $0\le \theta_1\le \theta_2\le 1$, the \textit{$(\theta_1,\theta_2)$-interval} of $p$ denoted by $p(\theta_1,\theta_2)$ means the closed subsegment of $p$ so that the  initial endpoint  has a   distance $\theta_1  n$ to $p^-$ and its terminal endpoint has a distance $\theta_2 n$ to $p^-$ where $n:=\ell(p)$.  

By abuse of language, if the values $\theta_1,\theta_2$ are not important, we often call $(\theta_1,\theta_2)$-interval by  \textit{$\theta$-interval} where $\theta:=\theta_2-\theta_1$.

\subsubsection*{\textbf{Cayley graphs}}
Let $G$ be a
group generated by a subset $S$. We assume $S=S^{-1}$ and $1\notin S$. For $g \in G$ let $\|g\|_S$ be the length of a shortest
word in the alphabet set $S$ representing $g$. This defines the word metric $d_S(a,b)=\|a^{-1}b\|_S$. Alternatively, it is the combinatorial metric on the Cayley graph of $G$ with respect to $S$, denoted by $\mathrm{Cay}(G, S)$. A combinatorial (oriented) path $p$ in the Cayley graph  is naturally labeled by  a formal word denoted by $\lab(p)$ over the alphabet $S$.

Let $S_n=\{g\in G: \|g\|_S=n\}$ and $B_{n}=\{g\in G: \|g\|_S\le n\}$. Fix $\Delta\ge 1$ and $x\in G$. Define the annulus-like set of radius $n$ with width $\Delta$
\begin{equation}\label{AnnulusEQ}
S_n(x, \Delta)=\{g\in G: |d_S(x, g)-n|\le\Delta\}
\end{equation}
For simplicity, write $S_n(\Delta)=S_n(1,\Delta)$, and $S_n(x)=S_n(x,0)$ for the $n$-th sphere at $x$.

Assume that $G$ acts by isometry on a metric space $X$. Throughout the paper, we choose a basepoint  $o\in X$ chosen once for all and the orbital map
$$
\pi: G \to Go
$$
extends naturally to the 1-skeleton of  $\mathrm{Cay}(G,S)$ by sending edges equivariantly to geodesic segments in $X$. In this way, every path $p$ is sent to a piecewise geodesic path $\pi(p)$ with the same label $\lab(p)$ as $p$. Namely, if $\lab(p)=s_1s_2\cdots s_n$ and $p$ starts with the element $g\in G$, we have 
$$
\pi(p)=(g[o,s_1o])\cdot (s_1[o,s_2o])\cdots (s_1\cdots s_{n-1}[o,s_no])$$
By abusing language, we say that the path $\pi(p)$ is labeled by $p$. 
Setting $\lambda=\max\{d_X(o,so):s\in S\}$, $\pi$ is a $\lambda$-Lipschitz map.

\subsubsection*{\textbf{Shortest projections and pullbacks}} We first introduce a general notion of projection maps.
\begin{defn}
Given a subset $A\subseteq X$, a set-valued map $\proj_A: X\to 2^A$ is called \textit{projection map} if there exists an increasing map $\hat r:\mathbf R_+\to \mathbf R_+$  so that  for any $x\in X$ with $d(x, A)\le r$, $\proj_A(x)\subseteq B(x,\hat r)$ where $\hat r=\hat r(r)$.      
\end{defn}

Most of the paper considers the shortest projections and their pullbacks via  Lipschitz maps. 
For given $x\in X$, define   $$\proj_A^X(x):=\{a\in A: d_X(x,a)\le d_X(x,A)+1\}$$  If a point $x$ is $r$-close to $A$, then $\mathrm{diam}_X(\proj_A(x))\le 2r+2$, so $\proj_A^X$ is a projection map.   
In general, if a path $\alpha$ is contained in $\mathcal N_r(\gamma)$, then  
\begin{align}\label{eq:proj_in_nbld}
\mathrm{diam}_X(\proj_A^X(\alpha))\le \ell(\alpha)+2r+2    
\end{align}
Pullbacks of shortest projections via Lipschitz maps provide another important class of projection maps. Let $\pi:X\to Y$ be a Lipschitz map. 
For $A\subset X$, let  $\proj_{\pi A}^Y: Y\to 2^{\pi(A)}$ be the shortest $d_Y$-projection map. Define  
$$
\begin{aligned}
\proj^Y_A(x):&=\pi^{-1}\proj_{\pi A}^Y(\pi(x))\\
&=\{y\in A: d_Y(\pi(x),\pi(y))\le d_Y(\pi(x),\pi(A))\}.    
\end{aligned}
$$
Since $\pi:X\to Y$ is a Lipschitz map, we see that $\proj^Y_A(x)$ is a projection map. 

Define the projection distances for which the triangle inequality holds $$\dist_A^X(x,y)=\mathrm{diam}_X\big(\proj^X_A(x)\cup \proj^X_A(y)\big),\quad \dist_A^Y(x,y)=\mathrm{diam}_X\big(\proj^Y_A(x)\cup \proj^Y_A(y)\big)$$


\subsubsection*{\textbf{Sublinearly contracting isometry}}
Let  $\proj_A: X\to 2^A$ be a (not-necessarily shortest) projection map. We introduce two notions of sublinear contraction. 

\begin{defn}\label{SublinearContrDefn}
Let $\kappa$ be a sublinear function.
We say that  $\proj_A$ has \textit{$\kappa$-contraction} (resp. \textit{strongly $\kappa$-contraction}) if for any $x, y\in X$ and $d_X(x,y)\le d_X(x,A)$, we have 
$$\mathrm{diam}_X\big(\proj_A(x)\cup \proj_A(y)\big)\le \kappa(d_X(x,A))\quad \left(\text{resp. } \quad \mathrm{diam}_X\big(\proj_A(x)\cup \proj_A(y)\big)\le \kappa(d_X(x,y))\right)$$
See Fig. \ref{fig:sublinearcontracting} for an illustration of their difference.
\end{defn}
\begin{rem}\label{rem:contraction_terms}
Let us illustrate the relation of the sublinear contractions with the existing ones. Assume $\kappa\equiv C$. If  $\proj_A$ is the shortest projection, then the (strong) $\kappa$-contraction is the usual strong contraction studied in literature \cite{YANG10,YANG11}. If  $\proj_A$ is a general projection map, the $\kappa$-contraction is usually called weak contraction  in  \cite{Sisto,ABDBR}. If $\kappa$ is a general sublinear function, according to Lemma \ref{CharMorseLem} below, the $\kappa$-contraction of the shortest projection characterizes the Morse quasi-geodesic.
\end{rem}

In practice,  the following intermediate property is used to verify the strongly sublinear contraction (eg. Lemma \ref{SSublContrInProper} and Lemma \ref{SSublContrAcyl}). 
\begin{lem}\label{lem:reduction}
Assume that there is a sublinear function \(\kappa\) such
that, whenever \(x,y\in X\) satisfy $d_X([x,y]_X,A)\ge d_X(x,y),$
one has
\[
        \mathrm{diam}_X\big(\proj_A(x)\cup \proj_A(y)\big)\le \kappa(d_X(x,y)).
\]
Then \(\proj_A\) has strongly sublinear contraction.
\end{lem}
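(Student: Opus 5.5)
Fix $x,y\in X$ with $d_X(x,y)\le d_X(x,A)$ and set $R:=d_X(x,y)$. The goal is $\mathrm{diam}_X(\proj_A(x)\cup\proj_A(y))\le \kappa'(R)$ for a sublinear $\kappa'$ depending only on $\kappa$ and the projection data. The plan is to chop $[x,y]$ into a bounded number of pieces, each of which satisfies the hypothesis of the lemma, and then apply the triangle inequality for $\dist_A$.

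Choose a geodesic $[x,y]$. Since $d(x,A)\ge R$, every point $z\in[x,y]$ satisfies
\[
d(z,A)\ge d(x,A)-d(x,z)\ge R-d(x,z).
\]
This lower bound degenerates as $z$ approaches $y$, so the subdivision has to be chosen with some care.

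\textbf{Step 1: the segment stays far from $A$.} Suppose first that $d([x,y],A)\ge R/2$. Subdivide $[x,y]$ into consecutive points $x=z_0,z_1,z_2=y$ with $d(z_i,z_{i+1})=R/2$. Each subsegment $[z_i,z_{i+1}]$ then satisfies $d([z_i,z_{i+1}],A)\ge R/2= d(z_i,z_{i+1})$, so the hypothesis applies to it. Summing over the two pieces gives
\[
\mathrm{diam}\big(\proj_A(x)\cup\proj_A(y)\big)\le 2\kappa(R/2)\le 2\kappa(R).
\]
In general we may use $k$ pieces of length $R/k$, provided the segment stays $R/k$-far from $A$.

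\textbf{Step 2: the segment comes close to $A$.} Otherwise some $w\in[x,y]$ has $d(w,A)<R/2$. Since $d(x,A)\ge R$, we get $d(x,w)>R/2$; in particular $y$ lies within distance $R/2$ of $w$, hence within distance $R$ of $A$. I would then handle $x$ and $y$ separately, using the projection-map property. For any point $z$ with $d(z,A)\le r$, the definition gives $\proj_A(z)\subseteq B(z,\hat r(r))$. This only yields a bound of size roughly $\hat r(R)$, which is sublinear only if $\hat r$ is sublinear. For shortest projections $\hat r(r)=2r+2$, which is linear, so this crude bound is not enough.

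A better route is to use only points of $[x,w']$, where $w'$ is the first point with $d(w',A)= R/4$. Along $[x,w']$ the distance to $A$ is at least $R/4$, while $d(x,w')\ge 3R/4$. Subdividing $[x,w']$ into pieces of length $R/4$ gives a bounded number of pieces, each satisfying the hypothesis. This controls $\mathrm{diam}(\proj_A(x)\cup\proj_A(w'))\lesssim\kappa(R)$.

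It remains to compare $\proj_A(w')$ with $\proj_A(y)$, which is the main obstacle. Both points are within $R$ of $A$ and within $R$ of each other, but the distance between their projections is not obviously sublinear. I expect the resolution to be an induction or iteration on scale. Apply the same subdivision argument to the pair $(w',y)$ at the smaller scale given by its own distance to $A$, which shrinks geometrically. The resulting bound is
\[
\sum_j \kappa(R/2^j)\lesssim \kappa(R)\log R,
\]
and this is still sublinear.

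Alternatively, one might observe that the paper's intended reduction only needs the contraction property for pairs with $d(x,y)\le d(x,A)$ taken with respect to the distance to $A$. In that case one can choose the constant $1/2$ differently and terminate with a pair where the hypothesis applies directly. I would try the geometric-scale iteration first, and absorb the resulting logarithmic factor into the new sublinear function $\kappa'$.
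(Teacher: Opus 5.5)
Your strategy matches the paper's: subdivide $[x,y]_X$ at geometrically shrinking scales toward $y$, apply the hypothesis on each piece, sum with the triangle inequality for projection distances, and treat a final piece of bounded size separately. Your iteration works. The first point $w'$ at distance $R/4$ from $A$ satisfies $d_X(w',y)\le R/4=d_X(w',A)$, so $(w',y)$ is again a pair as in the lemma, at scale at most $R/4$. If no such $w'$ exists, four pieces of length $R/4$ suffice.

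The paper avoids the case analysis. For every $z\in[x,y]_X$ one has $d_X(z,A)\ge d_X(x,A)-d_X(x,z)\ge d_X(z,y)$. Hence the points $x_i$ with $d_X(x_i,y)=R/2^i$ make every $[x_{i-1},x_i]_X$ satisfy the hypothesis directly.

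You also need to say how the iteration stops. Once the scale drops below a fixed constant $C$, either the last segment still satisfies the hypothesis, or both its endpoints are $O(C)$-close to $A$. In the latter case the defining property of a projection map bounds the last term by a constant. The hypothesis alone can never finish the job: when $y\in A$, every segment ending at $y$ has distance $0$ to $A$. This is also why your closing ``alternative'' of terminating with a pair to which the hypothesis applies directly cannot work.

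The genuine error is in the final estimate. The bound $\sum_j\kappa(R/2^j)\lesssim\kappa(R)\log R$ is true but too lossy, and $\kappa(R)\log R$ is \emph{not} sublinear in general. For $\kappa(R)=R/\log R$ it equals $R$, and for $\kappa(R)=R/\sqrt{\log R}$ it is superlinear. So ``absorbing the logarithmic factor'' does not prove the lemma for an arbitrary sublinear $\kappa$.

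What you need is that the sum itself is sublinear, and this uses the geometric decay of the scales. Given $\epsilon>0$, choose $r_\epsilon$ with $\kappa(r)\le\epsilon r$ for $r\ge r_\epsilon$. The terms with $R/2^j\ge r_\epsilon$ contribute at most $\epsilon\sum_j R/2^j\le 2\epsilon R$. The terms with $C\le R/2^j<r_\epsilon$ number at most $\log_2(r_\epsilon/C)+1$, independently of $R$, and each is at most $\kappa(r_\epsilon)$. The total is therefore at most $2\epsilon R+O_\epsilon(1)$, which is sublinear. With this replacement and the terminal step above, your argument coincides with the paper's.
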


\begin{proof}
Let \(x,y\in X\) satisfy \(d_X(x,y)\le d_X(x,A)\), and put
\(n=d_X(x,y)\). Choose points $x=x_0,x_1,\ldots,x_m$
on \([x,y]_X\) so that $d_X(x_i,y)=n/2^i,$
where \(m=m(n)\) is the smallest integer with \(n/2^m<C\), for a fixed
constant \(C>0\). The hypothesis gives
\[
        \mathrm{diam}_X\big(\proj_A(x_{i-1})\cup \proj_A(x_i)\big)\le \kappa(n/2^i).
\]
For the remaining segment $[x_m,y]_X$, either the hypothesis applies or this
segment meets the $C$-neighborhood of $A$ to which may apply the definition of the projection map. In either case, there exists a constant $D=D(C)$ so that $\mathrm{diam}_X\big(\proj_A(x_{m})\cup \proj_A(y)\big)\le D$. By the triangle
inequality for projection distances,
\begin{align}\label{eq:kappa_prime}
        \mathrm{diam}_X\big(\proj_A(x)\cup \proj_A(y)\big)
        \le
        \sum_{i=1}^{m}\kappa(n/2^i)+D=:\kappa'(n).   
\end{align}
 
Since \(\kappa\) is sublinear, \(\kappa'\) is also sublinear and hence \(\proj_A\) is strongly sublinearly contracting.
\end{proof}
\begin{cor}\label{cor:log_tracking}
Assume that \(\proj_A\) has $C$-contraction for some $C>0$. Then \(\proj_A\) logarithmically tracks the shortest projection \(\proj_A^\pi\); namely, there exists a large constant $c>1$ so that
$$
\forall x\in X,\quad \mathrm{diam}\big(\proj_A(x)\cup \proj_A^\pi(x)\big) \le c\,\log\big(d_X(x,A)\big)
$$
\end{cor}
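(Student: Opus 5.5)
The plan is to run the dyadic subdivision from the proof of Lemma \ref{lem:reduction} along a geodesic from $x$ to a shortest-projection point. The only change is that the weak $C$-contraction now contributes a uniformly bounded error at each of the roughly $\log_2 d_X(x,A)$ scales, instead of the error $\kappa(n/2^i)$.

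Fix $x\in X$ and put $n=d_X(x,A)$. If $n$ is bounded by a fixed constant $C_0$, the definition of a projection map gives $\proj_A(x)\subseteq B(x,\hat r(C_0))$. Also $\proj^\pi_A(x)\subseteq B(x,C_0+1)$. So the diameter of the union is bounded by a constant, which is absorbed by choosing $c$ large; here $\log$ is read as $\max\{1,\log\}$ for small arguments. Assume therefore that $n$ is large.

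Pick any $a\in\proj^\pi_A(x)$, so $d_X(x,a)\le n+1$, and a geodesic $[x,a]$. On it choose points $x=z_0,z_1,\dots,z_m$ with $d_X(z_i,a)=d_X(x,a)/2^i$, where $m\approx\log_2 n$ is the first index with $d_X(z_m,a)<C_0$.

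The key verification is that each consecutive pair is admissible for $C$-contraction, i.e.\ $d_X(z_{i-1},z_i)\le d_X(z_{i-1},A)$. Since $z_{i-1}$ lies on a geodesic from $x$ to an almost-closest point of $A$,
\[
d_X(z_{i-1},A)\ \ge\ d_X(x,A)-d_X(x,z_{i-1})\ \ge\ d_X(z_{i-1},a)-1 .
\]
Moreover $d_X(z_{i-1},z_i)=\tfrac12 d_X(z_{i-1},a)$, which is at most $d_X(z_{i-1},a)-1$ as long as $d_X(z_{i-1},a)\ge 2$. This holds for all $i\le m$ once $C_0\ge 2$.

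Hence $C$-contraction gives $\mathrm{diam}_X(\proj_A(z_{i-1})\cup\proj_A(z_i))\le C$ for each $i$. For the last point, $d_X(z_m,A)<C_0$, so the projection-map property gives $\proj_A(z_m)\subseteq B(z_m,\hat r(C_0))$. Therefore $\proj_A(z_m)$ lies within $D:=C_0+\hat r(C_0)$ of $a$.

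Summing with the triangle inequality for projection distances gives
\[
\proj_A(x)\subseteq B\big(a,\ Cm+C+D\big),\qquad m\le \log_2 n+1 .
\]
Since $a\in\proj^\pi_A(x)$ was arbitrary, every point of $\proj^\pi_A(x)$ lies within $C\log_2 n+O(1)$ of every point of $\proj_A(x)$. Consequently
\[
\mathrm{diam}\big(\proj_A(x)\cup\proj^\pi_A(x)\big)\le 2\big(C\log_2 n+C'\big),
\]
where the diameters of $\proj_A(x)$ and of $\proj^\pi_A(x)$ are each controlled through a common point. Choosing $c$ large then gives the bound $c\log n$.

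The only delicate point is the admissibility check above. Because of the $+1$ slack in $\proj^\pi_A$, intermediate points of $[x,a]$ are only almost closest to $A$, and the halving steps must stay at least a constant away from $A$. Stopping the subdivision at the threshold $C_0$ and handling the final short segment with the projection-map property resolves this.
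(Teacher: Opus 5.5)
Your proposal is correct and follows the paper's own argument: the paper rerun the dyadic subdivision of Lemma \ref{lem:reduction} along a geodesic from $x$ to a shortest-projection point, so that $C$-contraction contributes a bounded error at each of the $O(\log d_X(x,A))$ scales, exactly as you do. Your explicit check that $d_X(z_{i-1},z_i)\le d_X(z_{i-1},A)$, using the $+1$ slack and stopping at the threshold $C_0$, fills in a step the paper leaves implicit. Your reading of $\log$ as $\max\{1,\log\}$ when $d_X(x,A)$ is small is also needed for the statement to make sense.
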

\begin{proof}
It suffices to repeat the proof of Lemma \ref{lem:reduction} by assuming that $y$ is a shortest projection point of $x$ to $A$. The inequality (\ref{eq:kappa_prime}) is exactly what we wanted, where $\kappa'(n)$ is bounded above by $c\log(n)$ for some constant $c>1$.
\end{proof}
In this context, it is natural to introduce the corresponding notions for
isometries.

\begin{defn}\label{defn:sublinear_contraction_elements}
An isometry $h$ of $X$ is called \textit{sublinearly contracting}
(respectively, \textit{strongly sublinearly contracting}) if the orbit
$A=\langle h\rangle o$ is a quasi-geodesic in $X$ and the shortest
projection $\proj_A^X$ with respect to $d_X$ is $\kappa$-contracting
(respectively, strongly $\kappa$-contracting) for some sublinear function
$\kappa$.
\end{defn}

\begin{rem}
We emphasize that these contraction properties are defined using
shortest projection with respect to the metric $d_X$. Since sublinear
contraction of shortest projection characterizes the Morse property,
a sublinearly contracting isometry is precisely a Morse isometry, namely,
an isometry whose orbit is a Morse quasi-geodesic.
\end{rem}

\subsection{Sublinear projection geometry  of Morse quasi-geodesics}\label{SSubMorseIsometry}
We study Morse quasi-geodesics via the sublinearly contracting property of shortest  projections. Most of materials are adapted from \cite{ACGH,DMS, ADT17} to our needs of the counting purpose. 

\begin{defn}
Let $\mu:\mathbb R_{+}\to \mathbb R_{+}$.
A subset $\gamma$ in $X$ is called \textit{$\mu$-Morse} if for every $c\ge 1$, any $c$-quasi-geodesic with two endpoints on $\gamma$ lies in the $\mu(c)$-neighborhood of  $\gamma$.     
\end{defn}
The following  elementary fact will be  implicitly used later on.
\begin{lem}\label{MorseSubpath}\cite[Lemma 3.1]{Liu21}\cite[Lemma 2.4]{RST21}
Let $\gamma$ be a $\mu$-Morse $c$-quasi-geodesic for some $\mu$ and $c>0$. Then there exists $\mu'$ depending only on $\mu$ and $c$ with the following property. Any subpath of $\gamma$ is also $\mu'$-Morse, and  $\gamma$ contains a $\mu'$-Morse geodesic  in its finite neighborhood.
\end{lem}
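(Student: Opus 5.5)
The plan follows the standard route via taming quasi-geodesics (Liu, Russell--Spriano--Tran): replace an arbitrary quasi-geodesic with endpoints on a subpath by one with endpoints on $\gamma$ itself, and replace $\gamma$ by a nearby geodesic.

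For the first claim, let $\alpha$ be a subpath of $\gamma$ with endpoints $\alpha^-,\alpha^+$. Let $q$ be a $k$-quasi-geodesic with $q^\pm=\alpha^\pm$. Since $q$ ends on $\gamma$, the Morse property of $\gamma$ puts $q$ in $\mathcal N_{\mu(k)}(\gamma)$. This alone is not enough, because $q$ could wander near parts of $\gamma$ outside $\alpha$.

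To handle that, I will show that points of $q$ lying near $\gamma\setminus\alpha$ are in fact close to $\alpha^\pm$. Suppose $x\in q$ is $\mu(k)$-close to a point $z\in\gamma$ beyond $\alpha^+$. Continuity of $q$, together with the fact that $q$ stays in $\mathcal N_{\mu(k)}(\gamma)$, lets me choose a point $y\in q$ whose nearby point on $\gamma$ lies within bounded distance of $\alpha^+$. I will then use the quasi-geodesic inequalities for $\gamma$ (constant $c$) and for $q$ (constant $k$). The subpath of $q$ from $y$ through $x$ and back toward $\alpha^+$ is a quasi-geodesic whose endpoints are both near $\alpha^+$, so its length, and hence $d(x,\alpha^+)$, is bounded by a function of $k,c,\mu(k)$. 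The symmetric argument handles the side of $\alpha^-$. This gives $q\subseteq\mathcal N_{\mu'(k)}(\alpha)$ with $\mu'$ depending only on $\mu$ and $c$. If $\gamma$ is only coarsely connected, I first replace it by a continuous tame quasi-geodesic at bounded distance.

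For the second claim, I will choose geodesics $[\gamma(-n),\gamma(n)]$. Each is a $1$-quasi-geodesic with endpoints on $\gamma$, so it lies in $\mathcal N_{\mu(1)}(\gamma)$. Conversely, a standard argument shows $\gamma|_{[-n,n]}$ lies in a $C(c,\mu(1))$-neighborhood of this geodesic. If $\gamma$ is bi-infinite and $X$ is proper, an Arzelà--Ascoli limit yields a bi-infinite geodesic at bounded Hausdorff distance from $\gamma$. In general I will avoid the limit and simply use the finite segments, which is all the later arguments need.

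Finally, I will show this geodesic is $\mu''$-Morse. Take a $k$-quasi-geodesic with endpoints on the geodesic. Attach short geodesics of length at most $\mu(1)$ joining its endpoints to $\gamma$. The result is a $k'$-quasi-geodesic with endpoints on $\gamma$, so it lies near $\gamma$ by the Morse property. Since $\gamma$ is at bounded Hausdorff distance from the geodesic, the original quasi-geodesic lies near the geodesic.

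I expect the main obstacle to be the first claim: controlling the excursion of $q$ near $\gamma$ beyond the endpoints of $\alpha$. Everything else is bookkeeping of constants.
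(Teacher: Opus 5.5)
The paper gives no proof of this lemma; it is imported from \cite{Liu21} and \cite{RST21}. Your argument is the standard one behind those citations and is correct. For subpaths you use a coarse intermediate-value argument along $q$: the $c$-quasi-geodesic inequality of $\gamma$ turns the bounded jump between nearby points of consecutive points of $q$ into a bounded jump of parameters, and the crossing point $y$ is taken on $[q^-,x]_q$. For the second claim you use Hausdorff-closeness of a geodesic to a Morse segment.

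One point to tighten is the last step when you pass to finite segments $[\gamma(-n),\gamma(n)]$ of an infinite $\gamma$. There the whole of $\gamma$ is not at bounded Hausdorff distance from that geodesic, so the sentence ``since $\gamma$ is at bounded Hausdorff distance from the geodesic'' does not apply as written. Instead, attach the short connectors to $\gamma|_{[-n,n]}$, which is possible because the geodesic lies in $\mathcal N_{\mu'(1)}(\gamma|_{[-n,n]})$ by your first part. Then invoke the $\mu'$-Morse property of that subpath rather than that of $\gamma$.

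Restricting to finite segments is the right scoping: the Arzel\`a--Ascoli step genuinely needs properness. It is also exactly how the paper uses the lemma in the proof of Lemma \ref{fractalMorseLem}.
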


We shall actually use  the other two equivalent formulations of Morse quasi-geodesics, the first of which is in terms of middle recurrence. This notion  was  introduced in \cite{DMS} and further clarified in  \cite{ADT17}. We shall   improve the middle recurrence in Section \ref{SecFracMorseLem}.

Let $\gamma$ be a finite path with initial and terminal points $\gamma^-,\gamma^+$. Given some $\theta\in (0,1/2)$, the \textit{$\theta$-middle} denoted by $\gamma^\theta$  is defined to be the following   subset on $\gamma$:
\begin{align}\label{defn:theta-middle}
\gamma^\theta \,:=\,\Bigl\{x\in \gamma: \min\{d(x,\gamma^-),  d_X(x,\gamma^+)\} \ge \theta d_X(\gamma^-,\gamma^+)\Bigr\}    
\end{align}
Note that if $\gamma$ is a geodesic segment, then the $\theta$-middle is exactly the segment $\gamma(\theta, 1-\theta)$.

\begin{defn}\label{MidRecurrentDefn}
We say that a quasi-geodesic $\gamma$ is \textit{$\theta$-middle recurrent} for some $\theta\in (0,1/2)$ if for any $c\ge 1$, there exists $R=R(c)$ so that any $c$-Lipschitz path $\alpha$ with endpoints $a$ and $b$ on $\gamma$ intersects the $R$-neighborhood of the $\theta$-middle of the subpath $[a,b]_\gamma$, i.e.
$$
\mathcal N_R(\alpha)\cap ([a,b]_\gamma)^\theta\ne \emptyset.
$$
\end{defn}

In the sequel, unless mentioned otherwise, $\proj_\gamma^X$ denotes the set of shortest projection in $d_X$-metric. 
The other characterization of Morse quasi-geodesics introduced  by \cite{ACGH} uses the sublinear contraction of shortest projection maps. We summarize those into the following.
\begin{lem}[\cite{ACGH,DMS, ADT17}]\label{CharMorseLem}
Let $\gamma$ be a  quasi-geodesic in $X$.   Then $\gamma$ is Morse if and only if one of the following holds:
\begin{enumerate}
    \item 
    the  gauge function   $\kappa_0:\mathbb R_{+}\to\mathbb R_+$ defined by $$ 
r\quad \longmapsto\quad \sup \left\{\mathrm{diam}_X\big(\proj_\gamma^X(B_r(x))\big): x\in X, r=d_X(x,\gamma)\right\}$$
is  sublinear.
\item 
$\gamma$ admits the $\theta$-middle recurrence for some $\theta\in (0,1/2)$.
\end{enumerate}
\end{lem}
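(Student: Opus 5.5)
The plan is to prove the equivalences Morse $\Leftrightarrow$ (1) and (1) $\Rightarrow$ (2) $\Rightarrow$ Morse, using throughout Lemma~\ref{MorseSubpath} to replace $\gamma$ by a nearby Morse geodesic when convenient.

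\emph{Morse $\Rightarrow$ (1).} I would argue by contradiction, following \cite{ACGH}. For bounded $r$ the gauge $\kappa_0(r)$ is bounded, since projections of nearby points are nearby. So assume there are $\epsilon>0$, $r_n\to\infty$, points $x_n$ with $d_X(x_n,\gamma)=r_n$, and $y_n\in B_{r_n}(x_n)$ with $d_X(p_n,q_n)\ge \epsilon r_n$, where $p_n\in\proj^X_\gamma(x_n)$ and $q_n\in\proj^X_\gamma(y_n)$. Consider the concatenation $[p_n,x_n]\cdot[x_n,y_n]\cdot[y_n,q_n]$. Its endpoints lie on $\gamma$, it has length at most $3r_n+\dots$, and its endpoints are $\ge\epsilon r_n$ apart.

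The key step is to show that this concatenation is a $c$-quasi-geodesic with $c=c(\epsilon)$ independent of $n$, possibly after shortcutting the middle segment. I would use that points $z$ on $[x_n,p_n]$ satisfy $d_X(z,\gamma)=d_X(z,p_n)$. This prevents backtracking between the first and last legs, up to a factor depending on $\epsilon$.

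Granting this, the Morse property places the path inside the $\mu(c)$-neighborhood of $\gamma$. But $x_n$ lies at distance $r_n\to\infty$ from $\gamma$, a contradiction. I expect this quasi-geodesicity check to be the main technical obstacle. If the direct concatenation fails, I would first try replacing $y_n$ by the first point of $[x_n,y_n]$ where the projection has moved by $\epsilon r_n/2$.

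\emph{(1) $\Rightarrow$ (2).} Let $\alpha$ be a $c$-Lipschitz path from $a$ to $b$ on $\gamma$, and suppose $\alpha$ misses the $R$-neighborhood of the $\theta$-middle of $[a,b]_\gamma$. Since $\proj^X_\gamma(a)\ni a$ and $\proj^X_\gamma(b)\ni b$, the projection of $\alpha$ must sweep across the middle, a length of at least $(1-2\theta)\,d_X(a,b)$ up to quasi-geodesic constants. On the sweeping portion every point of $\alpha$ is at distance at least $R$ from $\gamma$ wherever its projection lies in the middle.

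Next I would chop this portion into consecutive subpaths whose length equals the current distance $r\ge R$ to $\gamma$. Each such piece advances the projection by at most $\kappa_0(r)$, so the length of $\alpha$ is at least $(R/\kappa_0(R))\cdot(1-2\theta)d_X(a,b)$ roughly; here monotonicity of $r/\kappa_0(r)$ can be arranged by the concavity normalization (\ref{Eq:Concave}). This exceeds $c\,d_X(a,b)$ once $R=R(c)$ is large, contradicting the Lipschitz bound. Short segments where $d_X(a,b)$ is small are handled trivially.

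\emph{(2) $\Rightarrow$ Morse.} This follows \cite{DMS,ADT17}. Let $\beta$ be a $c$-quasi-geodesic with endpoints on $\gamma$, and let $z\in\beta$ be a point maximizing $D=d_X(z,\gamma)$. Take the maximal subpath $\beta'\ni z$ lying outside $\mathcal N_{D/2}(\gamma)$, and close it up with geodesics to $\gamma$ of length at most $D/2$ to get a path $\alpha$ with endpoints $a,b\in\gamma$. The path $\alpha$ is $c'$-Lipschitz with $c'$ depending only on $c$, using $d_X(a,b)\gtrsim\ell(\beta')/c-D$ together with a separate treatment of the case $\ell(\beta')\lesssim D$, where the conclusion is immediate.

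Middle recurrence gives a point of $\alpha$ within $R(c')$ of the middle of $[a,b]_\gamma$. Points of $\beta'$ are at distance at least $D/2$ from $\gamma$, so this point must lie on a closing geodesic, near $a$ or $b$. Quasi-geodesicity of $\gamma$ then makes that point far from the $\theta$-middle, unless $D$ is bounded in terms of $c$ and $\theta$. This bound on $D$ defines $\mu(c)$.
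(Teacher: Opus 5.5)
The paper does not prove this lemma; it quotes it from \cite{ACGH,DMS,ADT17}. Your architecture (Morse $\Rightarrow$ (1) $\Rightarrow$ (2) $\Rightarrow$ Morse) is reasonable, but the first implication has a genuine gap, and it sits exactly at the step you ``grant''.

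\textbf{Morse $\Rightarrow$ (1).} Uniform quasi-geodesicity of $[p_n,x_n]\cdot[x_n,y_n]\cdot[y_n,q_n]$ does not follow from the closest-point property. Take $z\in[p_n,x_n]$ and $w\in[y_n,q_n]$ at heights $t=d_X(z,\gamma)=d_X(z,p_n)$ and $s=d_X(w,\gamma)=d_X(w,q_n)$. That property only gives $d_X(z,w)\ge |t-s|$ and $d_X(z,w)\ge d_X(p_n,q_n)-t-s$. Both bounds are vacuous when $t\approx s\gtrsim \epsilon r_n/2$. So nothing in your argument prevents the two legs from running close together over a length comparable to $r_n$ before splitting towards $p_n$ and $q_n$. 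In that case the subpath between such $z,w$ has length $\asymp r_n$ while $d_X(z,w)=o(r_n)$, and no constant $c(\epsilon)$ works.

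Similar backtracking can occur between $[x_n,y_n]$ and a leg, for instance if $[x_n,y_n]$ first runs back down along $[x_n,p_n]$. Your fallback only changes the endpoint of the middle segment and does nothing about leg-to-leg backtracking. This is the substantive content of the implication proved in \cite{ACGH}, not a routine check.

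A repair must cut the path at near-meetings and control what remains. Any pair with $d_X(z,w)\le \delta(t+s)$ satisfies $t+s\ge d_X(p_n,q_n)/(1+\delta)$ and $|t-s|\le \delta(t+s)$. So cutting at such a pair with $t+s$ minimal keeps a point at height $\gtrsim \epsilon r_n$. Minimality then yields the quasi-geodesic inequality for pairs on the two legs and on the short cut segment. The long segment $[x_n,y_n]$ still needs a similar treatment, after first reducing to $d_X(x_n,y_n)\le r_n/2$.

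\textbf{(2) $\Rightarrow$ Morse.} There is a second, easily fixed problem here. With the cut at level $D/2$, the case $\ell(\beta')\lesssim D$ is not ``immediate''. There $d_X(a,b)$ may be much smaller than $D$, so $\alpha$ is not uniformly Lipschitz and middle recurrence cannot be applied. Meanwhile $\ell(\beta')\ge D$ bounds nothing.

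Cut instead at a fixed level $s>R(c')$. Let $\beta_s\ni z$ be the maximal subpath outside $\mathcal N_s(\gamma)$, closed up to a path $\alpha_s$ with endpoints $a,b\in\gamma$. There are two cases.
\begin{enumerate}
\item If $\ell(\beta_s)\ge 4cs$, then $d_X(a,b)\ge \ell(\beta_s)/(2c)-1$. Up to the trivial case $d_X(a,b)<1$, $\alpha_s$ is $c'$-Lipschitz with $c'$ depending only on $c$. Your argument then gives $d_X(a,b)\le 2R/\theta$, and hence $D\le s+c'R/\theta$, since $\ell(\beta_s)\ge 2(D-s)$.
\item If $\ell(\beta_s)<4cs$, the same inequality $\ell(\beta_s)\ge 2(D-s)$ gives $D<(2c+1)s$ directly.
\end{enumerate}

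\textbf{(1) $\Rightarrow$ (2).} This argument is fine; it is essentially the chopping argument of Lemma \ref{LipProj2Morse}.
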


Until  the end of this subsection, let  $\gamma$ be a Morse quasi-geodesic  in $X$, which is  $\kappa_0$-contracting as in Lemma \ref{CharMorseLem}. 

The following lemma is essentially contained in the proof of \cite[Theorem 7.1]{ACGH}. We revisit their arguments to give a slightly more general statement, adapted to our use in Section \textsection\ref{SecSublTrackAcyl}.
 
\begin{lem}\label{LipProj2Morse}
For any $\theta>0$ there exists $r=r(\theta,\kappa_0)>0$ with the following property.
Let $\alpha$ be a path with  $d_X(\alpha,\gamma)\ge r$. Then 
$$
\begin{aligned}
\mathrm{diam}_X\big(\proj_\gamma^X(\alpha)\big) \,\le\,   \theta \ell(\alpha)+\theta d_X(\alpha,\gamma).      
\end{aligned}$$  
Moreover, the same inequality holds for any projection map $\proj_\gamma^X$ that is sublinearly contracting in the sense of Definition \ref{SublinearContrDefn}.
\end{lem}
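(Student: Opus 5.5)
The plan is to cut $\alpha$ into short subpaths and control each one with the gauge function $\kappa_0$ of Lemma~\ref{CharMorseLem}. For each subpath the contraction bound is sublinear in the distance to $\gamma$, so taking that distance large makes each contribution a small fraction of its length. Set $D=d_X(\alpha,\gamma)\ge r$; $r$ will be fixed at the end in terms of $\theta$ and $\kappa_0$.

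\emph{Step 1: subdivision.} Parametrize $\alpha$ by arc length. Choose points $\alpha^-=z_0,z_1,\dots,z_k=\alpha^+$ along $\alpha$ greedily: if $z_i$ has been chosen, let $z_{i+1}$ be the first point after $z_i$ with $\ell([z_i,z_{i+1}]_\alpha)=d_X(z_i,\gamma)$; if no such point exists, set $z_{i+1}=\alpha^+$.

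\emph{Step 2: each piece.} Every point $w\in[z_i,z_{i+1}]_\alpha$ satisfies $d_X(z_i,w)\le d_X(z_i,\gamma)$, so the whole piece lies in the ball $B_{\rho_i}(z_i)$ with $\rho_i=d_X(z_i,\gamma)\ge D$. By Lemma~\ref{CharMorseLem}(1),
\[
\mathrm{diam}_X\big(\proj_\gamma^X([z_i,z_{i+1}]_\alpha)\big)\le \kappa_0(\rho_i).
\]
Only the diameter of projections of balls centred at the points $z_i$ is used here. For the "moreover" clause, Definition~\ref{SublinearContrDefn} gives this same bound for a general $\kappa$-contracting projection map, with $\kappa$ in place of $\kappa_0$.

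\emph{Step 3: summation.} Fix $\epsilon>0$ and choose $r$ so that $\kappa_0(t)\le\epsilon t$ for all $t\ge r$; this is possible by sublinearity.
\begin{itemize}
\item For every full piece, $i<k-1$, the piece has length $\rho_i$. Its bound is therefore at most $\epsilon\rho_i=\epsilon\,\ell([z_i,z_{i+1}]_\alpha)$.
\item The final piece may be short. Its bound is $\kappa_0(\rho_{k-1})$. Since $\rho_{k-1}\le D+\ell(\alpha)$ (go back along $\alpha$ to a point at distance $D$ from $\gamma$), this is at most $\epsilon(D+\ell(\alpha))$.
\end{itemize}
Consecutive pieces share the points $z_i$, so their projection sets intersect. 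The projection of $\alpha$ is the union of these chained sets, hence its diameter is at most the sum of their diameters. This gives
\[
\mathrm{diam}_X\big(\proj_\gamma^X(\alpha)\big)\le \epsilon\,\ell(\alpha)+\epsilon\big(D+\ell(\alpha)\big)\le 2\epsilon\,\ell(\alpha)+\epsilon D.
\]
Taking $\epsilon=\theta/2$ yields the claimed inequality $\theta\,\ell(\alpha)+\theta D$.

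\emph{Main obstacle.} The main care point is the bookkeeping in Steps 1 and 3. Each piece must fit inside a single ball of radius $d_X(z_i,\gamma)$ centred on $\alpha$, so that the gauge applies with constant radius. The short last piece must be handled separately, which is why the bound carries the extra $\theta D$ term. If the definition of $\proj_\gamma^X$ with the $+1$ slack causes unions to be non-chained, I would absorb an additive constant into $\kappa_0$ before choosing $r$.
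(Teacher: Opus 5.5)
Your proof is correct and is essentially the paper's argument. It uses the same greedy subdivision with $\ell([z_i,z_{i+1}]_\alpha)=d_X(z_i,\gamma)$, the gauge $\kappa_0$ applied to the balls $B_{\rho_i}(z_i)$, and the separate bound $\rho_{k-1}\le \ell(\alpha)+d_X(\alpha,\gamma)$ for the last piece, giving $\theta\ell(\alpha)+\theta d_X(\alpha,\gamma)/2$. One small correction for the ``moreover'' clause: the definition of $\kappa$-contraction only bounds $\mathrm{diam}_X\big(\proj_\gamma^X(z_i)\cup\proj_\gamma^X(w)\big)$ for each $w$ in the ball, so chaining through $\proj_\gamma^X(z_i)$ gives $2\kappa(\rho_i)$ rather than $\kappa(\rho_i)$, which is harmless after halving $\epsilon$.
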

\begin{proof} 
Let   $x_0=\alpha^-$ and $L_0=d_X(x_0,\gamma)$. We inductively choose $x_{i+1}\in \alpha$  to be the   point after $x_i$ with $$\ell([x_i,x_{i+1}]_\alpha)=d_X(x_i,\gamma)=:L_i$$ for $i\ge 0$. In the end, we have  a maximal integer $N\ge 0$ so that $d_X(x_N,\alpha^+)\le L_N=d_X(x_N,\gamma)$.  Note that, $\sum_{0\le i< N}  L_i\le \ell(\alpha)$ and $L_N\le  \ell(\alpha)+d_X(\alpha,\gamma)$. 

Thanks to the sublinear $\kappa_0$, choose $r=r(\kappa_0,\theta)$ large enough so that $\kappa_0(a)\le \theta a/2$ for any $a>r$. For each $0\le i\le N$, the path $[x_i,x_{i+1}]_\alpha$ with  $x_{N+1}=\alpha^+$ is contained in a ball of radius $L_i\ge r$. Those balls are  disjoint with $\gamma$, so their projections to $\gamma$ give  $$\begin{aligned}
\mathrm{diam}_X\big(\proj_\gamma^X(\alpha)\big) & \le \sum_{0\le i< N} \kappa_0(L_i)+\kappa_0(L_N)\\
&\le \sum_{0\le i< N} \theta L_i/2+\theta \ell(\alpha)/2 + \theta d_X(\alpha,\gamma)/2  \\
&\le \theta \ell(\alpha)+\theta d_X(\alpha,\gamma)/2.
\end{aligned}$$ 
The proof does not use the particular nature of shortest projection map rather than the sublinear projection, so the same proof proves the ``moreover" statement.
\end{proof}

The following sublinearly bounded image property is proved in \cite[Theorem 7.1]{ACGH}, which characterizes the sublinear contraction. We use this property extensively in the sequel. 
\begin{lem}\label{SubLinearProj}
There  exist a sublinear function $\kappa_1=\kappa_1(\kappa_0)$ and a constant $C=C(\kappa_0)>0$  with following property. For every geodesic $\alpha=[x,y]_X$ disjoint with  $\mathcal N_C(\gamma)$,  we have   $$\mathrm{diam}_X\big(\proj_\gamma^X(\alpha)\big)\le \max\{\kappa_1(L_1),\kappa_1(L_2)\}$$
where $L_1=d_X(x,\gamma),L_2=d_X(y,\gamma)$. 
\end{lem}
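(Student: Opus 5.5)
The plan is to reduce Lemma \ref{SubLinearProj} to the single-ball estimate built into the gauge $\kappa_0$ of Lemma \ref{CharMorseLem}, using the greedy subdivision from the proof of Lemma \ref{LipProj2Morse}. The extra input is the fact that $\alpha$ is a geodesic. Write $L_1=d_X(x,\gamma)$, $L_2=d_X(y,\gamma)$, and assume $L_1\ge L_2$.

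\emph{Short geodesics.} If $\ell(\alpha)\le L_1$, then $\alpha$ lies in the closed ball $B_{L_1}(x)$, so by definition of $\kappa_0$ its projection has diameter at most $\kappa_0(L_1)$.

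\emph{Long geodesics.} Suppose $\ell(\alpha)>L_1$, and let $p$ be the point of $\alpha$ closest to $\gamma$, with $d_X(p,\gamma)=L_0\ge C$. Split $\alpha$ at $p$ into two geodesic subsegments. On each, $t\mapsto d_X(\alpha(t),\gamma)$ is $1$-Lipschitz. The aim is to show that if $d_X(p,\gamma)$ is much smaller than $d_X(x,p)$, the contraction property is violated. Concretely, choose $r$ with $\kappa_0(s)\le s/10$ for $s\ge r$, and take $C\ge r$.
\begin{enumerate}
\item Apply the greedy subdivision to $[x,p]_\alpha$: points $x=x_0,x_1,\dots$ with $d_X(x_i,x_{i+1})=d_X(x_i,\gamma)=:\ell_i$.
\item Projection of consecutive points moves by at most $\kappa_0(\ell_i)\le \ell_i/10$.
\item Let $\pi_i\in\proj_\gamma^X(x_i)$. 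Then $d_X(x_i,\pi_i)=\ell_i$, and the triangle inequality along the geodesic gives $d_X(x_0,x_k)\le \ell_0+\ell_k+d_X(\pi_0,\pi_k)\le \ell_0+\ell_k+\sum_{i<k}\ell_i/10$.
\item On the other hand, $d_X(x_0,x_k)=\sum_{i<k}\ell_i$ because $\alpha$ is geodesic and the $x_i$ are consecutive.
\item Comparing the two yields $\sum_{i<k}\ell_i\le \tfrac{10}{9}(\ell_0+\ell_k)\le 3L_1$.
\end{enumerate}
Hence the total projection diameter of $[x,p]_\alpha$ is at most $\sum_{i}\kappa_0(\ell_i)$, which is a sum of $\kappa_0$ over lengths whose total is $O(L_1)$. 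The same applies to $[p,y]_\alpha$ with $O(L_2)$.

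The step I expect to be the main obstacle is converting $\sum_i\kappa_0(\ell_i)$, subject to $\sum_i\ell_i\le 3L_1$, into a genuinely sublinear function of $L_1$. Concavity of $\kappa_0$ alone does not suffice when there are many small $\ell_i$. I would exploit the geometry: the $\ell_i$ cannot stay small for many steps, since each step covers distance $\ell_i$ while the distance to $\gamma$ changes by at most $\ell_i$. I would run the argument at dyadic scales. For each $j$, consider the indices with $\ell_i\in[2^j,2^{j+1})$. Step 5 applied to the portion of $\alpha$ between the first and last such index shows that this portion has length $O(2^j)$, so there are $O(1)$ such indices. Therefore the sum is at most $\sum_{j\le \log_2 L_1}O(\kappa_0(2^{j+1}))$. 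Set
\[
\kappa_1(L)=C'\sum_{j\le \log_2 L+1}\kappa_0(2^j)+C'.
\]
This is sublinear, since the dyadic sum of a sublinear function is $o(L)$, as in (\ref{eq:kappa_prime}).

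Finally, combine the two halves. The projections of $[x,p]_\alpha$ and $[p,y]_\alpha$ meet at $\proj_\gamma^X(p)$, whose diameter is bounded by definition of the gauge function. This gives
\[
\mathrm{diam}_X\big(\proj_\gamma^X(\alpha)\big)\le \kappa_1(L_1)+\kappa_1(L_2)\le 2\kappa_1(L_1).
\]
Replacing $\kappa_1$ by $2\kappa_1$ gives the stated bound $\max\{\kappa_1(L_1),\kappa_1(L_2)\}$.
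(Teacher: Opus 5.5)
The paper does not prove this lemma; it imports it from \cite[Theorem 7.1]{ACGH}. Your argument is therefore a genuinely different, self-contained route. It uses only the ball gauge $\kappa_0$ and the geodesicity of $\alpha$: you split $\alpha$ at its closest point $p$ to $\gamma$, run the greedy subdivision of Lemma \ref{LipProj2Morse} on each half, and count the steps dyadically. This gives an explicit $\kappa_1(L)\approx\sum_{C\lesssim 2^j\lesssim L}\kappa_0(2^j)$, of the same shape as \eqref{eq:kappa_prime}. It also shows what geodesicity adds over Lemma \ref{LipProj2Morse}, which only gives a bound linear in $\ell(\alpha)$, and only above a $\theta$-dependent threshold. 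The dyadic count is sound. Comparing steps 3 and 4 between the first and last indices $a<b$ at scale $[2^j,2^{j+1})$ gives $\tfrac{9}{10}\sum_{a\le i<b}\ell_i\le \ell_a+\ell_b<2^{j+2}$, so at most five indices occur at each scale.

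One step, however, does not follow as written. The last inequality of step 5, $\tfrac{10}{9}(\ell_0+\ell_k)\le 3L_1$, requires $\ell_k\le\tfrac{17}{10}L_1$, and nothing you say gives this. Along $[x,p]_\alpha$ the function $d_X(\cdot,\gamma)$ need not be monotone. The only a priori bound, $\ell_k\le L_1+\sum_{i<k}\ell_i$, makes the comparison circular. The same missing bound is also what caps the dyadic scales at $\log_2 L_1+O(1)$.

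To repair this, run the comparison all the way to $p$ and use $L_0\le L_1$. Let $M$ be the maximum of $d_X(\cdot,\gamma)$ on $[x,p]_\alpha$, attained at a point $q^\ast$ of this geodesic.
\begin{itemize}
\item Since the distance function is $1$-Lipschitz, $d_X(x,q^\ast)\ge M-L_1$ and $d_X(q^\ast,p)\ge M-L_0\ge M-L_1$. Hence $d_X(x,p)\ge 2M-2L_1$.
\item The projection chain, including the final piece $[x_k,p]_\alpha\subseteq B_{\ell_k}(x_k)$, gives $d_X(x,p)\le L_1+L_0+\tfrac{1}{10}\bigl(d_X(x,p)+M\bigr)$.
\end{itemize}
Together these yield $M\le\tfrac{38}{17}L_1$ and $d_X(x,p)<3L_1$. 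This is exactly what you need, both for $\sum_{i<k}\ell_i=d_X(x,x_k)\le d_X(x,p)$ and for the range of scales.

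Also start the dyadic sum defining $\kappa_1$ at $2^j\approx C$, since all $\ell_i\ge C$. As written, the sum over all $j\le\log_2 L+1$ may diverge. With these fixes the proof goes through.
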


We also need the sublinear bounded projection between Morse quasi-geodesics.
\begin{lem}\cite[Proposition 8.3]{ACGH}\label{SublBddProj}
There  exist a sublinear function $\kappa_2=\kappa_2(\kappa_0)$ and a constant $C=C(\kappa_0)>0$  with following property. 
Let $\alpha, \beta$ be a pair of $\kappa_0$-Morse geodesics.   If $d_X(\alpha,\beta)>C$, then $$\mathrm{diam}_X\bigl(\proj_\alpha^X(\beta)\bigr)\le \kappa_2(d_X(\alpha,\beta))$$   
\end{lem}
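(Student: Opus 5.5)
The plan is to reduce to Lemma \ref{SubLinearProj}, which already bounds the projection of a geodesic avoiding $\mathcal N_C(\alpha)$ sublinearly in terms of the distances of its endpoints to $\alpha$. The issue is that along $\beta$ the distance to $\alpha$ need not stay comparable to $L:=d_X(\alpha,\beta)$. So I will cut $\beta$ into a ``near'' part and a ``far'' part and treat them separately. I take $C$ at least the constant of Lemma \ref{SubLinearProj} for $\kappa_0$-Morse $\alpha$, and choose $q\in\beta$, $p\in\alpha$ with $d_X(p,q)\le L+1$. It suffices to bound $\mathrm{diam}_X(\proj_\alpha^X([q,b]_\beta))$ for every $b\in\beta$ on one side of $q$ by a sublinear function of $L$; the other side is symmetric.

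\emph{Near part.} Fix a large constant $K$, to be chosen from the Morse gauge of $\beta$. Let $b^*$ be the first point of $[q,b]_\beta$ with $d_X(b^*,\alpha)\ge KL$; if no such point exists, put $b^*=b$. Then $[q,b^*]_\beta$ is a geodesic disjoint from $\mathcal N_C(\alpha)$ with both endpoints at distance at most $KL$ from $\alpha$. Lemma \ref{SubLinearProj} gives
\[
\mathrm{diam}_X\big(\proj_\alpha^X([q,b^*]_\beta)\big)\le \kappa_1(KL)\le K\kappa_1(L),
\]
using (\ref{Eq:Concave}).

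\emph{Far part.} I will show that $\proj_\alpha^X(b)$ stays within a sublinear distance of $\proj_\alpha^X(b^*)$. Suppose $D:=d_X(\proj_\alpha^X(q),\proj_\alpha^X(b))$ is much larger than $K\kappa_1(L)$. Consider the concatenation
\[
\eta=[q,p]\cdot[p,\pi b]_\alpha\cdot[\pi b,b],\qquad \pi b\in\proj_\alpha^X(b).
\]
Its endpoints lie on $\beta$. Using the projection estimates (Lemma \ref{LipProj2Morse} applied to $[q,b]_\beta$ at small $\theta$, together with the triangle inequality through $p$), I would check that $d_X(q,b)\ge d_X(b,\alpha)+D-O(L)-o(\cdot)$. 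The aim is to conclude that $\eta$ is a $c$-quasi-geodesic with $c$ uniform, once $K$ is large. Then the $\kappa_0$-Morse property of $\beta$ puts the middle of $[p,\pi b]_\alpha$ in the $\mu(c)$-neighbourhood of $\beta$. Since that middle lies on $\alpha$, this forces $L\le\mu(c)$, contradicting $L>C$ once $C>\mu(c)$. Hence $D\lesssim K\kappa_1(L)+O(\kappa_0(L))$. Defining $\kappa_2$ as this bound, sublinearity follows from that of $\kappa_0$ and $\kappa_1$.

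\emph{Main obstacle.} The hardest step is the quasi-geodesicity of $\eta$. It needs a lower bound on $d_X(q,b)$ in terms of $L+D+d_X(b,\alpha)$, and the naive triangle inequality gives this only up to an additive error of order $L$. If this fails, I would instead use middle recurrence (Lemma \ref{CharMorseLem}(2)) of $\beta$ for the Lipschitz path $\eta$. That path has length at most $3\,d_X(q,b)$ once $D$ and $d_X(b,\alpha)$ dominate $L$, and middle recurrence yields a point of $\eta$ within $R(3)$ of the middle of $[q,b]_\beta$. Points on $[q,p]$ or $[\pi b,b]$ would make that middle point project close to $\proj_\alpha^X(q)$ or $\proj_\alpha^X(b)$, and an iteration on halves then controls $D$. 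A point on $[p,\pi b]_\alpha$ would put $\alpha$ within $R(3)$ of $\beta$, which is impossible once $C>R(3)$.
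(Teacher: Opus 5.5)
The paper does not prove this lemma; it quotes it from \cite[Proposition 8.3]{ACGH}. Your argument therefore has to stand on its own, and as written it has a gap at exactly the step you flag, though not for the reason you give.

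The global estimate is not the problem. Your near part (Lemma \ref{SubLinearProj} applied to $[q,b]_\beta$) shows that $D>K\kappa_1(L)$ forces both $d_X(b,\alpha)>KL$ and $D\le\kappa_1(d_X(b,\alpha))$. Then $d_X(q,b)\ge d_X(b,\alpha)-L-1$ already gives $\ell(\eta)\le 3\,d_X(q,b)$, and the additive $O(L)$ loss is absorbed. But this only makes $\eta$ a Lipschitz path. The Morse gauge of $\beta$ applies to quasi-geodesics, so you need $\ell(\sigma)\le c\,d_X(\sigma^-,\sigma^+)+c$ for every subpath $\sigma$ of $\eta$. That is precisely where the hypothesis $D\gg K\kappa_1(L)$ must be used, and you never use it there. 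The dangerous subpaths run from $x\in[q,p]$ to $y\in[\pi b,b]$ with $t=d_X(y,\alpha)$ of order $L$. Their length is about $d_X(x,\alpha)+D+t$, which far exceeds $d_X(x,y)$ unless $[x,y]_X$ dips near $\alpha$.

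Your fallback does not repair this. In case (iii), a middle point $m$ of $[q,b]_\beta$ that is $R(3)$-close to $[\pi b,b]$ projects near $\pi b$ only up to an error controlled by $d_X(m,\alpha)$. That distance is at least $\theta\,d_X(q,b)-L-1$, and sublinear contraction gives nothing better for small balls far from $\alpha$. So the errors accumulated by iterating on halves are not bounded by any function of $L$, and you gain nothing beyond Lemma \ref{SubLinearProj} itself.

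Your primary route can be completed, reading $D$ as $d_X(p,\pi b)$:
\begin{enumerate}
\item Since $p\in\proj_\alpha^X(x)$ for $x\in[q,p]$ and $\pi b\in\proj_\alpha^X(y)$ for $y\in[\pi b,b]$, you have $\dist_\alpha^X(x,y)\ge D$.
\item If $D>\max\{\kappa_1(d_X(x,\alpha)),\kappa_1(t)\}$, Corollary \ref{SubLinearContr} forces $[x,y]_X$ into $\mathcal N_C(\alpha)$ with $d_X(x,y)\ge d_X(x,\alpha)+t+D-\kappa_1(d_X(x,\alpha))-\kappa_1(t)-O(C)$. This bounds the length of that subpath by $3\,d_X(x,y)+O(1)$.
\item Otherwise, take $K\ge 4$. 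Since $d_X(x,\alpha)\le L+1$ and $D>K\kappa_1(L)\ge\kappa_1(L+1)$, you must have $K\kappa_1(L)<\kappa_1(t)$. This forces $t>L$, and then $\kappa_1(t)\le (t/L)\kappa_1(L)$ by (\ref{Eq:Concave}) gives $t>KL$. Now $d_X(x,y)\ge t-L-1\ge t/2$ dominates the length.
\item Two-piece subpaths (a geodesic to a near-projection point followed by a segment of $\alpha$) are uniform quasi-geodesics by Corollary \ref{SubLinearEntry}.
\end{enumerate}
Hence $\eta$ is a uniform quasi-geodesic with endpoints on $\beta$. Already the point $p\in\alpha\cap\eta$ (you do not need the middle of $[p,\pi b]_\alpha$) is then $\mu(c)$-close to $\beta$, contradicting $L>C$ once $C>\mu(c)$. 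This gives $D\le K\kappa_1(L)+O(1)$ for every $b\in\beta$ directly, so the near/far split becomes unnecessary.
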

We derive a few   corollaries used later. In the next three results, let $\kappa_1,C$ satisfy Lemma \ref{SubLinearProj}, and  assume  $\kappa_1(C)\le C$ up to increasing $C$. 

\begin{cor}\label{SubLinearEntry}
Let $[x,y]_X$ be a geodesic entering the $C$-neighborhood of $\gamma$ at the point $y$.  Set $L=d_X(x,\gamma)$. Then $\proj_\gamma^X(x)$ is contained in the $(\kappa_1(L)+2C)$-neighborhood of $y$.  
\end{cor}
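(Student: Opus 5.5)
Let $z\in\proj_\gamma^X(x)$ be a shortest projection point, so $d_X(x,z)=L$ up to the additive $1$ in the definition of $\proj_\gamma^X$. We must bound $d_X(y,z)$ by $\kappa_1(L)+2C$. The idea is to split the geodesic $[x,y]_X$ just before it enters $\mathcal N_C(\gamma)$. Apply Lemma \ref{SubLinearProj} to the part outside the neighborhood, and use the definition of a projection map for the part inside.

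\textbf{Step 1.} For small $\epsilon>0$, let $y_\epsilon$ be the point on $[x,y]_X$ at distance $\epsilon$ before $y$. By hypothesis, $[x,y]_X$ enters $\mathcal N_C(\gamma)$ at $y$, so the subgeodesic $[x,y_\epsilon]_X$ is disjoint from $\mathcal N_C(\gamma)$. Lemma \ref{SubLinearProj} then gives
\[
\mathrm{diam}_X\big(\proj_\gamma^X([x,y_\epsilon]_X)\big)\le \max\{\kappa_1(L),\kappa_1(d_X(y_\epsilon,\gamma))\}.
\]
Since $d_X(y_\epsilon,\gamma)\le C+\epsilon$ and $\kappa_1$ is non-decreasing, this is at most $\max\{\kappa_1(L),\kappa_1(C+\epsilon)\}$. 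In the degenerate case $L\le C$, the claim follows directly from $d_X(x,y)\le L$, $d_X(x,z)\le L+1$ and the triangle inequality, so we may assume $L>C$. Monotonicity then bounds the maximum by $\kappa_1(L)$. Letting $\epsilon\to0$ is harmless; alternatively, apply the lemma to $[x,y_\epsilon]$ with $\epsilon$ fixed and absorb $\epsilon$ into the constant.

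\textbf{Step 2.} Pick $w\in\proj_\gamma^X(y_\epsilon)$. Since $d_X(y_\epsilon,\gamma)\le C+\epsilon$, we have $d_X(y_\epsilon,w)\le C+\epsilon+1$, hence $d_X(y,w)\le C+2\epsilon+1$. By Step 1, $d_X(z,w)\le\kappa_1(L)$. Therefore
\[
d_X(y,z)\le \kappa_1(L)+C+O(1).
\]
The $O(1)$ comes only from the $+1$ slack in the definition of $\proj^X_\gamma$ and the choice of $\epsilon$. It is absorbed into $2C$ provided $C\ge 2$, which we may assume after enlarging $C$; the standing normalization $\kappa_1(C)\le C$ survives enlarging $C$, by sublinearity after adjusting $\kappa_1$ if needed.

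\textbf{Main obstacle.} The only real obstacle is bookkeeping at the entry point: the geodesic touches $\mathcal N_C(\gamma)$ exactly at $y$, so Lemma \ref{SubLinearProj} cannot be applied to all of $[x,y]_X$, only to its open-ended initial part. This is handled by the approximation $y_\epsilon\to y$ together with monotonicity of $\kappa_1$; the normalization $\kappa_1(C)\le C$ covers the case where the maximum in Lemma \ref{SubLinearProj} is attained at the endpoint near $\gamma$.
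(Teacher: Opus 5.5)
Your argument is correct and is essentially the paper's proof: both apply Lemma \ref{SubLinearProj} to the part of $[x,y]_X$ lying outside $\mathcal N_C(\gamma)$, and then use that the entry point $y$ is $C$-close to its own projection. The paper compresses the bookkeeping into the bounds $\kappa_1(\max\{L,C\})\le\max\{\kappa_1(L),\kappa_1(C)\}$ and $\kappa_1(C)\le C$, whereas your $y_\epsilon$ approximation and your case split $L\le C$ versus $L>C$ handle the endpoint and the $+1$ slack in $\proj_\gamma^X$ more carefully, so in your version the normalization $\kappa_1(C)\le C$ is not actually needed.
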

\begin{proof}
By Lemma \ref{SubLinearProj}, $\proj_\gamma^X(x)$ is contained in the $(\kappa_1(\max\{L,C\})+C)$-neighborhood of $y$. The proof is finished by $\kappa_1(\max\{L,C\})\le \max\{\kappa_1(L),\kappa_1(C)\}$ and $\kappa_1(C)\le C$. 
\end{proof}

A few more corollaries follow, whose proofs are  straightforward and left to the interested reader. %

\begin{cor}\label{SubLinearCommonEntry}
Let $\alpha, \beta$ be two geodesics emanating  from a point $o$ and ending at   $\gamma$. Let $x,y$ be the corresponding entry points of $\alpha$ and $\beta$ in $\mathcal N_C(\gamma)$. Set $L=d_X(o,\gamma)$. Then $d_X(x,y)\le 2\kappa_1(L)+4C$.    
\end{cor}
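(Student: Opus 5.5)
The plan is to reduce the statement directly to Corollary \ref{SubLinearEntry}, applied once to $\alpha$ and once to $\beta$, and then use the triangle inequality through a common projection point. Note first that $d_X(o,\gamma)=L$ is the same quantity for both geodesics, since they share the initial point $o$. So Corollary \ref{SubLinearEntry} applies with the same $L$ in both cases.

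The steps, in order:

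\begin{enumerate}
\item Set $\alpha'=[o,x]_\alpha$, the initial subgeodesic of $\alpha$. It is a geodesic which enters $\mathcal N_C(\gamma)$ exactly at its endpoint $x$, because $x$ is by definition the first point of $\alpha$ in $\mathcal N_C(\gamma)$.
\item If $L\le C$, the entry points are $x=y=o$ and there is nothing to prove. So assume $L>C$.
\item Corollary \ref{SubLinearEntry} applied to $\alpha'$ gives
\[
\proj_\gamma^X(o)\subseteq \mathcal N_{\kappa_1(L)+2C}(x).
\]
\item Likewise, with $\beta'=[o,y]_\beta$,
\[
\proj_\gamma^X(o)\subseteq \mathcal N_{\kappa_1(L)+2C}(y).
\]
\item Choose any point $z\in\proj_\gamma^X(o)$; this set is nonempty by the definition of the coarse shortest projection. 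Then
\[
d_X(x,y)\le d_X(x,z)+d_X(z,y)\le 2\kappa_1(L)+4C.
\]
\end{enumerate}

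The only point needing care is step 1. Corollary \ref{SubLinearEntry} is stated for a geodesic $[x,y]_X$ entering the $C$-neighborhood at its terminal point. We must check that the truncated subpath $\alpha'$ fits this: it is still a geodesic, and apart from its endpoint it stays outside $\mathcal N_C(\gamma)$, so Lemma \ref{SubLinearProj} (via the corollary) applies to it. Beyond this the argument is purely formal, and I expect no real obstacle.
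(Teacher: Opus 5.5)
Your proposal is correct and is the argument the paper intends; the paper leaves this proof to the reader. You apply Corollary \ref{SubLinearEntry} to the initial subgeodesics $[o,x]_\alpha$ and $[o,y]_\beta$, which share the same $L=d_X(o,\gamma)$, and pass through a common point of $\proj_\gamma^X(o)$ via the triangle inequality, which gives exactly $2(\kappa_1(L)+2C)$. The existence of the entry points, which you use implicitly, is guaranteed by the hypothesis that $\alpha$ and $\beta$ end on $\gamma$.
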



\begin{cor}\label{SubLinearContr}
Let $x$ and $y$ be two points in $X$ so that $$\dist_\gamma^X(x,y)\,>\, \max\{\kappa_1(L_1),\kappa_1(L_2)\}$$ where $L_1=d_X(x,\gamma)$ and $L_2=d_X(y,\gamma)$.  Then any geodesic $[x,y]_X$ intersects the $C$-neighborhood of $\gamma$. Moreover,   $$\left|\mathrm{diam}_X\big([x,y]_X\cap \mathcal N_C(\gamma)\big) - \dist_\gamma^X(x,y)\right|\,\le\, \kappa_1(L_1)+\kappa_1(L_2)+4C.$$ 
\end{cor}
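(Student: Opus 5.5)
We prove Corollary \ref{SubLinearContr} by applying Lemma \ref{SubLinearProj} to the pieces of $[x,y]_X$ that lie outside $\mathcal N_C(\gamma)$.

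\textbf{Intersection.} Suppose $[x,y]_X$ is disjoint from $\mathcal N_C(\gamma)$. Lemma \ref{SubLinearProj} then gives
\[
\mathrm{diam}_X\big(\proj_\gamma^X([x,y]_X)\big)\le \max\{\kappa_1(L_1),\kappa_1(L_2)\}.
\]
Since $x,y\in[x,y]_X$, the left-hand side is at least $\dist_\gamma^X(x,y)$. This contradicts the hypothesis, so $[x,y]_X$ meets $\mathcal N_C(\gamma)$.

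\textbf{Setup for the estimate.} Let $u$ be the first point and $v$ the last point of $[x,y]_X$ in the closed set $\mathcal N_C(\gamma)$. The subgeodesic $[x,u]_X$ enters the $C$-neighborhood of $\gamma$ exactly at $u$. By Corollary \ref{SubLinearEntry},
\[
\proj_\gamma^X(x)\subseteq \mathcal N_{\kappa_1(L_1)+2C}(u).
\]
Apply the same corollary to $[y,v]_X$, traversed from $y$. It gives
\[
\proj_\gamma^X(y)\subseteq \mathcal N_{\kappa_1(L_2)+2C}(v).
\]
If $L_1<C$ or $L_2<C$, the bound from Corollary \ref{SubLinearEntry} still holds: we use $\kappa_1(C)\le C$ together with monotonicity, exactly as in that corollary's proof. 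In the degenerate case $u=x$ the bound is immediate.

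\textbf{Comparing the two diameters.} The main point is that $\mathrm{diam}_X([x,y]_X\cap\mathcal N_C(\gamma))$ equals $d_X(u,v)$. This holds because $u$ and $v$ are the extreme points along a geodesic, and any two points of $[u,v]_X$ are at distance at most $d_X(u,v)$.

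For the lower bound on $d_X(u,v)$, pick $p\in\proj_\gamma^X(x)$ and $q\in\proj_\gamma^X(y)$ realizing the diameter of the union up to $\epsilon$. Both $p$ and $q$ lie within the stated neighborhoods of $u$ and $v$, and points inside a single projection are within distance $2(\kappa_1(L_i)+2C)$ of each other. The triangle inequality then gives
\[
\dist_\gamma^X(x,y)\le d_X(u,v)+\kappa_1(L_1)+\kappa_1(L_2)+4C.
\]
The within-one-projection case needs a little care: it is where the doubled constant would appear. I would absorb this by noting that the diameter of the union is at least the distance between two points $p\in\proj_\gamma^X(x)$ and $q\in\proj_\gamma^X(y)$.

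For the upper bound, take any $p\in\proj_\gamma^X(x)$ and $q\in\proj_\gamma^X(y)$. Then
\[
d_X(u,v)\le d_X(p,q)+\kappa_1(L_1)+\kappa_1(L_2)+4C\le \dist_\gamma^X(x,y)+\kappa_1(L_1)+\kappa_1(L_2)+4C.
\]

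\textbf{Main obstacle.} I expect the only real obstacle to be bookkeeping: ensuring the additive constant is exactly $4C$ rather than $8C$. The fix is to choose $p$ and $q$ in different projection sets, and to use the assumption that the union's diameter is large, exceeding $\max\kappa_1(L_i)$, so that the maximal pair indeed straddles the two sets.
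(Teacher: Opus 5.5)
Your intersection argument is correct. So is the identification $\mathrm{diam}_X([x,y]_X\cap\mathcal N_C(\gamma))=d_X(u,v)$ via the first and last entry points. The upper bound $d_X(u,v)\le \dist_\gamma^X(x,y)+\kappa_1(L_1)+\kappa_1(L_2)+4C$ also holds. The paper leaves this corollary to the reader, and yours is the natural route.

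The gap is in the reverse inequality, $\dist_\gamma^X(x,y)\le d_X(u,v)+\kappa_1(L_1)+\kappa_1(L_2)+4C$. It arises when the diameter of $\proj_\gamma^X(x)\cup\proj_\gamma^X(y)$ is (nearly) realized by two points of the same set, say $\proj_\gamma^X(x)$. For such a pair, Corollary \ref{SubLinearEntry} only gives $2\kappa_1(L_1)+4C$. This overshoots the allowed error by as much as $\kappa_1(L_1)-\kappa_1(L_2)-d_X(u,v)$, which is not bounded by a constant. So this is not a $4C$ versus $8C$ bookkeeping issue and cannot be absorbed by enlarging $C$.

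Neither of your proposed fixes closes it. The first one, that the diameter of the union is at least $d_X(p,q)$ for a cross pair, is a lower bound on $\dist_\gamma^X(x,y)$, whereas you need an upper bound. The second one, that the hypothesis forces the maximal pair to straddle the two sets, needs $\mathrm{diam}_X\,\proj_\gamma^X(x)\le\kappa_1(L_1)$. The bound $2\kappa_1(L_1)+4C$ you actually have does not fall below the threshold $\max\{\kappa_1(L_1),\kappa_1(L_2)\}$.

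The missing line is a direct bound on the diameter of a single projection set.
\begin{itemize}
\item If $L_1>C$, apply Lemma \ref{SubLinearProj} to the degenerate geodesic $\{x\}$. Alternatively, apply it to $[x,u']_X$ with $u'\in[x,u)$ close to $u$; this segment is disjoint from $\mathcal N_C(\gamma)$ and has $d_X(u',\gamma)<L_1$. Either way you get $\mathrm{diam}_X\,\proj_\gamma^X(x)\le\kappa_1(L_1)$.
\item If $L_1\le C$, every point of $\proj_\gamma^X(x)$ is within $L_1\le C$ of $x$, up to the harmless $+1$ of the coarse convention. So this diameter is at most $2C$ (or $2C+2$).
\end{itemize}
In both cases the same-set contribution is already at most $d_X(u,v)+\kappa_1(L_1)+\kappa_1(L_2)+4C$, taking $C\ge1$ for the coarse case. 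The same holds symmetrically for $\proj_\gamma^X(y)$. No straddling argument is needed, and with this addition your proof is complete.
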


\subsection{Morse  isometries in group actions}
Let $h\in G$ be an element  of infinite order. According to \cite{DMS}, $h$ is called \textit{Morse element} if the cyclic group $\langle h\rangle$ is a Morse  quasi-geodesic in some Cayley graph. Since Morse  quasi-geodesics are preserved under quasi-isometries,   Morse elements are well-defined independent of generating sets.   


Given an element $h\in G$, define the coarse stabilizer of $\langle h\rangle$:
$$
E(h):=\{g\in G: d_{Haus}(g\langle h\rangle, \langle h\rangle)<\infty\}
$$
where $d_{Haus}$ denotes the Hausdorff distance between subsets in word metric.

\begin{lem}
If $h$ is a Morse element in $G$, then $E(h)$ is the maximal elementary group containing $h$. Further, $E(h)$ is \emph{weakly malnormal} (i.e. for any $g\notin E(h)$, $gE(h)g^{-1}\cap E(h)$ is finite), and   
$$
E(h)=\{g\in G: \exists n>0,\, (gh^n=h^ng)\lor (gh^n=h^{-n}g) \}
$$  
\end{lem}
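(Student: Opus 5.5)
The plan is to treat the three assertions in turn, working in $\mathrm{Cay}(G,S)$, where $\langle h\rangle$ is a Morse quasi-geodesic. Set
\[
E'=\{g\in G: \exists n>0,\ gh^ng^{-1}=h^{\pm n}\}.
\]

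\emph{Step 1: $E(h)=E'$.} The inclusion $E'\subseteq E(h)$ is elementary. If $gh^ng^{-1}=h^{\epsilon n}$, then $g\langle h^n\rangle=\langle h^n\rangle g$. Since $\langle h^n\rangle$ has finite index in $\langle h\rangle$, both $g\langle h\rangle$ and $\langle h\rangle g$ lie at finite Hausdorff distance from $\langle h^n\rangle g$. The latter is within distance $\|g\|_S$ of $\langle h^n\rangle$, hence of $\langle h\rangle$.

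For the converse, let $g\in E(h)$ with $d_{Haus}(g\langle h\rangle,\langle h\rangle)\le D$. For each $k$, choose $m(k)$ with $d_S(gh^k,h^{m(k)})\le D$. Then $h^{-m(k)}gh^k$ lies in the finite ball $B_D$. By pigeonhole there are $k\neq k'$ with $h^{-m(k)}gh^k=h^{-m(k')}gh^{k'}$, which gives $gh^{k-k'}g^{-1}=h^{m(k)-m(k')}$.

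It remains to check that the exponents agree up to sign. Conjugation preserves translation length, i.e. $\lim_j \|x^j\|_S/j$. Since $\langle h\rangle$ is a quasi-geodesic, $\|h^j\|_S$ grows linearly, so $|k-k'|=|m(k)-m(k')|$. Here $k-k'\neq 0$, and $m(k)-m(k')\neq 0$ because $h$ has infinite order. Hence $g\in E'$. This step uses only the quasi-isometric embedding of $\langle h\rangle$ and local finiteness of the Cayley graph.

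\emph{Step 2: $E(h)$ is a virtually cyclic subgroup.} First, $E(h)$ is a subgroup, since Hausdorff distance is $G$-invariant and satisfies the triangle inequality. Elements $g\in E(h)$ with $d_{Haus}(g\langle h\rangle,\langle h\rangle)\le D$ satisfy $g\in\mathcal N_D(\langle h\rangle)$. So $E(h)$ lies in a neighborhood of $\langle h\rangle$, and one expects $\langle h\rangle$ to have finite index in $E(h)$.

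The difficulty is that $D$ depends on $g$. The plan is to use the Morse property to make the bound uniform. Since $g\langle h\rangle$ is a quasi-geodesic at finite Hausdorff distance from $\langle h\rangle$, it has the same endpoints at infinity, in the sense of bi-infinite quasi-geodesics. The Morse property of $\langle h\rangle$ should then force $g\langle h\rangle$ into the $\mu(c)$-neighborhood of $\langle h\rangle$. The argument applies the Morse condition to long subsegments of $g\langle h\rangle$ whose endpoints are moved onto $\langle h\rangle$ by a bounded amount. The resulting constant $c$ depends only on the quasi-geodesic constants of $\langle h\rangle$, which are the same for all translates $g\langle h\rangle$.

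With this uniform bound, every $g\in E(h)$ lies within $\mu$ of $\langle h\rangle$. Hence $E(h)\subseteq \langle h\rangle B_{\mu}$, and $[E(h):\langle h\rangle]<\infty$.

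\emph{Maximality.} Any virtually cyclic subgroup $V\ni h$ contains $\langle h\rangle$ with finite index. So every $v\in V$ satisfies $d_{Haus}(v\langle h\rangle,\langle h\rangle)<\infty$, which gives $V\subseteq E(h)$.

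\emph{Step 3: weak malnormality.} Suppose $gE(h)g^{-1}\cap E(h)$ is infinite. Since $E(h)$ is virtually cyclic, this intersection then contains an element of infinite order, and hence some power $h^n$ up to finite index. Concretely, there are $a\neq 0$ and $b\neq 0$ with $gh^ag^{-1}=h^b$. By Step 1 and the translation-length argument, $|a|=|b|$, so $g\in E'=E(h)$. This contradicts $g\notin E(h)$, so the intersection is finite.

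I expect the main obstacle to be the uniformity in Step 2. It is a standard Morse-lemma argument, but it needs care to handle bi-infinite quasi-geodesics through the finite-subpath version of Lemma \ref{MorseSubpath}.
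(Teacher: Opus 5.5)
Your outline matches the paper's. The paper declares the finite index of $\langle h\rangle$ in $E(h)$ ``standard'' via the Morse property and derives the remaining assertions from it. Your Step~1 (pigeonhole in $B_D$ plus conjugation-invariance of stable translation length), your maximality argument and your Step~3 are correct, and more explicit than the paper. The gap is in Step~2, which is exactly where the Morse hypothesis has to do its work.

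You join the endpoints of a long subsegment of $g\langle h\rangle$ to $\langle h\rangle$ by segments of length up to $D=d_{Haus}(g\langle h\rangle,\langle h\rangle)$. You then assert that the resulting path is a $c$-quasi-geodesic with $c$ depending only on the constants of $\langle h\rangle$. That is not true. A subpath that starts on a connector and runs a short way along $g\langle h\rangle$ can backtrack by an amount of order $D$. So the concatenation is only a $c(D)$-quasi-geodesic, and the Morse gauge returns $\mu(c(D))$, which is no better than the $D$ you started with. The uniform inclusion $E(h)\subseteq\langle h\rangle B_\mu$ therefore does not follow as written. Virtual cyclicity fails with it, and both maximality and Step~3 depend on it.

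One repair is to make the connectors uniformly short before invoking the Morse gauge.

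By Lemma~\ref{CharMorseLem}(1), $\langle h\rangle$ is $\kappa_0$-contracting. Fix $\theta<1/(2\lambda)$ and let $r=r(\theta,\kappa_0)$ be as in Lemma~\ref{LipProj2Morse}.

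Suppose a subpath of $g\langle h\rangle$ of length $\ell$ stayed outside $\mathcal N_r(\langle h\rangle)$. Lemma~\ref{LipProj2Morse} bounds the diameter of its projection to $\langle h\rangle$ by $\theta(\ell+D)$. On the other hand, its endpoints are at least $\ell/\lambda-1$ apart, and each lies within $D+1$ of its projection. So the projection has diameter at least $\ell/\lambda-2D-O(1)$, which is a contradiction once $\ell$ is large.

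Hence every $x\in g\langle h\rangle$ lies on a subpath of $g\langle h\rangle$ whose endpoints are within $r$ of $\langle h\rangle$. Attaching connectors of length at most $r$ to this subpath gives a quasi-geodesic whose constants depend only on $\lambda$ and $r$. The Morse gauge then places $x$ within a distance $M=M(\mu,\lambda,r)$ of $\langle h\rangle$, independently of $g$. This gives $E(h)\subseteq\langle h\rangle B_M$ and hence finite index.

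Alternatively, you may cite the standard fact that asymptotic Morse quasi-geodesics are uniformly Hausdorff close, which is what the paper's reference to \cite{YANG6} supplies.
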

\begin{proof}
Via the Morse property of $\langle h\rangle$, it is a standard argument that $[E(h):\langle h\rangle]$ is finite (see \cite{YANG6}), so   $E(h)$ is an elementary group. If $E$ is an elementary subgroup with $h\in E$, then since $\langle h\rangle$ is of finite index in $E$, any element $f\in E$ is contained in $E(h)$, so $E\subset E(h)$.  

The weakly malnormality follows by the very definition of the coarse stabilizer $E(h)$.
And the description of $E(h)$ is standard, where  the $\pm$ depends on whether $g$ preserves or reserves the orientation of $\langle h\rangle$.
\end{proof}

\begin{lem}\label{lem:E(G)}
Suppose that $G$ is non-elementary and contains a Morse element. Then $G$ contains a unique maximal finite normal subgroup denoted as $E(G)$.  In addition, for any Morse element $h\in G$, $E(G)=\cap_{g\in G} gE(h)g^{-1}$.   
\end{lem}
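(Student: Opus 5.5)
Define $E(G)$ as the intersection $\bigcap_{g\in G} gE(h)g^{-1}$ for a fixed Morse element $h$. I will show that this subgroup is finite and normal, that it contains every finite normal subgroup, and that it does not depend on the choice of $h$.

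\textbf{Normality and finiteness.} Normality is immediate, since an intersection over all conjugates is normal. For finiteness, I use that $G$ is non-elementary, so $G\neq E(h)$. Choose $g\notin E(h)$. Weak malnormality of $E(h)$ (previous lemma) says that $gE(h)g^{-1}\cap E(h)$ is finite. The intersection defining $E(G)$ is contained in this finite set, so $E(G)$ is finite.

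\textbf{Maximality.} Let $F$ be a finite normal subgroup of $G$ and take $f\in F$. Conjugation by powers of $h$ permutes the finite set $F$. Hence some power $h^n$ with $n>0$ centralizes $f$, i.e. $fh^n=h^nf$. By the description
\[
E(h)=\{g: \exists n>0,\ gh^n=h^{\pm n}g\},
\]
we get $f\in E(h)$, so $F\subseteq E(h)$. Since $F$ is normal, $F=gFg^{-1}\subseteq gE(h)g^{-1}$ for every $g$. Therefore $F\subseteq E(G)$.

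\textbf{Uniqueness and independence of $h$.} By the two steps above, $E(G)$ is a finite normal subgroup containing every finite normal subgroup. It is therefore the unique maximal one. Running the same argument with any other Morse element $h'$ gives another subgroup with the same characterization. Hence $\bigcap_g gE(h')g^{-1}=E(G)$ as well, which proves the ``in addition'' clause.

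\textbf{Expected obstacle.} The only delicate point is the finiteness step: it requires some $g\notin E(h)$. This is exactly where non-elementarity enters, since otherwise $G=E(h)$ would be virtually cyclic.
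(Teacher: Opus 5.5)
Your proposal is correct and follows the paper's proof: it defines $E(G)$ as the intersection of the conjugates of $E(h)$, gets finiteness from weak malnormality applied to some $g\notin E(h)$ (which exists by non-elementarity), and gets maximality by showing every finite normal subgroup $F$ lies in $E(h)$ and hence in every conjugate. The only differences are minor. You obtain $f\in E(h)$ from a power $h^k$ centralizing $F$ together with the algebraic description of $E(h)$, whereas the paper bounds $d_{Haus}(f\langle h\rangle,\langle h\rangle)$ via $fh^n=h^nf_n$ with $f_n\in F$; you also state explicitly why the intersection does not depend on $h$, which the paper leaves implicit.
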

\begin{proof}
Assume that $h$ is a Morse element. We claim that $E(G)=\cap_{g\in G} gE(h)g^{-1}$ is the unique maximal finite normal subgroup. Indeed, since  $E(h)$ is {weakly malnormal},   $gE(h)g^{-1}\cap E(h)$ is finite for any $g\notin E(h)$. Such an element $g$ exists because $G$ is non-elementary. Then $E(G)$ is a finite normal group. To see the maximality, let $F<G$ be a finite normal subgroup. For every $f\in F$ and $n\in\mathbb Z$, there exists $f_n\in F$ such that $fh^n=h^nf_n$. Thus,
$$d_{Haus}(f\langle h\rangle,\langle h\rangle)\le \max\{\|u\|_S:u\in F\}.$$
By definition, $E(h)$ is the stabilizer of $\langle h\rangle$ up to finite Hausdorff distance, so $F\subset E(h)$. Applying the same argument to every conjugate of $E(h)$ and using the normality of $F$, we obtain $F\subset E(G)$. The proof is complete.
\end{proof}

\subsubsection*{\textbf{Independent Morse elements}} The independence of two Morse elements is a non-elementary condition imposed on the groups and actions. 

\begin{defn}\label{defn:indepMorse}
Two Morse elements $h_1, h_2\in G$ are called \textit{independent}  if they have no conjugate powers; equivalently $E(h_1)\ne gE(h_2)g^{-1}$ for any $g\in G$.    
\end{defn}

As shown in the definition of $E(h)$, the independence of Morse elements manifests a kind of bounded intersection condition. 
If   $F$ is a finite set of pairwise independent Morse elements in $G$, the definition of independence implies that  
\begin{align}\label{eq:Morsesystem}
\f=\{gE(f): f\in F, g\in G \}    
\end{align} has   bounded intersection in the Cayley graph of $G$ relative to some (or any) finite generating set $S$. Conversely, the bounded intersection of $\f$ characterizes the independence of $F$.

We introduce the bounded intersection in a general isometric action of $G$ on a metric space $X$.  

\begin{defn}\label{BddProj}
A family of subsets $\f$ in $X$ has    \textit{bounded intersection property} if there exists a bounded intersection function $\sigma:\mathbb R_+\to \mathbb R_+$ so that the following holds
$$
\mathrm{diam}_X\bigl(\mathcal N_r(U)\cap \mathcal N_r(V)\bigl) \,\le\, \sigma(r)
$$
for any $r>0$ and $U\ne V\in \f$. If there exists a finite number $B>0$ so that $\mathrm{diam}_X\bigl(\proj_U^X(V)\bigr)\le B$ for any $U\ne V\in \f$, then $\f$ has    \textit{bounded projection property}. 
\end{defn}
\begin{rem}
If  $\f$ consists of  $\kappa$-contracting subsets for a constant $\kappa\equiv C$, then  the bounded intersection property of $\f$ is equivalent to the bounded projection property (\cite{YANG6}).   
\end{rem}
 
\begin{lem}\label{SublBddProj4H}
There exists a sublinear function $\kappa_2$ depending on $\f$ so that $$\mathrm{diam}_S(\proj_U^S(V))\,\le\, \kappa_2(d_S(U,V))$$  
for any $U\ne V\in \f$.
\end{lem}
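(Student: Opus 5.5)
Here $\f=\{gE(f): f\in F, g\in G\}$ as in (\ref{eq:Morsesystem}), with the word metric $d_S$ on $G$. My plan is to reduce Lemma \ref{SublBddProj4H} to Lemma \ref{SublBddProj}, the sublinear bounded projection between Morse geodesics, applied in $\mathrm{Cay}(G,S)$. In the small-distance regime I would use the bounded intersection property of $\f$ instead.

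\textbf{Uniform Morse gauge.} Since $F$ is finite, each $E(f)$ is a finite extension of $\langle f\rangle$, and $\langle f\rangle$ is a Morse quasi-geodesic in $\mathrm{Cay}(G,S)$. By Lemma \ref{MorseSubpath}, each $E(f)$ lies within bounded Hausdorff distance $D$ of a bi-infinite $\mu$-Morse geodesic $\gamma_f$. The constants $D$ and $\mu$ are uniform over $f\in F$, and by $G$-equivariance they are uniform over all translates $U=gE(f)\in\f$; write $\gamma_U:=g\gamma_f$. By Lemma \ref{CharMorseLem}, all $\gamma_U$ are then $\kappa_0$-contracting for a single sublinear $\kappa_0$. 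Shortest projections to $U$ and to $\gamma_U$ differ by a bounded amount plus $\kappa_0$-error. Indeed, $\proj^S_U(x)$ lies within $D$ of points whose distance to $\gamma_U$ is within $2D$ of optimal, and sublinear contraction controls such near-optimal points. Hence it suffices to bound $\mathrm{diam}_S(\proj_{\gamma_U}^S(\gamma_V))$ by a sublinear function of $d_S(\gamma_U,\gamma_V)$, after absorbing additive constants. Since $d_S(\gamma_U,\gamma_V)$ and $d_S(U,V)$ differ by at most $2D$, the function's argument can be changed accordingly.

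\textbf{Large distance.} If $d_S(\gamma_U,\gamma_V)>C$, where $C$ is the constant from Lemma \ref{SublBddProj}, that lemma gives $\mathrm{diam}_S(\proj_{\gamma_U}(\gamma_V))\le\kappa_2(d_S(\gamma_U,\gamma_V))$, which is what we want.

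\textbf{Small distance: the main obstacle.} When $U,V$ come within distance $C$, the bound must come from independence. Here I expect the main difficulty: controlling the full projection, including far-away parts of $\gamma_V$, and not only the near part. My approach would be as follows. Let $a\in\gamma_V$ be a point such that $\proj_{\gamma_U}(a)$ is far from the region where $\gamma_U$ and $\gamma_V$ are close. Consider the subray of $\gamma_V$ from $a$ back toward the close region. Because $\gamma_U$ is Morse, a geodesic from $a$ to its projection, concatenated with a segment along $\gamma_U$, is a quasi-geodesic with endpoints $a$ and a point near $\gamma_V$. The Morse property of $\gamma_V$ then forces $\gamma_V$ to fellow travel $\gamma_U$ in a $\mu(c)$-neighborhood along a stretch of length comparable to $\mathrm{diam}(\proj_{\gamma_U}(\gamma_V))$, minus sublinear errors; Corollary \ref{SubLinearContr} is the tool for quantifying this. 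The bounded intersection function $\sigma$ of $\f$ caps that length by $\sigma(\mu(c)+2D)$. This gives a uniform bound $B$ whenever $d_S(U,V)\le C$. Finally, I would set $\kappa_2':=\max\{\kappa_2,B\}+\text{const}$, which is sublinear and works for all pairs $U\ne V$.
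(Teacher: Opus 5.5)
\paragraph{Comparison with the paper's proof.}
Your large-distance step is the paper's: both simply invoke Lemma \ref{SublBddProj}. The real difference is in the regime $d_S(U,V)\le C$.

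The paper handles that regime by cofiniteness. Up to translation one may take $U=E(f)$ and $V=bE(f')$ with $\|b\|_S\le C$. So there are finitely many such pairs, each with (as the paper asserts) finite projection diameter, and a constant is added to $\kappa_2$.

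You instead argue uniformly. Take $a\in V$, $p\in\proj^S_U(a)$, and a close pair $q\in U$, $q'\in V$. The path $[a,p]\cdot[p,q]\cdot[q,q']$ is a uniform quasi-geodesic, so the Morse property of $V$ puts $[p,q]$ in a uniform neighbourhood of $V$, and bounded intersection caps $d_S(p,q)$.

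This works. The quasi-geodesic claim is exactly Corollary \ref{SubLinearEntry}. With $L=d_S(a,U)$, the entry point of $[a,q]_S$ into $\mathcal N_C(U)$ lies within $\kappa_1(L)+2C$ of $p$. Hence $d_S(a,q)\ge L+d_S(p,q)-\kappa_1(L)-3C$, and the same estimate holds from any point of $[a,p]$, since $p$ remains a closest point.

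Your route gives a bound depending only on the Morse gauge and the bounded-intersection function $\sigma$, with no appeal to finiteness of pairs up to translation. It also proves the finiteness of each individual $\mathrm{diam}_S(\proj^S_U(V))$, which the paper's two-line argument takes for granted. The paper's route is shorter but relies on that unproved finiteness and on the cofinite action.

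\paragraph{A step that fails as written.}
The preliminary reduction from $(U,V)$ to $(\gamma_U,\gamma_V)$ does not work. The discrepancy you derive between $\proj^S_U(x)$ and $\proj^S_{\gamma_U}(x)$ is of order $\kappa_0(d_S(x,U))$, because near-optimal points of a merely sublinearly contracting set can spread sublinearly. Since $d_S(x,U)$ is unbounded as $x$ ranges over $V$, this is not an additive constant. Likewise, replacing $x\in V$ by a point of $\gamma_V$ within $D$ moves its projection by up to $\kappa_0(d_S(x,U))$, not by a constant. So a bound on $\mathrm{diam}_S(\proj^S_{\gamma_U}(\gamma_V))$ does not give one on $\mathrm{diam}_S(\proj^S_U(V))$ by what you wrote.

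The fix is to drop the detour. In the small-distance regime, run your concatenation argument directly with $U$ and $V$: Corollary \ref{SubLinearEntry} holds for Morse quasi-geodesics and the Morse property holds for Morse subsets. This places $\proj^S_U(V)$ in a ball of radius $\sigma(\cdot)$ about $q$. In the large-distance regime, apply Lemma \ref{SublBddProj} to $U,V$ as Morse quasi-geodesics, as the paper implicitly does. Otherwise you would need a separate argument controlling near-optimal projection points of far-away points of $V$.
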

\begin{proof}
Let $C,\kappa_2$ be given by Lemma \ref{SublBddProj}.
It suffices to consider  $U\ne V$ with  $d_S(U,V)\le C$.  Since  up to isometry there  are only finitely many such pairs $(U,V)$ and   $\mathrm{diam}_S(\proj_U^S(V))$ is finite, the conclusion follows by adding a fixed constant to $\kappa_2$.
\end{proof}

Fix a basepoint $o\in X$.  We call the quasi-geodesic in $X$ $$\ax(f)=E(f)o$$ \textit{quasi-axis} (depending on $o$) of $f$. We say that two Morse elements $f_1,f_2$ act as \textit{independent} isometries on $X$ if their axes $\{g\ax(f_i): i=1,2, g\in G\}$ form a bounded intersection system. 

Consider the  family of associated axis $$\pi(\f)=\{g\ax(f) : f\in F, g\in G \}$$
In general, $\pi(\f)$ may not have bounded intersection in $X$. Let us record a few settings in which such a bounded intersection system arises. 

\begin{lem}\label{lem:bddintersectionamples}
Let $F$ be a finite set of independent Morse elements in $G$. If one of the following happens
\begin{enumerate}
    \item 
    the action $G\act X$ is proper on a metric space; or 
    \item 
    the action $G\act X$ is acylindrical on a hyperbolic space; or
    \item 
    the set of elements in $F$ act as independent WPD loxodromic isometries on a hyperbolic space $X$,
\end{enumerate}
then $\pi(\f)$ has bounded intersection in $X$.    
\end{lem}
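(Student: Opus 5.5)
The plan treats the three cases separately. Throughout, the target is the following bound: for every $r>0$ there should be a bound $\sigma(r)$ on $\mathrm{diam}_X\bigl(\mathcal N_r(U)\cap\mathcal N_r(V)\bigr)$ for distinct $U=g_1\ax(f_1)$, $V=g_2\ax(f_2)$ in $\pi(\f)$. By equivariance we may translate so that $U=\ax(f_1)$. In all three cases the argument is by contradiction. Suppose the $r$-neighborhoods of $U$ and $V$ share a long piece. Then we obtain a large set of group elements moving a long segment of $U$ only a bounded amount, and the corresponding finiteness property of the action yields a contradiction with independence, i.e.\ with $U\ne V$. The independence input is used via $E(f_1)\ne gE(f_2)g^{-1}$, together with weak malnormality of $E(f)$.

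\textbf{Case (1), proper action.} The orbit map $G\to Go$ is coarsely Lipschitz. Properness gives that it is coarsely proper: points $go$ within bounded $d_X$ distance of $o$ come from finitely many $g$. First show that the pullback $\{g\in G: go\in\mathcal N_r(U)\}$ lies in a finite word-metric neighborhood of $E(f_1)$. Indeed $U=E(f_1)o$, so if $d_X(go,e o)\le r$ with $e\in E(f_1)$, then $e^{-1}g$ lies in the finite set $\{k: d_X(ko,o)\le r\}$. Hence $\mathcal N_r(U)\cap\mathcal N_r(V)\cap Go$ pulls back into $\mathcal N_{D(r)}(E(f_1))\cap\mathcal N_{D(r)}(g_2E(f_2))$ in $\mathrm{Cay}(G,S)$. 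This set has bounded $d_S$-diameter by the bounded intersection of $\f$ in the Cayley graph, which is recorded after \eqref{eq:Morsesystem}. Since $G\act X$ is cobounded on neighborhoods of the axes (points of $\mathcal N_r(U)$ are within $r+\sup d(o,\ldots)$ of $Go$) and the orbit map is Lipschitz, a bounded $d_S$-diameter gives a bounded $d_X$-diameter.

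\textbf{Cases (2) and (3), hyperbolic $X$.} The axes are quasi-geodesics in a hyperbolic space, so there is a uniform Morse constant. Suppose $\mathcal N_r(U)\cap\mathcal N_r(V)$ contains points $x,y$ at distance $L\gg1$. Then by thinness the axes $U$ and $V$ fellow travel within $r'=r'(r,\delta)$ along a segment of length $\approx L$. Write $V=g\ax(f_2)$; this contains $gf_2^kg^{-1}$-orbit points. Consider the elements $u_{ij}=f_1^{i}\,(gf_2^{j}g^{-1})$, with $j$ ranging over $\gtrsim L/\tau(f_2)$ values and $i$ chosen so that $u_{ij}$ nearly fixes the starting point of the fellow-traveling segment. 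For each such pair, $u_{ij}$ moves both a point $p$ near the start and the point $f_1^Np$ at distance $\ge N\tau(f_1)$ along $U$ by at most $R=R(r)$; here one uses that translation along two $r'$-close quasi-axes differs by a bounded amount, as long as we stay inside the fellow-traveling window. In case (3), choose $N$ by the WPD condition \eqref{WPDDefn} for $f_1$, $x=p$ and radius $R$. This gives finitely many such elements, say at most $M(R)$. In case (2), acylindricity gives the uniform version. If $L$ is large compared with $N\tau(f_1)+M$, we get too many distinct $u_{ij}$. So two of them coincide, which yields a relation $f_1^{a}=gf_2^{b}g^{-1}$ with $b\ne0$. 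That relation gives $E(f_1)=gE(f_2)g^{-1}$, contradicting independence; when $f_1=f_2$ and $g\notin E(f_1)$, it contradicts weak malnormality via the description of $E(h)$. Hence $L$ is bounded by some $\sigma(r)$.

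\textbf{Main obstacle.} The hardest step is the pigeonhole in case (3). We must ensure that the produced elements are genuinely distinct and all satisfy the WPD displacement bound with the same pair of points $p,f_1^Np$. This requires aligning orientations of $U$ and $V$; if they are antiparallel, we replace $f_2$ by $f_2^{-1}$. It also requires keeping $j$ in a window of length $L-N\tau(f_1)$, so that both $p$ and $f_1^Np$ stay inside the fellow-traveling region. I would first fix $p$ at the start of the window and take $i=-j'$, where $j'$ is the index with $f_1^{j'}p$ nearest to $gf_2^jg^{-1}p$.
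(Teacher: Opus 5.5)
Your proposal is correct. Item (1) follows the same route as the paper. The paper only remarks that properness lets the bounded intersection of $\f$ in $\mathrm{Cay}(G,S)$ pass to $\pi(\f)$, and you supply the details: $\{g: go\in\mathcal N_r(E(f)o)\}\subseteq\mathcal N_{D(r)}(E(f))$, and every point of $\mathcal N_r(U)\cap\mathcal N_r(V)$ is $r$-close to an orbit point of $\mathcal N_{2r}(U)\cap\mathcal N_{2r}(V)$.

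For items (2) and (3) your route differs. The paper gives no argument: it observes that bounded intersection is exactly the geometric separation of \cite[Definition 4.40]{DGO} and invokes \cite[Theorem 6.8]{DGO}. You instead reprove that separation directly with the standard WPD pigeonhole. The elements $f_1^{i}(gf_2^{j}g^{-1})$ nearly fix both $p$ and $f_1^Np$, so a collision yields $f_1^a=gf_2^bg^{-1}$ with $b\ne0$. You then conclude by independence, or by weak malnormality of $E(f)$ when $f_1=f_2$. The citation is shorter and packages all the uniformity. Your version is self-contained and shows that only three inputs are used: WPD of the elements of $F$, fellow-travelling of their axes, and independence in $G$. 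This matches the paper's later remark that the DGO argument needs only the Morse property of the axes.

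Two small points to tidy if you write it out. First, the finite count $M(R)$ from \eqref{WPDDefn} depends on the basepoint, and your $p$ moves along $U$ as $V$ varies. To make the bound on $L$ uniform in $V$, conjugate by a suitable power $f_1^k$ to bring $p$ within bounded distance of $o$, enlarging $R$ accordingly. Second, in case (2) an element of $F$ may act elliptically on $X$. Then its axis is bounded and the estimate is trivial. Loxodromic elements of an acylindrical action are WPD, so case (2) otherwise reduces to case (3).
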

\begin{proof}
For the item (1), since the action is proper, the bounded intersection of $\pi(\f)$ in $X$ follows from that of $\f$ in $\mathrm{Cay}(G,S)$. 

The bounded intersection is exactly geometrically separated condition in \cite[Definition 4.40]{DGO}. The  items (2) and (3) follow by \cite[Theorem 6.8]{DGO}, where $\pi(\f)$ is proven to be geometrically separated.  
\end{proof}

\subsubsection*{\textbf{Morse local-global groups}}
The class of groups satisfying Morse local-global assumption is introduced in \cite[Definition 2.12]{RST21} to enable the construction of the concatenated path to be Morse. 

\begin{defn}\label{defn:MorseLoc2Glob}
We say that $G$ satisfies \textit{Morse local-global assumption} if for any Morse gauge $\mu$ and $c>1$, there exist $L>0,c'>0$, and Morse gauge $\mu'$ depending only on $\mu,c$ with the following property. If $\gamma$ is a $L$-local $\mu$-Morse $c$-quasi-geodesic path, then it is a $\mu'$-Morse $c'$-quasi-geodesic path.     
\end{defn}

The following fact is a  consequence of  \cite[Lemma 2.15]{RST21}, which is reminiscent to the classical fact in hyperbolic geometry that sufficiently long local quasi-geodesics are global quasi-geodesics. 
\begin{lem}\label{lem:Morseadmissiblepath}
Suppose that $G$ satisfies the Morse local-global assumption. Let $h,k\in G$ be two Morse elements with $E(h)\ne E(k)$. Then there exist $N,\mu,c>1$ depending on $E(h)$ and $E(k)$ so that for any $h'\in E(h)$ and for any $m_i, n_i>N$ with $1\le i\le l$, the word  
$$
(h^{n_1},k^{m_1},h^{n_2},k^{m_2},\cdots, h^{n_l},k^{m_l}, h')
$$
labels a $\mu$-Morse $c$-quasi-geodesic in $\mathrm{Cay}(G,S)$. 
\end{lem}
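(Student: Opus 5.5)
The plan is to verify the hypothesis of the Morse local-global assumption (Definition \ref{defn:MorseLoc2Glob}) for the path $\gamma$ labelled by $(h^{n_1},k^{m_1},\ldots,h^{n_l},k^{m_l},h')$, and then read off the conclusion. Concretely, I want a Morse gauge $\mu_0$ and a constant $c_0$, depending only on $E(h)$ and $E(k)$, such that every subpath of $\gamma$ of length at most $L$ is a $\mu_0$-Morse $c_0$-quasi-geodesic. Here $L=L(\mu_0,c_0)$ is the scale provided by Definition \ref{defn:MorseLoc2Glob}. Taking $N>L$, each window of length $L$ meets at most two consecutive long pieces, plus possibly the terminal piece $h'$. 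So everything reduces to a uniform statement about short concatenations of axis segments.

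First, the geometry of the pieces. Each $h^{n_i}$ labels a path in a translate $g\langle h\rangle$, and each $k^{m_i}$ labels a path in a translate $g\langle k\rangle$. We realize them as geodesic words (or uniform quasi-geodesics) on these cosets. By Lemma \ref{MorseSubpath}, these subpaths are uniformly Morse, with gauge depending only on $h$ and $k$. The element $h'\in E(h)$ is arbitrary, but $E(h)$ lies within finite Hausdorff distance of $\langle h\rangle$. Hence a geodesic word for $h'$ labels a uniformly Morse quasi-geodesic near the same $h$-coset as the preceding $k$-piece's endpoint. Consecutive pieces lie on cosets $gE(h)$ and $g'E(k)$. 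These are distinct members of the family $\f$ in (\ref{eq:Morsesystem}), since $E(h)\ne E(k)$, and they share the junction point. Applying Lemma \ref{SublBddProj4H} with $d_S(U,V)=0$ gives a uniform bound $B=\kappa_2(0)$ on the projection of either coset onto the other.

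Second, the two-piece statement. Let $\alpha$ and $\beta$ be uniformly Morse quasi-geodesics meeting at a point $z$, with $\mathrm{diam}(\proj_\alpha^S(\beta))\le B$ and $\mathrm{diam}(\proj_\beta^S(\alpha))\le B$. I claim $\alpha\cdot\beta$ is a $c_0$-quasi-geodesic. The argument is as follows. For $x\in\alpha$ and $y\in\beta$, the projection of $y$ to $\alpha$ lies within $B$ of $z$. Corollary \ref{SubLinearContr} then forces a geodesic $[x,y]$ to pass within $C+\kappa_1(\cdot)$ of $z$, modulo sublinear errors. Since $\alpha$ and $\beta$ are quasi-geodesics, the sublinear error can be absorbed, giving $d(x,y)\ge d(x,z)+d(z,y)-\mathrm{const}$. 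For the Morse gauge, take a $c$-quasi-geodesic $\eta$ with endpoints $x\in\alpha$, $y\in\beta$. By the same entry-point argument (Corollary \ref{SubLinearEntry}), $\eta$ passes within a bounded distance of $z$. Splitting $\eta$ there, each half is a quasi-geodesic with endpoints near a single Morse piece, so it stays within $\mu(c')$ of that piece. The three-piece case, where the short terminal $h'$ is involved, is handled the same way by iterating once.

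The step I expect to be the main obstacle is making the two-piece constants genuinely uniform. In particular, the sublinear error terms $\kappa_1(d(x,\alpha))$ must be absorbed into a fixed multiplicative and additive constant, independent of $n_i$ and $m_i$. My first attempt would use Lemma \ref{LipProj2Morse} with small $\theta$ to convert sublinear contraction into a linear bound with an arbitrarily small slope. This is the same device underlying \cite[Lemma 2.15]{RST21}, and I would ultimately cite that lemma to supply these constants. Once the local statement holds, Definition \ref{defn:MorseLoc2Glob} yields $\mu'$ and $c'$, which are the required $\mu$ and $c$.
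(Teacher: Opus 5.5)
Your route is essentially the paper's. The paper gives no separate argument and derives this lemma directly from \cite[Lemma 2.15]{RST21}, which is the result you fall back on for the uniform junction constants before invoking Definition \ref{defn:MorseLoc2Glob} with $N$ chosen so that every $h^{n}$- and $k^{m}$-piece is longer than the scale $L$.

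One caution if you want the two-piece Morse step self-contained rather than cited: Corollary \ref{SubLinearEntry} applies only to geodesics and gives only sublinear control near $z$. Instead, use that geodesic triangles with two uniformly Morse sides are uniformly slim. Combined with the bounded coarse intersection of $E(h)$ and $E(k)$ (they share no nontrivial power since $E(h)\ne E(k)$), this bounds the Gromov product at $z$ and makes $[x,y]$ a uniformly Morse geodesic passing uniformly close to $z$.
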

In particular, the following is proved.  

\begin{lem}\cite[Corollaries 3.6 \& 4.9]{RST21}\label{lem:Morse-non-elementary}
Suppose that $G$ contains a Morse element and satisfies the Morse local-global assumption. If  $G$  is non-elementary, then  
\begin{enumerate}
    \item 
    $G$ contains infinitely many independent Morse elements;
    \item 
    Every infinite normal subgroup contains a Morse element.   
\end{enumerate}
\end{lem}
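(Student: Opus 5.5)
For the first assertion, I would start from a single Morse element $h$ and use non-elementarity to produce a second Morse element with a different elementary closure. Since $G$ is non-elementary, $E(h)\neq G$, so there is some $g\notin E(h)$; put $k=ghg^{-1}$. Then $k$ is Morse and $E(k)=gE(h)g^{-1}$. If $E(k)=E(h)$, then $g$ would send $\langle h\rangle$ to within finite Hausdorff distance of itself, because $[E(h):\langle h\rangle]<\infty$. That would force $g\in E(h)$, a contradiction, so $E(h)\neq E(k)$.

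Now apply Lemma \ref{lem:Morseadmissiblepath} to the pair $h,k$. For $n,m>N$ the bi-infinite periodic word $(h^n,k^m,h^n,k^m,\dots)$ labels a $\mu$-Morse $c$-quasi-geodesic, since every finite piece does with uniform constants. Hence $f_{n,m}:=h^nk^m$ has infinite order, its cyclic orbit is uniformly quasi-geodesic, and so it is a Morse element.

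To get infinitely many independent ones, I would take $f_j=h^{n_j}k^{m_j}$ with pairwise distinct, rapidly increasing exponents $n_j$. I would then argue by contradiction using Definition \ref{defn:indepMorse}. Suppose $E(f_i)=gE(f_j)g^{-1}$. Then the axes $\langle f_i\rangle$ and $g\langle f_j\rangle$ are at finite Hausdorff distance, so by the Morse property they fellow travel along arbitrarily long stretches. Each axis is a concatenation of long pieces of cosets of $E(h)$ and $E(k)$, and these cosets have bounded intersection (the system \eqref{eq:Morsesystem}). Fellow travelling therefore forces the $h$-blocks of one axis to match, up to bounded error, the $h$-blocks of the other. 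In particular the block lengths $n_i$ and $n_j$ must coincide up to a constant, which contradicts the choice of exponents.

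I expect this matching step to be the main obstacle. Carrying it out needs a fellow-travel or projection argument showing that long coset pieces of one Morse concatenation must align with coset pieces of the other. As a fallback I would count conjugacy classes instead: $f_j$ has stable translation length growing like $n_j+m_j$, and elements of a common virtually cyclic group $E(f)$ have commensurable translation lengths. With a careful choice of the exponents this can be arranged to exclude coincidences for all but finitely many pairs.

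For the second assertion, let $N\lhd G$ be infinite. If $N\subseteq E(h)$, then normality gives $N\subseteq gE(h)g^{-1}$ for all $g$, so $N\subseteq E(G)$, which is finite by Lemma \ref{lem:E(G)}; this is impossible. So there is some $n\in N\setminus E(h)$. Set $k=nhn^{-1}$, so that $E(k)=nE(h)n^{-1}\neq E(h)$ by the argument above. For every $m$ the commutator $h^m n h^{-m} n^{-1}$ lies in $N$, and it equals $h^m k^{-m}$. Applying Lemma \ref{lem:Morseadmissiblepath} to the pair $h,k^{-1}$ (note $E(k^{-1})=E(k)$), for $m>N$ all powers of $h^mk^{-m}$ label uniformly Morse quasi-geodesics. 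Hence $h^mk^{-m}$ is a Morse element lying in $N$.
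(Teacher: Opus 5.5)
The paper gives no argument for this lemma; it imports both parts from \cite[Corollaries 3.6 \& 4.9]{RST21}. Your route, deriving them directly from Lemma \ref{lem:Morseadmissiblepath} and Lemma \ref{lem:E(G)}, is therefore genuinely different.

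\textbf{Part (2)} is correct and complete, and it replaces the citation by a short self-contained proof. If the normal subgroup were contained in $E(h)$, it would lie in $\bigcap_g gE(h)g^{-1}=E(G)$, which is finite. For $x$ in the normal subgroup but outside $E(h)$, and $k=xhx^{-1}$, you get $E(k)=xE(h)x^{-1}\neq E(h)$. The element $h^mxh^{-m}x^{-1}=h^mk^{-m}$ lies in the normal subgroup. Lemma \ref{lem:Morseadmissiblepath}, applied to the pair $(h,k^{-1})$, makes it Morse for large $m$. You use the letter $N$ both for the normal subgroup and for the threshold of Lemma \ref{lem:Morseadmissiblepath}; rename one of them.

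\textbf{Part (1)} has a real gap at exactly the step you flag, and your sketch of that step is not right as stated. Since $k=ghg^{-1}$, every coset of $E(k)$ lies within Hausdorff distance $\|g\|_S$ of a coset of $E(h)$. So all blocks of your axes, $h$-blocks and $k$-blocks alike, are long pieces near cosets of the same subgroup. An $h$-block of one axis can therefore match a $k$-block of the other, and ``$n_i$ and $n_j$ coincide up to a constant'' is not what fellow travelling forces. Distinct $n_j$ alone do not suffice: if $g^2=1$, then $g(h^ak^b)g=gh^agh^b$ is a cyclic permutation of $h^bk^a=h^b\,gh^ag$, so $h^ak^b$ and $h^bk^a$ are conjugate.

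To close the gap, take three steps.
\begin{enumerate}
\item Compare the periodic axes, i.e.\ the bi-infinite paths labelled by the periodic words, not the orbits. These are uniformly $\mu$-Morse $c$-quasi-geodesics, so finite Hausdorff distance forces Hausdorff distance at most some $B(\mu,c)$, independent of $i,j$. Without this uniformity the bounded-intersection constant depends on the pair and the argument collapses.
\item Enlarge the exponent threshold so that every block is longer than the bounded-intersection constant at the resulting uniform scale $r$. Note that consecutive blocks lie near distinct cosets of $E(h)$, because $g\notin E(h)$.
\item Choose $n_j$ inductively so that $\|h^{n_j}\|_S$ is much larger than the period $\|h^{n_i}\|_S+\|k^{m_i}\|_S$ of every earlier axis.
\end{enumerate}
Then a single $h$-block of the $g$-translate of the $f_j$-axis lies near one coset $U$. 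It stays uniformly close to a stretch of the $f_i$-axis containing two consecutive complete blocks. One of these lies near a coset $V\neq U$ and is too long for $\mathcal N_r(U)\cap\mathcal N_r(V)$, a contradiction.

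This is the mechanism of Claim \ref{clm:indepset}, but you must run it against the fixed axes of the earlier $f_i$. That claim assumes $h,k$ independent, whereas yours are conjugate, so it does not apply verbatim.

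Finally, discard the translation-length fallback. The relation $E(f_i)=gE(f_j)g^{-1}$ only gives $|a|\,\tau(f_i)=|b|\,\tau(f_j)$ for some unknown nonzero integers $a,b$, with $\tau$ the stable length. That is merely a rational ratio, and no choice of exponents lets you exclude it.
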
 
\begin{rem}
Similar conclusions were established in the proper action of  groups on general metric spaces with strongly contracting element (\cite{YANG6}).    
\end{rem}

\subsection{Sublinear extension lemma via Morse elements}\label{SSec:sublinear_extension}
Let $F$ be a set of three independent Morse elements in $G$ and let $\f$ be the system of Morse axes defined in (\ref{eq:Morsesystem}). 

\begin{conv}[$\kmorse,C$]\label{kmorseConv}
Let $\kmorse$ denote the common sublinear function for $H\in \f$. Assume in addition that  $(\kmorse, C)$ satisfy Lemmas \ref{SubLinearProj} and \ref{SublBddProj4H}. Moreover, any geodesic with endpoints in $H\in \f$ is contained in $\mathcal N_C(H)$.  
\end{conv}

Once $D$ is fixed, we freely increase $\kmorse$ by an additive constant
depending only on $\f$ and $D$. This preserves sublinearity and all the
properties in Convention \ref{kmorseConv}; fixed constants below are absorbed
in this way. We continue to denote the enlarged function by $\kmorse$.

\begin{lem}\label{BddprojViaMorse}
There exists $D=D(\f)$ with the following property.  For any geodesic $\alpha$ from $1$ to $g\in G$, there exists $f_1\ne f_2\in F$ so that 
\begin{align}
\label{eq:goodprojectpts1}\max\{\mathrm{diam}_S(\alpha\cap \mathcal N_C(H_1)),\;\mathrm{diam}_S(\alpha\cap \mathcal N_C(H_2))\}\; \le D
\end{align}
where $H_1=E(f_1)$ and $H_2=E(f_2)$.
\end{lem}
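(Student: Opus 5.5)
The plan is a pigeonhole argument over the three elements of $F$: the geodesic $\alpha$ can have long fellow-travelling with at most one of the three axes through the identity. Here $\f$ consists of cosets $gE(f)$, and Lemma \ref{BddprojViaMorse} concerns $H_i=E(f_i)$, the axes through $1$.

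Suppose, for contradiction, that for two distinct elements $f\ne f'\in F$ both diameters $\mathrm{diam}_S(\alpha\cap\mathcal N_C(E(f)))$ and $\mathrm{diam}_S(\alpha\cap\mathcal N_C(E(f')))$ exceed $D$. Choose points $a\in\alpha\cap \mathcal N_C(E(f))$ and $b\in\alpha\cap\mathcal N_C(E(f'))$ closest to $\alpha^-=1$. Both subsets of $\alpha$ have diameter $>D$, and $1\in E(f)\cap E(f')$. The key geometric input is the Morse (sublinear contraction) property in the form of Convention \ref{kmorseConv}. Since $\alpha$ starts at $1\in E(f)$, I would argue that $\alpha$ enters $\mathcal N_C(E(f))$ near its start. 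I would then use the first and last entry points: the subpath of $\alpha$ between two points of $\mathcal N_C(E(f))$ is a geodesic whose endpoints lie $C$-close to $E(f)$. By the Morse property it stays in a uniform neighborhood $\mathcal N_{C'}(E(f))$, and more simply it lies in $\mathcal N_{C'}(E(f))$ from $1$ onwards up to the last exit point. Thus the initial segment $[1,p]_\alpha$, with $p$ the last point in $\mathcal N_C(E(f))$, lies in $\mathcal N_{C'}(E(f))$ and has length at least $D-2C$, say. The same holds for $f'$ with an initial segment $[1,p']_\alpha$.

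The shorter of the two initial segments, of length at least $D-2C$, then lies in $\mathcal N_{C'}(E(f))\cap\mathcal N_{C'}(E(f'))$. Since $f,f'$ are independent, $E(f)\ne E(f')$ are distinct members of $\f$, so the bounded intersection property (or Lemma \ref{SublBddProj4H} applied with $d_S(U,V)=0$) bounds this diameter by $\sigma(C')$. Choosing $D>\sigma(C')+2C$ gives a contradiction. Since $|F|=3$, at most one $f\in F$ can fail \eqref{eq:goodprojectpts1}, so two good ones $f_1\ne f_2$ remain.

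The main obstacle is justifying that a large diameter of $\alpha\cap\mathcal N_C(E(f))$ forces an initial subsegment starting at $1$ to fellow-travel $E(f)$; this is a uniform Morse statement. I would derive it from the Morse gauge: for $x=1$ and $y$ a far point of $\alpha\cap\mathcal N_C(E(f))$, the segment $[1,y]_\alpha$ is a geodesic whose endpoints are within $C$ of $E(f)$. Concatenating it with short paths to $E(f)$ gives a uniform quasi-geodesic with endpoints on $E(f)$, so it lies in $\mathcal N_{\mu(c)}(E(f))$. Only finitely many axes are involved, so all constants depend only on $\f$.
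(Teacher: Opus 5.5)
Your proposal is correct and is essentially the paper's proof: the Morse property gives $C'$ so that geodesics with endpoints in $\mathcal N_C(E(f))$ stay in $\mathcal N_{C'}(E(f))$, bounded intersection of $\f$ bounds $\mathrm{diam}_S\bigl(\mathcal N_{C'}(U)\cap\mathcal N_{C'}(V)\bigr)$, and pigeonhole over the three elements of $F$ finishes. You also make explicit a point the paper leaves implicit: since $1\in E(f)\cap E(f')$, both long intersections are initial segments of $\alpha$ starting at $1$, hence nested. This is what forces the overlap, and it also shows the segment length is exactly the diameter (so $>D$, not merely $D-2C$).
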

\begin{proof}
By Morse property, for any $U\in \f$, there exists $C'>0$ so that any geodesic segment with two endpoints in $\mathcal N_C(U)$ is contained in $\mathcal N_{C'}(U)$.
By the bounded intersection of $\f$, there exists  $D=D(C',\f)$ so that for any $U\ne V\in \f$, $$\mathrm{diam}_S\bigl(\mathcal N_{C'}(U)\cap \mathcal N_{C'}(V)\bigr)\le D.$$  Thus, if a word geodesic $[1,g]_S$  intersects $\mathcal N_C(E(f))$ in a diameter greater than $D$,  the intersection with $\mathcal N_C(E(f'))$ has diameter at most $D$ for any $f'\ne f\in F$. Note that $F$ contains three independent elements. Thus there are at least two elements $f_1,f_2$ from $F$ satisfying (\ref{eq:goodprojectpts1}).  
\end{proof}
\begin{lem}\label{BddprojViaMorse2}
Fix any $D>0$. Let $\alpha$ be a geodesic  from $1$ to $g\in G$ so that  $\mathrm{diam}_S(\alpha\cap \mathcal N_C(H))\le D$ where $H=E(f)$ for some $f\in F$.
Then \begin{align}
\label{eq:goodprojectpts2}\dist_{H}^S(1,g)\le D+C+\kmorse(\|g\|_S) 
\end{align} 
\end{lem}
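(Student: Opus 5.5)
We prove (\ref{eq:goodprojectpts2}) by splitting into two cases, depending on whether the geodesic $\alpha=[1,g]_S$ enters $\mathcal N_C(H)$. Since $H\ni 1$ is a subgroup containing the identity, we have $d_S(1,H)=0$, so $\proj_H^S(1)\subseteq B(1,1)$. Hence $\dist_H^S(1,g)$ is, up to an additive constant absorbed into $\kmorse$, at most $\mathrm{diam}_S(\{1\}\cup\proj_H^S(g))$. It therefore suffices to locate $\proj_H^S(g)$ near $1$.

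\emph{Case 1: $\alpha$ meets $\mathcal N_C(H)$.} The point $1=\alpha^-$ lies in $H$, so $\alpha\cap\mathcal N_C(H)$ contains $1$. Let $y$ be the last point of $\alpha$ in $\mathcal N_C(H)$. By hypothesis $d_S(1,y)\le \mathrm{diam}_S(\alpha\cap\mathcal N_C(H))\le D$. Consider the subgeodesic $[y,g]_\alpha$, read backwards from $g$: it is a geodesic from $g$ that enters $\mathcal N_C(H)$ exactly at its endpoint $y$ (after a harmless reparametrisation by one step). Corollary \ref{SubLinearEntry}, applied in $\mathrm{Cay}(G,S)$ with $\gamma=H$ and the Morse data of Convention \ref{kmorseConv}, gives
\[
\proj_H^S(g)\subseteq \mathcal N_{\kmorse(L)+2C}(y),\qquad L=d_S(g,H)\le \|g\|_S .
\]
Combining this with $d_S(1,y)\le D$ and the monotonicity of $\kmorse$ yields $\mathrm{diam}_S(\{1\}\cup \proj_H^S(g))\le D+2C+\kmorse(\|g\|_S)$ up to the additive constants.

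\emph{Case 2: the endpoint $1$ itself is the only contact.} This is covered by Case 1, since $1\in H$ means $\alpha$ always meets $\mathcal N_C(H)$. Consequently the only genuine issue is the bookkeeping of constants. The statement asks for $D+C$ rather than $D+2C$, but the convention that $\kmorse$ may be enlarged by additive constants depending on $\f$ and $D$ lets us absorb the extra $C$, as well as the $+1$ coming from the definition of $\proj^S$.

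The main point to verify carefully is the legitimacy of invoking Corollary \ref{SubLinearEntry} (equivalently Lemma \ref{SubLinearProj}) in the word metric for $\gamma=H=E(f)$. This requires that $E(f)$ is a Morse quasi-geodesic in $\mathrm{Cay}(G,S)$ with contraction gauge $\kmorse$, which is exactly the content of Convention \ref{kmorseConv}. We also need that the subpath $[g,y]$ stays outside $\mathcal N_C(H)$ except at $y$; this holds by the choice of $y$ as the last entry point.

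If one prefers to avoid the entry-point corollary, an alternative route is available. Apply Lemma \ref{SubLinearProj} to the subgeodesic from $g$ to the point just after $y$, which lies entirely outside $\mathcal N_C(H)$. This gives $\mathrm{diam}(\proj_H^S([y',g]))\le\kmorse(\|g\|_S)$. Then note that $\proj_H^S(y')$ lies within $C+1$ of $y$, which in turn lies within $D$ of $1$.
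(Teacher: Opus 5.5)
Your proposal is correct and follows essentially the paper's argument: the exit point of $\alpha$ from $\mathcal N_C(H)$ lies within $D$ of $1$, and the projection of the remaining subgeodesic is controlled by Lemma \ref{SubLinearProj}; your ``alternative route'' is exactly the paper's proof. Going through Corollary \ref{SubLinearEntry} instead costs only an extra $C$, and, as you note, the paper's convention lets you absorb that into $\kmorse$.
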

\begin{proof}
Let $x$ be the exit point of $[1,g]_S$ in $\mathcal N_C(H)$, so  $d_S(1,x)\le D$ by assumption. Note that $x$ is $C$-close to $\proj^S_{H}(x)$, so $\proj^S_{H}(x)$ is $(D+C)$-close to $1$.  Denote $n=d_S(1,g)$. Using  Lemma \ref{SubLinearProj}, $$\mathrm{diam}_S(\proj^S_H([x,g]_S))\le \kmorse(n).$$ 
Thus, $\dist_{H}^S(1,g)\le \kmorse(n)+D+C$, so (\ref{eq:goodprojectpts2}) is proved.     
\end{proof}

\begin{defn}\label{defn:admissibletuples}
A finite sequence of elements  $(g_1, g_2,  \cdots, g_{n+1})$ in $G$ is said to be \textit{$D$-admissible} relative to $\f$ for some $D>0$ if  for any choice of geodesics $\alpha_i=[1,g_i]_S$  with $1\le i\le n+1$ there exists some $f_i\in F$  so that for $1\le i\le n$,
\begin{enumerate}
    \item 
    $\mathrm{diam}_S\bigl(\alpha_i\cap \mathcal N_C(g_iE(f_{i}))\bigr)\le D$ and $\mathrm{diam}_S\bigl(\alpha_{i+1}\cap \mathcal N_C(E(f_{i}))\bigr)\le D$.
     
    \item 
    $E(f_i)\ne g_{i+1}E(f_{i+1})$ provided that $i<n$.
\end{enumerate}
For concreteness, we also say that $(g_1, g_2,  \cdots, g_{n+1})$ in $G$ is \textit{$D$-admissible} relative to $(E(f_1),\cdots,E(f_n))$.    
\end{defn}

\begin{lem}\label{lem:admissible_tuple}
Let  $D=D(\f)$ be as in Lemma \ref{BddprojViaMorse}.
Then every finite sequence $(g_1, g_2,  \cdots, g_{n+1})$ in $G$ is \textit{$D$-admissible} relative to $\f$. 
\end{lem}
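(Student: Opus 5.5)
We build the elements $f_i$ greedily from left to right, using Lemma \ref{BddprojViaMorse} at each step, and use the fact that $F$ contains three independent elements to keep enough freedom for condition (2).

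Fix $D=D(\f)$ from Lemma \ref{BddprojViaMorse}. Lemma \ref{BddprojViaMorse} is stated for geodesics issuing from $1$. For a geodesic $\alpha=[1,g]_S$ it yields at least two elements $f\in F$ with $\mathrm{diam}_S(\alpha\cap\mathcal N_C(E(f)))\le D$. Its proof gives more: at most one $f\in F$ violates this bound. The reason is that two distinct $U\ne V\in\f$ cannot both meet $\alpha$ in $C$-neighborhoods of diameter $>D$, since such long intersections lie in $\mathcal N_{C'}$ of the respective axes, and $\mathrm{diam}_S(\mathcal N_{C'}(U)\cap\mathcal N_{C'}(V))\le D$.

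The same argument works at the far endpoint $g$ by left-translation. The path $g^{-1}\alpha$ is a geodesic from $g^{-1}$ to $1$, and the translated system $g\f=\f$ is the same family. So among the cosets $gE(f)$, $f\in F$, at most one meets $\alpha$ in $\mathcal N_C$ with diameter $>D$. These cosets are pairwise distinct elements of $\f$, because $E(f)\ne E(f')$ for independent $f\ne f'$.

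For each $i$ and each choice of geodesics $\alpha_i$, we must choose $f_i$ satisfying two conditions.
\begin{enumerate}
\item[(a)] $\alpha_i$ meets $\mathcal N_C(g_iE(f_i))$ in diameter $\le D$. This excludes at most one element of $F$, call it $b_i$.
\item[(b)] $\alpha_{i+1}$ meets $\mathcal N_C(E(f_i))$ in diameter $\le D$. This excludes at most one element of $F$, call it $a_{i+1}$.
\end{enumerate}
Condition (2) adds a third constraint: $E(f_i)\ne g_{i+1}E(f_{i+1})$. Once $f_i$ is chosen, this excludes at most one value of $f_{i+1}$, namely the unique $f$ (if any) with $g_{i+1}E(f)=E(f_i)$. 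Uniqueness holds because the cosets $g_{i+1}E(f)$, $f\in F$, are distinct.

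The constraints on $f_{i+1}$ are exactly these three: $b_{i+1}$, $a_{i+2}$, and the one induced by $f_i$. With only $|F|=3$ elements, a naive greedy choice might exclude all three.

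This is the main obstacle, and we handle it by showing that two of the exclusions coincide. Suppose $g_{i+1}E(f)=E(f_i)$. Condition (b) for $f_i$ says $\alpha_{i+1}$ meets $\mathcal N_C(E(f_i))=\mathcal N_C(g_{i+1}E(f))$ in diameter $\le D$. So this $f$ already satisfies condition (a) at step $i+1$, and hence $f\ne b_{i+1}$.

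The conflicting value therefore lies in the complement of $\{b_{i+1}\}$. If it also equals $a_{i+2}$, only two values are excluded in total. If not, we exploit the remaining freedom at step $i$. There, at most $b_i$ and the condition-(2) constraint from $f_{i-1}$ are excluded, together with $a_{i+1}$. When there is a genuine choice, we pick $f_i$ so that $E(f_i)$ is not any coset $g_{i+1}E(f)$ with $f\notin\{b_{i+1},a_{i+2}\}$.

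If this bookkeeping fails, the fallback is a backtracking or induction argument with the invariant that at each stage at least one valid continuation exists. We would phrase this as a path in a finite graph whose vertices are the elements of $F$ and whose edges between consecutive positions encode compatibility. Each vertex should have out-degree at least one, which follows from the counting above, giving the existence of the sequence $(f_1,\dots,f_n)$ by induction on $n$.
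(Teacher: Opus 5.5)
There is a genuine gap at exactly the step you flag as the main obstacle. Your ``coincidence'' argument proves the opposite of what you need. From $g_{i+1}E(f)=E(f_i)$ you conclude $f\neq b_{i+1}$, so the exclusion coming from condition (2) is \emph{distinct} from $b_{i+1}$ and the count is not reduced. When it also differs from $a_{i+2}$, all three elements of $F$ are excluded, and neither fallback repairs this. The ``remaining freedom at step $i$'' may not exist, since step $i$ can itself face three distinct constraints ($b_i$, $a_{i+1}$, and the one induced by $f_{i-1}$) and leave a single admissible value. Likewise, the out-degree-at-least-one claim for your compatibility graph does not follow from ``the counting above''; that counting explicitly allows out-degree zero.

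The missing idea is that a conflict in condition (2) forces $\alpha_{i+1}$ to be short.

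If $g_{i+1}E(f)=E(f_i)$, then $g_{i+1}\in E(f_i)$ and $E(f)=g_{i+1}^{-1}E(f_i)=E(f_i)$, so $f=f_i$ by independence. Both endpoints of $\alpha_{i+1}=[1,g_{i+1}]_S$ then lie in $E(f_i)$, so Convention~\ref{kmorseConv} gives $\alpha_{i+1}\subseteq\mathcal N_C(E(f_i))$. Your condition (b) for $f_i$ then yields $\|g_{i+1}\|_S=\mathrm{diam}_S(\alpha_{i+1})\le D$. Hence condition (a) at step $i+1$ holds for \emph{every} $f\in F$, and $b_{i+1}$ does not exist.

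So there are two cases:
\begin{enumerate}
\item[(i)] If $g_{i+1}\notin E(f_i)$, condition (2) is automatic and only $b_{i+1},a_{i+2}$ are excluded.
\item[(ii)] If $g_{i+1}\in E(f_i)$, condition (a) is vacuous and only $f_i,a_{i+2}$ are excluded.
\end{enumerate}
Either way at least one of the three elements survives, and the plain left-to-right greedy choice works with no backtracking. This is the paper's argument: pick a common good $f_2$; if $E(f_1)=g_2E(f_2)$, note that $d_S(1,g_2)\le D$ and replace $f_2$ by some $f_2'\neq f_2$ that is good at the initial end of $\alpha_3$.

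A minor point: you justify ``at most one $f$ violates'' for arbitrary $U\neq V\in\f$, where it is false. It holds because all the cosets in question pass through a common endpoint of $\alpha$. With $|F|=3$ it is anyway just a restatement of Lemma~\ref{BddprojViaMorse}.
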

\begin{proof}
It is almost a repeated application of Lemma \ref{BddprojViaMorse}, but we need to take care of the condition (2): $E(f_i)\ne g_{i+1}E(f_{i+1})$. We explain this choice for $(g_1, g_2,g_3)$ and the general case follows inductively.

Fix $\alpha_1=[1,g_1]_S$, $\alpha_2=[1,g_2]_S$ and $\alpha_3=[1,g_3]_S$.
Applying first  Lemma \ref{BddprojViaMorse}  for $g_1^{-1}$ and $g_2$, we may choose a common $f_1\in F$ so that $$\mathrm{diam}_S\bigl(\alpha_1\cap \mathcal N_C(g_1E(f_1))\bigr),\;\mathrm{diam}_S\bigl(\alpha_2\cap \mathcal N_C(E(f_1))\bigr)\le D,$$ and then for $g_2^{-1}$ and $g_3$, a common $f_2$ so that $$\mathrm{diam}_S\bigl(\alpha_2\cap \mathcal N_C(g_2E(f_2))\bigr),\;\mathrm{diam}_S\bigl(\alpha_3\cap \mathcal N_C(E(f_2))\bigr)\le D.$$   We need to verify additionally that $E(f_1)\ne g_{2}E(f_{2})$. If $f_1\ne f_2$, then  $E(f_1)\ne g_{2}E(f_{2})$ holds by the independent assumption. Thus, if  $E(f_1)= g_{2}E(f_{2})$, then $f_1=f_2$ and $g_2\in E(f_1)$. It follows by Convention \ref{kmorseConv} that $\alpha_2\subset \mathcal N_C(E(f_1))$, so $d_S(1,g_2)\le D$.

In this case we may choose $f_2'\in F\setminus \{f_2\}$ by  Lemma \ref{BddprojViaMorse} so that $\mathrm{diam}_S\bigl(\alpha_3\cap \mathcal N_C(E(f_2'))\bigr)\le D$. Since $F$ is a finite set of independent elements, we may increase $D$ if necessary so that $$\mathrm{diam}_S(\proj_{E(f_2')}(\mathcal N_C(E(f_1))))\le D$$ so $\mathrm{diam}_S\bigl(\alpha_2\cap \mathcal N_C(g_2E(f_2'))\bigr)\le D$. Necessarily, $E(f_1)\ne g_{2}E(f_{2}')$ by independence, so the conditions (1) and (2) are fulfilled for $(\alpha_1,\alpha_2,\alpha_3)$.  Thus, the proof is finished by induction.    
\end{proof}

The following two results may be viewed as a sublinear analogue of   Extension Lemma in \cite{YANG10}. There are two principal differences. First, the required lengths of the inserted Morse elements are sublinear in the lengths of the elements being concatenated and also depend on the length of the admissible sequence. Second, the fellow-travelling conclusion has sublinear error, whereas the original lemma has a uniform additive error.

For ease of exposition, we start with the basic case of concatenating two elements. 

\begin{lem}\label{ExtensionViaMorse}
For any $D>0,$  let $(g_1,g_2)$ be a $D$-admissible tuple in $G$ relative to $E(f)$ for some $f\in F$. Denote  $n=\max\{\|g_1\|_S,\|g_2\|_S\}$. For every $h\in E(f)$ satisfying 
\begin{align}\label{eq:longMorseInsertion1}
\|h\|_S  \ge  4\kmorse(n),
\end{align} 
the path labeled by  the word $(g_1,h,g_2)$ \emph{sublinearly fellow travels} any geodesic $[1,g_1hg_2]_S$; namely,  
\begin{enumerate}
    \item 
    $[1,g_1hg_2]_S$ intersects $\mathcal N_C(g_1E(f))$ in the entry and exit points denoted as $z,w$;
    \item 
    $d_S(g_1,z)\le 3\kmorse(\|g_1\|_S)$ and $d_S(g_1h,w)\le 3\kmorse(\|g_2\|_S)$. 
\end{enumerate}
In particular,  $$
\Big | \|g_1\|_S+\|h\|_S+\|g_2\|_S - \|g_1hg_2\|_S \Big|\le 12\kmorse(n).
$$
\end{lem}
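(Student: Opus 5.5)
The plan is to work in the coset $U:=g_1E(f)$ and to compare the projections of the three points $1$, $g_1hg_2$, and the segment between them onto $U$, using the sublinear contraction of $\proj^S_U$ from Convention \ref{kmorseConv}. First I will translate the admissibility hypothesis into projection estimates. Applying Lemma \ref{BddprojViaMorse2} to the reversed geodesic $g_1^{-1}\alpha_1$, which joins $1$ to $g_1^{-1}$ and meets $\mathcal N_C(E(f))$ in diameter at most $D$, gives $\dist^S_U(g_1,1)\le D+C+\kmorse(\|g_1\|_S)$. Similarly, $g_1h\alpha_2$ joins $g_1h\in U$ to $g_1hg_2$ and meets $\mathcal N_C(U)$ in diameter at most $D$, so $\dist^S_U(g_1h,g_1hg_2)\le D+C+\kmorse(\|g_2\|_S)$. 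After absorbing $D+C$ into $\kmorse$ as allowed, the projections of $1$ and $g_1hg_2$ to $U$ lie within $\kmorse(\|g_1\|_S)$ of $g_1$ and within $\kmorse(\|g_2\|_S)$ of $g_1h$, respectively.

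Next I will show that $[1,g_1hg_2]_S$ enters $\mathcal N_C(U)$. By the triangle inequality for projection distances,
\[
\dist^S_U(1,g_1hg_2)\ge \|h\|_S-\kmorse(\|g_1\|_S)-\kmorse(\|g_2\|_S)\ge 2\kmorse(n),
\]
using (\ref{eq:longMorseInsertion1}) and monotonicity of $\kmorse$. Since $d_S(1,U)\le\|g_1\|_S\le n$ and $d_S(g_1hg_2,U)\le \|g_2\|_S\le n$, this exceeds $\max\{\kmorse(L_1),\kmorse(L_2)\}$. Corollary \ref{SubLinearContr} therefore gives the entry point $z$ and the exit point $w$, which proves item (1). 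If $\|h\|_S$ is small relative to the constants absorbed into $\kmorse$, I first enlarge $\kmorse$ by a constant so that the strict inequality still holds.

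For item (2), I will apply Corollary \ref{SubLinearEntry} to the initial subgeodesic $[1,z]$, which enters $\mathcal N_C(U)$ at $z$. This places $\proj^S_U(1)$ within $\kmorse(L_1)+2C$ of $z$, with $L_1\le\|g_1\|_S$. Combining this with the first step yields $d_S(g_1,z)\le 2\kmorse(\|g_1\|_S)+2C\le 3\kmorse(\|g_1\|_S)$ once the constant is absorbed. Running the same argument for the reversed geodesic from $g_1hg_2$ to $w$ gives the bound on $d_S(g_1h,w)$. The main obstacle is this constant bookkeeping, together with making sure the reversal step legitimately uses Lemma \ref{BddprojViaMorse2} on translated geodesics, so that the stated factor $3$ comes out rather than a larger multiple.

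Finally, for the length estimate I will write $\|g_1hg_2\|_S=d(1,z)+d(z,w)+d(w,g_1hg_2)$ along the geodesic. Each of the three terms differs from $\|g_1\|_S$, $\|h\|_S$ and $\|g_2\|_S$, respectively, by at most the sum of the relevant endpoint errors $3\kmorse(\|g_1\|_S)$ and $3\kmorse(\|g_2\|_S)$. The middle term picks up both errors, so the total discrepancy is at most $4\cdot 3\kmorse(n)=12\kmorse(n)$.
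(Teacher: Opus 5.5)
Your proposal is correct and follows the paper's proof essentially step by step. You convert admissibility into projection bounds near $g_1$ and $g_1h$ via Lemma \ref{BddprojViaMorse2}, use the triangle inequality for projection distances together with $\|h\|_S\ge 4\kmorse(n)$ to force $[1,g_1hg_2]_S$ into $\mathcal N_C(g_1E(f))$, and use sublinear contraction to place $z$ and $w$ near $g_1$ and $g_1h$. The only difference is cosmetic: you invoke Corollaries \ref{SubLinearContr} and \ref{SubLinearEntry} directly, whereas the paper argues by contradiction with Lemma \ref{SubLinearProj}, and these are equivalent.
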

\begin{proof}
Assume by the convention above that
$\kmorse(r)>2D+2C$ for every $r$. Write $H=g_1E(f)$. Let us assume $\mathcal N_C(H)\cap [1,g_1hg_2]_S= \emptyset$ to derive a contradiction. By Lemma \ref{SubLinearProj}, $\dist^S_{H}(1,g_1hg_2)\le \kmorse(n).$ By Lemma \ref{BddprojViaMorse2}, $\dist_{H}^S(1,g_1)\le \kmorse(\|g_1\|_S)+D+C$ and $\dist_{H}^S(g_1h,g_1hg_2)\le \kmorse(\|g_2\|_S)+D+C$. Hence, 
$$\begin{aligned}
d_S(g_1,g_1h) &\le \dist_H^S(g_1,g_1h) \\
& \le \dist_H^S(g_1,1)+\dist_H^S(1,g_1hg_2)+\dist_H^S(g_1hg_2,g_1h)\\
&\le  3\kmorse(n)+2D+2C    
\end{aligned}$$ Thus $d_S(g_1,g_1h)< 4\kmorse(n)$, contradicting the choice of $h$ in Eq. (\ref{eq:longMorseInsertion1}). Hence, the assertion (1) follows.

To see the assertion (2), let $z,w$ be the entry and exit points of $[1,g_1hg_2]_S$ in $\mathcal N_C(H)$ respectively. By Lemma \ref{SubLinearProj} again, $\dist_{H}^S(1,z)\le \kmorse(\|g_1\|_S)$, so  $$\begin{aligned}
d_S(g_1,z)&\le C+ \dist_{H}^S(g_1,1)+\dist_{H}^S(1,z)\\
&\le 2\kmorse(\|g_1\|_S)+D+2C \le 3\kmorse(\|g_1\|_S)    
\end{aligned}$$ Similarly, one obtains $d_S(g_1h,w)\le 3\kmorse(\|g_2\|_S)$. Thus, the assertion (2) is proved. The ``in particular" follows by applying twice triangle inequality.
\end{proof}

The same argument applies to arbitrarily long admissible sequences, although the required lengths of the inserted elements depend on the length of the sequence. In later applications, sequences of length three suffice.

\begin{lem}\label{ExtensionViaMorse2}
For any $D>0$, assume that a triple  of elements $(g_1,g_2, g_3)$ in $G$ is $D$-admissible relative to $(E(f_1),E(f_2))$ for some $f_1,f_2\in F$. 

If $n=\max\{\|g_i\|_S: i=1,2,3\}$ is sufficiently large, then for every $h_1\in E(f_1), h_2\in E(f_2)$ satisfying 
\begin{align}\label{eq:longMorseInsertion2}
n\,\ge \,\max\{\|h_1\|_S,\|h_2\|_S\}\,\ge \,\min\{\|h_1\|_S,\|h_2\|_S\}\,>\,  10\kmorse(n),    
\end{align} the  path labeled by  $(g_1, h_1,g_2,h_2, g_3)$ \emph{sublinearly fellow travels} any geodesic $[1,g_1h_1g_2h_2g_3]_S$:  
\begin{enumerate}
    \item 
    $[1,g_1h_1g_2h_2g_3]_S$ intersects $\mathcal N_C(g_1E(f_1))$ and $\mathcal N_C(g_1h_1g_2E(f_2))$ in the entry and exit points denoted as $z_1,w_1$ and $z_2,w_2$ respectively;
    \item 
    $d_S(g_1,z_1)\le 3\kmorse(\|g_1\|_S)$, $d_S(g_1h_1g_2h_2,w_2)\le 3\kmorse(\|g_3\|_S)$, and  $$d_S(g_1h_1,w_1),\,d_S(g_1h_1g_2,z_2)\le 10\kmorse(n).$$ 
\end{enumerate}  
\end{lem}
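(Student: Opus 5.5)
The plan is to run the argument of Lemma \ref{ExtensionViaMorse} twice: once for each axis $H_1=g_1E(f_1)$ and $H_2=g_1h_1g_2E(f_2)$. The new ingredient needed is control of the projection of the two far endpoints onto each axis, and here the middle element $g_2$, together with the bounded projection between distinct axes, replaces the role of a single neighbour. Set $\gamma=[1,g_1h_1g_2h_2g_3]_S$ and $N=\|g_1h_1g_2h_2g_3\|_S\le 5n$. The sublinearity and concavity \eqref{Eq:Concave} of $\kmorse$ give $\kmorse(5n)\le 5\kmorse(n)$. This factor is why the constant $10$ appears in \eqref{eq:longMorseInsertion2}, and the hypothesis ``$n$ sufficiently large'' is used to absorb the additive constants $D$, $C$ into $\kmorse(n)$.

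\textbf{Step 1 (the far endpoint projects near the axis-exit).} I will show $\dist^S_{H_1}(g_1h_1,\,g_1h_1g_2h_2g_3)\le 3\kmorse(n)+2D+2C$, and symmetrically $\dist^S_{H_2}(g_1h_1g_2,\,1)\le 3\kmorse(n)+2D+2C$. By admissibility condition (1), Lemma \ref{BddprojViaMorse2} gives $\dist_{H_1}^S(g_1h_1,g_1h_1g_2)\le\kmorse(\|g_2\|_S)+D+C$. For the remaining piece I apply Lemma \ref{ExtensionViaMorse} to the pair $(g_2,h_2,g_3)$ translated by $g_1h_1$. This shows that $[g_1h_1g_2,\,g_1h_1g_2h_2g_3]_S$ passes within $3\kmorse(n)$ of $g_1h_1g_2$ and of $g_1h_1g_2h_2$, and otherwise fellow travels $H_2$. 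The projection of this geodesic to $H_1$ then splits into three parts. The initial stretch has projection $\le\kmorse$ by Lemma \ref{SubLinearProj}. The part near $H_2$ has projection $\le\kmorse_2(d_S(H_1,H_2))$, which is bounded by Lemma \ref{SublBddProj4H} applied to $H_1\neq H_2$; this distinctness is exactly admissibility condition (2) after translation. The final stretch starts at a point whose $H_1$-projection is already controlled. Summing these gives the claim. The argument for $H_2$ is symmetric, using the reversed word.

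\textbf{Step 2 (entry into each neighbourhood).} Suppose $\gamma\cap\mathcal N_C(H_1)=\emptyset$. Lemma \ref{SubLinearProj} then gives $\dist^S_{H_1}(1,\text{endpoint})\le\kmorse(N)\le5\kmorse(n)$. The triangle inequality for $\dist_{H_1}^S$ combines this with Step 1 and with $\dist_{H_1}^S(1,g_1)\le\kmorse(\|g_1\|_S)+D+C$ (Lemma \ref{BddprojViaMorse2}). The result is $\|h_1\|_S\le d_S(g_1,g_1h_1)\le 9\kmorse(n)+$const, which is smaller than $10\kmorse(n)$ for large $n$, contradicting \eqref{eq:longMorseInsertion2}. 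The same argument handles $H_2$. This proves assertion (1).

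\textbf{Step 3 (locating entry and exit points).} Let $z_1,w_1$ be the entry and exit points of $\gamma$ in $\mathcal N_C(H_1)$. Exactly as in the proof of Lemma \ref{ExtensionViaMorse}, Lemma \ref{SubLinearProj} applied to $[1,z_1]_S$ gives $d_S(g_1,z_1)\le C+\dist_{H_1}^S(g_1,1)+\dist^S_{H_1}(1,z_1)\le 3\kmorse(\|g_1\|_S)$. For $w_1$, I use $[w_1,\text{endpoint}]_S$, whose $H_1$-projection is $\le\kmorse(N)$. Combined with Step 1, this gives $d_S(g_1h_1,w_1)\le C+5\kmorse(n)+3\kmorse(n)+$const$\;\le10\kmorse(n)$. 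The bound for $z_2$ is symmetric, and the bound for $w_2$ is the one-sided estimate as for $z_1$.

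The main obstacle is Step 1, specifically controlling the $H_1$-projection of the stretch of the sub-geodesic that fellow travels $H_2$. Lemma \ref{SublBddProj4H} bounds $\proj_{H_1}(H_2)$ only by $\kmorse_2(d_S(H_1,H_2))$, and $d_S(H_1,H_2)$ could be large; when it is, however, $g_2$ is long and the bound is still $\le\kmorse(n)$ up to constants. I would also need to check that the constants close up under the factor-of-$10$ hypothesis. If they do not, I would enlarge $\kmorse$ by the admitted additive constant, or restrict to sufficiently large $n$ as the statement permits.
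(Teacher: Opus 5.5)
There is a genuine gap in Step 1, at the ``final stretch''. Lemma \ref{SubLinearProj} bounds the $H_1$-projection of a geodesic only if that geodesic avoids $\mathcal N_C(H_1)$. Knowing that its initial point has controlled projection says nothing about the rest of it.

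Nothing in your argument prevents the $g_3$-piece $g_1h_1g_2h_2[1,g_3]_S$, or the tail of $[g_1h_1,g_1h_1g_2h_2g_3]_S$ after $w$, from returning to $\mathcal N_C(H_1)$. Admissibility only controls its intersection with $\mathcal N_C(H_2)$. If it does return, $\dist^S_{H_1}(g_1h_1,g_1h_1g_2h_2g_3)$ is uncontrolled and Steps 2 and 3 collapse. So does your symmetric claim for $H_2$, which needs the $g_1$-piece to avoid $\mathcal N_C(H_2)$.

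Ruling this out is the real content of the paper's proof, and it is where $\|h_2\|_S>10\kmorse(n)$ and $H_1\ne H_2$ are used. Work in coordinates translated by $g_1^{-1}$, and let $x$ be the exit point of $h_1[1,g_2]_S\cdot h_1g_2[1,h_2]_S\cdot h_1g_2h_2[1,g_3]_S$ from $\mathcal N_C(H_1)$. Suppose $x$ lay on the $g_3$-piece.
\begin{itemize}
\item The geodesic $\delta=[h_1,x]_S$ would lie in $\mathcal N_{C_1}(H_1)$ by the Morse property, with $C_1$ a uniform constant.
\item On the other hand, admissibility with Lemma \ref{BddprojViaMorse2} gives $\dist^S_{H_2}(h_1,x)\ge\|h_2\|_S-2(D+C+\kmorse(n))$.
\item So by Lemma \ref{SubLinearProj}, $\delta$ enters $\mathcal N_C(H_2)$ and meets it in a set of diameter roughly $\|h_2\|_S-6\kmorse(n)$. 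This exceeds the bounded-intersection constant for $\mathcal N_C(H_2)\cap\mathcal N_{C_1}(H_1)$.
\end{itemize}
Hence the $g_3$-piece avoids $\mathcal N_C(H_1)$, and Lemma \ref{SubLinearProj} gives the bound $\kmorse(3n)$ for it.

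Your invocation of Lemma \ref{ExtensionViaMorse} could have supplied exactly this. Its points $z,w\in\mathcal N_C(H_2)$ with $d_S(z,w)\ge\|h_2\|_S-6\kmorse(n)$ are what make a return to $\mathcal N_C(H_1)$ impossible. You use it only for bookkeeping.

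Two smaller points. First, the obstacle you flagged is not one. Since $g_1h_1g_2,g_1h_1g_2h_2\in H_2$ and $d_S(H_1,H_2)\le\|g_2\|_S\le n$, Lemma \ref{SublBddProj4H} gives $\dist^S_{H_1}(g_1h_1g_2,g_1h_1g_2h_2)\le\kmorse(n)$ directly. Passing through the entry and exit points $z,w$ instead only adds error terms. Also, the geodesic produced by Lemma \ref{ExtensionViaMorse} is $[g_1h_1,g_1h_1g_2h_2g_3]_S$, not $[g_1h_1g_2,\cdot]_S$. Its initial stretch starts on $H_1$, so Lemma \ref{SubLinearProj} does not apply to it.

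Second, your constants do not close. The three pieces give $5\kmorse(n)+D+C$, not $3\kmorse(n)+2D+2C$. With the crude bound $\kmorse(N)\le5\kmorse(n)$, Steps 2 and 3 then produce $11\kmorse(n)$ and $10\kmorse(n)+\mathrm{const}$, neither of which beats the hypothesis. The endpoint distances to $H_1$ are at most $n$ and $3n$, so Lemma \ref{SubLinearProj} actually gives $\kmorse(3n)\le3\kmorse(n)$. This brings both totals to $9\kmorse(n)$ plus additive constants. A multiplicative overrun cannot be absorbed by adding a constant to $\kmorse$.
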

\begin{proof} 
Let $C_1$ be such that every geodesic with endpoints in
$\mathcal N_C(U)$ is contained in $\mathcal N_{C_1}(U)$ for $U\in\f$, and
let $L$ bound
\[
\mathrm{diam}_S\bigl(\mathcal N_C(U)\cap\mathcal N_{C_1}(V)\bigr)
\]
over distinct $U,V\in\f$. Using the convention above, assume that
$\kmorse(r)>2D+8C+L$ for every $r$.
Choose $n_1>C$ so that $10\kmorse(r)<r$ for every $r\ge n_1$.
Let $\gamma=[1,g_1h_1g_2h_2g_3]_S$ be any geodesic. The proof strategy is the same as Lemma \ref{ExtensionViaMorse} but with more involved estimates. The conclusion holds up to isometry, so we may apply $g_1^{-1}$ to the labeled path and consider its  subpath  $$\gamma_1:=h_1[1,g_2]_S\cdot (h_1g_2[1, h_2]_S)\cdot (h_1g_2h_2[1, g_3]_S)$$ Denote $H_1=E(f_1)$ and $H_2=h_1g_2E(f_2)$; these are distinct by admissibility. We use the same notation for points on $g_1^{-1}\gamma$ and their translates back to $\gamma$. We shall prove that $\gamma_1$ has a sublinearly bounded projection to $H_1$. To this end, let $x$ be the exit point of $\gamma_1$ in $\mathcal N_C(H_1)$. We first prove that $x$ must be contained in the subpath of $\gamma_1$ $$\gamma_2:=h_1[1,g_2]_S\cdot (h_1g_2[1, h_2]_S)$$

Indeed, suppose that $x$ lies on
$\gamma_1\setminus\gamma_2=h_1g_2h_2[1,g_3]_S$, and write
$x=h_1g_2h_2t$, where $t$ is a vertex of $[1,g_3]_S$. By
$D$-admissibility and Lemma \ref{BddprojViaMorse2},
\[
\dist_{H_2}^S(h_1,h_1g_2),\quad
\dist_{H_2}^S(h_1g_2h_2,x)\le D+C+\kmorse(n).
\]
Let $\delta=[h_1,x]_S$ be any geodesic. If $\delta$ were disjoint from
$\mathcal N_C(H_2)$, Lemma \ref{SubLinearProj} would give
$\dist_{H_2}^S(h_1,x)\le\kmorse(n)$, contradicting
\[
\dist_{H_2}^S(h_1,x)
\ge d_S(1,h_2)-2(D+C+\kmorse(n)).
\]
Let $a,b$ be the entry and exit points of $\delta$ in
$\mathcal N_C(H_2)$. Applying Lemma \ref{SubLinearProj} to the two outside
subsegments and using the preceding inequalities gives
\[
\dist_{H_2}^S(a,b)
\ge d_S(1,h_2)-4\kmorse(n)-2D-2C.
\]
Since $a,b\in\mathcal N_C(H_2)$, it follows that
\[
\mathrm{diam}_S\bigl(\delta\cap\mathcal N_C(H_2)\bigr)
\ge d_S(1,h_2)-6\kmorse(n),
\]
where fixed constants are absorbed according to the convention above.
On the other hand, since $H_1$ is Morse, $\delta$ is contained in
$\mathcal N_{C_1}(H_1)$. By the choice of $L$,
\[
\mathrm{diam}\bigl(\mathcal N_C(H_2)\cap
\mathcal N_{C_1}(H_1)\bigr)\le L.
\]
Then $d_S(1,h_2)-6\kmorse(n)>L$ by
Eq. (\ref{eq:longMorseInsertion2}), a contradiction. Hence, $x$ lies on
$\gamma_2$, so $h_1g_2h_2[1, g_3]_S\cap \mathcal N_C(H_1)=\emptyset$.  By Lemma \ref{SubLinearProj}, $$\dist_{H_1}^S(h_1g_2h_2,h_1g_2h_2g_3)\le \kmorse(3n)\le 3\kmorse(n)$$ Further, the $D$-bounded intersection $[h_1,g_2]_S\cap \mathcal N_C(H_1)$ with Lemma \ref{BddprojViaMorse2} implies $\dist_{H_1}^S(h_1,h_1g_2)\le C+D+\kmorse(\|g_2\|_S)$. We now bound the projection of $\gamma_1$ to $H_1$ as follows 
\begin{equation}\label{eq:gamma1toH1}\begin{aligned}
&\quad \dist_{H_1}^S(h_1,h_1g_2h_2g_3)\\
\le & \quad \dist_{H_1}^S(h_1,h_1g_2)+\dist_{H_1}^S(h_1g_2,h_1g_2h_2)+\dist_{H_1}^S(h_1g_2h_2,h_1g_2h_2g_3)\\
\le & \quad   (C+D+\kmorse(n))+\kmorse(n)+3\kmorse(n)\\
\le & \quad C+D+5\kmorse(n)   
\end{aligned}\end{equation}
where $\dist_{H_1}^S(h_1g_2,h_1g_2h_2)\le \mathrm{diam}_S\bigl(\proj_{H_1}^S(H_2)\bigr)\le \kmorse(n)$ by Lemma \ref{SublBddProj4H}.
\medskip

\noindent The remaining proof proceeds to show that $g_1^{-1}\gamma$ intersects $\mathcal N_C(H_1)$ and $\mathcal N_C(H_2)$. 
\medskip

\noindent \textbf{(1).}  We first see $\mathcal N_C(H_1)\cap g_1^{-1}\gamma\ne \emptyset$. Indeed, if not,  Lemma \ref{SubLinearProj} gives $$\dist_{H_1}^S(g_1^{-1},h_1g_2h_2g_3)\le \kmorse(3n)\le 3\kmorse(n)$$ The $D$-bounded intersection of $[1,g_1]_S\cap \mathcal N_C(H_1)$  with Lemma \ref{BddprojViaMorse2} implies $$\dist_{H_1}^S(g_1^{-1},1)\le C+D+\kmorse(\|g_1\|_S)$$ Combining these inequalities with Eq. (\ref{eq:gamma1toH1}),
$$
\begin{aligned}
d_S(1,h_1)&\le \dist_{H_1}^S(1,g_1^{-1})+\dist_{H_1}^S(g_1^{-1},h_1g_2h_2g_3)+\dist_{H_1}^S(h_1,h_1g_2h_2g_3)\\
&\le 2(C+D)+9\kmorse(n).
\end{aligned}$$ This contradicts $d_S(1,h_1)>10\kmorse(n)$ in Eq. (\ref{eq:longMorseInsertion2}). Hence, $\mathcal N_C(H_1)\cap g_1^{-1}\gamma\ne \emptyset$. 

\medskip
\noindent
Let $z_1,w_1$ be the entry and exit points of $g_1^{-1}\gamma$ in $\mathcal N_C(H_1)$ respectively.  So  Lemma \ref{SubLinearProj} gives $\dist_{H_1}^S(g_1^{-1},z_1)\le \kmorse(\|g_1\|_S)$ and $\dist_{H_1}^S(w_1,h_1g_2h_2g_3)\le \kmorse(3n)\le 3\kmorse(n)$. We first bound  $$\begin{aligned}
d_S(1,z_1)&\le C+ \dist_{H_1}^S(g_1^{-1},1)+\dist_{H_1}^S(g_1^{-1},z_1)\\
&\le 2\kmorse(\|g_1\|_S)+D+2C \le 3\kmorse(\|g_1\|_S)    
\end{aligned}$$
and using (\ref{eq:gamma1toH1}),  $$\begin{aligned}
d_S(h_1,w_1)&\le C+ \dist_{H_1}^S(h_1,h_1g_2h_2g_3)+\dist_{H_1}^S(h_1g_2h_2g_3,w_1)\\
&\le 2C+D+8\kmorse(n) \le 9\kmorse(n).    
\end{aligned}$$

\noindent \textbf{(2).} Let us first assume to the contrary that $g_1^{-1}\gamma$ is disjoint with $\mathcal N_C(H_2)$.  Choose some $w_1'\in H_1$ so that $d_S(w_1,w_1')\le C$. If  $[w_1,w_1']_S$ meets $N_C(H_2)$, then $\dist_{H_2}^S(w_1,w_1')\le (3C+2)$ by Eq.~\ref{eq:proj_in_nbld}. Otherwise, since
\[
d_S(w_1,H_2)\le d_S(w_1,h_1)+d_S(1,g_2)
\le9\kmorse(n)+n\le2n,
\] we obtain $\dist_{H_2}^S(w_1,w_1')\le\kmorse(2n)$ by the sublinear contraction. Thus,  
$\dist_{H_2}^S(w_1,w_1')\le \max\{\kmorse(2n),(3C+2)\}<3\kmorse(n)$.
We see that $$
\begin{aligned}
\dist_{H_2}^S(w_1,h_1g_2)&\le \dist_{H_2}^S(w_1,w_1')+\mathrm{diam}_S(\proj_{H_2}(H_1))+\dist_{H_2}^S(h_1,h_1g_2)\\
&\le D+C+5\kmorse(n).    
\end{aligned}$$ 
Here, $\dist_{H_2}^S(h_1,h_1g_2), \dist_{H_2}^S(h_1g_2h_2,h_1g_2h_2g_3)\le D+C+ \kmorse(n)$ by the admissible condition at $H_2$ with Lemma \ref{BddprojViaMorse2}. Moreover, Lemma \ref{SubLinearProj} gives $$\dist_{H_2}^S(w_1,h_1g_2h_2g_3) \le \kmorse(2n)\le 2\kmorse(n).$$ Combining these inequalities gives $$
\begin{aligned}
d_S(1,h_2)&\le \dist_{H_2}^S(h_1g_2,h_1g_2h_2)\\
&\le \dist_{H_2}^S(h_1g_2,w_1)+\dist_{H_2}^S(w_1,h_1g_2h_2g_3)+\dist_{H_2}^S(h_1g_2h_2g_3,h_1g_2h_2)\\
&\le  2D+2C+8\kmorse(n)\le 9\kmorse(n).
\end{aligned}$$ 
This contradicts $d_S(1,h_2)>10\kmorse(n)$ in Eq. (\ref{eq:longMorseInsertion2}).  
Consider the entry and exit points $z_2,w_2$ of $g_1^{-1}\gamma$ in $\mathcal N_C(H_2)$.
Reversing the orientation of $\gamma$, $H_1$ and $H_2$ are symmetric positions on $\gamma$, so we may obtain similarly as above: $d_S(h_1g_2,z_2)\le 9\kmorse(n)$ and $d_S(h_1g_2h_2,w_2)\le 3\kmorse(\|g_3\|_S)$. Hence, the assertion (2) follows and the proof is complete.
\end{proof}

\section{Sublinear projection tracking in a proper action}\label{sec:SPTProperAction}

Let $G$ act by isometry on a metric space $(X,d_X)$ with a finite generating set $S$. We recall the   notion of sublinear projection tracking given in Introduction. 

\begin{defn}\label{SublTrackDefn}
Let $H\subseteq G$ be a subset and $g\in G$ be any element. 
\begin{enumerate}
    \item 
    The \textit{$d_S$-projection} $\proj^S_{H}(g)$   is  the set of elements $h\in H$ so that $h$ is a shortest $d_S$-projection point of $g$ to $H$: $d_S(g,h)=d_S(g,H)$.
    
    \item The \textit{$d_X$-projection} $\proj^X_{H}(g)$  is  the set of elements $h\in H$ so that $ho$ is a shortest $d_X$-projection point of $go$ to $Ho$: $d_X(go, ho)=d_X(go,Ho)$.  
\end{enumerate}
By definition,  $\proj^X_{fH}(fg)=f\cdot \proj^X_{H}(g)$ and $\proj^S_{fH}(fg)=f\cdot \proj^S_{H}(g)$  for any $f\in G$.
\end{defn}

The sublinear tracking property relates these projections up to a sublinear error.
\begin{defn}\label{SublinearTrackingDefn}
Let $\ktrack$ be a sublinear function.  
We say that the $d_S$-projection   \textit{$\ktrack$-tracks} the $d_X$-projection along $H$ if for any $g\in G$, 
\begin{align}
\label{SublTrackEq}
\mathrm{diam}_S\Big(\proj^X_{H}(g)\cup \proj^S_{H}(g)\Big)\le \ktrack\left(d_S(g,H)\right)   
\end{align}
Further, the $d_S$-projection   \textit{$\ktrack$-tracks} the $d_X$-projection along a family of subsets $\f=\{H_i\subseteq G:i\in I\}$ if it does so along each member $H\in \f$.

We also say that the action $G\act X$ has \textit{sublinear projection tracking} along $H$ (respectively $\f$).
\end{defn}
\begin{rem}
By triangle inequality,  $$\Big|d_S(g,H)-d_S(g,\proj^X_{H}(g))\Big|\le \mathrm{diam}_S\Big(\proj^X_{H}(g)\cup \proj^S_{H}(g)\Big)$$ 
Equivalently, the inequality (\ref{SublTrackEq})  may be  replaced  with the following
\begin{align}
\label{SublTrackEq2}
\mathrm{diam}_S\Big(\proj^X_{H}(g)\cup \proj^S_{H}(g)\Big)\le \ktrack'\left(d_S(g,\proj^X_{H}(g))\right)
\end{align}
for some $\ktrack'$ depending only on $\ktrack$. 
\end{rem}

The point of sublinear projection tracking is as follows. 
If we denote $$\dist_{H}^S(g,h)=\mathrm{diam}_S\Big(\proj^S_{H}(g)\cup \proj^S_{H}(h)\Big)\quad \text{and}\quad\dist_{H}^X(g,h)=\mathrm{diam}_S\Big(\proj^X_{H}(g)\cup \proj^X_{H}(h)\Big)$$ 
then
\begin{align}
\label{SublTrackInquality}
\Big|\dist_{H}^S(g,h)-\dist_{H}^X(g,h)\Big|\le \ktrack(d_S(h,H))+\ktrack(d_S(g,H))
\end{align}

We prove sublinear projection tracking along Morse isometries for proper
actions.

\begin{lem}\label{SublTrackInProper}
{Assume that $G$ acts properly on a metric space $X$.} Consider an element  $f\in G$ which acts as a Morse  isometry on $X$. Set $H=\langle f\rangle$.
There exists a sublinear function $\ktrack$ with   the following property.  For any $g\in G$, denoting $n={d_S(g,\proj^S_{H}(g))}$, we have
$$
{\mathrm{diam}_S\Big(\proj^S_{H}(g)\cup\proj^X_{H}(g)\Big)}\le \ktrack(n). 
$$
\end{lem}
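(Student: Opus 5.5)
The plan is to reduce everything to a sublinear estimate in $X$ along the single axis $Ho$. Then properness and the fact that $H$ is quasi-geodesic in both metrics convert that estimate back to $d_S$.

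\textbf{Step 1: setup and reduction to $d_X$.} Fix $g\in G$, a point $h\in\proj^S_H(g)$ and a word geodesic $\alpha=[g,h]_S$ of length $n$. Also fix $h'\in\proj^X_H(g)$. Since $H=\langle f\rangle$ is infinite cyclic, the maps $k\mapsto f^k$ into $\mathrm{Cay}(G,S)$ and $k\mapsto f^ko$ into $X$ are both quasi-isometric embeddings of $\mathbb Z$; the second uses that $f$ is a Morse isometry with quasi-geodesic orbit. Hence there is $c\ge1$ with $d_S(h_1,h_2)\le c\,d_X(h_1o,h_2o)+c$ for all $h_1,h_2\in H$. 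Also $\mathrm{diam}_S(\proj^S_H(g))\le 2n$ trivially, and any two points of $\proj^X_H(g)$ are uniformly close by the same comparison, since $\proj^X_H(g)o$ has diameter controlled by $\kappa_0$ of its distance, which is at most $\lambda n$. So it suffices to bound $d_X(ho,h'o)$ by a sublinear function of $n$.

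\textbf{Step 2: the first entry point and properness.} Fix $\theta\in(0,1)$ and let $r=r(\theta,\kappa_0)$ be as in Lemma \ref{LipProj2Morse} for $\gamma=Ho$. Let $\beta=\pi(\alpha)$, a $\lambda$-Lipschitz path from $go$ to $ho$ of length at most $\lambda n$. Let $x$ be the first point of $\beta$ in $\mathcal N_r(Ho)$, and let $u$ be a vertex of $\alpha$ with $d_X(uo,x)\le\lambda$. Then $d_X(uo,h''o)\le r+\lambda$ for some $h''\in H$. By properness of the action, $u^{-1}h''$ lies in a finite set, so $d_S(u,h'')\le K=K(\theta)$. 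Because $h$ is a $d_S$-closest point of $H$ to $g$ and $u\in\alpha$, we get $d_S(g,h)\le d_S(g,u)+K$, which gives $d_S(u,h)\le K$. This is the key place where the word-metric minimality of $h$ enters. In $X$ it gives $d_X(x,ho)\le\lambda K+\lambda$. If $x=go$ already (so $g$ is $r$-close to $Ho$), then $n\le K$ and there is nothing to prove.

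\textbf{Step 3: projection of the initial subpath and optimizing over $\theta$.} The subpath $[go,x]_\beta$ stays at distance at least $r$ from $Ho$ (up to the endpoint). Lemma \ref{LipProj2Morse} then gives
$$\mathrm{diam}_X\bigl(\proj^X_{Ho}([go,x]_\beta)\bigr)\le\theta\lambda n+\theta\, d_X(go,Ho)\le 2\theta\lambda n.$$
Now $h'o\in\proj^X_{Ho}(go)$, and $\proj^X_{Ho}(x)$ lies within about $2(r+\lambda K)$ of $ho$. Hence $d_X(ho,h'o)\le 2\theta\lambda n+C(\theta)$, and by Step 1 the same type of bound holds for $d_S(h,h')$. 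Setting
$$\ktrack(n)=\inf_{\theta\in(0,1)}\bigl(2c\lambda\theta n+C'(\theta)\bigr)$$
gives a function with $\ktrack(n)/n\to0$. After replacing it by a concave majorant as in the conventions, it is sublinear and satisfies the lemma.

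\textbf{Main obstacle.} The delicate step is Step 2: showing that the first approach of $\pi(\alpha)$ to $Ho$ happens only boundedly far, in $d_S$, from the word projection $h$. This requires properness; in the WPD setting of Section \ref{SecSublTrackAcyl} this is exactly what fails and must be replaced by the generalized Morse lemma. A secondary point to check is the last part of Step 1: that $\proj^X_H(g)$ itself has $d_S$-diameter sublinear in $n$, which follows from $\kappa_0$-contraction and the linear comparison on $H$.
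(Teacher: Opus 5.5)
Your argument is correct, and it takes a genuinely different route from the paper's.

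\textbf{The paper's route.} It argues by contradiction. Assuming $d_X(ho,ko)\ge\epsilon n$ along a sequence with $n\to\infty$, it forms the $(2\lambda/\epsilon)$-Lipschitz path $\pi([k,g]_S)\cdot[go,ho]_X$. It then invokes $\theta$-middle recurrence (Lemma \ref{CharMorseLem}(2)) to find a point of the middle of $[ho,ko]_{Ho}$ that is $R$-close to $\pi([k,g]_S)$, and converts this to $d_S$-closeness by properness. This contradicts minimality of $k$, because that point of $H$ is far from $k$. A separate local-finiteness argument for bounded $d_S(g,H)$ is then needed to turn ``$l/n\to0$'' into an actual sublinear function.

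\textbf{Your route.} You use properness and word-metric minimality directly, at the first entry point of $\pi([g,h]_S)$ into $\mathcal N_r(Ho)$. They show this entry point lies within $\lambda(K+1)$ of $ho$. Everything before it is then controlled by the sublinear projection estimate for paths staying $r$-far from $Ho$ (Lemma \ref{LipProj2Morse}).

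\textbf{Comparison.} Your approach gives a direct, quantitative bound $d_X(ho,h'o)\le 2\theta\lambda n+C(\theta)$ for every $\theta$. This yields an explicit $\ktrack$ with no compactness step, and it adapts readily to the quasi-isometry setting of Remark \ref{SPTQInv}. What it does not give is a template for the WPD case. Your argument needs properness at a single point (a $d_X$-approach to $Ho$ forces a $d_S$-approach to $H$), and this fails for WPD actions. The paper's middle-recurrence argument is built to be upgraded, via the generalized Morse lemma and Sisto's separation, to Lemma \ref{SublTrackInAcyl}.

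\textbf{A small fix.} The ``trivial'' bound $\mathrm{diam}_S(\proj^S_H(g))\le 2n$ in your Step 1 is not sublinear, and it is not what you need. Steps 2--3 hold for arbitrary $h\in\proj^S_H(g)$ and $h'\in\proj^X_H(g)$. So the triangle inequality through a fixed $h'$ already bounds the diameter of the union by twice your cross bound.
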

\begin{rem}
In Sect.~\ref{SecSublTrackAcyl}, the corresponding result is proved for Morse WPD isometries in general, possibly non-proper, actions.  Since   every Morse element in a
proper action is WPD, the
proper case is formally subsumed by the WPD case.  However, its proof is considerably simpler and provides the main intuition for the general argument, so we include it separately.    
\end{rem}

\begin{proof}[Proof of Lemma \ref{SublTrackInProper}]
Let
\[
\rho(r)=\sup\Big\{\mathrm{diam}_S\big(\proj^S_H(g)\cup\proj^X_H(g)\big):
g\in G,\ d_S(g,H)\le r\Big\}.
\]
First note that $\rho(r)<\infty$ for every $r$. Indeed, if $k\in\proj^S_H(g)$,
then, after translating by $k^{-1}$, the element $k^{-1}g$ belongs to the finite
ball $B_S(1,r)$. The $d_S$-projection sets of elements in this ball are finite,
and properness of the action implies that their $d_X$-projection sets to $Ho$
are finite as well. Thus their union has finite $d_S$-diameter.

Fix $g\in G$, and choose arbitrary points $h\in\proj^X_H(g)$ and
$k\in\proj^S_H(g)$. Set
\[
n=d_S(g,k)=d_S(g,H),\qquad l=d_X(ho,ko).
\]
Choose $\lambda\ge1$ so that the orbit map is $\lambda$-Lipschitz and its
restriction to $H$ is a $\lambda$-quasi-isometric embedding. It suffices to
prove, uniformly over all such triples, that $l/n\to0$ as $n\to\infty$.
Suppose otherwise. Then there are $\epsilon>0$ and triples $(g,h,k)$ with
$n\to\infty$ such that
\[
l\ge\epsilon n.
\]
   
\medskip

Consider the path $\alpha$ obtained by concatenating $\pi([k,g]_S)$ and
$[go,ho]_X$. Since $h$ is a shortest $d_X$-projection point and $k\in H$,
\[
d_X(go,ho)\le d_X(go,ko)\le\lambda n.
\]
Also $\ell(\pi([k,g]_S))\le\lambda n$, and hence
\[
\ell(\alpha)\le2\lambda n\le \frac{2\lambda}{\epsilon}d_X(ko,ho).
\]
Thus $\alpha$ is a $c$-Lipschitz path for $c=2\lambda/\epsilon$.
\medskip

Denote by $\gamma=Ho$ the Morse quasi-geodesic. We regard $\gamma$ and
$\pi([k,g]_S)$ as paths whose consecutive orbital points are at distance at
most $\lambda$.
\medskip

\noindent By Lemma \ref{CharMorseLem}, $\gamma$ is $\theta$-middle recurrent
for some $\theta>0$. Therefore there is $R=R(c)$ such that $\alpha$ meets the
$R$-neighborhood of the $\theta$-middle $([ho,ko]_\gamma)^\theta$. Every point
of this middle is at distance at least $\theta l$ from $ho$. If a point of the
middle is $R$-close to $[go,ho]_X$, the shortest-projection property of $ho$
implies that it is $(2R+\lambda)$-close to $ho$, which is impossible for all
sufficiently large $l$. Consequently, for all sufficiently large triples, the
$\theta$-middle meets the $R$-neighborhood of $\pi([k,g]_S)$.
\medskip

Choose a point in this middle and then $x\in H$ whose orbit point is within
$\lambda$ of it. There is also $y\in[k,g]_S$ such that
$d_X(xo,yo)\le R+2\lambda$. Properness gives a constant
\[
\widetilde R=\max\{\|q\|_S:q\in G,\ d_X(o,qo)\le R+2\lambda\}<\infty
\]
such that $d_S(x,y)\le\widetilde R$. On the other hand,
\[
d_S(x,k)\ge \frac{d_X(xo,ko)}{\lambda}
\ge \frac{\theta l-\lambda}{\lambda}.
\]
For sufficiently large $l$, this is greater than $2\widetilde R$, and hence
\[
d_S(y,k)>\widetilde R\ge d_S(y,x).
\]
Since $y\in[k,g]_S$, it follows that
\[
d_S(g,x)<d_S(g,y)+d_S(y,k)=d_S(g,k),
\]
contradicting $k\in\proj^S_H(g)$. Thus $l/n\to0$ uniformly. Since $h$ and $k$
were arbitrary, the quasi-isometric embedding of $H$ and the finiteness of
$\rho(r)$ imply, by splitting its defining supremum into bounded and unbounded
values of $d_S(g,H)$, that $\rho(r)/r\to0$. This proves the lemma.
\end{proof}

\begin{rem}\label{SPTQInv}
The proof actually does not use the group action in an essential way. One may use the same proof, with minor modifications, to prove the following.
Let $\pi:X\to Y$ be a quasi-isometry and  $A\subseteq X$ be a Morse quasi-geodesic. Then the shortest $d_X$-projection to $A$ sublinearly tracks the pullback of the shortest $d_Y$-projection to $\pi(A)$.      
\end{rem}

The other fact is that the pullback of the shortest $d_X$-projection has strongly sublinear contraction (see Definition \ref{SublinearContrDefn}). 
\begin{lem}\label{SSublContrInProper}
Assume that the action of $G$ on a metric space $X$ is proper. Let $f\in G$ be an element which acts as a strongly contracting isometry on $X$. Set $H=\langle f\rangle$. Then there exists a sublinear function $\kmorse$ with the following property.
Let $\alpha=[x,y]$ be a word geodesic between $x,y\in G$ of length $n$ so that $d_S(\alpha,H)\ge n$. Then $$\mathrm{diam}_S\Big(\proj^X_{H}(x)\cup \proj^X_{H}(y)\Big) \le \kmorse(n).$$
\end{lem}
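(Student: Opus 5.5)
The plan is to combine the strong contraction of $Ho$ in $X$ with a properness estimate. Properness will show that points which are far from $H$ in the word metric are also uniformly far from $Ho$ in $X$. The key quantity is the function
\[
\psi(r)=\inf\{d_X(o,qo): q\in G,\ \|q\|_S\ge r\}.
\]
Since the action is proper, only finitely many $q$ satisfy $d_X(o,qo)\le t$ for each $t$, so $\psi(r)\to\infty$ as $r\to\infty$. For $g\in G$ and $k\in H$ we have $d_X(go,ko)=d_X(o,k^{-1}go)$ and $\|k^{-1}g\|_S\ge d_S(g,H)$. Hence $d_X(go,Ho)\ge\psi(d_S(g,H))$. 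In particular, every vertex $g$ of $\alpha$ satisfies $d_X(go,Ho)\ge\psi(n)$. Every point of the path $\pi(\alpha)$ lies within $\lambda$ of such a vertex orbit point, where $\lambda=\max_{s\in S}d_X(o,so)$. Therefore
\[
d_X(\pi(\alpha),Ho)\ge\psi(n)-\lambda.
\]

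Next I will cover $\pi(\alpha)$ by balls that miss $Ho$ and apply strong contraction to each. Set $r_n=\psi(n)-\lambda-1$, and take $n$ large enough that $r_n>0$. The path $\pi(\alpha)$ has length at most $\lambda n$. I subdivide it into at most $\lambda n/r_n+1$ consecutive subpaths, each of length at most $r_n$. Each subpath lies in a ball of radius $r_n$ centred at its initial point, and this ball is disjoint from $Ho$. Strong contraction of $Ho$ with constant $C$ says that the shortest projection of such a ball has $d_X$-diameter at most $C$. Chaining the subpaths through their shared endpoints gives
\[
\mathrm{diam}_X\big(\proj^X_{Ho}(xo)\cup\proj^X_{Ho}(yo)\big)\le C\Big(\frac{\lambda n}{r_n}+1\Big).
\]

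Finally, I convert this to the word metric. The restriction of the orbit map to $H$ is a $\lambda$-quasi-isometric embedding after enlarging $\lambda$. So the $d_S$-diameter of $\proj^X_H(x)\cup\proj^X_H(y)$ is at most $\lambda C(\lambda n/r_n+1)+\lambda^2$. Since $r_n\to\infty$, this bound is $o(n)$.

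For the finitely many small $n$ with $r_n\le 0$, I use instead the trivial bound on the projection diameters of pairs at word distance at most $n$. This bound is finite by properness, exactly as in the finiteness of $\rho(r)$ in the proof of Lemma \ref{SublTrackInProper}. Taking the maximum of the two bounds and passing to a monotone concave majorant yields the sublinear function $\kmorse$.

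The only delicate point is the lower bound $d_X(\pi(\alpha),Ho)\ge\psi(n)-\lambda$ with $\psi\to\infty$, that is, turning word distance into $X$-distance from the axis. This is exactly where properness is used, and I expect the rest to be routine bookkeeping.
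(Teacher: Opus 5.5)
Your argument is correct, but it takes a genuinely different route from the paper's.

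\textbf{The paper's route.} The paper argues by contradiction. Assuming $d_X(uo,vo)\ge\epsilon n$ along a sequence, it first uses strong contraction to dispose of the case where an endpoint lies more than $\lambda n$ from $Ho$. It then closes up $\pi(\alpha)$ with the two projection segments into a $c$-Lipschitz path from $uo$ to $vo$. Applying $\theta$-middle recurrence of $Ho$ (Lemma~\ref{CharMorseLem}) to this path gives a middle point within $R$ of $\pi(\alpha)$. Properness turns this into a point of $H$ at bounded word distance from $\alpha$, contradicting $d_S(\alpha,H)\ge n\to\infty$. A separate local-finiteness argument handles bounded lengths.

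\textbf{Your route and what it buys.} You use properness up front, through the gauge $\psi$, to keep all of $\pi(\alpha)$ at distance at least $\psi(n)-\lambda$ from $Ho$. You then cover $\pi(\alpha)$ by $O(n/\psi(n))$ balls missing $Ho$. This is more elementary and avoids middle recurrence entirely. It also gives an explicit rate $\kmorse(n)\lesssim n/\psi(n)$, which is bounded, for instance, when the orbit map is a quasi-isometry.

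It also survives if ``strongly contracting'' is read, as the paper's proof does, as strong $\kappa_0$-contraction with $\kappa_0$ sublinear. Each piece then contributes $\kappa_0(r_n)$, with $r_n\le\lambda n$ after capping if necessary. The total is $\kappa_0(r_n)(\lambda n/r_n+1)=o(n)$.

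\textbf{What the paper's route buys.} It transfers. It is the template for the WPD case, Lemma~\ref{SSublContrAcyl}, where $\psi$ need not tend to infinity: word-far elements may move $o$ only a bounded amount, for example when point stabilizers are infinite. There you cannot cover $\pi(\alpha)$ by large balls missing $Ho$. The paper instead replaces the properness step by the generalized Morse lemma (Lemma~\ref{fractalMorseLem}) together with Sisto's separation (Lemma~\ref{GeomSepAlongH}).

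\textbf{One small correction.} For the finitely many small $n$, the appeal to the finiteness of $\rho(r)$ in Lemma~\ref{SublTrackInProper} is not quite apt, because here $d_S(x,H)$ is unbounded. You need the split the paper uses:
\begin{itemize}
\item if $d_X(xo,Ho)\ge\lambda n$, strong contraction bounds the projection diameter directly;
\item otherwise, apply an $H$-translation moving a projection point of $xo$ to $o$; properness then leaves only finitely many configurations of $(x,y)$ and their projection sets.
\end{itemize}
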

\begin{proof}
By assumption, there is a sublinear function $\kappa_0$ such that
$\gamma:=Ho$ is a strongly $\kappa_0$-contracting quasi-geodesic. Choose
$\lambda\ge1$ so that the orbit map is $\lambda$-Lipschitz and
$\pi|_H$ is a $\lambda$-quasi-isometric embedding.

We first note that the supremum of
\[
d_X(uo,vo),
\]
over all word geodesics $[x,y]_S$ of length at most $N$ satisfying
$d_S([x,y]_S,H)\ge d_S(x,y)$ and all choices
$uo\in\proj^X_{Ho}(xo)$, $vo\in\proj^X_{Ho}(yo)$, is finite. Indeed, if
$d_X(xo,Ho)\ge d_X(xo,yo)$, strong contraction bounds this quantity by
$\kappa_0(\lambda N)$. Otherwise $d_X(xo,Ho)<\lambda N$; after translating a
projection point of $xo$ to $o$, properness leaves only finitely many possible
elements $x$ and then finitely many possible $y$ and projection points.

It therefore suffices to prove uniformly that, whenever
$d_S([x,y]_S,H)\ge d_S(x,y)$,
\[
\frac{d_X(uo,vo)}{d_S(x,y)}\longrightarrow0
\quad\text{as }d_S(x,y)\to\infty.
\]
Suppose otherwise. Then there are $\epsilon>0$, word geodesics
$\alpha_i=[x_i,y_i]_S$, and projection points $u_io,v_io\in Ho$ such that,
with $N_i=d_S(x_i,y_i)\to\infty$,
\[
d_S(\alpha_i,H)\ge N_i,
\qquad d_X(u_io,v_io)\ge\epsilon N_i.
\]
\noindent We may assume, after passing to a subsequence, that
\[
\max\{d_X(x_io,u_io),d_X(y_io,v_io)\}\le\lambda N_i.
\]
Indeed, otherwise orienting $[x_i,y_i]_S$ from an endpoint whose distance to
$Ho$ is greater than $\lambda N_i\ge d_X(x_io,y_io)$ and applying strong
contraction gives
\[
d_X(u_io,v_io)\le\kappa_0(\lambda N_i),
\]
contrary to $d_X(u_io,v_io)\ge\epsilon N_i$.

\medskip

\noindent Let
\[
\beta_i=[u_io,x_io]_X\cdot\pi(\alpha_i)\cdot[y_io,v_io]_X.
\]
This path joins $u_io$ to $v_io$ and has length at most $3\lambda N_i$.
Thus, for $c=3\lambda/\epsilon$,
\[
\ell(\beta_i)\le c\,d_X(u_io,v_io),
\]
so $\beta_i$ is a $c$-Lipschitz path.
\medskip

Recall that $\gamma=Ho$ is $\theta$-middle recurrent for some $\theta>0$.
Thus the $\theta$-middle of $[u_io,v_io]_\gamma$ meets the $R$-neighborhood of
$\beta_i$ for a constant $R=R(c)$. If such a middle point is $R$-close to
$[u_io,x_io]_X$ or $[y_io,v_io]_X$, the shortest-projection property implies
that it is $(2R+\lambda)$-close to $u_io$ or $v_io$, respectively. This
contradicts its being in the $\theta$-middle because
$d_X(u_io,v_io)\to\infty$. Hence, for all
large $i$, there are $z_i\in H$ and $a_i\in\alpha_i$ such that
\[
d_X(z_io,a_io)\le R+2\lambda.
\]
Properness gives a constant $R'$ independent of $i$ such that
$d_S(z_i,a_i)\le R'$. This contradicts
\[
d_S(\alpha_i,H)\ge N_i\longrightarrow\infty.
\]
The locally finite supremum above therefore defines a sublinear bound, which
the quasi-isometric embedding $\pi|_H$ transfers to the word metric. Finally,  Lemma
\ref{lem:reduction} implies the
strongly sublinear contraction property in the word metric.
\end{proof}

\section{Sublinear projection tracking along WPD elements}\label{SecSublTrackAcyl}
We establish \ref{SubTrackThm} on sublinear projection tracking in acylindrical actions. The arguments use the acylindricity of the action (or merely the existence of WPD elements) and the Morse property of the axis, but not hyperbolicity of the ambient space.

\subsection{Generalized Morse Lemma}\label{SecFracMorseLem}
We first prove a generalized version of the Morse lemma in general metric spaces. The statement is akin to the bottleneck property of quasi-trees (Remark \ref{QTRem}), so this part may be omitted by readers interested only in acylindrical actions on quasi-trees (see \cite{Balasub} for such a reduction). 


\begin{figure}
    \centering

\tikzset{every picture/.style={line width=0.75pt}} 

\begin{tikzpicture}[x=0.75pt,y=0.75pt,yscale=-1,xscale=1]

\draw  [fill={rgb, 255:red, 155; green, 155; blue, 155 }  ,fill opacity=0.54 ][dash pattern={on 4.5pt off 4.5pt}] (213.5,146) .. controls (213.5,136.61) and (221.11,129) .. (230.5,129) -- (413.5,129) .. controls (422.89,129) and (430.5,136.61) .. (430.5,146) -- (430.5,214) .. controls (430.5,214) and (430.5,214) .. (430.5,214) -- (213.5,214) .. controls (213.5,214) and (213.5,214) .. (213.5,214) -- cycle ;
\draw    (105.5,214) -- (539.5,213) ;
\draw  [dash pattern={on 4.5pt off 4.5pt}]  (173.5,152) -- (174.5,212) ;
\draw [shift={(174.5,212)}, rotate = 89.05] [color={rgb, 255:red, 0; green, 0; blue, 0 }  ][fill={rgb, 255:red, 0; green, 0; blue, 0 }  ][line width=0.75]      (0, 0) circle [x radius= 3.35, y radius= 3.35]   ;
\draw [shift={(173.5,152)}, rotate = 89.05] [color={rgb, 255:red, 0; green, 0; blue, 0 }  ][fill={rgb, 255:red, 0; green, 0; blue, 0 }  ][line width=0.75]      (0, 0) circle [x radius= 3.35, y radius= 3.35]   ;
\draw  [dash pattern={on 4.5pt off 4.5pt}]  (471.13,119.38) -- (469.63,212) ;
\draw [shift={(469.63,212)}, rotate = 90.93] [color={rgb, 255:red, 0; green, 0; blue, 0 }  ][fill={rgb, 255:red, 0; green, 0; blue, 0 }  ][line width=0.75]      (0, 0) circle [x radius= 3.35, y radius= 3.35]   ;
\draw [shift={(471.13,119.38)}, rotate = 90.93] [color={rgb, 255:red, 0; green, 0; blue, 0 }  ][fill={rgb, 255:red, 0; green, 0; blue, 0 }  ][line width=0.75]      (0, 0) circle [x radius= 3.35, y radius= 3.35]   ;
\draw  [line width=0.75]  (235.5,229) .. controls (235.47,233.67) and (237.79,236.01) .. (242.46,236.04) -- (301.32,236.39) .. controls (307.99,236.43) and (311.31,238.78) .. (311.28,243.45) .. controls (311.31,238.78) and (314.65,236.47) .. (321.32,236.51)(318.32,236.49) -- (396.46,236.96) .. controls (401.13,236.99) and (403.47,234.67) .. (403.5,230) ;
\draw [color={rgb, 255:red, 74; green, 144; blue, 226 }  ,draw opacity=1 ][line width=1.5]    (230,213) -- (411.5,214) ;
\draw [shift={(411.5,214)}, rotate = 180.32] [color={rgb, 255:red, 74; green, 144; blue, 226 }  ,draw opacity=1 ][line width=1.5]      (6.71,-6.71) .. controls (3.01,-6.71) and (0,-3.7) .. (0,0) .. controls (0,3.7) and (3.01,6.71) .. (6.71,6.71) ;
\draw [shift={(230,213)}, rotate = 0.32] [color={rgb, 255:red, 74; green, 144; blue, 226 }  ,draw opacity=1 ][line width=1.5]      (6.71,-6.71) .. controls (3.01,-6.71) and (0,-3.7) .. (0,0) .. controls (0,3.7) and (3.01,6.71) .. (6.71,6.71) ;
\draw [color={rgb, 255:red, 74; green, 144; blue, 226 }  ,draw opacity=1 ][line width=1.5]    (173.5,152) .. controls (167,115.5) and (171.5,55) .. (215.5,57) .. controls (259.5,59) and (234.5,190) .. (264.5,134) .. controls (294.5,78) and (292.5,177) .. (305.5,137) .. controls (318.5,97) and (350.5,122) .. (348.5,136) .. controls (346.5,150) and (367.5,164) .. (387.5,120) .. controls (407.5,76) and (447.31,48.19) .. (471.13,119.38) ;
\draw [shift={(471.13,119.38)}, rotate = 71.5] [color={rgb, 255:red, 74; green, 144; blue, 226 }  ,draw opacity=1 ][fill={rgb, 255:red, 74; green, 144; blue, 226 }  ,fill opacity=1 ][line width=1.5]      (0, 0) circle [x radius= 4.36, y radius= 4.36]   ;
\draw [shift={(173.5,152)}, rotate = 259.9] [color={rgb, 255:red, 74; green, 144; blue, 226 }  ,draw opacity=1 ][fill={rgb, 255:red, 74; green, 144; blue, 226 }  ,fill opacity=1 ][line width=1.5]      (0, 0) circle [x radius= 4.36, y radius= 4.36]   ;
\draw  [dash pattern={on 4.5pt off 4.5pt}]  (267.5,130) -- (268.5,214) ;
\draw [shift={(268.5,214)}, rotate = 89.32] [color={rgb, 255:red, 0; green, 0; blue, 0 }  ][fill={rgb, 255:red, 0; green, 0; blue, 0 }  ][line width=0.75]      (0, 0) circle [x radius= 3.35, y radius= 3.35]   ;
\draw [shift={(267.5,130)}, rotate = 89.32] [color={rgb, 255:red, 0; green, 0; blue, 0 }  ][fill={rgb, 255:red, 0; green, 0; blue, 0 }  ][line width=0.75]      (0, 0) circle [x radius= 3.35, y radius= 3.35]   ;
\draw  [dash pattern={on 4.5pt off 4.5pt}]  (291.5,130) -- (292.5,213) ;
\draw [shift={(292.5,213)}, rotate = 89.31] [color={rgb, 255:red, 0; green, 0; blue, 0 }  ][fill={rgb, 255:red, 0; green, 0; blue, 0 }  ][line width=0.75]      (0, 0) circle [x radius= 3.35, y radius= 3.35]   ;
\draw [shift={(291.5,130)}, rotate = 89.31] [color={rgb, 255:red, 0; green, 0; blue, 0 }  ][fill={rgb, 255:red, 0; green, 0; blue, 0 }  ][line width=0.75]      (0, 0) circle [x radius= 3.35, y radius= 3.35]   ;
\draw  [dash pattern={on 4.5pt off 4.5pt}]  (308.5,130) -- (309.5,213) ;
\draw [shift={(309.5,213)}, rotate = 89.31] [color={rgb, 255:red, 0; green, 0; blue, 0 }  ][fill={rgb, 255:red, 0; green, 0; blue, 0 }  ][line width=0.75]      (0, 0) circle [x radius= 3.35, y radius= 3.35]   ;
\draw [shift={(308.5,130)}, rotate = 89.31] [color={rgb, 255:red, 0; green, 0; blue, 0 }  ][fill={rgb, 255:red, 0; green, 0; blue, 0 }  ][line width=0.75]      (0, 0) circle [x radius= 3.35, y radius= 3.35]   ;
\draw  [dash pattern={on 4.5pt off 4.5pt}]  (347.5,130) -- (347.66,142.99) -- (348.5,213) ;
\draw [shift={(348.5,213)}, rotate = 89.31] [color={rgb, 255:red, 0; green, 0; blue, 0 }  ][fill={rgb, 255:red, 0; green, 0; blue, 0 }  ][line width=0.75]      (0, 0) circle [x radius= 3.35, y radius= 3.35]   ;
\draw [shift={(347.5,130)}, rotate = 89.28] [color={rgb, 255:red, 0; green, 0; blue, 0 }  ][fill={rgb, 255:red, 0; green, 0; blue, 0 }  ][line width=0.75]      (0, 0) circle [x radius= 3.35, y radius= 3.35]   ;
\draw  [dash pattern={on 4.5pt off 4.5pt}]  (246,130) -- (247.5,213) ;
\draw [shift={(247.5,213)}, rotate = 88.96] [color={rgb, 255:red, 0; green, 0; blue, 0 }  ][fill={rgb, 255:red, 0; green, 0; blue, 0 }  ][line width=0.75]      (0, 0) circle [x radius= 3.35, y radius= 3.35]   ;
\draw [shift={(246,130)}, rotate = 88.96] [color={rgb, 255:red, 0; green, 0; blue, 0 }  ][fill={rgb, 255:red, 0; green, 0; blue, 0 }  ][line width=0.75]      (0, 0) circle [x radius= 3.35, y radius= 3.35]   ;
\draw  [dash pattern={on 4.5pt off 4.5pt}]  (384.5,129) -- (384.5,213) ;
\draw [shift={(384.5,213)}, rotate = 90] [color={rgb, 255:red, 0; green, 0; blue, 0 }  ][fill={rgb, 255:red, 0; green, 0; blue, 0 }  ][line width=0.75]      (0, 0) circle [x radius= 3.35, y radius= 3.35]   ;
\draw [shift={(384.5,129)}, rotate = 90] [color={rgb, 255:red, 0; green, 0; blue, 0 }  ][fill={rgb, 255:red, 0; green, 0; blue, 0 }  ][line width=0.75]      (0, 0) circle [x radius= 3.35, y radius= 3.35]   ;

\draw (159,141.4) node [anchor=north west][inner sep=0.75pt]    {$x$};
\draw (479,106.4) node [anchor=north west][inner sep=0.75pt]    {$y$};
\draw (158.5,217.4) node [anchor=north west][inner sep=0.75pt]    {$u=u_{0}$};
\draw (454,215.4) node [anchor=north west][inner sep=0.75pt]    {$v=v_{3}$};
\draw (280,241.4) node [anchor=north west][inner sep=0.75pt]    {$\ell ( p) =\theta n$};
\draw (199,63.4) node [anchor=north west][inner sep=0.75pt]    {$\beta _{0}$};
\draw (426,76.4) node [anchor=north west][inner sep=0.75pt]    {$\beta _{3}$};
\draw (249,148.4) node [anchor=north west][inner sep=0.75pt]    {$\alpha _{1}$};
\draw (293,148.4) node [anchor=north west][inner sep=0.75pt]    {$\alpha _{2}$};
\draw (356,151.4) node [anchor=north west][inner sep=0.75pt]    {$\alpha _{3}$};
\draw (273,95.4) node [anchor=north west][inner sep=0.75pt]    {$\beta _{1}$};
\draw (326,95.4) node [anchor=north west][inner sep=0.75pt]    {$\beta _{2}$};
\draw (286,218.4) node [anchor=north west][inner sep=0.75pt]    {$v_{1}$};
\draw (264,218.4) node [anchor=north west][inner sep=0.75pt]    {$u_{1}$};
\draw (241,218.4) node [anchor=north west][inner sep=0.75pt]    {$v_{0}$};
\draw (307,218.4) node [anchor=north west][inner sep=0.75pt]    {$u_{2}$};
\draw (342,218.4) node [anchor=north west][inner sep=0.75pt]    {$v_{2}$};
\draw (379,219.4) node [anchor=north west][inner sep=0.75pt]    {$u_{3}$};
\draw (473,159.4) node [anchor=north west][inner sep=0.75pt]    {$\leq \epsilon n$};
\draw (143,173.4) node [anchor=north west][inner sep=0.75pt]    {$\epsilon n\geq $};

\end{tikzpicture}
    \caption{Lemma \ref{fractalMorseLem}: decompose the path $\alpha$ from $x$ to $y$ into components $\alpha_i$ contained in  $\mathcal N_{r/2}(p)$ (the grayed region). }
    \label{fig:fractalMorseLem}
\end{figure}
\medskip

By Lemma \ref{CharMorseLem}, we shall understand Morse quasi-geodesics from the view point of sublinear contraction. Fix a sublinear contraction function $\kappa_0$ in the sequel.
Before presenting the full version, we  note the following special case, which   gives a  generalization of middle recurrence (Definition \ref{MidRecurrentDefn}). 

\begin{lem}\label{fractalMorseCor}
For any $\theta\in (0,1]$ and $c, \lambda\ge 1$, there exists a constant $r=r(\kappa_0, \theta, c, \lambda)>0$ with the following property.
Let $\gamma$ be a $\kappa_0$-contracting $c$-quasi-geodesic  in $X$ and $\alpha$ a $\lambda$-Lipschitz path  from $x$ to $y$ on $\gamma$. Consider a $\theta$-interval $p$ of $[x,y]_\gamma$. For any $R>4r$, set $N=\lfloor\ell(p)/6R\rfloor$. Then there are points $z_1,\ldots,z_N\in p$ so that $d_X(z_i,\alpha)\le r$ for $1\le i\le N$ and $d_X(z_i,z_{i+1})\ge R$ for $1\le i<N$. 
\end{lem}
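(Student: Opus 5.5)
We follow the decomposition suggested by Figure \ref{fig:fractalMorseLem}. Write $n=d_X(x,y)$. First reduce to $\ell(p)\ge 6R$, otherwise $N=0$ and there is nothing to prove. Fix a small $\epsilon>0$, to be chosen depending on $\theta,\lambda,c$, and let $r$ be large in terms of $\kappa_0$ and $\epsilon$. Use Lemma \ref{LipProj2Morse} with parameter $\epsilon$ to fix $r$: any subpath $\beta$ of $\alpha$ staying outside $\mathcal N_{r/2}(\gamma)$ then satisfies
\[
\mathrm{diam}_X\big(\proj^X_\gamma(\beta)\big)\le \epsilon\,\ell(\beta)+\epsilon\, d_X(\beta,\gamma).
\]

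Decompose $\alpha$ into maximal subpaths $\alpha_1,\dots,\alpha_m$ lying in $\mathcal N_{r/2}(p)$, separated by complementary subpaths $\beta_0,\dots,\beta_m$. Two further pieces $\beta_0,\beta_m$ join $x,y$ to the first and last components. Let $u_i,v_i$ be shortest projection points to $\gamma$ of the endpoints of $\alpha_i$; they lie within $r/2$ of those endpoints and near $p$.

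The main step is to show that the shadows of the $\alpha_i$ on $p$ cover $p$ up to total length $O(\epsilon\lambda n)$. A gap between $v_{i}$ and $u_{i+1}$ inside $p$ is crossed by the projection of $\beta_i$, and $\beta_i$ avoids $\mathcal N_{r/2}(p)$. The obstacle is that $\beta_i$ may come close to $\gamma$ outside $p$. I would handle this by noting that if $\beta_i$ returns near $\gamma\setminus p$ between two points projecting into $p$, the quasi-geodesic $\gamma$ forces an excursion of length comparable to the projected distance. Concretely, split $\beta_i$ at its points in $\mathcal N_{r/2}(\gamma)$. On the subpaths far from $\gamma$, apply Lemma \ref{LipProj2Morse}. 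For the pieces near $\gamma\setminus p$, they project near the endpoints of $p$, so they contribute nothing to the interior gap. Summing gives that the total gap length is at most $\epsilon\sum\ell(\beta_i)+\epsilon\sum d_X(\beta_i,\gamma)\le 2\epsilon\lambda c\, n$. Since $\ell(p)=\theta n$, choosing $\epsilon\lambda c\ll\theta$ makes the uncovered part at most $\ell(p)/3$.

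Next, run a greedy selection. Walk along $p$ and pick $z_1$ as the first point within $r$ of $\alpha$. Then repeatedly pick $z_{j+1}$ as the first point at distance $\ge R$ past $z_j$ that is $r$-close to some $\alpha_i$. Each $\alpha_i$ has its shadow $[u_i,v_i]_\gamma$ contained in the $r/2$-neighbourhood of $\alpha_i$ coarsely, via the contracting property. More precisely, $\alpha_i$ stays within $r/2$ of $p$, so every point of the segment between consecutive near points is $r$-close to $\alpha$ once $r$ absorbs the quasi-geodesic constants. Therefore each step advances by at most $R$ plus the length of the gap skipped. Hence at least $(\ell(p)-\ell(p)/3)/(2R)-1\ge \lfloor\ell(p)/6R\rfloor$ points are produced, using $R>4r$ to absorb boundary effects. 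The expected difficulty is entirely the gap estimate in the preceding paragraph, where one must ensure that the excursions of $\alpha$ outside the neighbourhood of $p$ have linearly small shadow on $p$.
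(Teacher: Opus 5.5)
Your outline has the same architecture as the paper's argument, which obtains this statement as the special case $x=u$, $y=v$ of Lemma \ref{fractalMorseLem}. Decompose $\alpha$ into pieces near $p$ and excursions, use Lemma \ref{LipProj2Morse} to show that the excursions shadow only a small linear fraction of $p$, use connectedness to see that the rest of $p$ is $O(r)$-close to $\alpha$, and extract $R$-separated points. Your greedy walk is equivalent to the paper's maximal $R$-separated subset.

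The real difference is the projection target. The paper projects onto the subsegment $p$ itself, which is uniformly contracting by Lemma \ref{MorseSubpath}. Every component of $\alpha\setminus\mathcal N_{r_0/2}(p)$ is then automatically far from the target, so the returns near $\gamma\setminus p$ that you worry about never need separate treatment. The price is that the end pieces $\beta_0,\beta_N$ may start at distance up to $(1-\theta)n$ from $p$. Their projections to $p$ are located near $p^\pm$ via Lemma \ref{lem:endpoint-projection}, and the resulting term $\theta_0(1-\theta)n$ is absorbed into the choice of $\theta_0$. In exchange, that argument works unchanged when $x,y$ are only $\epsilon n$-close to $\gamma$.

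You project onto $\gamma$ instead. Then $x,y$ project to themselves and Lemma \ref{lem:endpoint-projection} is not needed, at the cost of splitting excursions at their returns to $\mathcal N_{r/2}(\gamma)$. Pieces near $\gamma\setminus p$ do project into $O(r)$-neighbourhoods of $p^\pm$, so this works. You could even drop the distinction between $p$ and $\gamma\setminus p$: any point of $\alpha$ in $\mathcal N_{r/2}(\gamma)$ projects to a point $r/2$-close to $\alpha$.

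One estimate needs repair. Your bound $\epsilon\sum_i d_X(\beta_i,\gamma)\lesssim \epsilon\lambda c n$, and likewise the additive $\epsilon r/2$ per far piece after splitting, is not justified. The path $\alpha$ may cross $\partial\mathcal N_{r/2}(p)$ or $\partial\mathcal N_{r/2}(\gamma)$ arbitrarily often, so the number of pieces is not $O(n/r)$.

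Treat pieces of length $<r$ separately. All their points lie within $r$ of $\gamma$, so their projections stay $O(r)$-close to $\alpha$ and create no genuine gap. The paper does exactly this by concatenating $\alpha_i,\alpha_{i+1}$ across short $\beta_i$. Only the at most $\ell(\alpha)/r$ long pieces then contribute, and their additive terms total at most $\epsilon\ell(\alpha)$.

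Two smaller points. The fact that a shadow $[u_i,v_i]$ is $r$-close to $\alpha_i$ comes from connectedness, not contraction: a path in $\mathcal N_{r/2}([p^-,z])\cup\mathcal N_{r/2}([z,p^+])$ going from the first set to the second passes within $r$ of $z$. It is also cleanest to reduce first to a geodesic $\gamma$ via Lemma \ref{MorseSubpath}, as the paper does, so that parameter separation along $p$ equals $d_X$-separation.
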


\begin{rem}\label{QTRem}
This   improves the middle recurrence in the following sense: the Lipschitz path   intersects   \emph{every} $(1-2\theta)$-interval (instead of a particular $\theta$-middle one) for \emph{any} admissible values of $\theta$ (instead of a fixed one),  in a linear number of points comparable with its length.

This could be compared with a similar property in quasi-trees.  Recall that a geodesic metric space being a quasi-tree is characterized by the   \textit{$\Delta$-bottleneck property} in \cite{Man05} : any path from $x$ to $y$ in $X$ intersects    the $\Delta$-ball around any point on  $[x,y]_X$. 
\end{rem}
A   direct but   worth-noting corollary is as follows.
\begin{cor}\label{fractalMorseCor1}
For any $\theta\in (0,1]$ and $\lambda\ge 1$, there exists a constant $r=r(\kappa_0,\theta, \lambda)>0$ with the following property.
Let $\gamma$ be a $\kappa_0$-contracting geodesic  in $X$ and $\alpha$ a $\lambda$-Lipschitz path  from $x$ to $y$ on $\gamma$. Then $\alpha$ intersects the $r$-neighborhood of any $\theta$-interval of $[x,y]_\gamma$. 
\end{cor}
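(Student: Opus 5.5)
This is a direct consequence of Lemma \ref{fractalMorseCor}, applied with $c=1$ since a geodesic is a $1$-quasi-geodesic. Let $\gamma$ be a $\kappa_0$-contracting geodesic and $\alpha$ a $\lambda$-Lipschitz path from $x$ to $y$ on $\gamma$. Let $p$ be a $\theta$-interval of $[x,y]_\gamma$, and take $r_0=r(\kappa_0,\theta,1,\lambda)$ from Lemma \ref{fractalMorseCor}. We want a single constant $r$ such that $\alpha$ meets $\mathcal N_r(p)$, whatever the length of $p$.

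\textbf{Long intervals.} Fix $R=4r_0+1$. If $\ell(p)\ge 6R$, then $N=\lfloor \ell(p)/6R\rfloor\ge 1$. Lemma \ref{fractalMorseCor} then gives at least one point $z_1\in p$ with $d_X(z_1,\alpha)\le r_0$, so $\alpha$ meets $\mathcal N_{r_0}(p)$.

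\textbf{Short intervals.} If $\ell(p)<6R$, then $\ell([x,y]_\gamma)=\ell(p)/\theta<6R/\theta$. Every point of $[x,y]_\gamma$ is therefore within $6R/\theta$ of any point of $p$. In particular the endpoint $x$, which lies on $\alpha$, is within $6R/\theta$ of $p$. So $\alpha$ meets the $(6R/\theta)$-neighborhood of $p$.

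Taking $r=\max\{r_0,\,6(4r_0+1)/\theta\}$ covers both cases. This $r$ depends only on $\kappa_0$, $\theta$ and $\lambda$, as required.

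There is essentially no obstacle: all the work is contained in Lemma \ref{fractalMorseCor}. The only point needing care is the degenerate case where $p$ is too short for that lemma to produce a point. This case is handled by the trivial endpoint bound, which relies on $p$ being a fixed proportion $\theta$ of $[x,y]_\gamma$.
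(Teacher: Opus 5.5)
Your proof is correct and matches the paper, which states this corollary without proof as a direct consequence of Lemma \ref{fractalMorseCor}: when $\lfloor \ell(p)/6R\rfloor\ge 1$, that lemma supplies a point of $p$ within $r$ of $\alpha$. You also treat the short-interval case, where the lemma is vacuous (the proof of Lemma \ref{fractalMorseLem} notes this), using $d_X(x,p)\le \ell([x,y]_\gamma)=\ell(p)/\theta$; the paper leaves this case implicit, and your handling of it is correct.
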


We prove the following general version, which  shall  find its use in the next subsection. 
\begin{lem}\label{fractalMorseLem}
For any $\epsilon>0, \theta\in (0,1]$ and $c,\lambda\ge 1$, there exists a constant $r=r(\kappa_0,\epsilon, \theta, c, \lambda)>0$ with the following property.

Consider a $\kappa_0$-contracting $c$-quasi-geodesic  $\gamma$  in $X$ and a $\lambda$-Lipschitz path $\alpha$ from $x$ to $y$ (not necessarily on $\gamma$). Assume that $u,v\in \gamma$ are some shortest $d_X$-projection points of  $x$, $y$ respectively so that $$d_X(x,u), d_X(y,v)\le \epsilon d_X(u,v).$$ Let $p$  be a $\theta$-interval of $[u,v]_\gamma$. For any $R>4r$, set $N=\lfloor\ell(p)/6R\rfloor$. Then we may find linearly ordered points $z_1,\ldots,z_N\in p$ and $w_1,\ldots,w_N\in \alpha$ so that $d_X(z_i,w_i)\le r$ for $1\le i\le N$ and $d_X(z_i,z_{i+1})\ge R$ for $1\le i<N$. 
\end{lem}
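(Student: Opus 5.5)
The plan follows Figure \ref{fig:fractalMorseLem}. Set $n=d_X(u,v)$ and fix $r$ large in terms of $\kappa_0,\epsilon,\theta,c,\lambda$. We decompose $\alpha$ according to its visits to $\mathcal N_{r/2}(p)$. Let $u_0=u$ and $v_N=v$. Cut $\alpha$ into maximal subpaths $\alpha_i$ contained in $\mathcal N_{r/2}(p)$, separated by excursions $\beta_i$ that stay outside $\mathcal N_{r/2}(p)$; $\beta_0$ and $\beta_{last}$ are the initial and terminal pieces. For each $\alpha_i$, record shortest projection points $u_i,v_i\in p$ of its endpoints. Then each $\alpha_i$ lies in $\mathcal N_{r/2}(p)$, so along $\alpha_i$ we can pick points $w$ that are $r/2$-close to points $z$ of $p$. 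These $z$'s move continuously up to $r$ as we traverse $\alpha_i$, and they cover $[u_i,v_i]_p$ up to a $r$-error. The goal is to show that the union of the intervals $[u_i,v_i]_\gamma$ covers all of $p$ except for total length at most $\ell(p)/2$. Then a greedy selection of points in $p$ spaced by about $R$ (discarding near-boundary points, which is where the factor $6R$ and $R>4r$ come in) yields $N$ points $z_j$ with $R$-separation, each $r$-close to a point $w_j\in\alpha$. The linear order of the $w_j$ along $\alpha$ matching that of the $z_j$ along $p$ is arranged by choosing, for each $z_j$, the first visit.

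The key estimate is on the excursions. Each $\beta_i$ stays at distance at least $r/2$ from $p$. I would like distance from $\gamma$, not just from $p$. Points of $\gamma$ outside $p$ are far along the quasi-geodesic, so nearness to them forces a projection outside $p$; I handle this by working with the projection to $\gamma$ and restricting to $p$. Apply Lemma \ref{LipProj2Morse} with a small parameter $\theta'$ to each excursion $\beta_i$ whose projection lands in $p$, or more precisely to the portion of $\beta_i$ at distance at least $r/2$ from $\gamma$. This gives
$$\mathrm{diam}_X\big(\proj^X_\gamma(\beta_i)\big)\le \theta'\ell(\beta_i)+\theta' d_X(\beta_i,\gamma).$$
Summing over $i$, the gaps between consecutive $[u_i,v_i]$ inside $p$ have total length at most
$$\theta'\ell(\alpha)+\theta'\sum_i d_X(\beta_i,\gamma)+O(r)\cdot\#\{i\}.$$
We have $\ell(\alpha)\le\lambda d_X(x,y)\le\lambda(1+2\epsilon)n$. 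The term $d_X(\beta_i,\gamma)$ is bounded by the starting distance $r/2$ of an excursion. For the end excursions $\beta_0$ and $\beta_{last}$, the hypothesis $d_X(x,u)\le\epsilon n$ controls the contribution: the projection of $x$ is $u$, and $\beta_0$ either enters $\mathcal N_{r/2}(p)$ or not.

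The main obstacle is the term $O(r)$ per excursion combined with the count of excursions. Short excursions could be numerous, each costing about $r$. I would fix this by merging. Only excursions whose projection diameter exceeds some threshold matter. Consecutive short ones (those with $\ell(\beta_i)\le r$) are absorbed into the neighboring $\alpha_i$'s, which now lie in $\mathcal N_{r}(p)$. This is consistent with the final tolerance $r$, obtained after renaming $r/2\to r$. Every remaining excursion has length at least about $r$, so their number is at most $\ell(\alpha)/r$. The total $O(r)$ error is then bounded by a constant times $\ell(\alpha)$, and that constant must be small. To make it small, I would give the excursion threshold an extra factor: excursions of length $\ge Kr$ are counted, shorter ones absorbed, with tolerance $Kr$. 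With $\theta'$ small and $K$ chosen, the uncovered length becomes $\le\theta n/2$. Fixing these constants in the right order, $\theta'$ first, then $K$, then $r$ so that $\kappa_0(a)\le\theta' a/2$ for $a\ge r$, is the delicate step. Finally, since $p$ is a $\theta$-interval of a $c$-quasi-geodesic, $\ell(p)\ge\theta n/c-c$ gives enough room for this.
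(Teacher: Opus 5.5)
Your plan has the same skeleton as the paper's proof. You decompose $\alpha$ along its visits to a neighbourhood of $p$, bound the excursions by Lemma \ref{LipProj2Morse}, absorb short excursions, cover $p$ by connectedness, and extract an ordered $R$-separated set. Your ``first visit'' is the paper's running-maximum step, since the first entry time of $\alpha$ into $\mathcal N_t([z,p^+]_p)$ is monotone in $z$. The route is correct in outline; the bookkeeping differs as follows.

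\emph{The $O(r)$ term and your factor $K$.} The paper first replaces $\gamma$ by a geodesic (Lemma \ref{MorseSubpath}) and projects to $p$ itself, taking the merging threshold equal to the neighbourhood radius $r_0/2$. Each surviving excursion then costs at most $\theta_0(\ell(\beta_i)+r_0/2)/2\le\theta_0\ell(\beta_i)$. Every merged piece still has its endpoints $r_0/2$-close to $p$, so connectedness covers the whole interval between their projections with no loss. Hence the additive $O(r)$ per excursion that you call the main obstacle never appears, and no $K$ is needed. Your $K$-device does work, but only because that loss is measured in the base radius, not in the enlarged tolerance $Kr$. So you must cover each merged piece's full projection interval as above; otherwise the trick becomes circular.

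\emph{The end pieces.} The paper uses Lemma \ref{lem:endpoint-projection}: since $u\in\proj^X_\gamma(x)$ lies on the $p^-$ side, $\proj^X_p(x)$ is within $C_0+\kappa_0'(\epsilon n)$ of $p^-$. This sublinear term is why the paper needs $n\ge n_0$ and takes $r$ large enough that $N=0$ for $n<n_0$. Your clamped $\gamma$-projection uses $u\in\proj^X_\gamma(x)$ directly and can avoid that lemma. However, your sentence about $\beta_0$ asserts the conclusion without proving it. What you must show is that portions of an excursion lying near $\gamma\setminus p$, whose clamped projections sit within $O(r)$ of $p^\pm$, cannot carry the clamped projection across $p$. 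If they did, some portion far from $\gamma$ would have projection diameter at least $\ell(p)-O(r)$, contradicting Lemma \ref{LipProj2Morse} once $\theta'$ is small. You also need to handle small $n$, as the paper does.

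\emph{Geodesic versus quasi-geodesic.} Straighten $\gamma$ as the paper does rather than working with $\ell(p)\ge\theta n/c-c$. On a genuine $c$-quasi-geodesic, arc-length spacing along $p$ does not give $d_X$-separation $R$, and a zigzagging $p$ can have fewer than $\ell(p)/6R$ points that are pairwise $R$-separated. The count $N$ is only meaningful once $\ell(p)=\theta n$.
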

In the proof, we shall use the following result in the proof of \cite[Lemma 3.6, Eq.(1)]{CashenMackay2019}. Roughly speaking, the shortest projection to a subsegment is sublinearly close to the endpoints of it. 

\begin{lem}\label{lem:endpoint-projection}
Let $\gamma$ be a $\kappa_0$--contracting geodesic. Then there exist a
constant $C_0>0$ and a sublinear function $\kappa_0'\asymp\kappa_0$ with the
following property.

Let $p=[a,b]_\gamma$ be a subsegment and let $x\in X$. Suppose that
$\proj_\gamma^X(x)\cap p=\varnothing$ and that $\proj_\gamma^X(x)$ lies in
the component of $\gamma\setminus (a,b)$ containing $a$. Then
\[
\mathrm{diam}_X\bigl(\{a\}\cup\proj_p^X(x)\bigr)
 \le C_0+\kappa_0'\bigl(d_X(x,\gamma)\bigr).
\]
The analogous conclusion holds at $b$ when $\proj_\gamma^X(x)$ lies on
the other side of $p$.
\end{lem}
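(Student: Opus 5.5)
Let $q\in\proj_\gamma^X(x)$ lie on the $a$-side of $p$, so that $[q,b]_\gamma\supseteq p$. Set $L=d_X(x,\gamma)=d_X(x,q)$. The plan is to reduce the statement to the sublinear bounded image property along a single geodesic. Lemma \ref{CharMorseLem} and Lemma \ref{SubLinearProj} apply to the geodesic $\gamma$, and we let $(\kappa_1,C)$ be as there.

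\textbf{Step 1: $[x,q]_X$ stays away from $\gamma$.} Fix a geodesic $\beta=[x,q]_X$. For any point $w\in\beta$ we have $d_X(w,\gamma)\ge d_X(x,\gamma)-d_X(x,w)=d_X(w,q)$. So the initial subsegment $\beta'$ of $\beta$ ending at distance $C$ from $q$ avoids $\mathcal N_{C}(\gamma)$, at least after a small adjustment of constants. Lemma \ref{SubLinearProj} then gives
\[
\mathrm{diam}_X\bigl(\proj_\gamma^X(\beta')\bigr)\le\kappa_1(L).
\]
The projection of the endpoint of $\beta'$ lies within $2C+2$ of $q$. Hence $\proj_\gamma^X(\beta)$ lies in the $(\kappa_1(L)+3C)$-neighbourhood of $q$.

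\textbf{Step 2: compare $\proj_p$ with $\proj_\gamma$.} Take $z\in\proj_p^X(x)$. If $d_X(z,a)$ is small there is nothing to prove, so assume $z$ is far from $a$. Consider the concatenation of $\beta$ with $[q,a]_\gamma$. This is a path from $x$ to $a$. We compare $d_X(x,z)$ with $d_X(x,a)\le L+d_X(q,a)$.

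Take a geodesic $[x,z]_X$. It is short relative to the path through $q$ and $a$, so it must enter $\mathcal N_C(\gamma)$. Indeed, otherwise Lemma \ref{SubLinearProj} gives $\mathrm{diam}\,\proj_\gamma([x,z])\le\kappa_1(L)$, which forces $z$ (a projection of itself) to be $\kappa_1(L)+C$ close to $q$. That is impossible when $z$ is far from $a$, since $z\in p$ lies on the far side of $a$ from $q$.

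So let $e$ be the entry point of $[x,z]_X$ into $\mathcal N_C(\gamma)$. By Corollary \ref{SubLinearEntry}, $e$ is within $\kappa_1(L)+2C$ of $q$. Then
\[
d_X(x,z)\ge d_X(x,e)+d_X(e,z)\ge (L-\kappa_1(L)-2C)+d_X(q,z)-\kappa_1(L)-2C.
\]
On $\gamma$ we have $d_X(q,z)=d_X(q,a)+d_X(a,z)$. Minimality of $z$ gives $d_X(x,z)\le d_X(x,a)\le L+d_X(q,a)$. Combining the two bounds yields
\[
d_X(a,z)\le 2\kappa_1(L)+4C.
\]

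\textbf{Conclusion and main obstacle.} Setting $\kappa_0'=2\kappa_1\asymp\kappa_0$ and $C_0=4C+2$ (the $+2$ absorbs the additive slack in the definition of $\proj^X$) gives the claim, and the $b$-side case is symmetric. The main obstacle is Step 2 when $a$ is close to $q$ compared with $\kappa_1(L)$. In that case the bound is trivial, but care is needed with the "$+1$" slack in the projection definition and with the degenerate case where $d_X(q,a)$ is tiny. I would handle it by splitting on whether $d_X(q,a)\le\kappa_1(L)$. If so, $z$ is within $\kappa_1(L)$ of $q$ by Step 1's argument, hence near $a$. Otherwise the estimate above applies.
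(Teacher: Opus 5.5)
\textbf{Assessment.} Your Step 2 is a correct, self-contained proof. Note that the paper gives no argument for this lemma at all; it imports the estimate from \cite[Lemma 3.6, Eq.~(1)]{CashenMackay2019}. Your route needs only Corollary \ref{SubLinearEntry}. For $z\in\proj_p^X(x)$, the entry point $e$ of $[x,z]_X$ into $\mathcal N_C(\gamma)$ satisfies $d_X(q,e)\le\kappa_1(L)+2C$. Hence $d_X(x,z)\ge L+d_X(q,a)+d_X(a,z)-2\kappa_1(L)-4C$. On the other hand, $d_X(x,z)\le d_X(x,a)+1\le L+d_X(q,a)+2$. Since $p$ is a geodesic issuing from $a$, $\mathrm{diam}_X(\{a\}\cup\proj_p^X(x))=\sup_z d_X(a,z)\le 2\kappa_1(L)+4C+2$. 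This keeps the section independent of the external reference and makes the constants explicit.

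\textbf{What to trim or fix.}
\begin{itemize}
\item Step 1 is never used and can be dropped.
\item Your claim that $[x,z]_X$ ``must enter'' $\mathcal N_C(\gamma)$, argued by contradiction via Lemma \ref{SubLinearProj}, is vacuous. Since $z\in p\subseteq\gamma$, the geodesic ends on $\gamma$; the only content is where its entry point lies.
\item The case split in your last paragraph is unnecessary. The identity $d_X(q,z)=d_X(q,a)+d_X(a,z)$ holds for every $z\in p$ however small $d_X(q,a)$ is, and the final bound does not involve $d_X(q,a)$.
\item The justification you offer for the degenerate branch would not work anyway. Step 1 controls $\proj_\gamma^X([x,q]_X)$ and says nothing about $\proj_p^X(x)$.
\end{itemize}

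\textbf{The comparability claim.} Lemma \ref{SubLinearProj} as quoted only gives $\kappa_1=\kappa_1(\kappa_0)$, so your assertion $2\kappa_1\asymp\kappa_0$ is unsupported. Only sublinearity of $\kappa_0'$ is used later, in Lemma \ref{fractalMorseLem}. If you want the comparability claimed in the statement, bootstrap as follows.
\begin{itemize}
\item Put $D=d_X(q,e)$. The case $d_X(x,e)\le L$ is immediate from contraction.
\item Otherwise take $x_1\in[x,e]_X$ at distance $L$ from $x$. Contraction moves the projection by at most $\kappa_0(L)$ from $x$ to $x_1$.
\item You also have $d_X(x_1,\gamma)\le d_X(x_1,e)+C\le D+C+1$.
\item Corollary \ref{SubLinearEntry} applied to $[x_1,e]_X$ then gives $D\le\kappa_0(L)+\kappa_1(D+C+1)+2C$.
\item Sublinearity of $\kappa_1$ forces $D\le 2\kappa_0(L)+O(1)$, and so $\kappa_0'\preceq\kappa_0$.
\end{itemize}
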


\begin{proof}[Proof of Lemma \ref{fractalMorseLem}]
By Lemma \ref{MorseSubpath}, $[u,v]_\gamma$ is quantitatively Morse and any geodesic with the same endpoints is Morse. It is easy to see that the conclusion remains valid up to perturbing $\gamma$ in a fixed neighborhood. Therefore, we may assume without loss of generality that $\gamma$ is a geodesic in the proof.

Choose $n_0$ so that $C_0+\kappa_0'(\epsilon n)\le \theta n/16$ for $n\ge n_0$. Set \begin{equation}\label{theta0Eq}
\theta_0= \frac{\theta}{4(\lambda (1+2\epsilon) +\epsilon +1-\theta)}    
\end{equation} Let $r_0=r_0(\kappa_0,\theta_0/2)>0$ be given by Lemma \ref{LipProj2Morse} such that if the distance of a path $\beta$ to $p$ lies between $r_0/2$ and $R$, then $$\mathrm{diam}_X\bigl(\proj_p^X(\beta)\bigr)\le \theta_0 (\ell (\beta) +R)/2.$$
Increase $r_0$, if necessary, and set $r=2r_0$ so that $6r>\theta n_0$. Denote $n=d_X(u,v)$. Assume $n>n_0$; otherwise $\lfloor\ell(p)/6R\rfloor=0$ and the conclusion is empty. By assumption,  $d_X(x,y)\le d_X(u,v) +d_X(u,x)+d_X(v,y) \le (1+2\epsilon) n$, so  $$\ell(\alpha)\le \lambda d_X(x,y)\le \lambda (1+2\epsilon) n.$$

First of all, observe that the $\lambda$-Lipschitz path $\alpha$ intersects $\mathcal N_{r_0/2}(p)$. Indeed, if not, then $\alpha\cap \mathcal N_{r_0/2}(p)=\emptyset$. By assumption, the endpoints $x$ and $y$ of $\alpha$ are within a distance at most $(\epsilon n +(1-\theta) n)$ to $p^-$ and $p^+$ respectively, so we obtain by  Lemmas \ref{LipProj2Morse} and \ref{lem:endpoint-projection} that   
\begin{equation}\label{lengthofpEQ}
\begin{aligned}
\ell(p)\;\le &\;\;\mathrm{diam}_X\bigl(\proj_p^X(\alpha)\bigr)+\mathrm{diam}_X(\proj_p^X(x)\cup\{p^-\})+\mathrm{diam}_X(\proj_p^X(y)\cup\{p^+\})\\
\le &\;\theta_0(\ell(\alpha)+\epsilon n +(1-\theta) n)+2(C_0+\kappa_0'(\epsilon n))\\
\le &\;\theta_0(\ell(\alpha)+\epsilon n +(1-\theta) n)+\theta n/8\\
\le &\;n \theta_0 (\lambda (1+2\epsilon) +\epsilon+1-\theta)+\theta n/8 \,<\, n\theta/2  
\end{aligned}
\end{equation}
where the last inequality follows by the choice of $\theta_0$ in Eq. (\ref{theta0Eq}). This contradicts $\ell(p)=\theta n$, whence $\alpha\cap \mathcal N_{r_0/2}(p)\ne\emptyset$.
\medskip

\noindent We now decompose $\alpha=\beta_0\alpha_1\beta_1\cdots \alpha_N \beta_{N}$ for some $N\ge 1$, where  $\{\beta_i:0\le i\le N\}$ are all the components  of $\alpha\setminus \mathcal N_{r_0/2}(p)$. In addition,
\begin{itemize}
    \item For each $1\le i\le N-1$, $\beta_i$ intersects $\mathcal N_{r_0/2}(p)$ only at its two endpoints;
    \item 
    the initial and terminal ones $\beta_0, \beta_{N}$ intersect $\mathcal N_{r_0/2}(p)$ in one endpoint, while their other endpoints have distance at most $(\epsilon n+ (1-\theta)n)$ to $p$.
\end{itemize} 
See Fig. \ref{fig:fractalMorseLem} for the illustration with $N=3$.
\medskip

The idea of the proof is to show that at least one half of $p$ is covered, in the correct order, by subsegments which stay uniformly close to the subpaths $\alpha_i$.

\medskip
Orient $p$ from $p^-$ to $p^+$. For $1\le i<N$, choose $u_i,v_i\in p$ as projection points of the initial and terminal endpoints of $\beta_i$, respectively. Set $u_0=p^-$ and $v_N=p^+$, choose $v_0$ as a projection point of the terminal endpoint of $\beta_0$, and choose $u_N$ as a projection point of the initial endpoint of $\beta_N$. If $1\le i<N$ and $\ell(\beta_i)>r_0/2$, then
$$d_X(u_i,v_i)\le \mathrm{diam}_X\big(\proj_p^X(\beta_i)\big)\le \theta_0 \ell(\beta_i).$$
By the same argument as in Eq. (\ref{lengthofpEQ}), for $i\in\{0,N\}$ we have
$$d_X(u_i,v_i)\le \theta_0(\ell(\beta_i)+\epsilon n+(1-\theta)n)/2+C_0+\kappa_0'(\epsilon n).$$
Taking into account $\sum_{i=0}^N \ell(\beta_i)\le \ell(\alpha)\le \lambda (1+2\epsilon) n$, we deduce that  
\begin{equation}\label{projectedlengthEq}
\begin{aligned}
\sum_{i\in\{0,N\}\,\text{or}\,\ell(\beta_i)>r_0/2} d_X(u_i,v_i) &\le \left(\theta_0 \sum_{i=0}^N \ell(\beta_i)\right) + \theta_0 (\epsilon n+ (1-\theta)n)+2(C_0+\kappa_0'(\epsilon n)) \\
&\le \theta_0 \lambda (1+2\epsilon) n + \theta_0 (\epsilon n+ (1-\theta)n)+\theta n/8 \\
&<\ell(p)/2.       
\end{aligned} 
\end{equation} 
Here the last inequality follows from the choices of $\theta_0$ and $n_0$.

\medskip
\noindent If $\alpha_i$ and $\alpha_{i+1}$ are separated by an interior component $\beta_i$ with $\ell(\beta_i)\le r_0/2$, concatenate them across $\beta_i$. After re-indexing, each resulting connector $\alpha_i$ lies in $\mathcal N_{r_0}(p)$ and joins the $r_0$-neighborhoods of $v_{i-1}$ and $u_i$. Orienting $p$ as above and deleting overlaps and backtracking by the running-maximum procedure, we obtain pairwise disjoint, ordered subsegments
$$J_i\subseteq [v_{i-1},u_i]_p$$
such that Eq.~\eqref{projectedlengthEq} gives
\begin{equation}\label{eq:alpha-uv}
\sum_i\ell(J_i)>\ell(p)/2.
\end{equation}
For every $z\in J_i$, connectedness applied to the two sets
$\mathcal N_{r_0}([p^-,z]_p)$ and $\mathcal N_{r_0}([z,p^+]_p)$ gives a point $w\in\alpha_i$ with $d_X(z,w)\le 2r_0=r$. Choosing the first entry of $\alpha_i$ into the second set as $z$ moves from left to right makes the corresponding points $w$ linearly ordered on $\alpha_i$.

\medskip
Choose a maximal $R$-separated subset $(z_j)_{j=1}^{M}$ of $\bigcup_iJ_i$, ordered from left to right. Its $R$-neighborhood in $p$ covers $\bigcup_iJ_i$, and hence Eq.~\eqref{eq:alpha-uv} gives
\[
M\ge \frac{\sum_i\ell(J_i)}{2R}>\frac{\ell(p)}{4R}.
\]
For each $z_j\in J_i$, choose the corresponding first crossing point $w_j\in\alpha_i$ as above. Since the $J_i$ and the $\alpha_i$ occur in the same order, the points $w_j$ are linearly ordered on $\alpha$. Therefore, setting
\[
N_0=\left\lfloor\frac{\ell(p)}{6R}\right\rfloor,
\]
we may retain $N_0$ pairs so that the points $z_j$ are linearly ordered
and $R$-separated on $p$, the points $w_j$ are linearly ordered on
$\alpha$, and $d_X(z_j,w_j)\le r.$
This completes the proof.
\end{proof}

\subsection{Sublinear projection tracking along WPD elements} Let $G$ act  by isometry on a metric space $X$ and $\pi: G\to Go$  the orbital map for a fixed basepoint $o\in X$.  The notion of WPD elements  by Bestvina-Fujiwara is    generalized  in a broader setting by Sisto \cite{Sisto16}.

\begin{defn}\label{AcylinAlongSubsetDefn}
We say that the action $G\act X$ is \textit{acylindrical} along a subgroup $H$  if for every $r>0$ there exists $R>0$ so that for any $x, y \in \pi(H)$
with $d(x, y) \ge R$, we have $$|\{g \in G: d(x, gx), d(y, gy) \le r\}|<\infty.$$    
\end{defn}
  
\begin{rem}
The definition does not depend on the choice of basepoint $o$, and is also invariant under isometry. 
This generalizes the notion of WPD elements as follows: if $f$ is a Morse isometry, then $f$ is a WPD element if and only if the action $G\act X$ is {acylindrical} along the subgroup $H:=\langle f\rangle$. If $X$ is hyperbolic, this is proved by \cite[Lemma 6.4]{DGO} that any larger $N$ in Eq. (\ref{WPDDefn}) works. The  proof uses only the Morse property of $\pi(H)$, so it works for a general metric space $X$.  

\end{rem}


The following geometric separation result proved by Sisto underlies the proof of the main result. See \cite{AM24} for a further development of such kind of results.

\begin{lem}\cite[Lemma 3.4]{Sisto16}\label{GeomSepAlongH}
Suppose that   $G$ acts acylindrically along a subgroup $H$ on a metric space $X$. Assume in addition that $\pi\lvert_H$ is a proper map.
Then for $r \ge 0$ there exists $R>0$ so that for any $x, y\in \pi(H)$ with $d_X(x, y)\ge R$  the
following holds. For each $D>0$ there exists $M>0$ so that for any $a\in \pi^{-1}(B_r(x))$ and $b\in \pi^{-1}(B_r(y))$,
\[
d_S(a,b)\le D \quad\Longrightarrow\quad d_S(a,H)\le M.
\]
\end{lem}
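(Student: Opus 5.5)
The plan is to argue by contradiction and convert the WPD hypothesis into a finiteness statement. The only leverage available is counting group elements that move two far-apart orbit points of $H$ a bounded amount.

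\textbf{Setup.} Fix $r\ge0$ and let $R_0$ be the constant from Definition \ref{AcylinAlongSubsetDefn} for the displacement bound $2r$. Take $R\ge R_0$, to be enlarged later. Fix $x,y\in\pi(H)$ with $d_X(x,y)\ge R$, and a bound $D>0$. Suppose the conclusion fails. Then there are pairs $a_i\in\pi^{-1}(B_r(x))$ and $b_i\in\pi^{-1}(B_r(y))$ with $d_S(a_i,b_i)\le D$ and $d_S(a_i,H)\to\infty$.

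\textbf{Key step: extract a finite set.} Write $b_i=a_if_i$ with $\|f_i\|_S\le D$. Passing to a subsequence, assume $f_i=f$ is constant. Consider the elements
\[
g_{ij}=a_ia_j^{-1}.
\]
For the first orbit point,
\[
d_X(g_{ij}\,a_jo,\,a_io)=0,
\]
and $a_jo,a_io\in B_r(x)$. Hence $g_{ij}$ moves $a_jo$ into $B_{r}(x)$, so $d_X(x,g_{ij}x)\le 2r$. For the second orbit point, $g_{ij}a_jfo=a_ifo$, both lying in $B_r(y)$, so $d_X(y,g_{ij}y)\le 2r$.

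\textbf{Apply acylindricity.} By Definition \ref{AcylinAlongSubsetDefn} applied to $x,y\in\pi(H)$, only finitely many elements $g$ move both $x$ and $y$ by at most $2r$. Fix $j=1$. Then $a_ia_1^{-1}$ takes only finitely many values as $i$ varies, so $\{a_i\}$ is contained in finitely many cosets $F a_1$ with $F$ finite. Therefore $d_S(a_i,H)\le d_S(a_1,H)+\max_{g\in F}\|g\|_S$ is bounded. This contradicts $d_S(a_i,H)\to\infty$, and we take $M$ to be this bound.

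\textbf{Where $R$ and properness enter.} The argument above has to be checked against the quantifier order in the statement: $R$ depends only on $r$, and $M$ may depend on $x$, $y$ and $D$. One also needs the relevant preimages to be nonempty, and $x,y$ must be genuine points of $\pi(H)$. Properness of $\pi|_H$ would be used only if one wanted $M$ uniform over $x,y$, by translating $x$ close to $o$ via $H$ and reducing to finitely many configurations. The main obstacle is this uniformity, should it be required. In that case I would first translate by $h\in H$ so that $x$ is near $o$. Properness of $\pi|_H$ then leaves finitely many $H$-points near $o$, but $y$ still ranges over infinitely many points at distance $\ge R$. For those I would invoke the remark following Definition \ref{AcylinAlongSubsetDefn}, that any larger separation still gives finiteness, to reduce to a fixed pair.
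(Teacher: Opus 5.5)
Your finiteness argument is correct and is the heart of the lemma. For fixed $f\in B(1,D)$, the set $A_f=\{a\in G: ao\in B_r(x),\ afo\in B_r(y)\}$ satisfies $A_fA_f^{-1}\subseteq\{g: d_X(x,gx),\,d_X(y,gy)\le 2r\}$. That set is finite by acylindricity along $H$, so $A_f$ is finite, and taking the union over the finitely many $f$ gives a bound. There is one slip: from $a_i=ga_1$ you get $d_S(a_i,a_1)=\|a_1^{-1}g^{-1}a_1\|_S$, not $\|g\|_S$. The conclusion survives because $\{a_i\}\subseteq Fa_1$ is a finite set. The paper gives no proof of this lemma and only cites Sisto. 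If you read the statement literally, with $M$ depending on $x$, $y$ and $D$, your argument is complete.

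The gap is the uniformity that you flag but do not resolve. Your argument never uses the hypothesis that $\pi|_H$ is proper. This signals that the intended content is an $M$ depending only on $r$, $R$ and $D$, and this uniform version is what the paper actually uses. In Lemmas \ref{SublTrackInAcyl} and \ref{SSublContrAcyl}, a single $M$ is applied to all consecutive pairs $(m_io,m_{i+1}o)$ across infinitely many configurations.

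Your proposed route to uniformity does not work. The remark after Definition \ref{AcylinAlongSubsetDefn}, that larger separations still give finiteness, cannot collapse infinitely many pairs $(x,y)$ to a fixed one. It also does not need to, because $y$ does not range over infinitely many points. The missing observation is that the hypothesis itself bounds $d_X(x,y)$. With $\lambda=\max_{s\in S}d_X(o,so)$, whenever such $a,b$ exist,
\[ d_X(x,y)\le d_X(x,ao)+d_X(ao,bo)+d_X(bo,y)\le 2r+\lambda D. \]
For $h\in H$ we have $d_S(ha,hb)=d_S(a,b)$ and $d_S(ha,H)=d_S(a,H)$, and $x\in\pi(H)$, so you may translate by an element of $H$ to make $x=o$. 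Properness of $\pi|_H$ then leaves only finitely many possible $y\in\pi(H)\cap B_{2r+\lambda D}(o)$. Taking the maximum of the finitely many constants your argument produces gives a uniform $M$. This is exactly where properness enters.
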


We divide  \ref{SubTrackThm} into Lemma  \ref{SublTrackInAcyl} and Lemma \ref{SSublContrAcyl} in the remainder of this subsection. Let us begin by recalling their setup.

\begin{enumerate}
    \item 
    Assume that $G$ is a group generated by a finite symmetric set $S$. 
    \item 
    Let $f\in G$ act  as a Morse isometry on $X$: this means by Definition \ref{SublinearContrDefn} that $\pi(H)$ is a Morse quasi-geodesic with $H=\langle f\rangle$.   
    \item 
    Assume that    $G$ acts acylindrically along  $H$ on $X$; equivalently, $f$ is a WPD isometry as in (\ref{WPDDefn}). 

\end{enumerate}

\begin{figure}
    \centering

\tikzset{every picture/.style={line width=0.75pt}} 

\begin{tikzpicture}[x=0.75pt,y=0.75pt,yscale=-1,xscale=1]

\draw    (56.5,199) -- (368.5,198) ;
\draw    (356.5,27) -- (353.5,198) ;
\draw [shift={(353.5,198)}, rotate = 91.01] [color={rgb, 255:red, 0; green, 0; blue, 0 }  ][fill={rgb, 255:red, 0; green, 0; blue, 0 }  ][line width=0.75]      (0, 0) circle [x radius= 3.35, y radius= 3.35]   ;
\draw [color={rgb, 255:red, 74; green, 144; blue, 226 }  ,draw opacity=1 ][line width=1.5]    (74.5,199) .. controls (68,162.5) and (69.5,49) .. (113.5,51) .. controls (157.5,53) and (112.5,216) .. (151.5,171) .. controls (190.5,126) and (182.5,205) .. (212.5,168) .. controls (242.5,131) and (237.5,173) .. (268.5,178) .. controls (299.5,183) and (304.5,117) .. (320.5,172) .. controls (336.5,227) and (317.5,17) .. (356.5,28) ;
\draw [shift={(356.5,28)}, rotate = 15.75] [color={rgb, 255:red, 74; green, 144; blue, 226 }  ,draw opacity=1 ][fill={rgb, 255:red, 74; green, 144; blue, 226 }  ,fill opacity=1 ][line width=1.5]      (0, 0) circle [x radius= 4.36, y radius= 4.36]   ;
\draw [shift={(74.5,199)}, rotate = 259.9] [color={rgb, 255:red, 74; green, 144; blue, 226 }  ,draw opacity=1 ][fill={rgb, 255:red, 74; green, 144; blue, 226 }  ,fill opacity=1 ][line width=1.5]      (0, 0) circle [x radius= 4.36, y radius= 4.36]   ;
\draw  [fill={rgb, 255:red, 155; green, 155; blue, 155 }  ,fill opacity=0.52 ][dash pattern={on 4.5pt off 4.5pt}] (119,197.75) .. controls (119,186.84) and (127.84,178) .. (138.75,178) .. controls (149.66,178) and (158.5,186.84) .. (158.5,197.75) .. controls (158.5,208.66) and (149.66,217.5) .. (138.75,217.5) .. controls (127.84,217.5) and (119,208.66) .. (119,197.75) -- cycle ;
\draw  [fill={rgb, 255:red, 155; green, 155; blue, 155 }  ,fill opacity=0.52 ][dash pattern={on 4.5pt off 4.5pt}] (179,198.75) .. controls (179,187.84) and (187.84,179) .. (198.75,179) .. controls (209.66,179) and (218.5,187.84) .. (218.5,198.75) .. controls (218.5,209.66) and (209.66,218.5) .. (198.75,218.5) .. controls (187.84,218.5) and (179,209.66) .. (179,198.75) -- cycle ;
\draw  [fill={rgb, 255:red, 155; green, 155; blue, 155 }  ,fill opacity=0.52 ][dash pattern={on 4.5pt off 4.5pt}] (248.75,197.75) .. controls (248.75,186.84) and (257.59,178) .. (268.5,178) .. controls (279.41,178) and (288.25,186.84) .. (288.25,197.75) .. controls (288.25,208.66) and (279.41,217.5) .. (268.5,217.5) .. controls (257.59,217.5) and (248.75,208.66) .. (248.75,197.75) -- cycle ;
\draw  [fill={rgb, 255:red, 155; green, 155; blue, 155 }  ,fill opacity=0.52 ][dash pattern={on 4.5pt off 4.5pt}] (306,196.75) .. controls (306,185.84) and (314.84,177) .. (325.75,177) .. controls (336.66,177) and (345.5,185.84) .. (345.5,196.75) .. controls (345.5,207.66) and (336.66,216.5) .. (325.75,216.5) .. controls (314.84,216.5) and (306,207.66) .. (306,196.75) -- cycle ;
\draw    (198.75,198.75) -- (268.5,197.75) ;
\draw [shift={(268.5,197.75)}, rotate = 359.18] [color={rgb, 255:red, 0; green, 0; blue, 0 }  ][fill={rgb, 255:red, 0; green, 0; blue, 0 }  ][line width=0.75]      (0, 0) circle [x radius= 3.35, y radius= 3.35]   ;
\draw [shift={(198.75,198.75)}, rotate = 359.18] [color={rgb, 255:red, 0; green, 0; blue, 0 }  ][fill={rgb, 255:red, 0; green, 0; blue, 0 }  ][line width=0.75]      (0, 0) circle [x radius= 3.35, y radius= 3.35]   ;
\draw    (268.5,197.75) -- (268.5,178) ;
\draw [shift={(268.5,178)}, rotate = 270] [color={rgb, 255:red, 0; green, 0; blue, 0 }  ][fill={rgb, 255:red, 0; green, 0; blue, 0 }  ][line width=0.75]      (0, 0) circle [x radius= 3.35, y radius= 3.35]   ;
\draw    (198.75,198.75) -- (198.75,179) ;
\draw [shift={(198.75,179)}, rotate = 270] [color={rgb, 255:red, 0; green, 0; blue, 0 }  ][fill={rgb, 255:red, 0; green, 0; blue, 0 }  ][line width=0.75]      (0, 0) circle [x radius= 3.35, y radius= 3.35]   ;

\draw (31,210.4) node [anchor=north west][inner sep=0.75pt]    {$1\in \mathrm{Pr}_{H}^{S}( x)$};
\draw (365,9.4) node [anchor=north west][inner sep=0.75pt]    {$xo$};
\draw (350,209.4) node [anchor=north west][inner sep=0.75pt]    {$y\in \mathrm{Pr}_{H}^{X}( x)$};
\draw (361,101.4) node [anchor=north west][inner sep=0.75pt]    {$\leq \lambda n$};
\draw (362,178.4) node [anchor=north west][inner sep=0.75pt]    {$yo$};
\draw (57,177.4) node [anchor=north west][inner sep=0.75pt]    {$o$};
\draw (157,49.4) node [anchor=north west][inner sep=0.75pt]    {$\ell ( \pi [ 1,x]_{S}) \leq \lambda n$};
\draw (157,74.4) node [anchor=north west][inner sep=0.75pt]    {$d_{X}( o,yo) =l \geq \epsilon n$};
\draw (261,155.4) node [anchor=north west][inner sep=0.75pt]    {$g_{i} o$};
\draw (184,150.4) node [anchor=north west][inner sep=0.75pt]    {$g_{i-1} o$};
\draw (257,219.4) node [anchor=north west][inner sep=0.75pt]    {$m_{i} o$};
\draw (178,219.4) node [anchor=north west][inner sep=0.75pt]    {$m_{i-1} o$};

\end{tikzpicture}
    \caption{Lemma \ref{SublTrackInAcyl}}
    \label{fig:SublTrackInAcyl}
\end{figure}

We first prove that the $d_S$-projection sublinearly tracks the $d_X$-projection along $H$. This is analogous to Lemma \ref{SublTrackInProper} in the proper action.  
\begin{lem}\label{SublTrackInAcyl}
Let $f\in G$ be a Morse WPD  isometry on $X$. Then 
there exists a sublinear function $\ktrack$ so that 
the $d_S$-projection   $\ktrack$-tracks $d_X$-projection along  $H=\langle f\rangle$. 
\end{lem}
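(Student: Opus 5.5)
We follow the scheme of Lemma \ref{SublTrackInProper}, replacing properness with the generalized Morse lemma and Sisto's geometric separation.

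\emph{Reduction.} By equivariance, translate so that $1\in\proj^S_H(x)$. Let $y\in\proj^X_H(x)$ be arbitrary, and put $n=d_S(x,H)=\|x\|_S$ and $l=d_X(o,yo)$. Since $\pi|_H$ is a quasi-isometric embedding, it suffices to show that $l/n\to0$ uniformly as $n\to\infty$. We must also check that the supremum over $d_S(x,H)\le r$ is finite for each $r$. This is less immediate than in the proper case. However, $xo$ lies within $\lambda r$ of $o$, so $\proj^X_{Ho}(xo)$ lies within $2\lambda r+2$ of $o$. As $\pi|_H$ is proper, this gives finitely many $y$, with a uniform bound. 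Splitting the supremum into bounded and unbounded $r$ then yields a sublinear $\ktrack$ exactly as before.

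\emph{Contradiction setup.} Suppose there are $\epsilon>0$ and triples with $n\to\infty$ and $l\ge\epsilon n$. Consider $\alpha=\pi([1,x]_S)$, followed by $[xo,yo]_X$. We have $d_X(xo,yo)\le d_X(xo,o)\le\lambda n$, so this path is $c$-Lipschitz with $c=2\lambda/\epsilon$ and joins $o$ to $yo$ on $\gamma=Ho$. Apply Lemma \ref{fractalMorseCor} (equivalently Lemma \ref{fractalMorseLem}) to a $\theta$-interval $p$ of $[o,yo]_\gamma$ placed in the middle, say $\theta=1/3$. For any $R>4r$ this gives about $\ell(p)/6R\gtrsim \epsilon n/R$ pairwise $R$-separated points $z_i\in p$ with $d_X(z_i,\cdot)\le r$ to the concatenated path, in linear order. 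Points of $p$ that are $r$-close to $[xo,yo]_X$ lie within $2r+\lambda$ of $yo$, by the shortest projection property. Since $p$ is in the middle, this cannot happen once $l$ is large. Hence all the $z_i$ are $r$-close to orbit points $g_io$ with $g_i\in[1,x]_S$, ordered along the geodesic. Choose $m_i\in H$ with $d_X(m_io,z_i)\le\lambda$, as in Fig.~\ref{fig:SublTrackInAcyl}.

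\emph{Key step: exploiting WPD through Lemma \ref{GeomSepAlongH}.} Fix $r'=r+\lambda$ and let $R_0$ be the constant from Lemma \ref{GeomSepAlongH}. Choose $R\ge R_0$ (and $R>4r$) fixed, so that $d_X(m_{i-1}o,m_io)\ge R_0$. The points $g_{i-1},g_i$ lie on a word geodesic of length $n$. There are linearly many such pairs, so their total $d_S$-length is at most $n$. Hence, by pigeonhole, some consecutive pair satisfies $d_S(g_{i-1},g_i)\le D:=6R n/(\epsilon' n)$, a constant depending only on $\epsilon,R,\lambda$. Translating by $m_{i-1}^{-1}$, Lemma \ref{GeomSepAlongH} applied with $a=m_{i-1}^{-1}g_{i-1}$ and $b=m_{i-1}^{-1}g_i$ gives $d_S(g_{i-1},H)\le M$, where $M=M(D)$ is independent of $n$.

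We now turn this into a contradiction. Take $k\in H$ with $d_S(g_{i-1},k)\le M$. Since $g_{i-1}\in[1,x]_S$, we get $d_S(x,k)\le d_S(x,g_{i-1})+M$. Because $1$ is a shortest projection, this forces $d_S(1,g_{i-1})\le M$. But $g_{i-1}o$ is $r$-close to $z_{i-1}$, which lies in the middle of $[o,yo]_\gamma$, at distance at least $\theta l/\!\!$ roughly from $o$. So $d_S(1,g_{i-1})\ge(\theta\epsilon n-r)/\lambda\to\infty$, a contradiction.

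\emph{Main obstacle.} The delicate point is the uniformity of constants. $R$ must be fixed before $D$, and $D$ before $M$, and the pigeonhole must produce a bounded $D$ uniformly in $n$. This is exactly why we need linearly many separated points from Lemma \ref{fractalMorseLem}, rather than the single point supplied by middle recurrence. The other point needing care is that Lemma \ref{GeomSepAlongH} is applied with base points $x,y\in\pi(H)$ at distance at least $R_0$, uniformly after translation.
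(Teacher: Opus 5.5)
Your argument is correct and is essentially the paper's proof. Your pigeonhole step finds a consecutive pair with $d_S(g_{i-1},g_i)\le D$, hence $d_S(g_{i-1},H)\le M$ by Lemma \ref{GeomSepAlongH}, hence $d_S(1,g_{i-1})\le M$ because $1$ is a shortest $d_S$-projection point; this is just the contrapositive of the paper's step, which shows every $g_i$ is far from $H$ so that all consecutive gaps exceed $D$ and sum to more than $\|x\|_S$. The remaining differences are cosmetic: you normalize by $n=d_S(x,H)$ and close up $\pi([1,x]_S)$ with $[xo,yo]_X$ to use a middle interval with endpoints on $\gamma$, whereas the paper applies Lemma \ref{fractalMorseLem} directly to $\pi([1,x]_S)$ with the second half of $[o,yo]_{\pi H}$; note that the linear ordering of the $g_i$, which your summation needs, comes from Lemma \ref{fractalMorseLem} rather than Lemma \ref{fractalMorseCor}.
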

\begin{proof}
Note that $d_S$ and $d_X$ projections are invariant under isometry. Given $x\in G$, we may assume without loss of generality that the   identity $1$ is contained in the shortest $d_S$-projection $\proj^S_{H}(x)$, and $y$ is in the $d_X$-projection $\proj^X_{H}(x)$. That is to say, $1$ is a shortest $d_S$-projection point of $x$ to $H$, and $yo$ is a shortest $X$-projection point of $xo$ to $\pi(H)$.

As $\pi\lvert_H$ is a $\lambda$-quasi-isometric embedding for some $\lambda>1$, we have   $$\lambda^{-1}\|y\|_S -\lambda \le d_X(o,yo)\le  \lambda \|y\|_S.$$
It suffices  to prove that   $$\frac{d_X(o,yo)}{\|x^{-1}y\|_S}\longrightarrow  0\quad \text{ as }\quad \|x^{-1}y\|_S\to \infty.$$
Indeed, if $n:=\|x^{-1}y\|_S$ and $d_X(o,yo)=o(n)$, then $\|y\|_S=o(n)$, while $\|x\|_S=d_S(x,H)\le n\le \|x\|_S+\|y\|_S$. Thus $n\sim d_S(x,H)$ and the required tracking estimate follows. We argue by contradiction. Let $l:=d_X(o,yo)$. There exists $\epsilon>0$ so that $l\ge \epsilon n$ holds for infinitely many pairs $(x,y)$ with $n\to\infty$. 

\medskip
Consider the  path $\alpha:=\pi([1,x]_S)$ labeled by $[1,x]_S$ in  $X$. As $\|x\|_S\le n$, $\alpha$ has length at most $\lambda n$. Let $p$  be the $(1/2,1)$-interval of $[o,yo]_{\pi H}$; that is, $p$ is the second half of it, so $\ell(p) = l/2$. The endpoint $xo$ of   $\alpha$ projects to $yo$ with a distance at most $$d_X(xo,yo)\le \lambda \|x^{-1}y\|_S\le \lambda n\le  \epsilon_0 d_X(o,yo)$$
where $\epsilon_0:=\lambda/\epsilon$. See Fig. \ref{fig:SublTrackInAcyl} for the illustration. 

\medskip
\noindent We then apply Lemma \ref{fractalMorseLem} to the $\lambda$-Lipschitz path $\alpha$ and the $(1/2,1)$-interval $p$, with parameter $(\epsilon_0,\theta:=1/2,\lambda)$. Let $r=r(\epsilon_0,1/2,\lambda)$ be the given constant. 

Let $R=R(r)>4r$ be the constant  given by Lemma \ref{GeomSepAlongH}. Set $N= \lfloor \ell(p) /6R\rfloor-1$. By Lemma \ref{fractalMorseLem}, the path $\alpha=\pi([1,x]_S)$ intersects a sequence of $r$-balls around  points $m_io$ on $p$ where $m_i\in H$ and $1\le i\le N+1$ so that  $$d_X(o,m_io)\ge d_X(o,yo)/2,\qquad d_X(m_{i}o,m_{i+1}o)\ge R$$ Here we use the choice of $p$ as the second half of $[o,yo]_{\pi H}$.

Moreover, we may choose a linearly ordered sequence of points $g_i\in [1,x]_S$ so that $d_X(g_io, m_io)\le r$. Since  $1$ is a shortest $d_S$-projection point of $x$ to $H$, for every $a\in H$ the triangle inequality gives
$$d_S(g_i,a)\ge d_S(x,a)-d_S(x,g_i)\ge d_S(x,1)-d_S(x,g_i)=d_S(g_i,1).$$
Thus $d_S(g_i,H)\ge d_S(g_i,1)$ for each $1\le i\le N+1$. 

For each $D$ to be given below, we have  $M$ given by Lemma \ref{GeomSepAlongH} so that if $d_S(g_i, H)> M$ for some $1\le i\le N$, then $d_S(g_i,g_{i+1})>D$. 

\medskip
\noindent
Now for infinitely many $(x,y)$  and $g_i\in [1,x]_S$ chosen as above,  we claim that  $d_S(g_i,H)> M$ for each $1\le i\le N+1$. Indeed, if $d_S(g_i,H)\le M$ for some $i$, then $d_S(g_i,1)\le M$ and thus $d_X(g_io, o)\le \lambda M$ by $\lambda$-Lipschitz property of $\pi$. Taking into account $d_X(m_io,g_io)\le r$, we have $d_X(m_io,o)\le r+\lambda M$. This contradicts the inequality $d_X(o,m_io)>d_X(o,yo)/2$ as $l=d_X(o,yo)\to\infty$.   

\medskip
\noindent
To conclude the proof, since $[1,x]_S$ is a word geodesic, we obtain   
$$
d_S(1,x)\ge \sum_{i=1}^N d_S(g_i,g_{i+1})>ND$$
Recall that $N\ge \ell(p)/6R - 1 \ge \epsilon n/12R-1\ge \epsilon n/24R$ when $n\gg 1$. Choose $D$ satisfying Lemma \ref{GeomSepAlongH} large enough so that $\frac{\epsilon D}{24R}>1$. This gives a contradiction: $d_S(1,x)\ge \frac{\epsilon D}{24R}\cdot n> n$. The proof is complete.
\end{proof}

We now establish that, under a stronger assumption,  the pullback of  the shortest $d_X$-projection to $\pi(H)$    has  strongly sublinear contraction in word metric as in Definition \ref{SublinearContrDefn}. We do not know whether $\pi(H)$  could be weakened as a Morse quasi-geodesic as in Lemma \ref{SublTrackInAcyl}.  

\begin{figure}
    \centering

\tikzset{every picture/.style={line width=0.75pt}} 

\begin{tikzpicture}[x=0.75pt,y=0.75pt,yscale=-1,xscale=1]

\draw    (38.5,227) -- (263.5,225) ;
\draw    (339,226) -- (544.5,225) ;
\draw    (350.5,100) -- (352.5,224) ;
\draw [shift={(352.5,224)}, rotate = 89.08] [color={rgb, 255:red, 0; green, 0; blue, 0 }  ][fill={rgb, 255:red, 0; green, 0; blue, 0 }  ][line width=0.75]      (0, 0) circle [x radius= 3.35, y radius= 3.35]   ;
\draw    (545,145) -- (544.5,225) ;
\draw [shift={(544.5,225)}, rotate = 90.36] [color={rgb, 255:red, 0; green, 0; blue, 0 }  ][fill={rgb, 255:red, 0; green, 0; blue, 0 }  ][line width=0.75]      (0, 0) circle [x radius= 3.35, y radius= 3.35]   ;
\draw  [dash pattern={on 4.5pt off 4.5pt}]  (93.5,226) -- (116.5,137) ;
\draw [shift={(116.5,137)}, rotate = 284.49] [color={rgb, 255:red, 0; green, 0; blue, 0 }  ][fill={rgb, 255:red, 0; green, 0; blue, 0 }  ][line width=0.75]      (0, 0) circle [x radius= 3.35, y radius= 3.35]   ;
\draw [shift={(93.5,226)}, rotate = 284.49] [color={rgb, 255:red, 0; green, 0; blue, 0 }  ][fill={rgb, 255:red, 0; green, 0; blue, 0 }  ][line width=0.75]      (0, 0) circle [x radius= 3.35, y radius= 3.35]   ;
\draw [color={rgb, 255:red, 0; green, 0; blue, 0 }  ,draw opacity=1 ][line width=1.5]    (86.5,116) .. controls (109.5,151) and (199,155) .. (239,125) ;
\draw [shift={(239,125)}, rotate = 323.13] [color={rgb, 255:red, 0; green, 0; blue, 0 }  ,draw opacity=1 ][fill={rgb, 255:red, 0; green, 0; blue, 0 }  ,fill opacity=1 ][line width=1.5]      (0, 0) circle [x radius= 4.36, y radius= 4.36]   ;
\draw [shift={(86.5,116)}, rotate = 56.69] [color={rgb, 255:red, 0; green, 0; blue, 0 }  ,draw opacity=1 ][fill={rgb, 255:red, 0; green, 0; blue, 0 }  ,fill opacity=1 ][line width=1.5]      (0, 0) circle [x radius= 4.36, y radius= 4.36]   ;
\draw  [dash pattern={on 4.5pt off 4.5pt}]  (125,226) -- (146.5,144) ;
\draw [shift={(146.5,144)}, rotate = 284.69] [color={rgb, 255:red, 0; green, 0; blue, 0 }  ][fill={rgb, 255:red, 0; green, 0; blue, 0 }  ][line width=0.75]      (0, 0) circle [x radius= 3.35, y radius= 3.35]   ;
\draw [shift={(125,226)}, rotate = 284.69] [color={rgb, 255:red, 0; green, 0; blue, 0 }  ][fill={rgb, 255:red, 0; green, 0; blue, 0 }  ][line width=0.75]      (0, 0) circle [x radius= 3.35, y radius= 3.35]   ;
\draw  [dash pattern={on 4.5pt off 4.5pt}]  (165,226) -- (186.5,143) ;
\draw [shift={(186.5,143)}, rotate = 284.52] [color={rgb, 255:red, 0; green, 0; blue, 0 }  ][fill={rgb, 255:red, 0; green, 0; blue, 0 }  ][line width=0.75]      (0, 0) circle [x radius= 3.35, y radius= 3.35]   ;
\draw [shift={(165,226)}, rotate = 284.52] [color={rgb, 255:red, 0; green, 0; blue, 0 }  ][fill={rgb, 255:red, 0; green, 0; blue, 0 }  ][line width=0.75]      (0, 0) circle [x radius= 3.35, y radius= 3.35]   ;
\draw  [dash pattern={on 4.5pt off 4.5pt}]  (202,226) -- (225.5,133) ;
\draw [shift={(225.5,133)}, rotate = 284.18] [color={rgb, 255:red, 0; green, 0; blue, 0 }  ][fill={rgb, 255:red, 0; green, 0; blue, 0 }  ][line width=0.75]      (0, 0) circle [x radius= 3.35, y radius= 3.35]   ;
\draw [shift={(202,226)}, rotate = 284.18] [color={rgb, 255:red, 0; green, 0; blue, 0 }  ][fill={rgb, 255:red, 0; green, 0; blue, 0 }  ][line width=0.75]      (0, 0) circle [x radius= 3.35, y radius= 3.35]   ;
\draw  [fill={rgb, 255:red, 128; green, 128; blue, 128 }  ,fill opacity=0.42 ][dash pattern={on 4.5pt off 4.5pt}] (362.5,226) .. controls (362.5,217.16) and (369.66,210) .. (378.5,210) .. controls (387.34,210) and (394.5,217.16) .. (394.5,226) .. controls (394.5,234.84) and (387.34,242) .. (378.5,242) .. controls (369.66,242) and (362.5,234.84) .. (362.5,226) -- cycle ;
\draw  [fill={rgb, 255:red, 128; green, 128; blue, 128 }  ,fill opacity=0.42 ][dash pattern={on 4.5pt off 4.5pt}] (409.75,225.5) .. controls (409.75,216.66) and (416.91,209.5) .. (425.75,209.5) .. controls (434.59,209.5) and (441.75,216.66) .. (441.75,225.5) .. controls (441.75,234.34) and (434.59,241.5) .. (425.75,241.5) .. controls (416.91,241.5) and (409.75,234.34) .. (409.75,225.5) -- cycle ;
\draw  [fill={rgb, 255:red, 128; green, 128; blue, 128 }  ,fill opacity=0.42 ][dash pattern={on 4.5pt off 4.5pt}] (457.5,225) .. controls (457.5,216.16) and (464.66,209) .. (473.5,209) .. controls (482.34,209) and (489.5,216.16) .. (489.5,225) .. controls (489.5,233.84) and (482.34,241) .. (473.5,241) .. controls (464.66,241) and (457.5,233.84) .. (457.5,225) -- cycle ;
\draw  [fill={rgb, 255:red, 128; green, 128; blue, 128 }  ,fill opacity=0.42 ][dash pattern={on 4.5pt off 4.5pt}] (503.5,225) .. controls (503.5,216.16) and (510.66,209) .. (519.5,209) .. controls (528.34,209) and (535.5,216.16) .. (535.5,225) .. controls (535.5,233.84) and (528.34,241) .. (519.5,241) .. controls (510.66,241) and (503.5,233.84) .. (503.5,225) -- cycle ;
\draw  [line width=0.75]  (147.5,149) .. controls (147.36,153.53) and (149.56,155.86) .. (154.09,156) -- (154.09,156) .. controls (160.56,156.2) and (163.72,158.56) .. (163.59,163.09) .. controls (163.72,158.56) and (167.03,156.4) .. (173.5,156.59)(170.59,156.5) -- (173.5,156.59) .. controls (178.03,156.73) and (180.36,154.53) .. (180.5,150) ;
\draw    (425.75,225.5) -- (473.5,225) ;
\draw [shift={(473.5,225)}, rotate = 359.4] [color={rgb, 255:red, 0; green, 0; blue, 0 }  ][fill={rgb, 255:red, 0; green, 0; blue, 0 }  ][line width=0.75]      (0, 0) circle [x radius= 3.35, y radius= 3.35]   ;
\draw [shift={(425.75,225.5)}, rotate = 359.4] [color={rgb, 255:red, 0; green, 0; blue, 0 }  ][fill={rgb, 255:red, 0; green, 0; blue, 0 }  ][line width=0.75]      (0, 0) circle [x radius= 3.35, y radius= 3.35]   ;
\draw  [line width=0.75]  (110.5,141) .. controls (105.99,139.81) and (103.14,141.48) .. (101.95,145.99) -- (96.98,164.94) .. controls (95.29,171.39) and (92.18,174.02) .. (87.67,172.83) .. controls (92.18,174.02) and (93.59,177.83) .. (91.9,184.28)(92.66,181.38) -- (84.5,212.45) .. controls (83.32,216.96) and (84.99,219.81) .. (89.5,221) ;
\draw    (289,52) -- (287.5,259) ;
\draw [color={rgb, 255:red, 74; green, 144; blue, 226 }  ,draw opacity=1 ][line width=1.5]    (350.5,100) .. controls (380.5,102) and (362.5,245) .. (390.5,203) .. controls (418.5,161) and (414.5,219) .. (430.5,208) .. controls (446.5,197) and (449.5,192) .. (464.5,208) .. controls (479.5,224) and (481.5,179) .. (507.5,203) .. controls (533.5,227) and (529.5,174) .. (545,145) ;
\draw [shift={(545,145)}, rotate = 298.12] [color={rgb, 255:red, 74; green, 144; blue, 226 }  ,draw opacity=1 ][fill={rgb, 255:red, 74; green, 144; blue, 226 }  ,fill opacity=1 ][line width=1.5]      (0, 0) circle [x radius= 4.36, y radius= 4.36]   ;
\draw [shift={(350.5,100)}, rotate = 3.81] [color={rgb, 255:red, 74; green, 144; blue, 226 }  ,draw opacity=1 ][fill={rgb, 255:red, 74; green, 144; blue, 226 }  ,fill opacity=1 ][line width=1.5]      (0, 0) circle [x radius= 4.36, y radius= 4.36]   ;
\draw    (425.75,225.5) -- (425.75,209.5) ;
\draw [shift={(425.75,209.5)}, rotate = 270] [color={rgb, 255:red, 0; green, 0; blue, 0 }  ][fill={rgb, 255:red, 0; green, 0; blue, 0 }  ][line width=0.75]      (0, 0) circle [x radius= 3.35, y radius= 3.35]   ;
\draw    (473.5,225) -- (473.5,209) ;
\draw [shift={(473.5,209)}, rotate = 270] [color={rgb, 255:red, 0; green, 0; blue, 0 }  ][fill={rgb, 255:red, 0; green, 0; blue, 0 }  ][line width=0.75]      (0, 0) circle [x radius= 3.35, y radius= 3.35]   ;

\draw (76,91.4) node [anchor=north west][inner sep=0.75pt]    {$g$};
\draw (242,104.4) node [anchor=north west][inner sep=0.75pt]    {$h$};
\draw (125,96.4) node [anchor=north west][inner sep=0.75pt]    {$d_{S}( g,h) =n$};
\draw (324,89.4) node [anchor=north west][inner sep=0.75pt]    {$go$};
\draw (553,135.4) node [anchor=north west][inner sep=0.75pt]    {$ho$};
\draw (112,232.4) node [anchor=north west][inner sep=0.75pt]    {$m_{i-1} ,m_{i} \in H$};
\draw (401,117.4) node [anchor=north west][inner sep=0.75pt]    {$\ell ( \pi [ g,h]_S) \leq \lambda n$};
\draw (309,157.4) node [anchor=north west][inner sep=0.75pt]    {$\lambda n\geq $};
\draw (547,168.4) node [anchor=north west][inner sep=0.75pt]    {$\leq 2\lambda n$};
\draw (334,226.4) node [anchor=north west][inner sep=0.75pt]    {$xo$};
\draw (547.5,222.4) node [anchor=north west][inner sep=0.75pt]    {$yo$};
\draw (146,163.4) node [anchor=north west][inner sep=0.75pt]    {$\geq D$};
\draw (62,165.4) node [anchor=north west][inner sep=0.75pt]    {$n\geq $};
\draw (269,50.4) node [anchor=north west][inner sep=0.75pt]    {$G$};
\draw (296,50.4) node [anchor=north west][inner sep=0.75pt]    {$X$};
\draw (144,122.4) node [anchor=north west][inner sep=0.75pt]    {$g_{i-1}$};
\draw (185,122.4) node [anchor=north west][inner sep=0.75pt]    {$g_{i}$};
\draw (402,93.4) node [anchor=north west][inner sep=0.75pt]    {$d_{X}(xo,yo) \geq \epsilon n$};
\draw (463,241.4) node [anchor=north west][inner sep=0.75pt]    {$m_{i} o$};
\draw (406,241.4) node [anchor=north west][inner sep=0.75pt]    {$m_{i-1} o$};
\draw (465,186.4) node [anchor=north west][inner sep=0.75pt]    {$g_{i} o$};
\draw (416,184.4) node [anchor=north west][inner sep=0.75pt]    {$g_{i-1} o$};

\end{tikzpicture}
    \caption{Lemma \ref{SSublContrAcyl}}
    \label{fig:SSublContrAcyl}
\end{figure}
 
\begin{lem}\label{SSublContrAcyl}
Assume that a WPD element $f\in G$ acts as a strongly sublinearly contracting isometry on $X$. Then the pullback projection $\proj_H^X$, where $H=\langle f\rangle$, has strongly sublinear contraction. More precisely, there exists a sublinear function $\kmorse$ with the following property.
Let $\alpha=[g,h]_S$ be a word geodesic  of length $n$ so that $d_S(\alpha,H)\ge n$. Then $$\mathrm{diam}_S\Big(\proj^X_{H}(g)\cup \proj^X_{H}(h)\Big)\le \kmorse(n)$$    
\end{lem}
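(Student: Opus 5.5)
The plan is to run the proof of Lemma \ref{SSublContrInProper}, with properness replaced by Sisto's geometric separation (Lemma \ref{GeomSepAlongH}), exactly as Lemma \ref{SublTrackInAcyl} adapts Lemma \ref{SublTrackInProper}. Fix $\lambda\ge1$ so that the orbit map $\pi$ is $\lambda$-Lipschitz and $\pi|_H$ is a $\lambda$-quasi-isometric embedding. Let $\kappa_0$ be the strong contraction function of $\gamma:=Ho$. Take $uo\in\proj^X_{Ho}(go)$ and $vo\in\proj^X_{Ho}(ho)$ with $u,v\in H$. Since $\pi|_H$ is a quasi-isometric embedding, it suffices to show
\[
\frac{d_X(uo,vo)}{n}\longrightarrow 0
\]
uniformly over all data with $d_S(\alpha,H)\ge n\to\infty$. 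Once this holds, the sublinear function $\kmorse$ is the supremum of $\mathrm{diam}_S$ over all such configurations of length at most $n$. (The statement only asks for the hypothesis $d_S(\alpha,H)\ge n$; Lemma \ref{lem:reduction} can then upgrade it to strong contraction in the sense of Definition \ref{SublinearContrDefn}.) Suppose instead that there are $\epsilon>0$ and sequences $g_k,h_k$ with $n_k\to\infty$ and $d_X(u_ko,v_ko)\ge\epsilon n_k$.

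\textbf{Reduction to a Lipschitz path.} If $d_X(go,Ho)>\lambda n\ge d_X(go,ho)$, strong contraction of $\gamma$ in $X$ gives $d_X(uo,vo)\le\kappa_0(\lambda n)$, which contradicts $\epsilon n$ for large $n$. The same holds with the roles of $g$ and $h$ exchanged. So we may assume $d_X(go,uo),d_X(ho,vo)\le\lambda n\le(\lambda/\epsilon)\,d_X(uo,vo)$.

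We apply Lemma \ref{fractalMorseLem} in the following setting:
\begin{itemize}
\item the path is the $\lambda$-Lipschitz path $\pi(\alpha)$ from $go$ to $ho$;
\item the projection points are $uo,vo$, with $\epsilon_0=\lambda/\epsilon$;
\item $p$ is the middle third of $[uo,vo]_\gamma$.
\end{itemize}
Let $r$ be the resulting constant, and choose $R>4r$ by Lemma \ref{GeomSepAlongH} for this $r$. The lemma then yields points $m_io\in p$ with $m_i\in H$, and linearly ordered $g_i\in\alpha$, for $1\le i\le N\approx \epsilon n/(18R)$, such that $d_X(g_io,m_io)\le r+\lambda$ and $d_X(m_io,m_{i+1}o)\ge R$. (We absorb $\lambda$ into $r$ by passing from points of $\pi(\alpha)$ to nearby vertex images.)

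\textbf{Applying geometric separation.} Fix $D$ with $\epsilon D/(18R)>2$, and let $M=M(D)$ be given by Lemma \ref{GeomSepAlongH}. Each $g_i\in\alpha$ satisfies $d_S(g_i,H)\ge d_S(\alpha,H)\ge n>M$ once $n$ is large. This is the step where the hypothesis $d_S(\alpha,H)\ge n$ replaces the projection-point argument used in Lemma \ref{SublTrackInAcyl}. Lemma \ref{GeomSepAlongH} then forces $d_S(g_i,g_{i+1})>D$ for every $i$. Since $\alpha$ is a word geodesic and the $g_i$ are ordered along it,
\[
n=\ell(\alpha)\ge\sum_{i}d_S(g_i,g_{i+1})>(N-1)D>n
\]
for large $n$, which is a contradiction.

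\textbf{Main obstacle.} The delicate step is the input to Lemma \ref{fractalMorseLem}. It needs $uo,vo$ to be shortest projections with $d_X(go,uo)\le\epsilon_0\,d_X(uo,vo)$, and this is exactly where strong, rather than ordinary, sublinear contraction is used. With only $\kappa_0(d_X(go,Ho))$-contraction, the far case gives no contradiction, because $d_X(go,Ho)$ may be much larger than $n$. I would carry this case split out carefully. A second point to check is that the ordering of the points $w_i$ produced by Lemma \ref{fractalMorseLem} transfers to an ordering of the vertices $g_i$ on $[g,h]_S$, so that the summation over consecutive $d_S(g_i,g_{i+1})$ is legitimate.
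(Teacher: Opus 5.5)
Your proposal is correct and follows essentially the same route as the paper. The paper also splits off the case where one endpoint lies farther than $\lambda n$ from $\pi(H)$ and kills it with strong contraction. It then applies Lemma \ref{fractalMorseLem} to $\pi([g,h]_S)$ with $\theta=1$ (the whole segment $[xo,yo]_{\pi(H)}$ instead of your middle third, an immaterial difference). Next, Lemma \ref{GeomSepAlongH} together with $d_S(\alpha,H)\ge n$ forces consecutive points $g_i$ to be $D$-apart along the word geodesic, and Lemma \ref{lem:reduction} finishes the argument.
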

\begin{proof}
By assumption, 
there exists a sublinear function  $\kappa_0$ so that $\pi(H)$ is strongly $\kappa_0$-contracting.
Choose $X$-projection points $xo,yo$ of $go,ho$ to $\pi(H)$, with $x,y\in H$, so that $\dist_{H}^X(g,h)=d_S(x,y)$. As the map $\pi\lvert_H: H\to \pi(H)$ is a $\lambda$-quasi-isometric embedding, it suffices to prove $$d_X(xo,yo)/d_S(g,h)\to 0,\quad \text{as}\quad d_S(g,h)\to\infty.$$ Assume to the contrary that there exist $\epsilon>0$ and infinitely many pairs $(g,h)$ with $d_S(g,h)\to\infty$ and $d_S([g,h]_S,H)\ge d_S(g,h)$ so that $$d_X(xo, yo)\ge \epsilon\, d_S(g,h).$$  

We now examine the   two cases, each of  which shall lead to a contradiction. 
\medskip

\noindent \textbf{Case 1.} If we have infinitely many pairs $(g,h)$ so that  $$\max\{d_X(go,xo),d_X(ho,yo)\}> \lambda d_S(g,h)$$
then, since $\lambda d_S(g,h) \ge d_X(go,ho)$, we obtain  $d_X(xo, yo)\le \kappa_0(\lambda d_S(g,h))$ by the strongly $\kappa_0$-contracting property of $\pi(H)$.  This, however, violates the  lower bound   $d_X(xo, yo)\ge \epsilon d_S(g,h)$ as  $d_S(g,h)\to \infty$.  
\medskip

\noindent \textbf{Case 2.} Let us next assume that $$d_X(go,xo), d_X(ho,yo)\le \lambda d_S(g,h)$$ holds for infinitely many pairs $(g,h)$ with $d_S(g,h)\to\infty$ and $d_S([g,h]_S,H)\to\infty$. 

\medskip
\noindent
Fixing such a pair $(g,h)$, we apply Lemma \ref{fractalMorseLem} with $\theta=1$ to the $\lambda$-Lipschitz path $\alpha:=\pi([g,h]_S)$ from $go$ to $ho$. The endpoints of $\alpha$ project to the endpoints $xo,yo$ of the segment $\beta=[xo,yo]_{\pi(H)}$. Here we understand $\pi(H)$ as a quasi-geodesic path, so $\beta$ is the subpath between $xo$ and $yo$. If we denote $\epsilon_0:=\lambda/\epsilon$, then $$d_X(go,xo),d_X(ho,yo)\le  \lambda d_S(g,h) \le \epsilon_0 d_X(xo, yo).$$


Before moving on, let us make precise the constants. 
Let    $r=r(\epsilon_0,\lambda,1)$ be the constant given by  Lemma \ref{fractalMorseLem} and then $R=R(r)>4r$ be  given by Lemma \ref{GeomSepAlongH}. Set $$N:=  \lfloor d_X(xo,yo)/6R\rfloor-1$$ Then the  path $\pi([g,h]_S)$ in $X$ passes through a  sequence of $r$-balls around  $R$-separated  points $m_io$ $(1\le i\le N+1)$ on $\beta=[xo,yo]_{\pi(H)}$, where $m_i\in [x,y]_H$. That is,  $d_X(m_io, m_jo)>R$ for $1\le i\ne j\le N+1$, and there are $g_i\in [g,h]_S$ so that $g_io\in B_r(m_io)$. Here $B_r(m_io)$ denotes the ball in $X$ around $m_io$ with radius $r$.  See Fig. \ref{fig:SSublContrAcyl}. 

\medskip
\noindent
For any $D>0$,  Lemma \ref{GeomSepAlongH} provides a constant $M=M(D)$ with the following property: if $g_i,g_{i+1}$ are not contained in the $M$-neighborhood of $H$ in $d_S$-metric, then $d_S(g_i,g_{i+1})\ge D$.  As a consequence, since $[g,h]_S$ is a word geodesic, we obtain  
$$
d_S(g,h)\ge \sum_{i=1}^N d_S(g_i,g_{i+1})\ge ND$$
By the choice of $N$,   $N\ge d_X(xo,yo)/6R-2\ge   \epsilon d_S(g,h)/6R-2$, thus $$
d_S(g,h)\ge  D(\epsilon d_S(g,h)/6R-2)$$ 
Choose $D$ large enough so that $D\epsilon/6R\ge 2$, and let $M=M(D)$ be supplied by Lemma \ref{GeomSepAlongH}. Since $d_S([g,h]_S,H)\ge d_S(g,h)\to\infty$, the geodesics $[g,h]_S$ lie outside $\mathcal N_M(H)$ for all sufficiently large pairs. The preceding inequality then implies $d_S(g,h)\ge 2d_S(g,h)-2D$, and hence $d_S(g,h)\le 2D$, a contradiction.

Finally,  Lemma \ref{lem:reduction} upgrades this estimate, after replacing $\kmorse$ by another sublinear function, to the strongly sublinear contraction asserted in the first sentence.
\end{proof}

From Lemmas \ref{SSublContrAcyl} and \ref{SublTrackInAcyl} we obtain  the following corollary.
\begin{cor}\label{MorseInAcylin}
Suppose that  $G$ acts by isometry on a geodesic metric space $X$. Assume that an element $f\in G$ acts as a strongly sublinearly contracting isometry. If $f$ is a WPD element, then $f$ is a Morse element in $G$.   
\end{cor}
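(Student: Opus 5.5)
We need to prove Corollary \ref{MorseInAcylin}: the element $f$ is Morse in $G$, i.e.\ $H=\langle f\rangle$ is a Morse quasi-geodesic in $\mathrm{Cay}(G,S)$. By Lemma \ref{CharMorseLem}(1), it suffices to show that $H$ is a quasi-geodesic in $\mathrm{Cay}(G,S)$ and that the shortest $d_S$-projection $\proj_H^S$ is $\kappa$-contracting for some sublinear $\kappa$.

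First, $H$ is quasi-isometrically embedded in the word metric. The orbit map restricted to $H$ is a quasi-isometric embedding into $X$, because $f$ is strongly sublinearly contracting and so $Ho$ is a quasi-geodesic. The orbit map is also $\lambda$-Lipschitz. Hence $d_S(f^a,f^b)\ge \lambda^{-1}d_X(f^ao,f^bo)$, which grows linearly in $|a-b|$; together with $d_S(f^a,f^b)\le |a-b|\,\|f\|_S$, this gives the claim.

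For contraction, take $x,y\in G$ with $d_S(x,y)\le d_S(x,H)=:r$. Bound the diameter of $\proj_H^S(x)\cup\proj_H^S(y)$ by the triangle inequality through the pulled-back projections:
\[
\mathrm{diam}_S\big(\proj^S_H(x)\cup\proj^S_H(y)\big)\le \ktrack(d_S(x,H))+\ktrack(d_S(y,H))+\mathrm{diam}_S\big(\proj^X_H(x)\cup\proj^X_H(y)\big),
\]
which is inequality (\ref{SublTrackInquality}) applied with Lemma \ref{SublTrackInAcyl}. The hypotheses of that lemma hold, since $f$ is WPD and $Ho$ is Morse; the latter follows from strong sublinear contraction and the remark following Definition \ref{defn:sublinear_contraction_elements}. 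Since $d_S(y,H)\le 2r$ and $\ktrack$ is sublinear, the first two terms are at most $3\ktrack(r)$ by concavity. For the last term, Lemma \ref{SSublContrAcyl} together with Lemma \ref{lem:reduction} shows that $\proj_H^X$ is strongly $\kmorse$-contracting in the word metric. Since $d_S(x,y)\le d_S(x,H)$, this term is at most $\kmorse(d_S(x,y))\le\kmorse(r)$.

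Thus $\proj_H^S$ is $(3\ktrack+\kmorse)$-contracting, which is sublinear, and Lemma \ref{CharMorseLem} yields that $H$ is Morse in $\mathrm{Cay}(G,S)$. Hence $f$ is a Morse element; this is independent of $S$, since the Morse property is preserved under quasi-isometries.

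The only delicate point is making sure the hypotheses of Lemmas \ref{SublTrackInAcyl} and \ref{SSublContrAcyl} are really met. For Lemma \ref{SSublContrAcyl}, Lemma \ref{GeomSepAlongH} requires $\pi|_H$ to be proper, which follows from the quasi-isometric embedding. The geodesicity of $X$ is needed for applying Lemma \ref{CharMorseLem} to $Ho$.
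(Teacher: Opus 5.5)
Your argument is correct, but it is organized differently from the paper's proof.

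\textbf{Your route.} You show that the word-metric shortest projection $\proj^S_H$ is $(3\ktrack+\kmorse)$-contracting. You do this by sandwiching it between the pulled-back projections: sublinear tracking from Lemma \ref{SublTrackInAcyl} at both points, plus strong contraction of $\proj^X_H$ from Lemma \ref{SSublContrAcyl} for the middle term. You then invoke the direction ``sublinear contraction $\Rightarrow$ Morse'' of Lemma \ref{CharMorseLem} in $\mathrm{Cay}(G,S)$.

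\textbf{The paper's route.} The paper verifies the Morse property by hand. Given a $c$-quasi-geodesic $\alpha$ with endpoints on $\gamma=\langle f\rangle$, it takes each excursion $\beta$ of $\alpha$ outside $\mathcal N_{R_0}(\gamma)$. It applies the ``moreover'' part of Lemma \ref{LipProj2Morse} to the strongly contracting pulled-back projection, which gives $\mathrm{diam}_S(\proj^X_\gamma(\beta))\le(\ell(\beta)+R_0)/2c$. It uses tracking only at the two endpoints of $\beta$, which lie at distance $R_0$ from $\gamma$, to bound $d_S(\beta^-,\beta^+)$. The quasi-geodesic inequality $\ell(\beta)\le c\,d_S(\beta^-,\beta^+)+c$ then bounds $\ell(\beta)$ uniformly.

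\textbf{What each buys.}
\begin{itemize}
\item The paper needs the tracking estimate at only one fixed scale (essentially finiteness of $\ktrack(R_0)$) and never calls on the ACGH characterization. The cost is redoing the contraction-to-Morse step for a non-shortest projection map.
\item Your route is more modular. It gives a quantitative by-product the paper does not state: an explicit sublinear contraction gauge for $\proj^S_H$ in the word metric. It also checks explicitly that $\langle f\rangle$ is undistorted.
\item In exchange, your route uses sublinear tracking at every scale and treats Lemma \ref{CharMorseLem} as a black box. Your contraction estimate is proved for vertices; it extends to all points of $\mathrm{Cay}(G,S)$ with bounded error, which is harmless but worth saying.
\end{itemize}
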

\begin{proof}
Denote $\gamma=\langle f\rangle$. Given $c>1$, let $\alpha$ be a $c$-quasi-geodesic with endpoints on $\gamma$.  By Lemma \ref{SSublContrAcyl}  the pullback projection map  $\proj_\gamma^X$ is strongly $\kmorse$-contracting.  Let $R_0=R_0(\kmorse,1/2c)$ be given  by the ``moreover" part of Lemma \ref{LipProj2Morse}.   

Hence, if $\beta$ is a component of $\alpha\setminus \mathcal N_{R_0}(\gamma)$, Lemma \ref{LipProj2Morse} shows that 
$\mathrm{diam}_S\big(\proj_\gamma^X(\beta)\big)\le  (\ell(\beta)+ R_0)/2c$. 
By Lemma \ref{SublTrackInAcyl}, we see that 
$$
\begin{aligned}
d_S(\beta^-,\beta^+)&\le 2R_0+2\ktrack(R_0)+\mathrm{diam}_S\big(\proj_\gamma^X(\beta)\big)\\
&\le  \ell(\beta)/2c+ 2\ktrack(R_0)+(4c+1)R_0/2c.    
\end{aligned}$$ 
As $\ell(\beta)\le cd_S(\beta^-,\beta^+)+c$, we derive $$\ell(\beta)\le 4c\,\ktrack(R_0)+(4c+1)R_0+2c=:R_1$$ Setting $R=R_0+R_1$ proves $\alpha\subseteq \mathcal N_R(\gamma)$. The proof is complete.
\end{proof}

\begin{rem}
When the action is proper (stronger) and f acts as a Morse isometry (weaker), this result was proved earlier in \cite[Theorem 1.1]{ADT17}. When the action is acylindrical on a hyperbolic space and $f$ is a loxodromic WPD isometry, it was proved in \cite[Theorem 1]{Sisto16}, using Lemma \ref{GeomSepAlongH}. The proofs of Lemmas \ref{SSublContrAcyl} and \ref{SublTrackInAcyl} also rely on this result, although the technical details might be different. 
    
\end{rem}

\subsection{Sublinearly large intersection persists}\label{SSublinearPersists}
Recall that the group $G$ with a finite generating set $S$   acts by isometry on a metric space $X$. Assume that $F\subseteq G$ is a finite non-empty set of  elements, which  act as independent Morse isometries   on $\mathrm{Cay}(G,S)$ and on $X$.  Denote $$\f=\{gE(f): g\in G, f\in F\},\qquad \pi(\f)=\{g\ax(f): g\in G, f\in F\}$$ 
Let  $(C,\kmorse^S)$   be given as in Convention \ref{kmorseConv} for Morse quasi-geodesics $\gamma\in \f$ in $\mathrm{Cay}(G,S)$;  similarly, $(r,\kmorse^X)$ for   Morse quasi-geodesics $\pi(\gamma)$ in $X$.

Axiomatizing the above discussion, we  propose  the following two assumptions: 
\begin{enumerate}
    \item[\textbf{(A)}] 
    The $d_S$-projection sublinearly $\ktrack$-tracks $d_X$-projection along  $\f$. 
\item[\textbf{(B)}]  The pulled-back $d_X$-projection $\proj^X_{\gamma}$ to $\gamma\in \f$ is strongly $\kmorse$-contracting in the word metric.  
\end{enumerate} 
where  $\kmorse$ and  $\ktrack$ are two fixed sublinear functions. 

Let $\kappa:\mathbb R_{>0}\to\mathbb R_{>0}$ be a (not necessarily sublinear) positive function. 
We now introduce  two technical notions of large intersections in the Cayley graph and the space.
\begin{defn}\label{LargeIntSets}
We say that an element $g\in G$ has \textit{$(C,\kappa)$-intersection} along $\gamma\in \f$ if for some word geodesic $[1,g]_S$, we have $$
\mathrm{diam}_S\bigl([1,g]_S\cap \mathcal N_C(\gamma)\bigr)> \kappa(\|g\|_S)
$$
where $\mathcal N_C(\cdot)$ denotes the neighborhood in $d_S$-metric.

Similarly,  $g$ has \textit{$(r,\kappa)$-intersection} along $\pi(\gamma)\in \pi(\f)$  if for some $d_X$-geodesic $[o,go]_X$, we have $$
\mathrm{diam}_X\bigl([o,go]_X\cap \mathcal N_r(\pi(\gamma))\bigr)> \kappa(\|g\|_S)
$$  
where $\mathcal N_r(\cdot)$ denotes the neighborhood in $d_X$-metric.
We emphasize that  the distance taken on the right-hand side is the word distance of $g$.
\end{defn}

\begin{lem}\label{LargeIntLem}
Under the  assumptions (A) and (B),   there exists a function $\tilde \kappa=\tilde \kappa(\kappa)$ with the following properties: 

\begin{enumerate}
\item 
If $g\in G$ has {$(r,\tilde \kappa)$-intersection} along $\pi(\gamma)$, then $g$ has \textit{$(C,\kappa)$-intersection} along $\gamma$.

\item   
If $g\in G$ has {$(C,\tilde \kappa)$-intersection} along $\gamma$, then $g$ has \textit{$(r,\kappa)$-intersection} along $\pi(\gamma)$.

\end{enumerate}
The function $\tilde \kappa$ differs from a constant multiple of $\kappa$ by constant multiples of $\kmorse$ and $\ktrack$.
\end{lem}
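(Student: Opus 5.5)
The plan is to reduce both statements to a comparison of \emph{projection distances}, and to move between the two metrics using assumption (A), with assumption (B) used once to keep distances to $\gamma$ under control. Fix $\gamma\in\f$, and let $\lambda$ be such that the orbit map is $\lambda$-Lipschitz and its restriction to each member of $\f$ is a $\lambda$-quasi-isometric embedding. The basic tool is Corollary \ref{SubLinearContr}, applied once in $\mathrm{Cay}(G,S)$ with $(C,\kmorse^S)$ and once in $X$ with $(r,\kmorse^X)$. It says that a geodesic has large intersection with the neighbourhood of a Morse axis exactly when the projection distance of its endpoints is large. The two quantities agree up to an additive error $\kappa_1(L_1)+\kappa_1(L_2)+4C$, where $L_1,L_2$ are the distances of the endpoints to the axis. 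The target is
\[
\tilde\kappa(t)=\lambda^2\kappa(t)+c_1\bigl(\kmorse(c_1t)+\ktrack(c_1t)+\kmorse^S(c_1t)+\kmorse^X(c_1t)\bigr)+c_2,
\]
with constants $c_1,c_2$ depending only on $\lambda,C,r$.

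\textbf{Direction (1).} Suppose $[o,go]_X$ meets $\mathcal N_r(\pi(\gamma))$ in a set of diameter $>\tilde\kappa(\|g\|_S)$. Since $d_X(o,go)\le\lambda\|g\|_S$, that intersection lies in a segment of length at most $\lambda\|g\|_S$. Hence both endpoints are within $\lambda\|g\|_S+r$ of $\pi(\gamma)$. The ``moreover'' clause of Corollary \ref{SubLinearContr} in $X$ then gives
\[
d_X\text{-diameter of }\proj^X_{\pi\gamma}(o)\cup\proj^X_{\pi\gamma}(go)\ \ge\ \tilde\kappa(\|g\|_S)-2\kmorse^X(2\lambda\|g\|_S)-4r.
\]
Pulling back through the quasi-isometric embedding $\pi|_\gamma$ gives the same lower bound for $\dist^X_\gamma(1,g)$, divided by $\lambda$ and minus a constant.

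Next comes the step where (B) enters. I claim that $d_S([1,g]_S,\gamma)<\|g\|_S$. Otherwise (B) would give $\dist^X_\gamma(1,g)\le\kmorse(\|g\|_S)$, which contradicts the previous bound once $\tilde\kappa$ contains a large enough multiple of $\kmorse$. Consequently $d_S(1,\gamma)$ and $d_S(g,\gamma)$ are both at most $2\|g\|_S$. Inequality \eqref{SublTrackInquality} from (A) now yields
\[
\dist^S_\gamma(1,g)\ \ge\ \dist^X_\gamma(1,g)-2\ktrack(2\|g\|_S).
\]
If this exceeds $\kmorse^S(2\|g\|_S)$, then Corollary \ref{SubLinearContr} in the Cayley graph applies. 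It shows that $[1,g]_S$ enters $\mathcal N_C(\gamma)$, with
\[
\mathrm{diam}_S\bigl([1,g]_S\cap\mathcal N_C(\gamma)\bigr)\ \ge\ \dist^S_\gamma(1,g)-2\kmorse^S(2\|g\|_S)-4C.
\]
Unwinding the chain of inequalities, the factor $\lambda^2\kappa$ and the sublinear corrections in $\tilde\kappa$ absorb all the losses, so the right-hand side exceeds $\kappa(\|g\|_S)$.

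\textbf{Direction (2)} is symmetric and somewhat easier. If $[1,g]_S$ meets $\mathcal N_C(\gamma)$ in diameter $>\tilde\kappa(\|g\|_S)$, then it meets $\gamma$'s $C$-neighbourhood at all. Therefore $d_S(1,\gamma)$ and $d_S(g,\gamma)$ are at most $\|g\|_S+C$ directly, and (B) is not needed here. Corollary \ref{SubLinearContr} in $\mathrm{Cay}(G,S)$ gives a large $\dist^S_\gamma(1,g)$. Then (A) gives a large $\dist^X_\gamma(1,g)$, and the quasi-isometric embedding gives a large $d_X$-projection distance in $X$. The distances $d_X(o,\pi\gamma)$ and $d_X(go,\pi\gamma)$ are at most $\lambda(\|g\|_S+C)$. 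So Corollary \ref{SubLinearContr} in $X$ converts the large projection distance into a large diameter of $[o,go]_X\cap\mathcal N_r(\pi(\gamma))$.

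\textbf{Main obstacle.} The step I expect to be delicate is the one in direction (1) where the distances of $1$ and $g$ to $\gamma$ must be bounded linearly in $\|g\|_S$. Without such a bound the tracking error $\ktrack(d_S(\cdot,\gamma))$ is not controlled by $\|g\|_S$, and this is exactly where (B) must be invoked. A secondary nuisance is bookkeeping. Projection sets in the pulled-back sense are measured in $d_S$, while Corollary \ref{SubLinearContr} in $X$ is stated in $d_X$, so each conversion costs a factor $\lambda$ and an additive constant. One must also check the regime where $\kappa(\|g\|_S)$ is comparable to $\|g\|_S$, in which case the hypotheses are vacuous, so that $\tilde\kappa$ can be taken uniformly of the displayed form.
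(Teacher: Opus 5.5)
Your proposal is correct and follows the paper's proof essentially step for step. You use Corollary \ref{SubLinearContr} in $X$ to get a large projection distance, then assumption (B) to force the endpoints to be $d_S$-close to $\gamma$, then (A) via \eqref{SublTrackInquality}, and finally Corollary \ref{SubLinearContr} in $\mathrm{Cay}(G,S)$. You phrase the (B) step through $d_S([1,g]_S,\gamma)<\|g\|_S$, which gives the bound $2\|g\|_S$, where the paper gets $L_1,L_2<\|g\|_S$ directly; either works. Your observation that (B) is not needed in direction (2) correctly fills in what the paper leaves as ``analogous'', and the factor $\lambda^2$ in $\tilde\kappa$ is more than needed but harmless.
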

\begin{proof}
We only prove (1); the proof of (2) is analogous.

Let $\lambda$ be the Lipschitz constant of the orbital map
$\pi:G\to X$.  Enlarging $\kmorse$ and $\ktrack$ if necessary, we may
assume that
\[
\kmorse(n)\geq 
\max\{\kmorse^X(\lambda n+r),\kmorse^S(3n)\}.
\]
Define
\[
\tilde\kappa(n)
=
\lambda\bigl(
4\kmorse(n)+2\ktrack(3n)+\kappa(n)+4r+4C
\bigr).
\]

Suppose that $g$ has $(r,\tilde\kappa)$-intersection along
$\pi(\gamma)$, and let $n=\|g\|_S$.  Fix a $d_X$-geodesic
$[o,go]_X$.  Choose $x,y\in\gamma$ to be $d_X$-projection points of
$o,go$ to $\pi(\gamma)$ respectively.  Since
$[o,go]_X$ intersects $\mathcal N_r(\pi(\gamma))$ and
$d_X(o,go)\leq \lambda n$, we have
\[
d_X(o,xo),d_X(go,yo)\leq \lambda n+r .
\]
Hence, by Lemma \ref{SubLinearContr} applied to $\pi(\gamma)$,
\[
\begin{aligned}
d_X(xo,yo)
&\geq 
\operatorname{diam}_X
([o,go]_X\cap\mathcal N_r(\pi(\gamma)))
-2\kmorse^X(\lambda n+r)-4r
\\
&>
\tilde\kappa(n)-2\kmorse^X(\lambda n+r)-4r
\\
&\geq
\lambda\bigl(
2\kmorse(n)+2\ktrack(3n)+\kappa(n)+4C
\bigr).
\end{aligned}
\]
Therefore
\begin{align}\label{eq:dS(x,y)}
d_S(x,y)>
2\kmorse(n)+2\ktrack(3n)+\kappa(n)+4C.    
\end{align}

Now let $[1,g]_S$ be an arbitrary word geodesic, and set
$L_1=d_S(1,\gamma)$ and $L_2=d_S(g,\gamma)$. If $L_1\ge n$, then
Assumption (B), applied to the pair $(1,g)$, gives
$d_S(x,y)\le \kmorse(n)$, contradicting (\ref{eq:dS(x,y)}). The same
argument with the pair $(g,1)$ excludes $L_2\ge n$. Hence
$L_1,L_2<n$.

By Assumption (A) and \eqref{SublTrackInquality},
\[
\begin{aligned}
\dist^S_\gamma(1,g)
&\geq d_S(x,y)-\ktrack(L_1)-\ktrack(L_2)\\
&>2\kmorse^S(3n)+\kappa(n)+4C.
\end{aligned}
\]
In particular, the hypothesis of Lemma \ref{SubLinearContr} is
satisfied in the Cayley graph, so $[1,g]_S$ intersects
$\mathcal N_C(\gamma)$. The moreover part of that lemma gives
\[
\begin{aligned}
\operatorname{diam}_S
([1,g]_S\cap\mathcal N_C(\gamma))
&\geq
\dist^S_\gamma(1,g)
-\kmorse^S(L_1)-\kmorse^S(L_2)-4C
\\
&>
\kappa(n),
\end{aligned}
\]
Hence $g$ has $(C,\kappa)$-intersection along $\gamma$.
The same argument with the two metrics interchanged proves (2), after
increasing $\tilde\kappa$ by a constant multiple if necessary.
\end{proof}

\section{Effective estimates on the growth function}\label{SecEffEst}


Assume that $G$ is an  acylindrical hyperbolic group with a finite generating set $S$.  Let $K<G$ be a non-virtually cyclic subgroup with a loxodromic WPD element.  That is, there exists an isometric action of $G$ on a hyperbolic space $X$ so that $K$ is a non-elementary subgroup with a loxodromic WPD isometry. The goal of this section is to estimate the relative growth function of $K$ $$n\,\longmapsto \,|B_n\cap K|$$ 
where $B_n=\{g\in G: \|g\|_S\le n\}$.

We shall prove the following slightly more precise version of ~\ref{GrowthBallThm}.
Let $F\subseteq K$ be a set of 3 elements which act as independent loxodromic WPD  isometries on $X$ (since a non-virtually cyclic subgroup $K$ contains infinitely many independent ones). Assume that $\kmorse$  is the  sublinear function  associated to $f\in F$.  
Denote $\f=\{kE(f): k\in K, f\in F\}$.

\begin{thm}\label{GrowthBallThm2}
There exists a sub-exponential function $\phi$ so that $\log \phi$ is controlled by the sublinear functions above and
$$
\forall n\ge 1:\; |B_n\cap K|\le \phi(n)\, \mathrm{e}^{n \omega(K,S)}
$$
In particular,  $$\mathrm{e}^{n \omega(G,S)}\le |B_n|\le  \phi(n)\, \mathrm{e}^{n \omega(G,S)}$$
\end{thm}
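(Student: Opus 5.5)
The plan is to prove an approximate supermultiplicativity of $a_n:=|B_n\cap K|$ with sublinear loss, and then convert it into the upper bound by a standard Fekete-type argument. Concretely, the target is a sublinear function $\kappa$ (built from $\kmorse$) and a constant $c\ge1$ such that for all $m,n\ge1$
\begin{equation*}
a_m\,a_n\;\le\; c\,\mathrm{e}^{c\kappa(\max\{m,n\})}\, a_{m+n+c\kappa(\max\{m,n\})}.
\end{equation*}
Once this holds, iterating it along dyadic scales gives $a_n^{2^k}\le \prod(\text{sublinear losses})\cdot a_{2^k n+o(2^k n)}$. Taking $2^k$-th roots and letting $k\to\infty$ yields $\log a_n\le n\,\omega(K,S)+\psi(n)$, where $\psi(n)\lesssim\sum_j 2^{-j}\kappa(2^j n)$. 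This sum is sublinear by concavity (\ref{Eq:Concave}) together with a dominated convergence argument on $\kappa(t)/t\to0$. I then set $\phi=\mathrm{e}^{\psi}$. The statement about $|B_n|$ follows by taking $K=G$, while the lower bound $\mathrm{e}^{n\omega}\le|B_n|$ is the usual consequence of submultiplicativity.

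To get the supermultiplicativity I use the sublinear extension lemma. Given $g_1,g_2\in B_n\cap K$, Lemma \ref{lem:admissible_tuple} (applied inside $K$ with $F\subseteq K$) says the pair $(g_1,g_2)$ is $D$-admissible relative to some $E(f)$ with $f\in F$. I then choose $h\in E(f)\cap K$, specifically a power of $f$, with $4\kmorse(n)\le\|h\|_S\le 4\kmorse(n)+c$; this is possible because $\langle f\rangle$ is quasi-isometrically embedded. Define $\Phi(g_1,g_2)=g_1hg_2\in K$. By Lemma \ref{ExtensionViaMorse},
\[
\big|\|\Phi(g_1,g_2)\|_S-\|g_1\|_S-\|h\|_S-\|g_2\|_S\big|\le 12\kmorse(n),
\]
so $\Phi$ lands in $B_{2n+16\kmorse(n)+c}\cap K$. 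There are only three choices of $f$ and a bounded number of choices of $h$, so they cost only a constant factor. To handle varying lengths, I will restrict to spheres (annuli $S_n(\Delta)$) and sum over them, which costs at most a polynomial factor. Here the Morse data for $\f$ come from Lemma \ref{SSublContrAcyl} and Lemma \ref{SublTrackInAcyl}, applied to the WPD loxodromics in $F$: loxodromic elements in hyperbolic space are strongly contracting, so these lemmas give that $f$ is Morse in $G$ (Corollary \ref{MorseInAcylin}), which is what the extension lemma needs.

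The main obstacle is controlling the multiplicity of $\Phi$, which must be at most $\mathrm{e}^{O(\kmorse(n))}$. Suppose $\Phi(g_1,g_2)=w$. By Lemma \ref{ExtensionViaMorse}(2), any geodesic $[1,w]_S$ has its entry point $z$ into $\mathcal N_C(g_1E(f))$ within $3\kmorse(n)$ of $g_1$. Since $z$ lies on a fixed geodesic for $w$, the point $g_1$ is then determined up to a ball of radius $3\kmorse(n)$ around a point of $[1,w]_S$. That point is pinned by its distance from $1$, which is $\|g_1\|_S\pm O(\kmorse(n))$, leaving only $O(\kmorse(n))$ possibilities. This gives at most $n\cdot|S|^{O(\kmorse(n))}$ preimages in total, after which $g_2=h^{-1}g_1^{-1}w$ is determined. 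The delicate point is that $z$ depends on the unknown coset $g_1E(f)$. I would resolve this by enumerating the candidate positions $t\le n+O(\kmorse(n))$ along the fixed geodesic, rather than trying to locate $z$ intrinsically. If this bookkeeping fails, the fallback is to invoke Lemma \ref{LargeIntLem} to identify the coset through the large intersection in $X$.
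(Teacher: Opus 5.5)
There is a genuine gap. It lies in the Fekete step, where you pass from your two-term inequality to an \emph{effective} bound. Iterate $a_m a_n\le c\,\mathrm{e}^{c\kappa(\max\{m,n\})}a_{m+n+c\kappa(\max\{m,n\})}$ dyadically, with $n_0=n$ and $n_{j+1}=2n_j+c\kappa(n_j)$. This gives
\[
\log a_n\le \frac{n_k}{2^k}\bigl(\omega(K,S)+o(1)\bigr)+\sum_{j<k}2^{-j-1}\bigl(c\kappa(n_j)+\log c\bigr),\qquad
\frac{n_k}{2^k}=n+\sum_{j<k}\frac{c\kappa(n_j)}{2^{j+1}},
\]
so both the loss and the drift of the index are at least of order $\sum_{j\ge 0}2^{-j}\kappa(2^jn)=n\sum_{j\ge0}\kappa(2^jn)/(2^jn)$.

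This quantity is finite and $o(n)$ only under a Dini-type condition $\sum_j\kappa(2^j)2^{-j}<\infty$. Concavity gives only the termwise bound $2^{-j}\kappa(2^jn)\le\kappa(n)$, which has no summable majorant, so dominated convergence does not apply. For the concave sublinear function $\kappa(t)=t/\log t$ the series diverges, $n_k/2^k\to\infty$, and the iteration yields nothing.

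This is not an artifact of the particular iteration. For that $\kappa$, the sequence $\log a_n=n\,\omega+n/\log\log n$ satisfies your two-term inequality (for large indices, with a suitable $c$), yet $\log a_n-n\omega$ is not $O(\kappa(n))$. A bound $|B_n\cap K|\le\phi(n)\mathrm{e}^{n\omega(K,S)}$ with \emph{some} subexponential $\phi$ is immediate from the definition of $\omega(K,S)$ as a $\limsup$. So the control of $\log\phi$ by $\kmorse$ is the whole content of the statement. Your argument delivers it only when $\kmorse$ satisfies such a summability condition, e.g.\ $\kmorse=O(\log)$, which would still give Corollary \ref{ModGrowthThm}. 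Your multiplicity count for $\Phi$ is fine: fixing one geodesic $[1,w]_S$ and enumerating positions along it costs only $\mathrm{poly}(n)\,\mathrm{e}^{O(\kmorse(n))}$.

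What is missing is an $n$-fold concatenation in which each factor costs an error depending only on its own length, as in Proposition \ref{LimitExistsProp}: $|S_m(\Delta)\cap K|^n\le\mathrm{e}^{n\kappa(m)}|B_{n(m+\kappa(m))}\cap K|$ for \emph{all} $n$. This cannot come from Lemmas \ref{ExtensionViaMorse} or \ref{ExtensionViaMorse2}. There the connector must exceed $\kmorse$ of the lengths of the adjacent pieces, i.e.\ eventually of the whole word; the remark after Lemma \ref{SublExtensionViaLoxo} makes exactly this point.

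The paper proceeds as follows:
\begin{itemize}
\item It takes a maximal $\kappa(m)$-separated set $A\subset S_m(\Delta)\cap K$ and refines it so that all its elements are $D$-admissible with respect to a single $E(f)$ on both sides.
\item It inserts one $h\in\langle f\rangle$ with $\|h\|_S$ at most a multiple of $\kappa(m)$ but $d_X(o,ho)>L_0\kappa(m)$.
\item Hyperbolicity of $X$ (long local quasi-geodesics are global) makes every word $a_1h\cdots a_\ell h$ a uniform quasi-geodesic in $X$, \emph{independently of $\ell$}.
\item The large-intersection transfer of Lemma \ref{LargeIntLem}, which rests on sublinear projection tracking, turns the resulting overlap in $X$ into a contradiction with the separation of $A$. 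Hence $\langle Ah\rangle^+$ is free (Lemma \ref{FreeSemigroup}).
\item The sublinear Fekete lemma then gives $\log|S_m(\Delta)\cap K|\le\omega m+(\omega+1)\kappa(m)$ scale by scale, with no summation over scales.
\end{itemize}
Your proposal uses the action on $X$ only to know that $f$ is Morse, and that action is precisely the ingredient it lacks.
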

Fix once and for all a number $\Delta\ge 0$. Define the annulus-like set of radius $n$ with width $\Delta$
\[
S_n(x, \Delta)=\{g\in G: |d_S(x, g)-n|\le\Delta\}
\]
For simplicity, write $S_n(\Delta)=S_n(1,\Delta)$ if $x=1$, and $S_n(x)=S_n(x,0)$ for the $n$-th sphere at $x$, and $S_n=S_n(1)$. Since balls are unions of linearly many thickened spheres, we have
$$
\limsup_{p\to\infty}\frac{\log |B_p\cap K|}{p}
=
\limsup_{p\to\infty}\frac{\log \max\{1,|S_p(\Delta)\cap K|\}}{p}.
$$
The technical bone is the following intermediate result.

\begin{prop}\label{LimitExistsProp}
There exists a sublinear function $\kappa$, controlled by the sublinear functions above and depending on $S$, so that the following holds for any $n, m\ge 1$:    
\[
|S_m(\Delta)\cap K|^n
\le
\mathrm{e}^{n\kappa(m)}\, |B_{n(m+\kappa(m))}\cap K|.
\]
\end{prop}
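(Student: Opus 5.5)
We construct an almost injective map
\[
\Phi:\ (S_m(\Delta)\cap K)^n\longrightarrow B_{n(m+\kappa(m))}\cap K
\]
by the sublinear extension procedure of Section \ref{SSec:sublinear_extension}, and then bound its fibres by $\mathrm{e}^{n\kappa(m)}$. Given a tuple $(a_1,\dots,a_n)$ with $a_i\in S_m(\Delta)\cap K$, Lemma \ref{lem:admissible_tuple} shows that the tuple is $D$-admissible relative to some $(E(f_1),\dots,E(f_{n-1}))$ with $f_i\in F\subseteq K$. We then choose connectors $h_i\in\langle f_i\rangle\subset K$ with
\[
10\kmorse(m+\Delta)<\|h_i\|_S\le 10\kmorse(m+\Delta)+c,
\]
where $c$ is a constant depending only on $F$ (it is the gap between consecutive word lengths of powers of $f_i$). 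We set $\Phi(a_1,\dots,a_n)=a_1h_1a_2h_2\cdots h_{n-1}a_n\in K$. The upper bound on word length is just the triangle inequality: $\|\Phi\|_S\le n(m+\Delta)+(n-1)(10\kmorse(m+\Delta)+c)$. Setting $\kappa(m)=\Delta+10\kmorse(m+\Delta)+c$ plus the terms coming from the fibre count below gives the required radius $n(m+\kappa(m))$. The connector choices are independent of $n$, which matters because only local three-term configurations are used below.

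The main step is to bound the size of each fibre by $\mathrm{e}^{n\kappa(m)}$. Let $\gamma$ be a word geodesic $[1,\Phi(a)]_S$. We apply Lemma \ref{ExtensionViaMorse2} to every consecutive triple $(a_{i},h_i,a_{i+1},h_{i+1},a_{i+2})$, suitably translated. This shows that $\gamma$ enters and exits $\mathcal N_C$ of the translated axis $a_1h_1\cdots a_iE(f_i)$ at points within $10\kmorse(m)$ of the endpoints of the corresponding connector. Passing from these local triple estimates to the global geodesic $\gamma$ is the delicate part. We would argue inductively, as in the proof of Lemma \ref{ExtensionViaMorse2}, that the projection of the whole tail of the concatenated path to $H_i$ stays sublinearly bounded. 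The tool is the bounded projection between distinct axes given by Lemma \ref{SublBddProj4H}, combined with admissibility condition (2). Once this is done, the prefix $a_1h_1\cdots a_i$ of $\Phi(a)$ lies within $10\kmorse(m)$ of a vertex of $\gamma$ at distance $t_i$ from $1$, where $t_i$ is determined up to $O(\kmorse(m))$ by $i$ and $m$.

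Now suppose $\Phi(a)=\Phi(a')$. Then both tuples use the same geodesic $\gamma$, so for each $i$ the prefix elements $a_1h_1\cdots a_ih_i$ and $a'_1h'_1\cdots a'_ih'_i$ lie in a ball of radius $O(\kmorse(m))$ around a common $\gamma$-vertex, chosen from the $O(\kmorse(m))$ vertices near position $t_i$. Consequently, given the prefix up to index $i-1$ and the element itself, the $i$-th prefix has at most $|B_{O(\kmorse(m))}|\cdot O(\kmorse(m))\le \mathrm{e}^{O(\kmorse(m))}$ choices. Knowing consecutive prefixes and the finitely many possible connectors determines $a_i$, so the fibre has size at most $\mathrm{e}^{n\,O(\kmorse(m))}$. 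We absorb this bound into $\mathrm{e}^{n\kappa(m)}$ by enlarging $\kappa$ by a constant multiple of $\kmorse$.

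I expect the global fellow-travelling step to be the main obstacle. Lemma \ref{ExtensionViaMorse2} is proved only for triples and requires $n$ (there, $\max\|g_i\|$) to be sufficiently large. The small values of $m$ we would handle by enlarging $\kappa$ by a constant: for $m\le m_0$ the estimate follows from $|S_m(\Delta)\cap K|\le |B_{m_0+\Delta}|$. The induction for long tuples should follow the proof of Lemma \ref{ExtensionViaMorse2} with a sublinear error that does not accumulate. The reason is that each axis $H_i$ only sees its neighbours $H_{i\pm1}$ through bounded projections, while more distant pieces project into the projection of a neighbouring axis.
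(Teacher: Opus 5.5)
\textbf{There is a genuine gap: the global fellow-travelling step cannot be closed in the Cayley graph.} Your fibre-counting reduction would be fine if you had global fellow-travelling. But that is exactly the step you flag, and your proposed mechanism for it is the strongly contracting one, which fails here.

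Lemma \ref{ExtensionViaMorse2} is genuinely local. Its errors are $\kmorse$ evaluated at distances from points to the axes, and the connectors must exceed $10\kmorse(n)$ with $n$ bounding the whole configuration.

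Now run your induction on an $n$-fold concatenation. A point $x$ of the tail lying $j$ pieces beyond $H_{i+1}$ is at distance of order $jm$ from $H_{i+1}$. This is what one expects even if the path is a quasi-geodesic. To force a geodesic from $x$ to $H_i$ through $\mathcal N_C(H_{i+1})$, via Corollary \ref{SubLinearContr} or Lemma \ref{SubLinearProj}, the projection distance on $H_{i+1}$ between $x$ and $H_i$ must exceed $\kmorse(jm)$. Your connector only supplies about $10\kmorse(m)$.

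Likewise, Lemma \ref{SublBddProj4H} bounds $\proj_{H_i}(H_{i+j})$ only by $\kmorse(d_S(H_i,H_{i+j}))\approx\kmorse(jm)$, not by a constant. So ``distant pieces project into the projection of a neighbouring axis'' is unavailable. The errors do not add up, but they scale with $n$, and connectors of length independent of $n$ cannot absorb them.

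Using connectors of length $\kmorse(nm)$ instead would ruin the radius $n(m+\kappa(m))$ on which the Fekete argument depends. The paper warns about exactly this in the remark after Lemma \ref{SublExtensionViaLoxo}. Note also a warning sign: your argument never uses the hyperbolic space $X$ or the WPD property. If it worked, it would give the estimate in any group with three independent Morse elements, which the paper does not claim.

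\textbf{The missing idea is to get global control in the hyperbolic space $X$ and transfer only local information back to the word metric.} The paper proceeds as follows.
\begin{enumerate}
\item It takes a maximal $\kappa(m)$-separated subset $A\subseteq S_m(\Delta)\cap K$, losing $\mathrm{e}^{\omega_0\kappa(m)}$ per letter.
\item It pigeonholes to a single $f\in F$, so every tuple from $A$ is admissible relative to $(E(f),\dots,E(f))$.
\item It chooses $h\in\langle f\rangle$ with $\|h\|_S\le\kappa(m)$ but $d_X(o,ho)>L_0\kappa(m)$.
\item Then $a_1ha_2h\cdots a_\ell h$ is a local quasi-geodesic in $X$, hence a global uniform quasi-geodesic by hyperbolicity (Lemma \ref{SublExtensionViaLoxo}).
\end{enumerate}

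Injectivity of the extension map (Lemma \ref{FreeSemigroup}) then only compares first letters $a_1\ne a_1'$:
\begin{enumerate}
\item If $a_1E(f)=a_1'E(f)$, Corollary \ref{SubLinearCommonEntry} gives $d_S(a_1,a_1')=O(\kmorse(m))$.
\item Otherwise, the Morse lemma in $X$ forces $[o,a_1o]_X$ to have large intersection with $\pi(a_1'E(f))$. Lemma \ref{LargeIntLem}, which rests on sublinear projection tracking, turns this into a large intersection in $\mathrm{Cay}(G,S)$, again giving $d_S(a_1,a_1')=O(\kmorse(m))$.
\end{enumerate}
Both cases contradict the $\kappa(m)$-separation of $A$. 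No fellow-travelling of a word geodesic along the whole concatenation is ever needed.
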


Assuming Proposition \ref{LimitExistsProp}, if   set $s(p)=\log \max\{1,|S_p(\Delta)\cap K|\}$ and $\tilde s(p)=\log |B_{p}\cap K|$,   we obtain 
\begin{align}\label{eq:sublinearsubadditive}
\forall n,m\ge 1,\quad \tilde s({nm+n\kappa(m)})\ge n\cdot s({m})-  n\kappa(m)    
\end{align} 
which implies Theorem \ref{GrowthBallThm2} (\ref{GrowthBallThm}) by the following sublinear variant of Fekete's lemma. 

\begin{lem}
Let $\{s(p):p\ge 1\}$ and $\{\tilde s(p):p\ge 1\}$ be two  sequences of  non-negative real numbers so that $$\limsup_{p} \frac{\tilde s(p)}{p}=\limsup_{p} \frac{s(p)}{p}.$$  Let $\kappa$ be a sublinear function so that Eq. (\ref{eq:sublinearsubadditive}) holds for any   integers $m$ and $n$. Then the limit exists $$\omega:=\lim_{p\to\infty}\frac{\tilde s(p)}{p}$$ and for any $p\ge 1$: $$s(p)\le \omega p+(\omega+1) \kappa(p).$$  
\end{lem}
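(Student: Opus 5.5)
The plan is a direct adaptation of Fekete's argument. Set
\[
\omega:=\limsup_{p\to\infty}\frac{\tilde s(p)}{p}=\limsup_{p\to\infty}\frac{s(p)}{p},
\]
and write $q_m:=m+\kappa(m)$ for brevity. The idea is to fix $m$ in Eq.~(\ref{eq:sublinearsubadditive}) and let $n\to\infty$, which turns the hypothesis into a lower bound on $\tilde s$ along the arithmetic progression $\{nq_m\}_{n\ge1}$. Dividing Eq.~(\ref{eq:sublinearsubadditive}) by $nq_m$ gives, for every $n\ge 1$,
\[
\frac{\tilde s(nq_m)}{nq_m}\;\ge\;\frac{s(m)-\kappa(m)}{q_m}.
\]
(If $q_m$ is not an integer, I would read $\tilde s$ at the integer part; since $\tilde s$ is monotone in our application, this only costs a bounded error, which disappears after dividing by $n$.)

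\emph{Step 1: the pointwise inequality.} Take $\limsup_{n\to\infty}$ of the left-hand side. It is a limsup along a subsequence of $p$, so it is at most $\omega$. Hence
\[
\omega q_m\ge s(m)-\kappa(m),
\qquad\text{that is,}\qquad
s(m)\le \omega m+(\omega+1)\kappa(m).
\]
This is exactly the claimed bound with $p=m$, and this step needs no monotonicity.

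\emph{Step 2: existence of the limit.} It remains to show that $\liminf_p \tilde s(p)/p\ge\omega$. For this I would use that $\tilde s$ is non-decreasing; this holds in our application, since $\tilde s(p)=\log|B_p\cap K|$, and I would add it as an explicit hypothesis if needed. Given $p$, choose $n$ with $nq_m\le p<(n+1)q_m$. Monotonicity gives
\[
\frac{\tilde s(p)}{p}\;\ge\;\frac{\tilde s(nq_m)}{(n+1)q_m}\;\ge\;\frac{n}{n+1}\cdot\frac{s(m)-\kappa(m)}{q_m}.
\]
Letting $p\to\infty$ (so $n\to\infty$) yields
\[
\liminf_{p\to\infty}\frac{\tilde s(p)}{p}\;\ge\;\frac{s(m)-\kappa(m)}{m+\kappa(m)}.
\]
Now take $\limsup_{m\to\infty}$ on the right. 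Since $\kappa(m)/m\to0$, this limsup equals $\limsup_m s(m)/m=\omega$. Combined with the trivial bound $\limsup_p\tilde s(p)/p=\omega$, the limit exists and equals $\omega$.

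The only delicate point is the interpolation in Step 2 between the progression $\{nq_m\}$ and arbitrary $p$. Without monotonicity of $\tilde s$, one gets the bound only along subsequences, and existence of the limit can fail. Everything else is bookkeeping with the sublinearity $\kappa(m)=o(m)$.
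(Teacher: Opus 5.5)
Your argument is correct and is essentially the paper's Fekete-type proof: fix $m$, apply Eq.~(\ref{eq:sublinearsubadditive}) along the progression $nq_m$ with $q_m=m+\kappa(m)$, interpolate to arbitrary $p$, and let $m\to\infty$ using $\kappa(m)=o(m)$. The differences are minor. You read the pointwise bound off the $\limsup$ along the progression, so that part needs no monotonicity, whereas the paper derives it from the $\liminf$ bound after proving the limit exists. You also state the monotonicity of $\tilde s$ as a hypothesis, which the paper uses without comment in its first inequality $\tilde s(p)\ge\tilde s(nq_m)$.
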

\begin{proof}
Fix an integer $m\ge 1$ and denote $q_m=m+\kappa(m)$. Let us write an integer $p=nq_m+r$ for $0\le r\le q_m$.  By Eq. (\ref{eq:sublinearsubadditive}), 
$$
\begin{aligned}
 \frac{\tilde s({p})}{p}\ge \frac{\tilde  s({nq_m})}{p} &\ge \frac{nq_m}{p}\cdot \left(\frac{s(m)}{q_m}-\frac{\kappa(m)}{q_m}\right)\\ 
\end{aligned}$$
Letting $p\to \infty$,  we have : 
\begin{align}\label{defn:omegalimit}
\omega:=\liminf_{p} \frac{\tilde s(p)}{p}\ge  \left(\frac{s(m)}{m+\kappa(m)}-\frac{\kappa(m)}{m+\kappa(m)}\right)    
\end{align} for every fixed $m\ge 1$.   The $\kappa$ is sublinear, so  $\frac{\kappa(m)}{m}\to 0$ as $m\to\infty$.  Taking the limit super on the RHS over $m$  yields
$$\liminf_{p} \frac{\tilde s(p)}{p}\ge   \limsup_{m} \frac{s(m)}{m}=\limsup_{m} \frac{\tilde s(m)}{m}$$ 
where the equality follows by assumption. Thus the limit exists, $\omega=\lim_{p\to\infty}\frac{\tilde s(p)}{p}$, and by Eq. (\ref{defn:omegalimit}), $$s(m)\le   \omega (m+\kappa(m))+\kappa(m)\le \omega m+(\omega+1)\kappa(m)$$ holds for any $m\ge 1$. The proof is complete.
\end{proof}

\subsection{Construction of free semi-groups}
The remainder of this section is to prove Proposition \ref{LimitExistsProp}, which relies on the construction of free semi-groups. We first prepare  the data used in construction.
\medskip

\noindent\textbf{Sublinear function $\kappa$.}
By Lemma \ref{LargeIntLem}, let us fix   $\kappa \ge 10\kmorse$ so that
\begin{align}\label{eq:kappa0defn}
\forall \gamma\in \f:\; \mathrm{diam}_X\bigl([o,go]_X\cap \mathcal N_r(\pi(\gamma))\bigr)>\kappa(\|g\|_S)\quad \Rightarrow\quad [1,g]_S\cap \mathcal N_C(\gamma)\ne \emptyset.   
\end{align}

Let $\omega_0$ be a large constant depending on $S$ so that $|S^{\le n}|\le \mathrm{e}^{\omega_0 n}$ for any $n\ge 1$.

\medskip

\noindent\textbf{Sublinearly separated annulus.}
Fix an integer $m\gg 1$.  Let $A$ be a maximal $\kappa(m)$-separated subset of $S_m(\Delta)\cap K$, so we have 
$$\begin{aligned}\label{eqn:Amaxsize}
|A|\;\ge\; \frac{|S_m(\Delta)\cap K|}{|B_{\kappa(m)}|}\;\ge\; |S_m(\Delta)\cap K|\,\mathrm{e}^{-\omega_0\kappa(m) }    
\end{aligned}$$

The following lemma is crucial in the construction.
\begin{lem}\label{SublExtensionViaLoxo}
There exist $c,D,L_0>1$ depending only on $\f$ with the following property. 
Fix $m\ge 1$. There exist a subset of $A$ (still denoted by $A$) and $f\in F$ so that     
\begin{align}\label{eqn:Asize}
|A|\;\ge\;  \frac{1}{6}\cdot |S_m(\Delta)\cap K|\,\mathrm{e}^{-\omega_0\kappa(m) }    
\end{align}
Moreover,  let $(a_1,a_2,\cdots, a_\ell)$  be any $\ell$-tuple of elements  in $A$.
    \begin{enumerate}
    \item if $\ell\ge 2$, then $(a_1,a_2,\cdots, a_\ell)$ is $D$-admissible relative to the $(\ell-1)$-tuple $(E(f),\cdots, E(f))$ in the sense of Definition \ref{defn:admissibletuples};
    \item for any fixed $h\in E(f)$ with $d_X(o,ho)>L_0\kappa(m)$, the word $$W=a_1h a_2h\cdots a_\ell h$$   labels a $c$-quasi-geodesic in $X$. 
\end{enumerate}  
\end{lem}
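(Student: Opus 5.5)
The plan is a pigeonhole step on $A$, followed by a local-to-global argument in the hyperbolic space $X$.

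\textbf{Pigeonhole for admissibility.} We need to fix a single $f\in F$ that works for every pair of elements of $A$. For each $a\in A$, fix a geodesic $[1,a]_S$ and also a geodesic $[1,a^{-1}]_S$, which is the reversed translate of the first. Applying Lemma \ref{BddprojViaMorse} to $a$ gives at least two elements $f\in F$ with $\mathrm{diam}_S([1,a]_S\cap\mathcal N_C(E(f)))\le D$. Applying it to $a^{-1}$ gives at least two elements with $\mathrm{diam}_S([1,a^{-1}]_S\cap\mathcal N_C(E(f')))\le D$, which is the same as $\mathrm{diam}_S([1,a]_S\cap \mathcal N_C(aE(f')))\le D$.

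Since $|F|=3$, two $2$-element subsets of $F$ always intersect. So each $a$ has some $f$ that is good at both ends. There are only $3$ possible values of $f$ (and at most $6$ "types" if one records the ordered choice), so some $f$ is shared by at least $|A|/6$ elements. Restricting $A$ to these elements gives \eqref{eqn:Asize}.

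Condition (1) of Definition \ref{defn:admissibletuples} then holds for consecutive pairs, with all $f_i=f$. For condition (2) we need $E(f)\ne a_{i+1}E(f)$, that is, $a_{i+1}\notin E(f)$. Elements of $A\cap E(f)$ have word geodesics lying in $\mathcal N_C(E(f))$, which forces $m\le D+\Delta$. Since $m\gg1$, we discard these. The "any choice of geodesics" clause is handled by enlarging $D$ using the Morse property, as in Lemma \ref{lem:admissible_tuple}.

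\textbf{Quasi-geodesic in $X$.} Since $X$ is hyperbolic, it suffices to show that the piecewise geodesic path with vertices at the prefixes of $W$ is a local quasi-geodesic with long pieces. It is enough that at each junction the Gromov products are bounded, so that the standard local-to-global criterion for hyperbolic spaces applies.

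The segments $[o,ho]$ have length $>L_0\kappa(m)$, and they lie on axes $g\ax(f)$. The pieces $a_i$ have projections to these axes controlled as follows. By admissibility and Lemma \ref{BddprojViaMorse2}, the $d_S$-projection of $[1,a_i]_S$ to $E(f)$ has diameter $\le D+C+\kmorse(m)$. The tracking property (A), via Lemma \ref{SublTrackInAcyl} and \eqref{SublTrackInquality}, transfers this to the $X$-projection. This gives $\mathrm{diam}_X\proj_{\ax(f)}(\{o,a_io\})\le c_1\kappa(m)$ (enlarging $\kappa$), and similarly at the other end.

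Hence each $h$-segment has length exceeding, by a large multiple $L_0$, the projections of its neighbouring $a$-pieces. In a hyperbolic space, a concatenation in which long axis segments are attached by pieces with small projections is a quasi-geodesic. One shows inductively that the path fellow travels the axes $g_i\ax(f)$, and projections of earlier and later pieces land near the entry and exit of each $h$-segment. This uses bounded projection between distinct translates of $\ax(f)$, which holds by Lemma \ref{lem:bddintersectionamples}(3), since $a_{i+1}\notin E(f)$.

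\textbf{Main obstacle.} The $a_i$ themselves need not be quasi-geodesic in $X$, because $\|a_i\|_S=m$ while $d_X(o,a_io)$ may be tiny. The quasi-geodesic constant $c$ must also be independent of $m$. I would therefore run the argument on the path through the points $o, a_1o, a_1ho,\ldots$, using the admissibility criterion for strongly contracting systems as in \cite{YANG10}. That argument tolerates arbitrary connecting pieces $a_i$ provided the projections are small relative to $d_X(o,ho)$. The step that needs the most care is making the threshold "$>L_0\kappa(m)$" dominate the sublinear projection bounds uniformly in $m$.
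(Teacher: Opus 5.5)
Your pigeonhole and admissibility steps match the paper's proof. The paper fixes a good pair $\{f_1,f_2\}$ and then one end-condition, losing $3\cdot 2=6$. Your transfer of the bound of Lemma \ref{BddprojViaMorse2} into $X$ via tracking (A) spells out what the paper compresses into ``after applying the orbital map''.

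\textbf{The gap: uniformity of $c$.} The step that fails is the one you flag yourself, namely that $c$ is independent of $m$. The paper's phrase ``uniform local quasi-geodesic'' passes over the same point. All you control is that $\proj_{\ax(f)}(a_{i+1}o)$ lies within $c_1\kappa(m)$ of $o$, and symmetrically at the other end. Suppose it lies at distance $t$ from $o$ on the side of $h^{-1}o$. At the vertex $a_iho$ the path arrives along $a_i\ax(f)$. The next piece $a_ih[o,a_{i+1}o]_X$ then first runs back along that same axis for length about $t$. So a subpath of length about $2t$ has endpoints $O(1)$ apart, which forces $c$ to be at least of order $t$.

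Nothing in your argument excludes $t$ of order $\kappa(m)$. If $1\in\proj^S_{E(f)}(a)$, every geodesic $[1,a]_S$ meets $\mathcal N_C(E(f))$ only inside $B(1,C)$, so $a$ passes your admissibility test. Meanwhile (A) only places $\proj^X_{E(f)}(a)$ within $\ktrack(m)$ of $1$. The two projections really can differ unboundedly. A bounded discrepancy, together with weak contraction of the curve-graph projection \cite{DR09}, would make the axes of \cite{RV21} strongly contracting, which they are not. Enlarging $L_0$ only improves the multiplicative constant. The admissible-path criterion of \cite{YANG10} does not rescue this either: its constants depend on the projection bound $\tau$, and here $\tau\asymp\kappa(m)$.

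\textbf{What is actually proved, and a fix.} Your argument, like the paper's, proves a weaker statement (e.g.\ by rescaling $X$ by $1/\kappa(m)$): $W$ labels a quasi-geodesic with uniform multiplicative constant and additive constant $O(\kappa(m))$. This seems to be all Lemma \ref{FreeSemigroup} uses. There the Morse constant becomes $O(\kappa(m))$, which is beaten by $d_X(o,ho)>L_0\kappa(m)$ once $L_0$ is large.

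To get the lemma as stated, also bound the $X$-projections uniformly. All $\ax(f)$ pass through $o$, so by Lemma \ref{lem:bddintersectionamples} and hyperbolicity, for each $x\in X$ at most one $f\in F$ has $[o,x]_X$ fellow travelling $\ax(f)$ beyond a uniform constant. Take five independent elements in $F$. Then each $a$ has an $f$ satisfying both word-metric conditions and both $X$-conditions, and pigeonhole keeps $|A|/5$. Now \cite{YANG10} applies with bounded $\tau$ and gives a uniform $c$.

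\textbf{A smaller point.} Enlarging $D$ by a constant does not settle the ``any choice of geodesics'' clause once $f$ is fixed on all of $A$. Sublinear contraction only bounds by $O(\kmorse(m))$ how long a second geodesic $[1,a]_S$ can run along $\mathcal N_C(E(f))$. Lemma \ref{lem:admissible_tuple} avoids this by letting the $f_i$ depend on the chosen geodesics. This is harmless downstream, and the paper likewise fixes one geodesic per element.
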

\begin{proof}
Note that $F$ consists of 3 independent WPD loxodromic elements. For every $a\in A$ there exist two distinct elements $f\in F$ so that
$$
\mathrm{diam}_S\bigl([1,a]_S\cap \mathcal N_C(E(f))\bigr)\le D
$$
by Lemma \ref{BddprojViaMorse}. Hence, there are two distinct elements  $f_1,f_2\in F$ and a subset $A_1\subset A$ with $|A_1|\ge |A|/3$ so that for any $a\in A_1$ and $i=1,2$, 
$$
\mathrm{diam}_S\bigl([1,a]_S\cap \mathcal N_C(E(f_i))\bigr)\le D.
$$
Applying Lemma \ref{BddprojViaMorse} to the inverses $\{a^{-1}:a\in A_1\}$ and translating back by $a$, there exists a subset $A_2\subset A_1$ with $|A_2|\ge |A_1|/2$ and some $f\in \{f_1,f_2\}$ so that for any $a\in A_2$,
$$
\mathrm{diam}_S\bigl([1,a]_S\cap \mathcal N_C(aE(f))\bigr)\le D.
$$ 
Together with (\ref{eqn:Amaxsize}), this gives (\ref{eqn:Asize}) after renaming $A_2$ as $A$.
To conclude the $D$-admissibility, it suffices to have $aE(f)\ne E(f)$ for any $a\in A$. This holds when $m\gg 1$. Indeed, if $a\in E(f)$, then by Convention \ref{kmorseConv} the geodesic $[1,a]_S$ is contained in a fixed neighborhood of $E(f)$, contradicting the bounded intersection above when $m-\Delta>D$. 

The bounded intersection above and Lemma \ref{BddprojViaMorse2}, after applying the orbital map to $X$, imply that the projections of $[o,a_io]_X$ and $h[o,a_{i+1}o]_X$ to $\ax(f)$, after translating the first projection by $a_i^{-1}$, have diameter at most a constant multiple of $\kappa(m)$. Therefore, choosing $L_0$ sufficiently large, each concatenation
$$
[o,a_io]_X\cdot(a_i[o,ho]_X)\cdot(a_ih[o,a_{i+1}o]_X)
$$
is a uniform local quasi-geodesic. The conclusion then follows from the classical fact in hyperbolic geometry that a sufficiently long local quasi-geodesic is a global $c$-quasi-geodesic.    
\end{proof}
\begin{rem}
We stress that Lemmas \ref{ExtensionViaMorse} and \ref{ExtensionViaMorse2} are local nature in the sense that the involved estimates depend on the length of sequences, so these are not sufficient in proving that the semi-group is free in Lemma \ref{FreeSemigroup}. In the proof given below, the sublinear projection tracking for WPD actions are essential.   
\end{rem}

The next lemma explains how to choose suitable $h\in E(f)$ in Lemma \ref{SublExtensionViaLoxo}. Such elements  exist, since $\langle f\rangle$ with the ambient metric $d_S$ is quasi-isometric to $\mathbb R$ and its orbit in $X$ is quasi-isometrically embedded.
\begin{lem}\label{SuitableEDefn}
For each $f\in F$ and each $L_0>0$, there exists $B_0>0$ so that for all sufficiently large $n$ there exists an element $h\in \langle f\rangle$ satisfying
$$
\|h\|_S  \le  B_0\kappa(n)
\quad\text{and}\quad
d_X(o,ho)>L_0\kappa(n).
$$
\end{lem}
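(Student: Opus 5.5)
The plan is to take $h=f^k$ for a suitable exponent $k$ depending on $n$, and to use two facts. First, the map $k\mapsto f^k$ is a quasi-isometric embedding of $\mathbb Z$ into $(G,d_S)$; indeed $f$ has infinite order and is Morse in $G$ (Corollary \ref{MorseInAcylin}, or simply because $\langle f\rangle$ is undistorted as its orbit in $X$ is quasi-isometrically embedded). Second, the orbit map restricted to $\langle f\rangle$ is a $\lambda$-quasi-isometric embedding into $X$, since $f$ is loxodromic. Concretely, there is $\lambda\ge1$ so that for all $k\in\mathbb Z$
\[
\|f^k\|_S\le \|f\|_S\,|k|,\qquad d_X(o,f^ko)\ge \lambda^{-1}|k|-\lambda .
\]
The upper bound on word length is trivial, so only the lower bound in $X$ needs the quasi-isometric embedding.

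Next I fix $n$ and set $k:=\lceil \lambda(L_0\kappa(n)+\lambda)\rceil+1$. With this choice $d_X(o,f^ko)\ge \lambda^{-1}k-\lambda> L_0\kappa(n)$. On the other hand,
\[
\|f^k\|_S\le \|f\|_S\bigl(\lambda L_0\kappa(n)+\lambda^2+2\bigr).
\]
Since $\kappa$ is positive and non-decreasing, for $n\ge 1$ we have $\kappa(n)\ge\kappa(1)>0$. The additive constant $\|f\|_S(\lambda^2+2)$ is therefore at most $\|f\|_S(\lambda^2+2)\kappa(n)/\kappa(1)$. Hence $B_0:=\|f\|_S\bigl(\lambda L_0+(\lambda^2+2)/\kappa(1)\bigr)$ works for every $n\ge1$, and in particular for all sufficiently large $n$. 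Because $F$ is finite, $B_0$ can be taken uniform over $f\in F$ by maximizing.

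The only point needing care is the linear lower bound for $d_X(o,f^ko)$, that is, that $f$ acts loxodromically, so that its orbit in $X$ is quasi-isometrically embedded rather than merely unbounded. This holds by the standing assumption that the elements of $F$ are loxodromic WPD isometries of the hyperbolic space $X$. Everything else is bookkeeping, including the absorption of additive constants into $\kappa(n)$, which uses positivity of $\kappa$.
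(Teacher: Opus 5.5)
Your proof is correct and follows the paper's argument: take $h=f^q$ with $q$ a suitable constant multiple of $\kappa(n)$, and use the linear lower bound on $d_X(o,f^qo)$ that comes from $f$ being loxodromic. You are also right that the word-length side needs only the trivial bound $\|f^q\|_S\le\|f\|_S|q|$ rather than undistortedness, and using $\kappa(n)\ge\kappa(1)>0$ absorbs the additive constants cleanly.
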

\begin{proof}
Since $\langle f\rangle$ is undistorted in the word metric and $f$ acts loxodromically on $X$, both $\|f^q\|_S$ and $d_X(o,f^qo)$ grow linearly in $|q|$. Choosing $q=q(n)$ to be a sufficiently large constant multiple of $\kappa(n)$ gives the desired element $h=f^q$.
\end{proof}
For each $m\gg 1$, choose $h=h(m)\in \langle f\rangle$ as above. Applying the lemma with a sufficiently large value of $L_0$ and enlarging $\kappa$ by a constant multiple if necessary, we shall simply write
$$
\|h\|_{S}\le \kappa(m)
\quad\text{and}\quad
d_X(o,ho)>L_0\kappa(m).
$$

\medskip

\noindent\textbf{Construction of extension map.}
Let $\mathbb W(A)=\cup_{n\ge 1}A^n$ where $A^n$ denotes the set of words of length $n$ over the alphabet set $A$.  We  define a \textit{$\kappa$-extension} map   $\Phi: \mathbb W(A)\to K$ by 
$$
\Phi: W=(a_i)_1^n\longmapsto \prod_{i=1}^{n} (a_ih)
$$
By Lemma \ref{SublExtensionViaLoxo}, there is a constant $c>1$ independent of $W$ so that $\Phi(W)$ labels a $c$-quasi-geodesic.

\begin{lem}\label{FreeSemigroup}
The map $\Phi: \mathbb W(A)\to K$ is  injective, so $T:=Ah$ generates a free semi-group $H=\langle T\rangle^+$ in $K$.  
\end{lem}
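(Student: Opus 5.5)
We argue by contradiction with a shortest counterexample. Suppose $W=(a_1,\dots,a_n)\ne W'=(a'_1,\dots,a'_{n'})$ are words in $\mathbb W(A)$ with $\Phi(W)=\Phi(W')$. If $a_1=a'_1$, left-cancelling $a_1h$ gives a shorter pair of distinct words with the same image. If one word is empty after cancellation, the other gives $\prod(a_ih)=1$. This is impossible, because $\Phi$ of a nonempty word labels a $c$-quasi-geodesic in $X$ by Lemma \ref{SublExtensionViaLoxo}(2), and its endpoints are at distance comparable to its length, which is at least $d_X(o,ho)>L_0\kappa(m)>0$. So we may assume $a_1\ne a'_1$. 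Set $g=\Phi(W)=\Phi(W')$. The aim is to show that $a_1$ and $a'_1$ are within $d_S$-distance less than $\kappa(m)$; since $A$ is $\kappa(m)$-separated, this forces $a_1=a'_1$, a contradiction.

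\textbf{Step 1 (large intersection in $X$).} The path labeled by $a_1h a_2h\cdots$ is a $c$-quasi-geodesic in $X$ and passes along the translate $a_1[o,ho]_X\subset a_1\ax(f)$, which has length $>L_0\kappa(m)$. By stability of quasi-geodesics in the hyperbolic space $X$, a geodesic $[o,go]_X$ fellow travels this path within a uniform constant. Hence $[o,go]_X$ has $(r,\cdot)$-large intersection with $\pi(a_1E(f))$ of diameter at least roughly $L_0\kappa(m)-O(1)$. It enters near $a_1o$, up to an error bounded by the projection estimate on $[o,a_1o]_X$ from the admissibility in Lemma \ref{SublExtensionViaLoxo}(1), which is $O(\kappa(m))$. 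The same holds for $W'$ with $a'_1 E(f)$ and the same geodesic $[o,go]_X$.

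\textbf{Step 2 (comparing the two axes).} Both $\pi(a_1E(f))$ and $\pi(a'_1E(f))$ contain long subsegments that are $O(1)$-close to the initial part of $[o,go]_X$. Their entry points are within $O(\kappa(m))$ of $a_1o$ and $a'_1o$ respectively, and $d_X(o,a_1o),d_X(o,a_1'o)\le\lambda(m+\Delta)$.

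If $a_1E(f)\ne a'_1E(f)$, bounded intersection of $\pi(\f)$ (Lemma \ref{lem:bddintersectionamples}(3)) bounds the overlap of the two neighborhoods. The long segment of one axis must then be disjoint, up to a constant, from that of the other. This forces the sublinear segment $[o,a_1o]_X$ (or $[o,a'_1o]_X$) to have large projection onto the other axis, contradicting the $O(\kappa(m))$ projection bounds coming from admissibility once $L_0$ is large.

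Hence $a_1E(f)=a'_1E(f)$, so $a_1^{-1}a'_1\in E(f)$. The two points $a_1o$ and $a'_1o$ are then both within $O(\kappa(m))$ of the entry point of $[o,go]_X$ into $\mathcal N_r(a_1\ax(f))$, by Corollary \ref{SubLinearEntry}. Therefore $d_X(a_1o,a'_1o)=O(\kappa(m))$, and since $\pi$ restricted to $a_1E(f)$ is a quasi-isometric embedding, $d_S(a_1,a'_1)=O(\kappa(m))$.

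\textbf{Step 3 (transfer to the word metric).} To get a constant strictly below the separation constant, I would use the projection tracking of Lemma \ref{SublTrackInAcyl} together with Lemma \ref{LargeIntLem}. These convert the $X$-side closeness into closeness of the $d_S$-projections of $1$ to $a_1E(f)$. Then I would compare $d_S(1,a_1)$ and $d_S(1,a'_1)$, which both lie in $[m-\Delta,m+\Delta]$. Points of $a_1E(f)$ at word distance about $m$ from $1$ whose projections agree up to $O(\kappa(m))$ differ by $O(\kappa(m))$. The outcome is $d_S(a_1,a'_1)\le C'\kappa(m)$.

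\textbf{Main obstacle.} The constant $C'$ produced this way may exceed the separation constant. The fix is to choose the separation scale of $A$ as a large multiple $C''\kappa(m)$ with $C''>C'$. This changes only the constant in the bound (\ref{eqn:Asize}), i.e.\ replaces $\omega_0\kappa(m)$ by $\omega_0C''\kappa(m)$, which is harmless. The only genuinely delicate point is keeping $C'$ independent of $L_0$ when $L_0$ is enlarged.
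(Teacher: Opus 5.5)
\textbf{Verdict: genuine gap in Step 2.} Your reduction to $a_1\ne a_1'$ is fine, including the check that a nonempty word cannot map to $1$. The same-coset case $a_1E(f)=a_1'E(f)$ is also fine; it even works directly in $d_S$ via Lemma \ref{SubLinearCommonEntry} and admissibility, without using $\Phi(W)=\Phi(W')$. The gap is Step 2 when $a_1E(f)\ne a_1'E(f)$.

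There you show that one initial piece, say $[o,a_1'o]_X$, runs along the other axis $a_1\ax(f)$ for length of order $L_0\kappa(m)$, and you say this contradicts the admissibility projection bounds. It does not. Lemma \ref{SublExtensionViaLoxo}(1) only controls $[1,a]_S$ (hence $[o,ao]_X$) against $E(f)$ and against its own terminal coset $aE(f)$. It says nothing about the projection of $[o,a_1'o]_X$ onto a different coset axis $a_1\ax(f)$.

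Your Step 2 also never uses the near-equality $\bigl|\|a_1\|_S-\|a_1'\|_S\bigr|\le 2\Delta$; it uses only the upper bound $m+\Delta$. Without near-equality the conclusion fails. Suppose an alphabet contained $a$, $a'$ and $b=aha'$ (all short if $\|a\|_S,\|a'\|_S\le m/3$). Then $\Phi(a,a')=\Phi(b)$ while $bE(f)\ne aE(f)$, and $[o,bo]_X$ does run along $a\ax(f)$ through the connector $a[o,ho]_X$. No admissibility condition on $b$ prevents this, since those conditions concern only $E(f)$ and $bE(f)$. This is exactly the configuration you claim to exclude. It is ruled out in the lemma only because $a$ and $b$ cannot both lie in the thin annulus $S_m(\Delta)$.

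So the different-coset case must also be closed by the separation of $A$. This uses near-equality of word lengths in $d_S$, since $d_X(o,a_1o)$ and $d_X(o,a_1'o)$ need not be comparable. The paper's argument runs as follows.
\begin{enumerate}
\item Relabel so that $[o,a_1o]_X$ is the piece running along $\pi(V_1)$, where $V_1=a_1'E(f)$. Bounded intersection keeps the contribution of the connector $a_1[o,ho]_X$ below $B_0$, so $[o,a_1o]_X$ meets $\mathcal N_{2r}(\pi(V_1))$ in diameter $>\kappa(m+\Delta)$.
\item By (\ref{eq:kappa0defn}), i.e.\ Lemma \ref{LargeIntLem}, the word geodesic $[1,a_1]_S$ enters $\mathcal N_C(V_1)$ at some point $u$.
\item Lemma \ref{SubLinearCommonEntry} puts $u$ within $2\kmorse(m+\Delta)+6C$ of the entry point $v$ of $[1,a_1']_S$, and admissibility gives $d_S(v,a_1')\le D$.
\item Since $\|a_1\|_S\le\|a_1'\|_S+2\Delta$, we get $d_S(u,a_1)\le d_S(u,v)+D+2\Delta$. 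Hence $d_S(a_1,a_1')=O(\kmorse(m))$, contradicting the $\kappa(m)$-separation of $A$ once $\kappa$ dominates a large multiple of $\kmorse$.
\end{enumerate}
If you replace your claimed admissibility contradiction with this argument, Steps 1 and 3 go through, as does your remark about enlarging the separation scale by a constant factor.
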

\begin{proof}
To derive a contradiction, assume that  there are two distinct words $$W:=(a_i)_1^n\quad \ne\quad W':=(a_j')_1^l$$ with $n,l\ge 1$ so that $\Phi(W)=\Phi(W')$. The corresponding words     $$\prod_{i=1}^{n} \left(a_ih\right),\qquad \prod_{j=1}^{l} \left(a_j'h\right)$$ label two $c$-quasi-geodesics $\alpha$ and $\beta$ with the same endpoints. For reference, fix a $d_X$-geodesic $\gamma$ with the same endpoints as $\alpha$ and $\beta$ do. By Morse Lemma,   there exists a constant $r$ depending on $c$ so that $\alpha$ is contained in $\mathcal N_r(\beta)$ and vice versa. 

\medskip
We may assume   $a_1\ne a_1'$ without loss of generality. Denote  $U_1=a_1E(f)$ and $V_1=a_1'E(f)$. Note that  $\|a_1\|_S, \|a_1'\|_S\in [m-\Delta,m+\Delta]$. We consider the following two cases.
\medskip

\noindent \textbf{Case 1.} Assume first that $U_1 = V_1$. If $u\in [1,a_1]_S$ and $v\in [1,a_1']_S$ are the corresponding entry points in $\mathcal N_C(U_1)=\mathcal N_C(V_1)$, then  $d_S(u,U_1), d_S(v,V_1)\le C$. By Lemma \ref{SubLinearCommonEntry}, after moving the entry points a distance at most $C$ to $U_1$, 
$$
d_S(u,v)\le 2\kmorse(m+\Delta)+6C.
$$ 
By construction, the terminal intersections of $[1,a_1]_S$ and $[1,a_1']_S$ with the neighborhoods of $a_1E(f)$ and $a_1'E(f)$ have diameter at most $D$. Thus $d_S(a_1,u)\le D$ and $d_S(a_1',v)\le D$. Hence
$$
d_S(a_1,a_1')\le 2\kmorse(m+\Delta)+6C+2D.
$$
Enlarging $\kappa$ by a constant multiple of $\kmorse$ and taking $m$ sufficiently large, this contradicts the choice of $A$ being $\kappa(m)$-separated.
\medskip

\noindent \textbf{Case 2.} Assume  now that $U_1\ne V_1$. Then there exists  some $B_0=B_0(r)$ so  that $$\mathrm{diam}_X\big(\mathcal N_{2r}(\pi(U_1))\cap \mathcal N_{2r}(\pi(V_1))\big)\le B_0.$$  Without loss of generality, assume $d_X(o,a_1o)\ge d_X(o,a_1'o).$ The subpath of $\beta$ labelled by $h$ is a translate of $[o,ho]_X$; after increasing $r$, it lies in $\mathcal N_r(\pi(V_1))$ and has $X$-diameter $d_X(o,ho)$. Since $\alpha$ and $\beta$ are uniform quasi-geodesics with the same endpoints, the order-preserving fellow-travelling property of quasi-geodesics implies, after increasing $r$, that $\alpha$ contains a subpath of diameter at least $d_X(o,ho)-2r$ in the $r$-neighborhood of $\pi(V_1)$. The part of this subpath lying in the connector $a_1[o,ho]_X\subseteq \mathcal N_r(\pi(U_1))$ has diameter at most $B_0$. Hence, the first piece $[o,a_1o]_X$ intersects $\mathcal N_{2r}(\pi(V_1))$ in a diameter at least
$$
d_X(o,ho)-B_0-2r>\kappa(m+\Delta),
$$
where the last inequality holds when $m\gg 0$ by the choice of $h$. By the defining property of $\kappa$ in Eq. (\ref{eq:kappa0defn}), this shows that $[1,a_1]_S$ intersects $\mathcal N_C(V_1)$. 

\medskip
Let $u\in [1,a_1]_S$ be the   entry point in $\mathcal N_C(V_1)$ and $v\in [1,a_1']_S$ the   entry point in $\mathcal N_C(V_1)$. Then, after moving the entry points a distance at most $C$ to $V_1$, Lemma \ref{SubLinearCommonEntry} gives
$$
d_S(u,v)\le 2\kmorse(m+\Delta)+6C.
$$
By construction, the terminal intersection of $[1,a_1']_S$ with $\mathcal N_C(a_1'E(f))=\mathcal N_C(V_1)$ has diameter at most $D$, so $d_S(v,a_1')\le D$. Noting $u\in [1,a_1]_S,v\in [1,a_1']_S$ are on geodesics and $\|a_1\|_S\le \|a_1'\|_S+2\Delta$, we see
$$
\begin{aligned}
d_S(u,a_1)&= d_S(1,a_1)-d_S(1,u) \\
&\le d_S(1,a_1')+2\Delta -d_S(1,u)\\
&\le d_S(u,v)+d_S(v,a_1')+2\Delta \\
&\le 2\kmorse(m+\Delta)+6C+D+2\Delta 
\end{aligned}$$ 
Hence,
$$
d_S(a_1,a_1')\le d_S(u,v)+ d_S(u,a_1)+d_S(v,a_1')\le 4\kmorse(m+\Delta)+12C+2D+2\Delta.
$$
Enlarging $\kappa$ as above and taking $m$ sufficiently large, this again contradicts  that $A$ is $\kappa(m)$-separated.  

\medskip
In each case, we arrived to a contradiction, so  the injectivity of $\Phi$ follows.
\end{proof}

By Lemma \ref{FreeSemigroup},  the standard Cayley graph of  the free semi-group $H=\langle Ah\rangle^+<K$   is a rooted tree with outer-ward valence $|A|$ in (\ref{eqn:Asize}). This produces at least
$$
|A|^n \ge (|S_m(\Delta)\cap K|)^n\mathrm{e}^{ -n(\omega_0 \kappa(m)+\log 6) }
$$
elements of $K$ in the ball $B_{n(m+\kappa(m)+\Delta)}$ in $\mathrm{Cay}(G,S)$. Enlarging $\kappa$ by a constant multiple and absorbing the fixed number $\Delta$, this ball is contained in $S^{\le n(m+\kappa(m))}$.    

Setting $s(p)=\log \max\{1,|S_p(\Delta)\cap K|\}$ and $\tilde s(p)=\log |B_{p}\cap K|$, we obtain
$$
\tilde s({nm+n\kappa(m)}) \ge n\cdot s({m}) - n\kappa(m).
$$
Thus Proposition \ref{LimitExistsProp} follows.
\begin{rem}
When $K=G$, we may decompose the word geodesic at position $nm$ so that   $$|B_{nm+3n\kappa(m)}\cap K|\le |B_{nm}\cap K|\cdot | B_{3n\kappa(m)}\cap K|$$ 
Replacing $\kappa$ by a constant multiple if necessary, we obtain $$\tilde s({nm})\ge n\cdot s({m})-  n\kappa(m)$$
This is simpler than the inequality (\ref{eq:sublinearsubadditive}). 
    
\end{rem}

\subsection{Largeness of free semigroups}
To conclude this section, we  record a counting consequence of Lemma~\ref{FreeSemigroup} which will be used in next section.

\begin{lem}\label{LargeFreeSemigroupGrowth}
There exists a sequence of free semigroups \(H_i<G\) such that
\[
        \e{H_i}\to \e{G}
\]
and every non-trivial element of \(H_i\) acts loxodromically on \(X\). In particular, $H_i$ consists of Morse elements.
\end{lem}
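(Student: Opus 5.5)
The plan is to apply the construction of the previous subsection with $K=G$, letting the scale $m$ go to infinity. For each large integer $m$, take the maximal $\kappa(m)$-separated subset $A=A_m\subseteq S_m(\Delta)$ refined as in Lemma~\ref{SublExtensionViaLoxo}, the element $f\in F$ it produces, and $h=h(m)\in\langle f\rangle$ from Lemma~\ref{SuitableEDefn} with $\|h\|_S\le\kappa(m)$ and $d_X(o,ho)>L_0\kappa(m)$. Set $T_m=A_mh$ and $H_m=\langle T_m\rangle^+$. By Lemma~\ref{FreeSemigroup}, $H_m$ is a free semigroup with basis $T_m$.

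\textbf{Growth of $H_m$.} Each generator in $T_m$ has word length at most $m+\Delta+\kappa(m)$. Hence the ball of radius $n(m+\Delta+\kappa(m))$ contains at least $|A_m|^n$ distinct elements of $H_m$, which gives
\[
\omega(H_m)\ge \frac{\log|A_m|}{m+\Delta+\kappa(m)}\ge\frac{\log|S_m(\Delta)|-\omega_0\kappa(m)-\log 6}{m+\Delta+\kappa(m)},
\]
using (\ref{eqn:Asize}). Since $\kappa$ is sublinear, and $\log|S_m(\Delta)|/m\to\omega(G)$ by Theorem~\ref{GrowthBallThm2} (the lower bound $|S_m(\Delta)|\gtrsim |B_m|/m$ suffices, because balls are unions of linearly many annuli), the right-hand side tends to $\omega(G)$. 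Together with the trivial bound $\omega(H_m)\le\omega(G)$, this gives $\omega(H_m)\to\omega(G)$. The only care needed is that $m$ runs along all integers, and the limit of $\log|B_m|/m$ exists.

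\textbf{Loxodromic elements.} A non-trivial element $g\in H_m$ is a word $W=a_1h\cdots a_\ell h$ with $\ell\ge1$. The key point is that every power $g^k$ is again such a word, namely $W$ repeated $k$ times. By Lemma~\ref{SublExtensionViaLoxo}(2), the bi-infinite concatenation of translates of $W$ is a $c$-quasi-geodesic in $X$, and $c$ does not depend on $k$. So $d_X(o,g^ko)\ge k\,\ell(\pi(W))/c-c$ with $\ell(\pi(W))\ge d_X(o,ho)>0$, i.e.\ $g$ has positive stable translation length and acts loxodromically on the hyperbolic space $X$.

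\textbf{Morse elements.} For the final claim, I would argue that $g$ is WPD. Under an acylindrical action this is automatic. In the setting of independent WPD elements, I would appeal to the fact that loxodromics in the non-elementary subgroup built from $Ah$ inherit WPD, or else restrict to an acylindrical action (an acylindrically hyperbolic $G$ admits one, e.g.\ by taking $X$ acylindrical from the start). Then Corollary~\ref{MorseInAcylin} applies, since loxodromic isometries of a hyperbolic space are strongly contracting, and it shows that $g$ is Morse in $G$.

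I expect the main obstacle to be the uniformity of the quasi-geodesic constant over powers in the loxodromic step. This has to come from the local-to-global argument in Lemma~\ref{SublExtensionViaLoxo}, whose local condition depends only on consecutive pairs $(a_i,a_{i+1})$, including the wrap-around pair $(a_\ell,a_1)$.
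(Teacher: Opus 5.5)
Your proposal is correct and follows the paper's proof: the same free semigroups $\langle A_m h_m\rangle^+$ from Lemma~\ref{FreeSemigroup} with $K=G$, and the same growth bound via (\ref{eqn:Asize}). Loxodromicity likewise comes from the uniform quasi-geodesic constant of Lemma~\ref{SublExtensionViaLoxo} applied to the powers $W^k$. The Morse property then follows from WPD and Corollary~\ref{MorseInAcylin} or Sisto's theorem. Choosing $X$ acylindrical is the right way to secure WPD; the alternative that loxodromics in the semigroup ``inherit'' WPD is unjustified and should be dropped.

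The only difference is that the paper passes to a subsequence $m_i$ with $\log\max\{1,|S_{m_i}(\Delta)|\}/m_i\to\omega(G)$, which is all the lemma needs. Your parenthetical lower bound of $|S_m(\Delta)|$ by roughly $|B_m|/m$ does not follow from the union-of-annuli remark, since that remark only bounds the largest annulus up to radius $m$. Convergence along all $m$ is nonetheless true, for example by Fekete's lemma for the submultiplicative sequence $|S_m|$.
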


\begin{proof}
Apply the construction in the proof of Lemma~\ref{FreeSemigroup} with
\(K=G\). Thus, for every sufficiently large \(m\), we may choose a  
\(\kappa(m)\)-separated subset $A_m\subset S^m(\Delta)$ so that (\ref{eqn:Asize}) holds:
\[
        |A_m|
        \ge
        \frac{1}{6}|S^m(\Delta)|\exp(-\omega_0\kappa(m)).
\]
and an element \(f_m\in F\) and $h_m\in \langle f_m\rangle$
such that
\[
        \|{h_m}\|_{S}\le \kappa(m)
        \quad\text{and}\quad
        d_X(o,h_mo)>L_0\kappa(m).
\]
Thus, by Lemma~\ref{FreeSemigroup}, 
\[
        H_m:=\langle A_m h_m\rangle^+
\]
is a free semigroup for each large $m\gg 1$.

Every generator \(a h_m\), with \(a\in A_m\), has \(S\)-length at most $m+\Delta+\kappa(m)$. Enlarging $\kappa$ by a constant multiple, we write this upper bound as $m+\kappa(m)$. Since \(H_m\) is freely generated by \(A_m h_m\), we have, for every \(n\ge 1\),
\[
        |B_{n(m+\kappa(m))}\cap H_m|
        \ge
        |A_m|^n.
\]
Therefore
\[
        \e{H_m}
        \ge
        \frac{\log |A_m|}{m+\kappa(m)}
        \ge
        \frac{\log |S_m(\Delta)|-\omega_0\kappa(m)-\log 6}
             {m+\kappa(m)}.
\]
Choose a sequence \(m_i\to\infty\) such that
\[
        \frac{\log \max\{1,|S_{m_i}(\Delta)|\}}{m_i}\to \e{G}.
\]
Since \(\kappa(m)=o(m)\), we get
\[
        \e{H_{m_i}}\to \e{G}.
\]

Finally, if \(1\ne g\in H_m\), then all powers \(g^n\) are represented by
words over the alphabet \(A_m h_m\). By Lemma~\ref{SublExtensionViaLoxo},
the corresponding paths in \(X\) are $c$-quasi-geodesics. Hence   \(g\) is loxodromic on \(X\). The ``in particular" statement follows by Sisto \cite{Sisto16}; see Lemma \ref{MorseInAcylin} as well.
\end{proof}

\section{No growth gap for quotients}\label{SecNoGrowthGapQuotients}

In this section, we prove that the growth rate of an acylindrically
hyperbolic group can be approximated by the growth rates of proper
quotients. By a proper quotient we mean that its kernel is infinite. We restate \ref{NoGapThm}  as follows.

\begin{thm}
\label{NoGrowthGapAHQuotients}
Let \(G\) be an acylindrically hyperbolic group with finite generating set
\(S\). Then for any loxodromic WPD element \(h\in G\), there exists an integer
\(k\ge 1\), and a sequence \(q_i\to\infty\) such that
\[
        \bar G_i:=G/\langle\!\langle h^{kq_i}\rangle\!\rangle
\]
are proper quotients and
\[
        \e{\bar G_i,S_i}\to \e{G,S},
\]
where \(\e{\bar G_i,S_i}\) is computed with respect to the image of \(S\) in
\(\bar G_i\).
\end{thm}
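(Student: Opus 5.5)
The argument proceeds by embedding the large free semigroups of Lemma~\ref{LargeFreeSemigroupGrowth} into small-cancellation quotients. First fix the data. Since $G$ is non-elementary acylindrically hyperbolic, choose an action $G\act X$ on a hyperbolic space in which $h$ is a loxodromic WPD element. Choose a set $F$ of three independent loxodromic WPD elements that are also independent from $h$, so that $E(h)\ne gE(f)g^{-1}$ for all $f\in F$ and $g\in G$. By Dahmani--Guirardel--Osin rotating family theory, there is $k\ge1$ such that for all sufficiently large $q$ the subgroup $N_q=\langle\!\langle h^{kq}\rangle\!\rangle$ is a free product of conjugates of $\langle h^{kq}\rangle$. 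Consequently $N_q$ is infinite, each $\bar G_q$ is a proper quotient, and $\bar G_q$ acts on a cone-off $\dot X_q$ which is hyperbolic with uniform constants. The key output of the theory is a Greendlinger-type statement: every nontrivial $n\in N_q$ has the property that a geodesic $[o,no]_X$ (or any uniform quasi-geodesic from $o$ to $no$) fellow-travels some translate $g\ax(h)$ along a length comparable to $d_X(o,h^{kq}o)$, up to a fixed fraction.

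Second, fix $\epsilon>0$. Apply Lemma~\ref{LargeFreeSemigroupGrowth} with $K=G$ to obtain a free semigroup $H_m=\langle A_m h_m\rangle^+$ with $\e{H_m}>\e{G}-\epsilon$, where $h_m\in\langle f_m\rangle$ for some $f_m\in F$. By Lemma~\ref{SublExtensionViaLoxo}, every word in $A_mh_m$ labels a $c$-quasi-geodesic in $X$, and $m$ is now fixed. The claim is that for $q$ large, depending on $m$, the projection $G\to\bar G_q$ is injective on $H_m$. This gives $\e{\bar G_q,S_q}\ge\e{H_m}$, because word length can only decrease under the quotient; the bound $\e{\bar G_q,S_q}\le\e{G,S}$ is automatic. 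Letting $\epsilon\to0$ and choosing $q_i$ diagonally then proves the theorem.

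Third, to prove injectivity, suppose $\Phi(W)\ne\Phi(W')$ but $\Phi(W)^{-1}\Phi(W')\in N_q$. Concatenating the two $c$-quasi-geodesics gives a $c'$-quasi-geodesic. This holds because the first letters differ, and the argument of Lemma~\ref{FreeSemigroup} (Cases 1 and 2) shows that the backtracking at the junction is bounded by $O(\kappa(m))$, which is constant once $m$ is fixed. By the Greendlinger property, this path must $r$-fellow-travel a translate $g\ax(h)$ along a length at least a fixed fraction of $d_X(o,h^{kq}o)\asymp q$.

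The path, however, is a concatenation of pieces $a[o,ao]_X$ and connectors lying near translates of $\ax(f_m)$. Each piece has $X$-length at most $\lambda(m+\Delta)$. Each connector lies along an axis that is independent from $h$, so by Lemma~\ref{lem:bddintersectionamples} its intersection with $\mathcal N_r(g\ax(h))$ is bounded by some constant $\sigma(r)$. A long fellow-travel with $g\ax(h)$ therefore has to cross many consecutive pieces $a_i$ and connectors $a_ih_m$ while staying near a single $h$-axis. The bounded intersection between $g\ax(h)$ and the $\ax(f_m)$-translates forbids this once $q$ exceeds a threshold depending only on $m$, $\sigma$ and $c'$, because the connectors have definite length $>L_0\kappa(m)$ and lie near their own $f_m$-axes.

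The main obstacle is this last step. The difficulty is to make the Greendlinger/fellow-travel conclusion of rotating families precise enough, and to verify that the $c'$-quasi-geodesic cannot track an $h$-axis across a connector. I would first try to handle it by passing to the projection complex / quasi-tree of $\{g\ax(h)\}\cup\{g\ax(f)\}$, where bounded projection is uniform.
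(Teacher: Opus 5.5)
Your overall strategy is the paper's. A Greendlinger-type statement for the very rotating family of high powers of $h$ gives that every $1\ne n\in N_q$ has $[o,no]_X$ meeting $\mathcal N_\epsilon(Y)$, $Y\in\mathcal A$, in diameter $L_h(q)\to\infty$. This is played against a bound, uniform in $q$, on how long paths of the free semigroup $H_m$ can stay near an $h$-axis, and the proof ends with a diagonal choice of $q_i$. However, the step you leave as the ``main obstacle'' is the heart of the proof, and the way you set it up does not close.

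The backtracking at the junction of $\overline{\alpha_U}\cdot\alpha_{U'}$ is not $O(\kappa(m))$. The letters $a_1\ne a_1'$ are only $\kappa(m)$-separated and may share a common prefix of length close to $m$. The argument of Lemma~\ref{FreeSemigroup} only excludes fellow-travelling past both first connectors, so the Gromov product at $o$ can be of order $m$. Hence $c'$, the Morse radius $r=r(c')$, and the bounded-intersection constant $\sigma(r)$ (which in general grows linearly in $r$) all depend on $m$. The connectors, by contrast, only have $X$-length of order $L_0\kappa(m)=o(m)$ with $L_0$ fixed. So your claim that a long fellow-travelling subpath cannot contain a complete connector is not justified as sketched.

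The missing idea is to avoid the concatenation altogether, as the paper does.

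\begin{itemize}
\item Suppose $u\ne v\in H_m$ with $v^{-1}u\in N_q$. Then $[vo,uo]_X$ is a translate of $[o,v^{-1}uo]_X$. By thinness of the triangle $(o,uo,vo)$, one of $[o,uo]_X$, $[o,vo]_X$ meets $\mathcal N_\rho(Y)$ in diameter at least $L_h(q)/3-C$, where $\rho=\epsilon_h+C$ is independent of $m$ (Corollary~\ref{PairLargeIntersectionX}).
\item Now only a single path $\alpha_g$ with $g\in H_m$ is involved. It is a $c$-quasi-geodesic with $c$ independent of $m$. Consequently the fellow-travelling constant $R$, the constant $T$ for geodesics with endpoints in $\mathcal N_\rho(Y)$, and the bounded-intersection constant $B$ between $Q$-neighbourhoods of $\ax(f_m)$-translates and $\mathcal N_{T+R}(Y)$ are all uniform.
\item Choose $L_0$ in Lemma~\ref{SublExtensionViaLoxo} \emph{after} fixing $\rho$, with $F$ chosen independent from $h$, so that every connector has $X$-diameter greater than $B$. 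This costs nothing in growth, since $\|h_m\|_S=O(\kappa(m))$.
\item A subpath of $\alpha_g$ inside $\mathcal N_{T+R}(Y)$ then contains no complete connector. It therefore consists of at most two partial connectors and one $A_m$-piece, which gives a bound $D_m$ depending only on $m$ (Lemma~\ref{FreeSemigroupBoundedHIntersection}).
\item Taking $q$ with $L_h(q)/3-C>D_m$ yields injectivity on $H_m$.
\end{itemize}

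No projection complex is needed.
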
 

Modulo Lemma \ref{LargeFreeSemigroupGrowth}, the proof follows closely the arguments of \cite{YANG7} and \cite{HLY}. Below we give a streamlined version of the argument in \cite{YANG7}, pointing out the differences and referring to these papers for details. 

Assume that $G$ admits a non-elementary action on a hyperbolic space $X$ so that \(h\in G\) is a loxodromic WPD element on $X$. By Lemma \ref{lem:bddintersectionamples}, the system of quasi-geodesic axes
\[
        \mathcal A:=\{g\ax(h):g\in G\}.
\]
has bounded intersection. After rescaling $X$, we cone off the members of $\mathcal A$ so that sufficiently high powers of $h$ form a very rotating family in the sense of \cite{DGO}. The required inputs are the bounded intersection of $\mathcal A$ and the proper action of $E(h)$ on $\ax(h)$, the latter following from the WPD assumption. The following is proved in \cite[Lemma~8.8]{YANG7} using the Greendlinger lemma from \cite{DGO}; the same proof applies in the present setting.
\begin{lem}\label{KernelLargeIntersectionX}
There exist constants
\(k=k(h)\ge 1\), \(\epsilon>0\), and a function
\[
        L_h(q)\to\infty
\]
such that for every \(q\ge 1\),   if \(1\ne g\in N_q:=\langle\!\langle h^{kq}\rangle\!\rangle\), then there exists
\(Y\in\mathcal A\) such that
\[
        \mathrm{diam}_X\bigl([o,go]_X\cap \mathcal N_{\epsilon}(Y)\bigr)
        \ge L_h(q).
\]
\end{lem}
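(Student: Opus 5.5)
The plan is to reproduce the Greendlinger-lemma argument of \cite[Lemma~8.8]{YANG7} in the very rotating family framework of \cite{DGO}. First I fix the geometric setup. By Lemma \ref{lem:bddintersectionamples}, $\mathcal A$ has bounded intersection, and by WPD $E(h)$ acts properly and coboundedly on $\ax(h)$. Following \cite[\S5--6]{DGO}, I rescale $X$ and cone off each $Y\in\mathcal A$ by a hyperbolic cone of radius $r_U$, obtaining a hyperbolic space $\dot X$. Then I choose $k$ so that $\langle h^{k}\rangle$ is normal in $E(h)$; for instance, take $k$ to be the index of the centralizer-type subgroup $E^+(h)$ of elements commuting with a power of $h$. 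For each $q\ge1$ the family $\{g\langle h^{kq}\rangle g^{-1}\}$, indexed by the apices $g\cdot c_{\ax(h)}$, is then $\rho$-rotating, with $\rho\to\infty$ as $q\to\infty$. This holds because the translation length of $h^{kq}$ along $\ax(h)$ grows linearly in $q$. Hence, for $q$ beyond a fixed threshold, the rotating family is very rotating. For the finitely many smaller $q$ I can absorb them by setting $L_h(q)$ small, or by replacing $k$ with a multiple.

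The second step is the Greendlinger lemma \cite[Theorem~5.?]{DGO}. For $1\ne g\in N_q$, any $\dot X$-geodesic from $o$ to $go$ passes through an apex $c_Y$, and it does so in a rotating way: the entry and exit points $a,b$ of the cone $C(Y)$ satisfy that $b$ lies in the $N_q$-orbit of a point at angle at least $\pi\cdot\sinh(r_U)$ on the cone boundary from $a$. I expect to translate this as follows. The corresponding boundary points $a',b'\in Y$ are related by an element conjugate to a nontrivial $h^{kqj}$, so their distance along $Y$ is at least comparable to the translation length of $h^{kq}$, which is at least $c\,q$ for a constant $c=c(h)>0$.

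The third step, which I expect to be the main obstacle, is transferring this statement from the coned-off geodesic back to an $X$-geodesic $[o,go]_X$. The approach is to lift the $\dot X$-geodesic to a path in $X$ by replacing each cone passage with a geodesic along the corresponding axis. Then I use the fact from \cite[Prop.~6.?]{DGO} (coning does not collapse geometry away from the cones) that this lift is a uniform quasi-geodesic in $X$ at the scale of the rescaling. By the Morse lemma in the hyperbolic space $X$, $[o,go]_X$ fellow-travels the lift within some $\epsilon$. In particular, $[o,go]_X$ contains points $\epsilon$-close to $a'$ and to $b'$ on $Y$, so
\[
\mathrm{diam}_X\bigl([o,go]_X\cap\mathcal N_\epsilon(Y)\bigr)\ge d_X(a',b')-2\epsilon\ge cq-2\epsilon=:L_h(q).
\]
The delicate point is that the constant $\epsilon$ must be independent of $q$. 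I would ensure this by fixing the rescaling and cone radius once, using the rotation parameter supplied by large $q$, rather than varying $r_U$ with $q$. The uniform quasi-geodesicity of the lift is exactly where the bounded intersection of $\mathcal A$ is used.
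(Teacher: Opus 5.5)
Your proposal takes the same route as the paper. The paper defers to the Greendlinger-lemma argument of \cite[Lemma~8.8]{YANG7} in the very rotating family framework of \cite{DGO}, with exactly your inputs: bounded intersection of $\mathcal A$ and properness of $E(h)$ on $\ax(h)$. Your outline expands that argument faithfully: fix the rescaling and cone radius once, use a shortening rotation at an apex to force the entry and exit points on $Y$ to be at distance comparable to the translation length of $h^{kq}$, and transfer back to $[o,go]_X$ by the Morse lemma with $\epsilon$ independent of $q$.

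One small correction concerns your recipe for $k$. Taking $k=[E(h):E^+(h)]$ need not make $\langle h^k\rangle$ normal in $E(h)$: in $\mathbb Z/3\rtimes\langle h\rangle$ with $h$ acting by inversion, $h^2$ is central, so $k=1$, yet $\langle h\rangle$ is not normal. Instead, let $\langle h^k\rangle$ be the normal core of $\langle h\rangle$ in $E(h)$, which has finite index in $\langle h\rangle$. Then each $\langle h^{kq}\rangle$ is characteristic in $\langle h^k\rangle$ and hence normal in $E(h)$, as your rotating family requires.
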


By the thin-triangle property of $X$, we obtain the following consequence.
\begin{cor}
\label{PairLargeIntersectionX}
There exist \(k=k(h)\ge 1\), \(\epsilon_h>0\), \(C>0\), and
\(L_h(q)\to\infty\) such that the following holds.
 
If \(g_1,g_2\in G\) satisfy $g_1^{-1}g_2\in N_q\setminus\{1\},$
then  one of the two geodesics \([o,g_1o]\), \([o,g_2o]\) satisfies
\[
        \mathrm{diam}_X([o,g_io]_X\cap \mathcal N_{\epsilon_h+C}(Y))
        \ge L_h(q)/3-C
\]
for some \(Y\in\mathcal A\).
\end{cor}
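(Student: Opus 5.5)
We apply Lemma \ref{KernelLargeIntersectionX} to $g:=g_1^{-1}g_2\in N_q\setminus\{1\}$ and then transfer the conclusion to one of the two sides of a geodesic triangle by thinness. Concretely, Lemma \ref{KernelLargeIntersectionX} gives some $Y_0\in\mathcal A$ with
\[
\mathrm{diam}_X\bigl([o,go]_X\cap \mathcal N_{\epsilon}(Y_0)\bigr)\ge L_h(q).
\]
Translating by $g_1$, the geodesic $g_1[o,go]_X$ joins $g_1o$ to $g_2o$, and it meets $\mathcal N_\epsilon(Y)$ in diameter at least $L_h(q)$, where $Y:=g_1Y_0\in\mathcal A$ because $\mathcal A$ is $G$-invariant. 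We take $\epsilon_h:=\epsilon$ and $k$ as in Lemma \ref{KernelLargeIntersectionX}.

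Next consider the geodesic triangle with vertices $o,g_1o,g_2o$ and sides $[o,g_1o]_X$, $[o,g_2o]_X$, $g_1[o,go]_X$, which is $\delta$-thin. Choose points $a,b$ on $g_1[o,go]_X\cap\mathcal N_\epsilon(Y)$ with $d_X(a,b)\ge L_h(q)$ (up to an arbitrarily small error), and let $\sigma:=[a,b]$ be the subsegment between them. Every point of $\sigma$ lies within $\delta$ of $[o,g_1o]_X\cup[o,g_2o]_X$. Along $\sigma$ the closest side switches only once, in the sense that there is a point $c\in\sigma$ such that $[a,c]$ is $\delta$-close to one side and $[c,b]$ is $\delta$-close to the other; this is the insize/tripod description of thin triangles, at the cost of replacing $\delta$ by a uniform multiple. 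One of $[a,c]$, $[c,b]$ has length at least $L_h(q)/2\ge L_h(q)/3$. Call the corresponding side $[o,g_io]_X$.

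It remains to check that this long piece lies near $Y$. Since the endpoints $a,b$ of $\sigma$ lie in $\mathcal N_\epsilon(Y)$ and $Y$ is a uniform quasi-geodesic (hence uniformly quasi-convex in the hyperbolic space $X$), the whole of $\sigma$ lies in $\mathcal N_{\epsilon+C_1}(Y)$ for a uniform $C_1$. The endpoints of the long piece are therefore $(\epsilon+C_1)$-close to $Y$ and $\delta'$-close to $[o,g_io]_X$. Pick the corresponding points $a',c'$ on $[o,g_io]_X$. They are at distance at least $L_h(q)/2-2\delta'$ apart and lie in $\mathcal N_{\epsilon+C_1+\delta'}(Y)$. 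Setting $C:=C_1+2\delta'$ yields the bound $L_h(q)/3-C$, with room to spare.

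The only delicate point is the single-switch structure of the closeness along $\sigma$; without it, we would not obtain one contiguous long piece. If the tripod argument feels heavy, an alternative is to use continuity of the function $t\mapsto d(\sigma(t),[o,g_1o]_X)-d(\sigma(t),[o,g_2o]_X)$ together with the observation that a portion near $[o,g_1o]_X$ followed by a portion near $[o,g_2o]_X$ forces the crossover to happen near the center of the triangle. Either way, the constants depend only on $\delta$, $\epsilon$, and the quasi-convexity of $\ax(h)$, so they are uniform in $q$.
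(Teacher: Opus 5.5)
Your proposal is correct and follows the paper's route. You apply Lemma \ref{KernelLargeIntersectionX} to $g_1^{-1}g_2$ and translate by $g_1$, so that the large intersection lies on the side $[g_1o,g_2o]_X$ near $g_1Y_0\in\mathcal A$; you then transfer it to $[o,g_1o]_X$ or $[o,g_2o]_X$ using thinness of the triangle with vertices $o,g_1o,g_2o$, which is the only justification the paper gives. Your single-switch (tripod) bookkeeping and the quasi-convexity of $Y$ supply the omitted details, and they give $L_h(q)/2$ minus a constant, which is stronger than the stated bound $L_h(q)/3-C$.
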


Choose a set \(F\subset G\) of three pairwise independent loxodromic WPD elements such that \(h\)
is independent from every element of \(F\). This is possible because the action of $G$ on $X$ is non-elementary. Apply the construction of Lemma \ref{LargeFreeSemigroupGrowth} using this set $F$. Since the radius $\rho:=\epsilon_h+C$ is now fixed, we may take the constant $L_0$ in Lemma \ref{SublExtensionViaLoxo} sufficiently large for the bounded-intersection argument below. This does not affect the growth estimate, since the word length of each inserted element remains bounded by a constant multiple of $\kappa(m)$.

The resulting free semigroups have uniformly bounded intersection with each axis in $\mathcal A$.
\begin{lem}
\label{FreeSemigroupBoundedHIntersection}
Let $H_m=\langle A_m h_m\rangle^+$
be constructed by Lemma~\ref{LargeFreeSemigroupGrowth}. Then
there exists \(D_m>0\) such that
\[
        \sup_{g\in H_m}
        \sup_{Y\in\mathcal A}
        \mathrm{diam}_X\bigl([o,go]_X\cap \mathcal N_\rho(Y)\bigr)
        \le D_m.
\] 
\end{lem}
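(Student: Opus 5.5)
The plan is to exploit that every $g\in H_m$ is represented by a word $W=a_1h_m a_2h_m\cdots a_\ell h_m$ over the alphabet $A_mh_m$, and that by Lemma \ref{SublExtensionViaLoxo} the corresponding path $\alpha_W$ in $X$ (the concatenation of translates of $[o,a_io]_X$ and $[o,h_mo]_X$) is a $c$-quasi-geodesic with $c$ independent of $\ell$. By the Morse lemma in the hyperbolic space $X$, $[o,go]_X$ and $\alpha_W$ are at Hausdorff distance at most some $r_0=r_0(c)$. It therefore suffices to bound, uniformly in $W$ and $Y\in\mathcal A$, the diameter of $\alpha_W\cap\mathcal N_{\rho+r_0}(Y)$. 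Since $m$ is fixed, the pieces $a_i[o,a_{i+1}o]$ and connectors have $X$-length at most $\lambda(m+\Delta+\kappa(m))=:\Lambda_m$. The task thus reduces to showing that a long subpath of $\alpha_W$ fellow-travelling $Y$ cannot cover more than a bounded number of consecutive pieces.

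The key step is the following. Suppose a subpath of $\alpha_W$ of diameter greater than $K\Lambda_m$, for a suitable $K$, lies in $\mathcal N_{\rho+r_0}(Y)$. Then it fully contains some connector $g_i[o,h_mo]_X$, where $g_i=a_1h_m\cdots a_i$. This connector lies in a bounded neighborhood of $g_i\ax(f_m)$ and has $X$-diameter $d_X(o,h_mo)>L_0\kappa(m)$. Hence $\mathcal N_{\rho'}(Y)\cap\mathcal N_{\rho'}(g_i\ax(f_m))$ has diameter at least $L_0\kappa(m)-O(1)$, where $\rho'$ depends only on $\rho,r_0$ and the quasi-axis constants, not on $L_0$. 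Since $h$ is independent of $f_m\in F$, the family $\{g\ax(h)\}\cup\{g\ax(f):f\in F\}$ has bounded intersection by Lemma \ref{lem:bddintersectionamples}(3). This gives a bound $\sigma(\rho')$ on the diameter above. Choosing $L_0$ so large that $L_0\kappa(m)-O(1)>\sigma(\rho')$ for all large $m$ yields a contradiction. This is exactly where the freedom to enlarge $L_0$, noted just before the lemma, is used. Consequently any subpath of $\alpha_W$ in $\mathcal N_{\rho+r_0}(Y)$ meets at most two consecutive pieces plus one connector, so its diameter is at most $3\Lambda_m$. Adding $2r_0$ for the Hausdorff comparison gives $D_m:=3\Lambda_m+2r_0+O(1)$.

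The main obstacle is the careful passage between the geodesic $[o,go]_X$ and $\alpha_W$. The fellow-travelling must be order-preserving, so that a long stretch of $[o,go]_X$ near $Y$ corresponds to a connected long stretch of $\alpha_W$ near $Y$ with neighborhood radius enlarged only by a constant depending on $c$. I would handle this with the standard fact that in hyperbolic spaces a $c$-quasi-geodesic and a geodesic with the same endpoints fellow travel in an order-preserving way up to $r_0(c)$. A second subtlety is that $\rho'$ must be independent of $L_0$, since $L_0$ is chosen after $\rho'$. This holds because $c$ in Lemma \ref{SublExtensionViaLoxo} is uniform once $L_0$ exceeds its threshold, and because enlarging $L_0$ only improves local quasi-geodesicity.
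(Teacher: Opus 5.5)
Your proposal is correct and follows essentially the same route as the paper. You pass to the uniform quasi-geodesic $\alpha_W$ by order-preserving fellow travelling, note that a long stretch near $Y$ would have to contain a full connector lying near a translate of $\ax(f_m)$, and rule this out by bounded intersection of $\{g\ax(h)\}\cup\{g\ax(f):f\in F\}$ once $L_0$ is large, which leaves at most one $A_m$-piece and two partial connectors.

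The one step you leave implicit is the paper's constant $T$. To pass from two points of $[o,go]_X\cap\mathcal N_\rho(Y)$ to a whole segment, and hence a connected subpath of $\alpha_W$, near $Y$, you need quasi-convexity of $Y$ in the hyperbolic space $X$. This enlarges the radius only by a constant independent of $L_0$, so your argument goes through unchanged.
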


\begin{proof}
Fix \(m\). By Lemma~\ref{SublExtensionViaLoxo}, every word in $H_m$ over the alphabet
\(A_m h_m\) labels a uniform quasi-geodesic in \(X\). Denote the corresponding
path from \(o\) to \(go\) by \(\alpha_g\). Its pieces labelled by elements of
\(A_m\) have \(S\)-length at most \(m+\Delta\), and hence their \(X\)-length is
bounded above by a constant $M_m$. Each connector piece labelled by $h_m$ lies
in a uniform neighborhood of some translate of $\ax(f_m)$, where \(f_m\in F\).

Let $R$ be a uniform fellow-travelling constant between $\alpha_g$ and
$[o,go]_X$, and let $Q$ be such that every connector lies in the
$Q$-neighborhood of its corresponding translate of $\ax(f_m)$. Fix a constant
$T$ so that any subsegment of a geodesic with endpoints in
$\mathcal N_\rho(Y)$ lies in $\mathcal N_T(Y)$. Since $F\cup\{h\}$ is a finite
set of independent loxodromic WPD elements, Lemma
\ref{lem:bddintersectionamples} gives a constant $B>0$ such that
\[
\mathrm{diam}_X\bigl(\mathcal N_Q(U)\cap\mathcal N_{T+R}(Y)\bigr)\le B
\]
for every translate $U$ of $\ax(f_m)$ and every $Y\in\mathcal A$. By our
choice of $L_0$, every connector has $X$-diameter greater than $B$.

Suppose that $x,y\in[o,go]_X\cap\mathcal N_\rho(Y)$. By the choice of $T$, the
subsegment $[x,y]_X$ lies in $\mathcal N_T(Y)$. The order-preserving fellow-travelling between
$[o,go]_X$ and $\alpha_g$ therefore gives a subpath of $\alpha_g$, with endpoints
uniformly close to $x,y$, which lies in $\mathcal N_{T+R}(Y)$.
This subpath cannot contain a complete connector, for otherwise that connector
would have diameter at most $B$. Consequently, it consists of at most two
partial connectors and one piece labelled by an element of $A_m$. Its diameter
is therefore bounded in terms of $m$. After also accounting for the
fellow-travelling error, we obtain a constant $D_m$ such that
\[
        \mathrm{diam}_X\bigl([o,go]_X\cap \mathcal N_\rho(Y)\bigr)
        \le D_m
\]
for all \(g\in H_m\) and all \(Y\in\mathcal A\).
\end{proof}

Taking a sufficiently high power relative to $m$, the free semigroups inject into the appropriate quotients.
\begin{lem}
\label{EmbedFreeSemigroupQuotient}
For every \(m\) sufficiently large, there exists \(q(m)\ge 1\) such that,
for every \(q\ge q(m)\), the quotient map
\[
        \pi_q:G\to G/\langle\!\langle h^{kq}\rangle\!\rangle
\]
is injective on \(H_m\).
\end{lem}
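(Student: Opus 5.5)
The plan is to argue by contradiction, combining Corollary \ref{PairLargeIntersectionX} with Lemma \ref{FreeSemigroupBoundedHIntersection}. Fix $m$ large, so that $H_m=\langle A_m h_m\rangle^+$ is a free semigroup. Let $D_m$ be the uniform bound from Lemma \ref{FreeSemigroupBoundedHIntersection} on $\mathrm{diam}_X([o,go]_X\cap\mathcal N_\rho(Y))$ over $g\in H_m$ and $Y\in\mathcal A$, where $\rho=\epsilon_h+C$. Since $L_h(q)\to\infty$, choose $q(m)$ so that
\[
L_h(q)/3-C>D_m \qquad\text{for all } q\ge q(m).
\]

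Now suppose $\pi_q$ is not injective on $H_m$ for some $q\ge q(m)$. Then there exist $g_1\ne g_2$ in $H_m$ with $\pi_q(g_1)=\pi_q(g_2)$, so $g_1^{-1}g_2\in N_q\setminus\{1\}$. Corollary \ref{PairLargeIntersectionX} then gives some $i\in\{1,2\}$ and some $Y\in\mathcal A$ with
\[
\mathrm{diam}_X\bigl([o,g_io]_X\cap\mathcal N_{\rho}(Y)\bigr)\ge L_h(q)/3-C>D_m .
\]
Because $g_i\in H_m$, this contradicts Lemma \ref{FreeSemigroupBoundedHIntersection}. Hence $\pi_q$ is injective on $H_m$ for every $q\ge q(m)$.

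The substantive step is therefore already contained in the two cited results, and what remains is bookkeeping. First, the radius $\rho$ in Corollary \ref{PairLargeIntersectionX} must be the same one used when choosing $L_0$ in Lemma \ref{FreeSemigroupBoundedHIntersection}; this is arranged by fixing $\rho$ before the construction, as the text does. Second, the statement must hold for each individual geodesic $[o,g_io]_X$ produced by the corollary. Lemma \ref{FreeSemigroupBoundedHIntersection} already takes a supremum over all $g\in H_m$, so it applies to $g_1$ and to $g_2$ separately, and no reduction to the element $g_1^{-1}g_2$ (which need not lie in $H_m$) is needed.

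The main point I would check is that Corollary \ref{PairLargeIntersectionX} really applies to arbitrary pairs $g_1,g_2$. Its derivation from Lemma \ref{KernelLargeIntersectionX} goes as follows: the geodesic $g_1[o,g_1^{-1}g_2o]_X$ has a long $\epsilon$-intersection with $g_1Y'$. The thin triangle with vertices $o$, $g_1o$, $g_2o$ then forces one of the other two sides to fellow travel at least a third of this intersection. I take this as given from the text.
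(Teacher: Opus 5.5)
Your proposal is correct and is essentially the paper's proof. You fix $D_m$ from Lemma \ref{FreeSemigroupBoundedHIntersection} with $\rho=\epsilon_h+C$, choose $q$ large enough that $L_h(q)/3-C>D_m$, and apply Corollary \ref{PairLargeIntersectionX} to two distinct elements of $H_m$ with the same image under $\pi_q$ to contradict that uniform bound.
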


\begin{proof}
Fix \(m\). Let \(D_m\) be given by
Lemma~\ref{FreeSemigroupBoundedHIntersection}, where
$\rho=\epsilon_h+C$ and \(C\) is the constant from
Corollary~\ref{PairLargeIntersectionX}.  
If  \(\pi_q\) is not injective on \(H_m\), then there exist distinct
\(u,v\in H_m\) such that $\pi_q(u)=\pi_q(v).$
Hence $v^{-1}u\in N_q\setminus\{1\}.$
Applying Corollary~\ref{PairLargeIntersectionX} with $g_1=v, g_2=u,$
we get, for some $Y\in\mathcal A$,
\[
        \max\{\mathrm{diam}_X([o,uo]_X\cap \mathcal N_{\epsilon_h+C}(Y)),\mathrm{diam}_X([o,vo]_X\cap \mathcal N_{\epsilon_h+C}(Y))\} \ge L_h(q)/3-C.
\]
If we choose
\(q\) so large that
\[
        L_h(q)/3-C>D_m,
\]
this contradicts Lemma \ref{FreeSemigroupBoundedHIntersection}.
Therefore \(\pi_q\) is injective on \(H_m\).
\end{proof}

We now complete the proof of Theorem \ref{NoGrowthGapAHQuotients}.

By Lemma~\ref{LargeFreeSemigroupGrowth},  $\e{H_{m_i},S}\to \e{G,S}.$
For each \(i\), choose \(q_i\ge i\) sufficiently large so that
Lemma~\ref{EmbedFreeSemigroupQuotient} applies to \(H_{m_i}\). Thus the
quotient map $\pi_i:G\to \bar G_i$
is injective on \(H_{m_i}\), and $q_i\to\infty$. Moreover, $N_{q_i}$ is infinite
because it contains the infinite cyclic subgroup $\langle h^{kq_i}\rangle$, so
$\bar G_i$ is a proper quotient. Since the quotient map does not increase word
length, injectivity on $H_{m_i}$ gives
\[
\e{H_{m_i},S}\le \e{\bar G_i,S_i}\le \e{G,S}.
\]
It follows that \(\e{\bar G_i,S_i}\to \e{G,S}\). The proof is complete.

\section{The growth-cogrowth inequality}
Suppose that a non-elementary group $G$ is either acylindrically hyperbolic (type I) or contains a Morse element and
satisfies the Morse local-global assumption (type II).  In both cases, $G$ contains a unique maximal finite normal subgroup denoted by $E(G)$  by Lemma \ref{lem:E(G)}. A subset $P \subset G$ is called \textit{non-degenerate} if it  is disjoint from $E(G)$. 

Let $H$ be a subgroup of $G$. Define 
$$\omega(G/H,\bar S):=\limsup_{n\to\infty} \frac{\log |\{Hg: d_S(H,Hg)\le n\}|}{n}$$
If $H$ is a normal subgroup, then $\omega(G/H,\bar S)$ is exactly the growth rate of the quotient $G/H$ relative to the image generating set $\bar S$.
The following is the main theorem of this section.
    \begin{thm}\label{thm:growthcogrowthformula}
Suppose that a non-elementary group $G$ is either acylindrically hyperbolic or contains a Morse element and
satisfies the Morse local-global assumption. For every confined subgroup
$H<G$ with a non-degenerate confining subset,
\[
        \omega(H,S)+\frac{\omega(G/H,\bar S)}{2}\ge \omega(G,S).
\]
    \end{thm}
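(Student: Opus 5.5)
Following the scheme of Jaerisch--Matsuzaki, Coulon and Choi--Gekhtman--Yang, the plan is to build an almost-injective map from pairs of elements of $G$ into $H$ that adds only a sublinear amount of length. The cosets $Hg$ of $\ell^1$-size at most $n$ number roughly $\mathrm{e}^{n\omega(G/H)}$. Elements of $G$ of length about $n$ will be written as $g=g_1g_2$ with $\|g_1\|_S,\|g_2\|_S\approx n/2$, or more generally with a coset representative read off from both ends.

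\textbf{Key map.} For $x,y\in S_n(\Delta)$ with $Hx=Hy$ (equivalently $xy^{-1}\in H$), we form the element
\[
\Psi(x,y)=x\,t\,p\,t'\,y^{-1},
\]
where $t,t'$ are sublinear connectors, namely powers of independent Morse elements of length $\asymp\kmorse(n)$, chosen through the admissibility Lemma~\ref{lem:admissible_tuple}. Here $p\in P$ is a confining element, arranged so that the whole product lies in $H$. To achieve this, we use confinedness: the conjugate $(xt)H(xt)^{-1}$ meets $P$. More precisely, $x t\, p\, t^{-1}x^{-1}$ is replaced by an element of the form $x t p t^{-1} x^{-1}$ lying in $H$, and we then multiply by $xy^{-1}\in H$. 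This produces $w=xtpt^{-1}y^{-1}\in H$ of length at most $2n+O(\kmorse(n))$.

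\textbf{Counting.} Pairs $(x,y)$ with $Hx=Hy$ number at least $|S_n|^2/\#\{\text{cosets met by }S_n\}$, which is $\gtrsim \mathrm{e}^{2n\omega(G)-n\omega(G/H)}$ by Cauchy--Schwarz together with the effective estimate of \ref{GrowthBallThm} (for type I) or the analogous Morse local-to-global bound from Lemma~\ref{lem:Morseadmissiblepath} (for type II). We will use the lower bound $|S_n|\ge \mathrm{e}^{n\omega}$ up to the sublinear factor. If $\Psi$ is at most $\mathrm{e}^{o(n)}$-to-one, then $|B_{2n+o(n)}\cap H|\ge \mathrm{e}^{2n\omega(G)-n\omega(G/H)-o(n)}$. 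Taking logarithms and dividing by $2n$ yields $\omega(H)\ge \omega(G)-\omega(G/H)/2$.

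\textbf{Main obstacle.} The hardest step is the finite-to-one (sublinearly-to-one) property of $\Psi$. It requires showing that from $w$ one can recover $x$ and $y$ up to sublinear ambiguity. For this we use the sublinear extension Lemma~\ref{ExtensionViaMorse2}: the path labeled $(x,t,p,t',y^{-1})$ sublinearly fellow travels $[1,w]_S$, and it passes within $O(\kmorse(n))$ of $x$ and of $xtpt'$. Hence $x$ lies in a ball of radius $O(\kmorse(n))$ around a point of $[1,w]_S$ at distance about $n$ from $1$, giving $\mathrm{e}^{O(\kmorse(n))}$ choices; the same holds for $y$.

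Non-degeneracy of $P$ enters here. We must ensure that the conjugated confining element $p$ does not lie in $E(f)$ for the Morse elements used as connectors, so that $(t,p,t')$ remains admissible and does not collapse the inserted axes. We will handle this by choosing the connectors among the three independent elements according to Lemma~\ref{BddprojViaMorse}, together with the fact that $p\notin E(G)=\cap_g gE(f)g^{-1}$. Consequently some conjugate of $p$ avoids the relevant $E(f)$, and we absorb the corresponding conjugator of bounded length.

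In type I we transfer fellow-travelling to the word metric using Lemma~\ref{LargeIntLem}. In type II we use Lemma~\ref{lem:Morseadmissiblepath} directly.
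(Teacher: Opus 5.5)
Your double-counting framework is sound. It is a pointwise-in-$n$ form of the paper's $\ell^2$ argument: $\|\phi_s\|^2$ is exactly the weighted count of pairs with $Hg_1=Hg_2$, and the paper's map $h\mapsto g\hat f_hp_h\hat f_h^{-1}g^{-1}h$ is your $\Psi$ with $x=g$, $y=h^{-1}g$. Also, $|S_n|\ge \mathrm{e}^{n\omega(G,S)}$ holds for every $n$ by subadditivity of $\log|S_n|$, so no effective growth estimate is needed.

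The genuine gap is the non-degeneracy step. To apply Lemma~\ref{ExtensionViaMorse2} to $(x,p,y^{-1})$ relative to $(E(f),E(f))$ you need $pE(f)\ne E(f)$, but $p$ is not under your control. First, $f$ is forced by the geodesics to $x^{-1}$ and $y^{-1}$, and the pigeonhole over $|F|=3$ may leave only one choice. Then $t\in E(f)$ is fixed. Finally $p$ is whatever element of $P$ lies in $(xt)^{-1}H(xt)$.

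Your repair does not work:
\begin{itemize}
\item Replacing the chosen $p$ by a conjugate $c^{-1}pc\notin E(f)$ destroys membership in $H$: from $xtpt^{-1}x^{-1}\in H$ nothing follows about $xtc^{-1}pct^{-1}x^{-1}$.
\item Inserting $c$ before invoking confinedness changes the conjugate of $H$, so the returned $p$ depends on $c$. You would then need one $c$ with $cpc^{-1}\notin E(f)$ for all $p\in P$ simultaneously, i.e.\ a conjugate of $E(f)$ disjoint from $P$, and this for three pairwise independent elements at once.
\end{itemize}
The identity $E(G)=\bigcap_g gE(f)g^{-1}$ only gives, for each $p$ separately, some conjugate avoiding that particular $p$. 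It does not give simultaneous avoidance.

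Supplying this is the real content of Lemmas~\ref{lem:indepsetF} and~\ref{lem:indepsetF2}. Starting from families $F_p$, each avoiding a single $p$, the paper inducts on $|P|$ and replaces $h\in F_Q$, $k\in F_p$ by $h^mk^m$. It proves that for large $m$ these elements:
\begin{itemize}
\item are Morse;
\item avoid every element of $Q\cup\{p\}$, since an element outside $E(h)$, or outside $E(k)$, cannot lie in $E(h^mk^m)$, by a uniform Morse quasi-geodesic loop argument;
\item are independent of all previously chosen elements.
\end{itemize}
In type II this is precisely where the Morse local-to-global assumption enters, via Lemma~\ref{lem:Morseadmissiblepath}, together with Lemma~\ref{lem:Morse-non-elementary} for the supply of independent Morse elements. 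In type I the same is done by WPD ping-pong.

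Your last paragraph misplaces these hypotheses. Lemma~\ref{ExtensionViaMorse2} works entirely in $\mathrm{Cay}(G,S)$ for any independent Morse elements, and needs neither Lemma~\ref{LargeIntLem} nor Lemma~\ref{lem:Morseadmissiblepath}.

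Once an $F$ as in Lemma~\ref{ExtensionLemConfined} is available, with $D\ge\max_{p\in P}\|p\|_S$ and a common $f$ for both connectors, your sublinear-to-one count goes through and the argument closes. Note that Lemma~\ref{lem:admissible_tuple} does not guarantee a common $f$, but your choice $t'=t^{-1}$ requires one.
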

\begin{rem}\label{rem:AHMorse}
The classes of groups of type I and type II may overlap, but neither is
contained in the other. We illustrate these two non-inclusions with the
following examples.

First, \cite[Theorem 4.26(2)]{OOS} constructs groups containing Morse
elements that do not satisfy the Morse local-global assumption. More
precisely, every torsion-free infinite hyperbolic group admits a quotient
in which every proper subgroup is generated by a Morse element. Let $G$
be such a quotient. Then the free product $G\star G$ is hyperbolic
relative to its factors and is therefore acylindrically hyperbolic.
However, $G\star G$ does not satisfy the Morse local-global assumption.

Conversely, it was recently shown in \cite{AZ26} that there exists a
finitely generated, non-virtually cyclic Morse local-to-global group
containing an infinite-order Morse element that is not acylindrically
hyperbolic.
\end{rem}

\subsection{Extension Lemma adapted to confined subgroups}
The main tool is a sublinear extension lemma adapted to confined subgroups, formulated as a variant of \cite[Lemma 5.7]{CGYZ} for constructing admissible sequences in the present setting. The main novelty of this subsection is to establish such a result for groups satisfying the Morse local-global assumption. 
    
    \begin{lem}\label{lem:indepsetF}
    Assume that $G$ contains a Morse element and
satisfies the Morse local-global assumption. Let $P$ be a finite non-degenerate set in $G$. Then there exists an infinite set $F$ of pairwise independent Morse elements in $G$ such that for every $p\in P$ and $f\in F$, $pE(f)\ne E(f)$.
    \end{lem}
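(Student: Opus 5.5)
Fix a Morse element $h_0\in G$ and a Morse element $k\in G$ with $E(k)\ne E(h_0)$; such $k$ exists by Lemma \ref{lem:Morse-non-elementary}(1). We will look for $F$ among conjugates of elements of the form $f_n=h_0^{n}k^{n}$ with $n>N$, where $N$ is supplied by Lemma \ref{lem:Morseadmissiblepath}. That lemma says every power $f_n^j$ labels a uniformly Morse quasi-geodesic, so each $f_n$ is a Morse element. To get pairwise independence, we take a sequence $n_1<n_2<\cdots$ growing fast. If $f_{n_i}$ and $f_{n_j}$ had conjugate powers, the axis of one would fellow travel a translate of the axis of the other along unbounded length. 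Both axes are concatenations of $h_0$- and $k$-blocks, and by bounded intersection of $\{gE(h_0),gE(k)\}$ (Definition \ref{BddProj}) long fellow travelling forces the blocks to match. This would force $n_i=n_j$. This is the standard argument, and we only sketch it.

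The real content is the condition $pE(f)\ne E(f)$ for $p\in P$. Note $pE(f)=E(f)$ simply means $p\in E(f)$. Since $P$ is finite and avoids $E(G)=\bigcap_g gE(f)g^{-1}$ (Lemma \ref{lem:E(G)}), each $p\in P$ lies outside some conjugate $g_pE(f)g_p^{-1}$. The difficulty is that we need a single $f$ that works for all $p$ simultaneously, and infinitely many such $f$.

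Our plan is to use the characterization
$$E(f)=\{g:\ gf^ng^{-1}=f^{\pm n}\ \text{for some } n\}$$
together with a conjugation trick. Since $E(g^{-1}fg)=g^{-1}E(f)g$, we have $p\in E(g^{-1}fg)$ iff $gpg^{-1}\in E(f)$. Because $E(f)$ is weakly malnormal and virtually cyclic, for each fixed $p$ the set of cosets $gE(f)$ with $gpg^{-1}\in E(f)$, i.e.\ with $p$ stabilizing the coset $g^{-1}E(f)$, is controlled. A nontrivial finite-order or Morse $p\notin E(G)$ stabilizes only those translates $g^{-1}E(f)$ whose axis it coarsely preserves.

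We will show that the set of $g$ for which some $p\in P$ fixes $g^{-1}E(f)$ cannot be all of $G$. We argue by contradiction. If every translate were stabilized by some element of $P$, then, since $P$ is finite, some $p\in P$ would stabilize translates $g^{-1}E(f)$ for $g$ ranging over a set that is large in a coarse sense. Concretely, choose $g=k^{m}$ with $m$ large, so that the translates $k^{-m}E(f)$ are pairwise distinct. Pigeonholing, one $p$ lies in $k^{-m}E(f)k^{m}$ for infinitely many $m$. Then $p$ coarsely fixes the axes $k^{-m}\ax(f)$, which pass near $k^{-m}$ and so accumulate along the $k$-axis. Using the Morse/bounded-intersection geometry, this gives $p\in E(k)$. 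Running the same argument with $k$ replaced by $h_0$ gives $p\in E(h_0)$. By weak malnormality $E(h_0)\cap E(k)$ is finite, so $p$ lies in the finite subgroup $E(h_0)\cap E(k)$.

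Repeating this with arbitrary conjugates $gkg^{-1}$ in place of $k$ puts $p$ in $\bigcap_g gE(k)g^{-1}=E(G)$, contradicting non-degeneracy. Hence some conjugate $g^{-1}f_ng$ satisfies $p\notin E(g^{-1}f_ng)$ for all $p\in P$. Doing this for each $n_i$ produces $F$, and pairwise independence is preserved because independence is a conjugacy-invariant notion in Definition \ref{defn:indepMorse}.

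The main obstacle is the step "coarsely stabilizing infinitely many translates along the $k$-axis forces $p\in E(k)$." We would first try it with bounded intersection: $d_S(pk^{-m},k^{-m})$ stays bounded along a subsequence because $p$ moves points of $k^{-m}E(f)$ a bounded amount near their projection. This gives $p$ coarsely fixing $\langle k\rangle$, and hence $p\in E(k)$.
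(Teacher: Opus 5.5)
Your proposal has a genuine gap, located exactly at the step you identify as the real content: avoiding all of $P$ at once.

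Assume, for contradiction, that every conjugate $g^{-1}E(f)g$ meets $P$. Pigeonholing along $g=k^m$ produces some $p\in P$ with $p\in E(k)$. Pigeonholing along $g=h_0^m$ produces some $p'\in P$ with $p'\in E(h_0)$. Nothing forces $p'=p$, so you cannot place a single element in $E(h_0)\cap E(k)$.

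The same defect breaks the final step. Running the argument with $gkg^{-1}$ in place of $k$ gives, for each $g$, an element $p_g\in P\cap gE(k)g^{-1}$ that depends on $g$. It does not give a single element of $\bigcap_g gE(k)g^{-1}=E(G)$. Pigeonholing over infinitely many conjugates cannot rescue this, since a fixed $p\notin E(G)$ may lie in infinitely many conjugates of $E(k)$: for instance, $p\in cE(k)c^{-1}$ for every $c$ commuting with $p\in E(k)$.

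What you actually derive is only that every conjugate of $E(k)$ meets $P$. That is the statement you set out to refute, with $E(f)$ replaced by $E(k)$, so the argument is circular.

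The missing ingredient is a mechanism that combines avoidance of different elements of $P$. The paper gets it by induction on $|P|$:
\begin{itemize}
\item For a single $p$, Lemma \ref{lem:E(G)} directly gives conjugates of Morse elements whose $E(\cdot)$ avoids $p$.
\item Claim \ref{clm:MorseElements} shows the following. If $h,k$ are independent, $Q\cap E(h)=\emptyset$ and $p\notin E(k)$, then $h^mk^m$ is Morse and $E(h^mk^m)$ misses $Q\cup\{p\}$ for all $m\gg1$. Otherwise, coset coincidences $pE(h)=(h^mk^m)^{n(m)}E(h)$ for two different values of $m$ would yield a loop labelled by a long alternating word, contradicting Lemma \ref{lem:Morseadmissiblepath}.
\end{itemize}

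Your intermediate statement is: if $p\notin E(k)$, then $p\notin k^{-m}E(f)k^m$ for $m\gg1$. This is the conjugation analogue of that claim. It would finish the proof at once if you already had a Morse $k$ with $E(k)\cap P=\emptyset$, but producing such a $k$ is the original problem.

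Your justification of that intermediate step also needs repair. Projection to a Morse coset at distance about $m$ is only sublinearly controlled. So the argument gives only that $d_S(pk^{-m},k^{-m})=\|k^mpk^{-m}\|_S$ is bounded by a sublinear function of $m$, not by a constant. To conclude $p\in E(k)$, you must compare this with the linear lower bound on $\|k^mpk^{-m}\|_S$ that holds when $p\notin E(k)$.
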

    \begin{proof}
    For every Morse element $h$, Lemma \ref{lem:E(G)} gives
    $E(G)=\bigcap_{g\in G}gE(h)g^{-1}$. Hence, for every $p\notin E(G)$ and every Morse element $h$, there exists $g\in G$ such that $p\notin gE(h)g^{-1}$. Starting with an infinite set of pairwise independent Morse elements given by Lemma \ref{lem:Morse-non-elementary} and conjugating its elements separately, we obtain an infinite set $F_p$ of pairwise independent Morse elements satisfying $pE(f)\ne E(f)$ for every $f\in F_p$. We shall combine these sets using the following two facts.

    Fix two independent Morse elements $h,k\in G$.

   \begin{claim}\label{clm:MorseElements}
    For all $m\gg 1$, the element $f_m:=h^mk^m$ is Morse. Moreover, if $p\notin E(h)$ and $q\notin E(k)$, then $p,q\notin E(f_m)$ for all sufficiently large $m$.
   \end{claim}
   \begin{proof}[Proof of the claim]
   The path
   \[
   \gamma_m=\bigcup_{n\in \mathbb Z}(f_m)^n[1,h^m]h^m[1,k^m]
   \]
   is a uniform Morse quasi-geodesic by Lemma \ref{lem:Morseadmissiblepath} when $m\gg1$. Suppose, to the contrary, that $p\in E(f_m)$ for infinitely many $m$. Then $p\gamma_m$ and $\gamma_m$ are uniform Morse quasi-geodesics with the same endpoints and hence have uniformly bounded Hausdorff distance. Since the system of translates of $E(h)$ and $E(k)$ has bounded intersection, the $pE(h)$-piece of $p\gamma_m$ must coincide, for all sufficiently large such $m$, with an $E(h)$-piece of $\gamma_m$. Thus there is an integer $n=n(m)$ such that
   \[
   pE(h)=(f_m)^nE(h).
   \]
   The integer $n$ is nonzero, since $p\notin E(h)$. Choose two such integers $m_1>m_2$ with $m_1-m_2\gg1$ and with $n(m_1),n(m_2)$ of the same sign. We obtain
   \[
   (f_{m_1})^{n(m_1)}E(h)=(f_{m_2})^{n(m_2)}E(h).
   \]
   Consequently, for some $h'\in E(h)$, the word
   \[
   W=(f_{m_1})^{-n(m_1)}(f_{m_2})^{n(m_2)}h'
   \]
   represents the identity. After combining adjacent syllables belonging to the same elementary subgroup, $W$ is an alternating word in $E(h)$ and $E(k)$ whose nontrivial interior syllables have lengths tending to infinity with $\min\{m_2,m_1-m_2\}$. The proof of Lemma \ref{lem:Morseadmissiblepath}, which is unchanged for signed powers, shows that $W$ labels a uniform Morse quasi-geodesic when $m_2,m_1-m_2\gg1$. This contradicts that $W$ is a loop. Hence $p\notin E(f_m)$ for all sufficiently large $m$. The argument for $q$ is symmetric. The same argument applies simultaneously to finite sets of elements outside $E(h)$ and $E(k)$.
   \end{proof}

   \begin{claim}\label{clm:indepset}
    Let $K\subseteq G$ be any finite set of pairwise independent Morse elements. Then $f_m=h^mk^m$ is independent from every element of $K$ for all sufficiently large $m$.
   \end{claim}
    \begin{proof}[Proof of the claim]
    Otherwise, after passing to a subsequence, there is a fixed $f\in K$ such that $f_m$ is not independent from $f$. Thus a translate of $\ax(f_m)$ has finite Hausdorff distance from $\ax(f)$. Since the axes $\ax(f_m)$ are uniformly Morse, this Hausdorff distance is bounded independently of $m$. Each $\ax(f_m)$ contains translated subpaths of $\ax(h)$ and $\ax(k)$ whose lengths tend to infinity with $m$. The bounded-intersection characterization of independence therefore implies that $f$ is not independent from either $h$ or $k$. It follows that $h$ and $k$ have conjugate nonzero powers, contradicting their independence.
    \end{proof}

   The conclusion now follows by induction on $|P|$, the case $|P|=1$ being the construction of $F_p$ above. Suppose that an infinite independent set $F_Q$ has been constructed for a nonempty subset $Q\subset P$, and let $p\in P\setminus Q$. Choose recursively $h_j\in F_Q$ and $k_j\in F_p$ so that $h_j$ and $k_j$ are independent and their commensurability classes have not been used previously. By Claim \ref{clm:MorseElements}, for a sufficiently large $m_j$ the element $h_j^{m_j}k_j^{m_j}$ avoids every element of $Q\cup\{p\}$. By Claim \ref{clm:indepset}, we may also choose $m_j$ so that this element is independent from all the elements constructed at earlier stages. This produces the required infinite set for $Q\cup\{p\}$ and completes the induction.
    \end{proof}
    \begin{lem}\label{lem:indepsetF2}
    Assume that $G$ is acylindrically hyperbolic. Let $P$ be a finite non-degenerate set in $G$. Then there exists an infinite set $F$ of pairwise independent Morse elements in $G$ such that for every $p\in P$ and $f\in F$, $pE(f)\ne E(f)$.
    \end{lem}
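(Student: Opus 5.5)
We will follow the strategy of Lemma \ref{lem:indepsetF}, replacing the Morse local-global input by the hyperbolic geometry of an acylindrical action. Fix a non-elementary acylindrical action of $G$ on a hyperbolic space $X$. Every loxodromic element is WPD. Loxodromic WPD elements are Morse in $G$ by Corollary \ref{MorseInAcylin}, since loxodromics are strongly contracting, or directly by Sisto \cite{Sisto16}. For a loxodromic $f$, the group $E(f)$ coincides with the stabilizer of the pair of fixed points $\{f^{\pm}\}\subset\partial X$. Two loxodromics are independent precisely when their fixed-point pairs are disjoint; this uses Lemma \ref{lem:bddintersectionamples} and acylindricity. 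Because the action is non-elementary, $G$ contains infinitely many pairwise independent loxodromics.

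\textbf{Step 1 (one element).} Fix $p\in P$. Lemma \ref{lem:E(G)} gives $E(G)=\bigcap_g gE(h)g^{-1}$ for any loxodromic $h$. Since $p\notin E(G)$, there is a conjugate $h_p=g h g^{-1}$ with $p\notin E(h_p)$. Equivalently, $p$ does not preserve $\{h_p^{\pm}\}$, which is exactly $pE(h_p)\ne E(h_p)$. Conjugating the members of an infinite independent family separately, we obtain for each $p$ an infinite independent family $F_p$ avoiding $p$.

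\textbf{Step 2 (combining).} We need analogues of Claims \ref{clm:MorseElements} and \ref{clm:indepset}. Take independent loxodromics $h,k$ with $p\notin E(h)$ and $q\notin E(k)$ for finite sets of such $p,q$. Then $f_m=h^mk^m$ is loxodromic for $m\gg1$ by the standard ping-pong and local-to-global quasi-geodesic argument in $X$. Moreover, $f_m^{+}\to h^{+}$ and $f_m^{-}\to k^{-}$ in $\partial X$.

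To get $p\notin E(f_m)$: if $p$ stabilized $\{f_m^\pm\}$ for infinitely many $m$, then in the limit $p$ would stabilize $\{h^+,k^-\}$ or swap these points. The limiting step will be done concretely in the spirit of the proof of Claim \ref{clm:MorseElements}. The uniform quasi-axes of $f_m$ contain long translates of $\ax(h)$ pieces, and $p$ maps them boundedly close to such pieces. By bounded intersection (Lemma \ref{lem:bddintersectionamples}) this forces $pE(h)=f_m^{n}E(h)$. An alternating word of the form $f_{m_1}^{-n_1}f_{m_2}^{n_2}h'$ representing $1$ then contradicts the fact that such words label quasi-geodesics in the hyperbolic space $X$. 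This is the step needing the most care, because the word may have short syllables. We plan to handle it, as in the claim, by choosing $m_1-m_2$ large and merging syllables.

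Independence of $f_m$ from a fixed finite set $K$ of loxodromics follows similarly. Non-independence would give $\{f_m^\pm\}=g\{f^\pm\}$; the long $h$- and $k$-subsegments of the uniform quasi-axis of $f_m$ would then lie near $g\ax(f)$. This contradicts the bounded intersection between $\ax(h)$, $\ax(k)$ and the translates of $\ax(f)$, provided $f$ is chosen outside the commensurability classes of $h,k$. We arrange this in the recursive choice.

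\textbf{Step 3 (induction).} The final induction on $|P|$ is verbatim that of Lemma \ref{lem:indepsetF}. Given an infinite independent family $F_Q$ for $Q\subset P$ and a new $p$, we choose fresh independent pairs $h_j\in F_Q$ and $k_j\in F_p$. We then take $m_j$ large so that $h_j^{m_j}k_j^{m_j}$ avoids $Q\cup\{p\}$ and is independent of all previously built elements. The resulting elements are loxodromic WPD, hence Morse, so the family is as required. The main obstacle is Step 2's avoidance claim; the rest is routine.
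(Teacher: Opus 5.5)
Your proposal is correct and follows the paper's own proof. Both fix a non-elementary acylindrical action, use the conjugation trick from Lemma \ref{lem:E(G)} to get $F_p$, re-prove the two claims about $h^mk^m$ by ping-pong and local-to-global in $X$ together with the geometric separation of Lemma \ref{lem:bddintersectionamples}, invoke Sisto for the Morse property, and repeat the induction on $|P|$.

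One small correction concerns the definition of independence. In the paper's sense (Definition \ref{defn:indepMorse}), independence means the fixed-point pairs lie in different $G$-orbits (no conjugate powers), not merely that they are disjoint; for example, $h$ and $ghg^{-1}$ with $g\notin E(h)$ have disjoint fixed-point pairs but are not independent. So the existence of infinitely many pairwise independent loxodromics is a standard input (e.g.\ from \cite{DGO}) rather than an immediate consequence of non-elementarity. Your Step 2 already uses the correct translated notion $\{f_m^\pm\}=g\{f^\pm\}$.
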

   \begin{proof}
Fix a non-elementary acylindrical action of $G$ on a hyperbolic space $X$.
The preceding proof applies with Morse elements replaced by loxodromic WPD
elements for this action. Indeed, the standard WPD ping-pong argument for two
independent loxodromic elements, as in \cite[Section 6]{DGO}, shows that
$h^m k^m$ is loxodromic WPD for all sufficiently large $m$; its axis is the
alternating union, up to a uniform error, of long translated subpaths of axes
of $h$ and $k$. The geometrical separation of WPD axes, given by
Lemma \ref{lem:bddintersectionamples}, proves the two claims above without
using the Morse local-global assumption. Finally, loxodromic WPD elements are
Morse in the word metric by \cite{Sisto16}. The same induction therefore gives
the required set $F$.
\end{proof}
    
   The following output is the main result of this subsection.
    \begin{lem}\label{ExtensionLemConfined}
	Suppose that a non-elementary group $G$ is either type I or type II. Let $P$ be a finite non-degenerate subset of $G$. Then there exist a set $F$ of three independent Morse elements and a constant $D>0$ with the following property.

For any $g,h\in G$ and any geodesics $\alpha_1=[1,g]_S$ and $\alpha_3=[1,g^{-1}h]_S$, there exists $f\in F$ such that, for every $p\in P$ and every geodesic $\alpha_2=[1,p]_S$, the three geodesics satisfy the conditions of Definition \ref{defn:admissibletuples} relative to $(E(f),E(f))$. In particular, $(g,p,g^{-1}h)$ is $D$-admissible relative to the system associated with $F$.
	\end{lem}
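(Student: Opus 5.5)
The plan is to fix $F$ once and for all using Lemmas \ref{lem:indepsetF} and \ref{lem:indepsetF2}, and then make a pigeonhole choice of $f\in F$ for each pair $(g,h)$. The condition on $\alpha_2$ will be arranged by taking $D$ large, so only $\alpha_1$ and $\alpha_3$ constrain the choice of $f$.

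\emph{Choice of $F$.} In type II apply Lemma \ref{lem:indepsetF}, and in type I apply Lemma \ref{lem:indepsetF2}, to the finite non-degenerate set $P$. This produces an infinite set of pairwise independent Morse elements $f$ with $pE(f)\ne E(f)$ for all $p\in P$. In type I, loxodromic WPD elements are Morse in the word metric by \cite{Sisto16}. Keep any three of them and call this set $F$. Then Convention \ref{kmorseConv} and Lemma \ref{BddprojViaMorse} apply to $F$ and give constants $C$ and $D_0=D(\f)$. Set
\[
D=\max\{D_0,\ \max_{p\in P}\|p\|_S\}.
\]

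\emph{Unwinding Definition \ref{defn:admissibletuples}.} For the triple $(g,p,g^{-1}h)$ relative to $(E(f),E(f))$, we must check the following:
\begin{enumerate}
\item[(a)] $\mathrm{diam}_S(\alpha_1\cap\mathcal N_C(gE(f)))\le D$;
\item[(b)] $\mathrm{diam}_S(\alpha_2\cap\mathcal N_C(E(f)))\le D$ and $\mathrm{diam}_S(\alpha_2\cap\mathcal N_C(pE(f)))\le D$;
\item[(c)] $\mathrm{diam}_S(\alpha_3\cap\mathcal N_C(E(f)))\le D$;
\item[(d)] $E(f)\ne pE(f)$.
\end{enumerate}
Condition (d) holds for every $f\in F$ by the choice of $F$. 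Condition (b) holds for every $f$, every $p\in P$ and every geodesic $\alpha_2$, since $\mathrm{diam}_S(\alpha_2)=\|p\|_S\le D$.

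\emph{Choosing $f$.} It remains to satisfy (a) and (c) simultaneously, and this is where the count of three matters. The proof of Lemma \ref{BddprojViaMorse} shows more than its statement: by bounded intersection of $\f$, if a geodesic meets $\mathcal N_C(U)$ in diameter $>D_0$ for some $U\in\f$, then it meets $\mathcal N_C(V)$ in diameter $\le D_0$ for every $V\ne U$. So for each fixed geodesic, at most one coset of $\f$ receives a large intersection.

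Apply this to the translated geodesic $g^{-1}\alpha_1=[g^{-1},1]_S$, with the cosets $E(f)$ for $f\in F$. These cosets are pairwise distinct by independence. Hence at most one $f\in F$ violates (a). Likewise, at most one $f\in F$ violates (c) for $\alpha_3$. Since $|F|=3$, some $f\in F$ satisfies (a) and (c).

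Crucially, this $f$ is chosen depending only on $\alpha_1$ and $\alpha_3$. Conditions (b) and (d) hold uniformly in $p\in P$ and in $\alpha_2$, so the same $f$ works for every $p$ and every $\alpha_2$, as the statement requires. The ``in particular'' clause is then immediate from Definition \ref{defn:admissibletuples}.

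The main obstacle is only the bookkeeping in the last step: one must extract the ``at most one bad element'' form from the proof of Lemma \ref{BddprojViaMorse} rather than its statement, and check that $gE(f)$ for distinct $f$ are distinct members of $\f$. The remaining difficulty, producing $F$ avoiding $P$, is already handled by the preceding lemmas.
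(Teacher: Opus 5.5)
Your proof is correct and is essentially the paper's argument: the same choice of $F$ from Lemmas \ref{lem:indepsetF} and \ref{lem:indepsetF2}, and the same $D=\max\{D_0,\max_{p\in P}\|p\|_S\}$, which makes the $\alpha_2$-conditions and $E(f)\ne pE(f)$ hold uniformly; the choice of $f$ for $\alpha_1$ and $\alpha_3$ is the same pigeonhole among the three elements of $F$. The only cosmetic difference is that you need not reopen the proof of Lemma \ref{BddprojViaMorse}: its statement already gives two good elements out of three, i.e.\ at most one bad one, and this is exactly how the paper uses it.
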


    \begin{proof}
	Let $F$ be a set of three independent Morse elements provided by Lemma \ref{lem:indepsetF} or Lemma \ref{lem:indepsetF2}, so that $pE(f)\ne E(f)$ for all $p\in P$ and $f\in F$. Let $D_0$ be given by Lemma \ref{BddprojViaMorse}, and choose
    \[
    D\ge \max\bigl\{D_0,\|p\|_S:p\in P\bigr\}.
    \]
    Apply Lemma \ref{BddprojViaMorse} to the translate of $\alpha_1$ from $1$ to $g^{-1}$ and to $\alpha_3$. For each of these two geodesics at least two elements of $F$ satisfy the required bound. Since $|F|=3$, the two choices have a common element $f$, and
    \[
    \mathrm{diam}_S\bigl(\alpha_1\cap \mathcal N_C(gE(f))\bigr),\quad
    \mathrm{diam}_S\bigl(\alpha_3\cap \mathcal N_C(E(f))\bigr)\le D.
    \]
    For every $p\in P$, both intersections of $\alpha_2$ with $\mathcal N_C(E(f))$ and with $\mathcal N_C(pE(f))$ have diameter at most $\ell(\alpha_2)=\|p\|_S\le D$. Finally, $E(f)\ne pE(f)$, so all the conditions of Definition \ref{defn:admissibletuples} hold.
	\end{proof}

\subsection{The cones and balls are comparable}
This subsection provides the main geometric ingredient in the proof. Although it may not be immediately apparent from its technical formulation, the intuition and motivation behind the key result come from Roblin's Shadow Principle for Patterson--Sullivan measures. In this sense, Lemma \ref{lem:shadowBaby} may be viewed as a counting-theoretic reverse engineering of the Shadow Principle. Its proof adapts the strategy of \cite{CGYZ}, using the sublinear Extension Lemma.

The \textit{critical exponent} of a subset $A \subseteq G$, denoted by $\e A$, is intimately related to the (partial) Poincar\'e series 
\begin{equation}\label{PoincareEQ}
s\ge 0, \quad \p_A(s) = \sum_{g\in A} \mathrm{e}^{-sd_S(1,g)}  
\end{equation}
as $\p_{A}(s)$ diverges for $s<\e A$ and converges for $s>\e A$.

Given $x\in G$ and $r\ge 0$, we define the cone by
$$\begin{aligned}
\Omega(x,r) &:= \{y\in G: [1,y]_S\cap B(x,r)\ne\emptyset
\text{ for every geodesic }[1,y]_S\}.
\end{aligned}
$$
where $B(x,r) := \{y\in G: d_S(x,y)\le r\}.$





\begin{lem}\label{lem:shadowBaby}
There exists a sublinear function $\kappa$ with the following property.
Fix $g \in G$ and any integer $n\ge 0$. Then 
\[
\bigl|H \cap S_n(g)\bigr| \le {3}\, \bigl |H \cap S_n(g, R_{g,n}) \cap \Omega(g,r_g)\bigr|
\]
where $R_{g,n}=\kappa(n+\|g\|_S)$ and $r_g=\kappa(\|g\|_S)$.
\end{lem}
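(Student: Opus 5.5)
We will construct a map $\Phi: H\cap S_n(g)\to H\cap S_n(g,R_{g,n})\cap\Omega(g,r_g)$ that is at most $3$-to-$1$, following the strategy of \cite[Lemma 5.8]{CGYZ}. Let $P$ be the finite non-degenerate confining subset of $H$, and let $F$ and $D$ be given by Lemma \ref{ExtensionLemConfined}. Replacing $P$ by a conjugate, we may assume that every conjugate $g^{-1}Hg$ meets $P$ after a fixed translation; concretely, for each $g$ choose $p=p_g\in P$ with $gpg^{-1}\in H$, which is possible since $g^{-1}Hg\cap P\ne\emptyset$.

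For $h\in H\cap S_n(g)$, we apply Lemma \ref{ExtensionLemConfined} to the pair $(g,h)$. This gives $f\in F$ such that $(g,p,g^{-1}h)$ is $D$-admissible relative to $(E(f),E(f))$. Pick $h_1,h_2\in E(f)$ whose word lengths are comparable to $10\kmorse(\max\{\|g\|_S,n\})$, so that (\ref{eq:longMorseInsertion2}) holds. The words in $(g,h_1,p,h_2,g^{-1}h)$ do not lie in $H$, however. The remedy is to conjugate the inserted piece: we use the element
\[
\Phi(h):= g\,(h_1 p h_1^{-1})\,g^{-1}h .
\]
This lies in $H$ as soon as $g h_1 p h_1^{-1} g^{-1}\in H$. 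To arrange this, we choose $p$ depending on $gh_1$ rather than on $g$, using confinedness: let $p\in P\cap (gh_1)^{-1}H(gh_1)$. This choice is legitimate because Lemma \ref{ExtensionLemConfined} holds uniformly for every $p\in P$ once $f$ is fixed.

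The word $(g,h_1,p,h_1^{-1},g^{-1}h)$ is then $D$-admissible, with the two inserted pieces in $E(f)$ and in $pE(f)\ne E(f)$. By Lemma \ref{ExtensionViaMorse2}, every geodesic $[1,\Phi(h)]_S$ passes within $3\kmorse(\|g\|_S)$ of $g$, so $\Phi(h)\in\Omega(g,r_g)$ for $\kappa\ge 3\kmorse$. The ``in particular'' estimate (the analogue of Lemma \ref{ExtensionViaMorse} for triples) gives $|d_S(g,\Phi(h))-n|\le 2\|h_1\|_S+\|p\|_S+O(\kmorse)$, which is sublinear in $n+\|g\|_S$. Hence $\Phi(h)\in S_n(g,R_{g,n})$.

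It remains to bound the multiplicity, and this is the main obstacle. Given $\Phi(h)$ and $g$, the data $(f,h_1,p)$ determine $h$. Here $p$ ranges over the finite set $P$, $f$ over $F$, and $h_1$ is a chosen power of $f$ fixed by $\|g\|_S,n$. This alone gives at most $3|P|$ preimages rather than $3$. To improve this, fix $p$ canonically as a function of $(g,f)$, for instance the first element of $P$ in a fixed order. Then $h$ is recovered from $\Phi(h)$ and $f$ alone, which gives the factor $|F|=3$. The subtle point is that $h_1$ must depend only on $(f,\|g\|_S,n)$ and not on $h$. If the constant $3$ cannot be attained exactly, a constant multiple would suffice for all applications.
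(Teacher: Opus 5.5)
Your final construction is the paper's. The paper takes $\hat f=\hat f(f,N)$, a power of $f$ depending only on $f$ and $N=\|g\|_S+n$. It takes $p$ to be the first element of $P$, in a fixed order, with $g\hat f p\hat f^{-1}g^{-1}\in H$. Then $h\mapsto g\hat f p\hat f^{-1}g^{-1}h$ lands in $H\cap S_n(g,R_{g,n})\cap\Omega(g,r_g)$ by Lemma \ref{ExtensionViaMorse2} and the triangle inequality, and is at most $|F|=3$-to-one. The preliminary choice of $p_g$ in your first paragraph is unnecessary. The finitely many small values of $\|g\|_S+n$, where Lemma \ref{ExtensionViaMorse2} does not apply, are handled as in the paper by enlarging $\kappa$ on a bounded interval so that $\Omega(g,r_g)=G$.
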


\begin{proof} 
Write $m=\|g\|_S$, $N=m+n$, and $P_0=\max\{\|p\|_S:p\in P\}$. It suffices to consider sufficiently large $N$. Indeed, there are only finitely many remaining pairs $(g,n)$, and they are absorbed by increasing the final function $\kappa$ on a bounded interval so that $1\in B(g,\kappa(m))$.

\medskip
\noindent\textbf{Choice of $\hat f_h$.} Increase $\kmorse$ by an additive constant so that, for every sufficiently large $N$ and every $f\in F$, the first positive power $\hat f(f,N)$ satisfying $\|\hat f(f,N)\|_S>10\kmorse(N)$ also satisfies
\[
\|\hat f(f,N)\|_S\le20\kmorse(N).
\]
This is possible because $F$ is finite and $\langle f\rangle$ is undistorted. Since $\kmorse$ is sublinear, after increasing the lower bound on $N$ we have
\[
20\kmorse(N)\le M:=\max\{m,n,P_0\}.
\]
Moreover, $N\ge M$ and hence $\kmorse(N)\ge\kmorse(M)$. Thus $\hat f(f,N)$ and its inverse satisfy the length conditions in Lemma \ref{ExtensionViaMorse2}.

Given $h\in H\cap S_n(g)$, fix geodesics $[1,g]_S$ and $[1,g^{-1}h]_S$. By Lemma \ref{ExtensionLemConfined}, choose $f_h\in F$ such that $(g,p,g^{-1}h)$ satisfies the required $D$--admissibility conditions relative to $(E(f_h),E(f_h))$ for every $p\in P$, and put $\hat f_h=\hat f(f_h,N)$. In particular, the choice of $\hat f_h$ depends only on $f_h$ and $N$.

\medskip
\noindent\textbf{Choice of $p_h$.} Fix an ordering of the finite set $P$ a priori. Choose the first $p_h\in P$ in this order such that
\[
g\hat f_h p_h\hat f_h^{-1}g^{-1}\in H,
\]
which is ensured by the definition of a confined subgroup. Define
\[
\widehat R(N)=\max\{2\|\hat f(f,N)\|_S+\|p\|_S:f\in F,\ p\in P\}.
\]
The function $\widehat R$ is bounded above by $40\kmorse(N)+P_0$ and therefore admits a sublinear upper bound. Choose one sublinear function $\kappa$ which dominates $3\kmorse$ and $\widehat R$, as well as the finitely many exceptional values above, and set
\[
r_g=\kappa(m),\qquad R_{g,n}=\kappa(N).
\]
\begin{rem}\label{rem:fhphChoice}
Once $\hat f_h$ and $g$ are chosen, $p_h$ is determined by the fixed order on $P$.
\end{rem}

Denote $\mathbf{h}=g\hat f_hp_h\hat f_h^{-1}g^{-1}h$. Since $g\hat f_hp_h\hat f_h^{-1}g^{-1}\in H$ and $h\in H$, we have $\mathbf h\in H$. By Lemma \ref{ExtensionViaMorse2}, the path labeled by $(g,\hat f_h,p_h,\hat f_h^{-1},g^{-1}h)$ sublinearly fellow travels every geodesic $\gamma=[1,\mathbf h]_S$. In particular, $\gamma$ contains a point $z_1$ satisfying
\[
d_S(g,z_1)\le3\kmorse(m)\le r_g.
\]
Moreover,
\[
\bigl|d_S(g,\mathbf h)-n\bigr|
\le2\|\hat f_h\|_S+\|p_h\|_S\le R_{g,n}.
\]
Consequently, $\mathbf h\in\Omega(g,r_g)\cap S_n(g,R_{g,n})$, and we have constructed the map
$$
\begin{aligned}
\Psi_g:H \cap S_n(g)&\longrightarrow H \cap S_n(g, R_{g,n}) \cap \Omega(g,r_g) \\
 h &\longmapsto \mathbf{h}
\end{aligned}
$$
If $\Psi_g(h)=\mathbf h$, then
\[
h=g\hat f_hp_h^{-1}\hat f_h^{-1}g^{-1}\mathbf h.
\]
Once $f_h$ is fixed, both $\hat f_h$ and $p_h$ are fixed. Since $|F|=3$, it follows that $|\Psi_g^{-1}(\mathbf h)|\le3$.
\end{proof}

\subsection{Proof of Theorem \ref{thm:growthcogrowthformula}}\label{secinequality}

The overall strategy for proving the growth--cogrowth inequality goes back to Coulon \cite{Coulon22}, who established a similar inequality for groups containing strongly contracting elements. Here we work in the broader setting of groups of types I and II, which need not contain strongly contracting elements. This requires the development of the sublinear Extension Lemma, as well as the more delicate estimates in Lemma \ref{lem:phis_finite}.

Let $H$ be a non-trivial confined subgroup of $G$ with a non-degenerate confining subset $P$.
Consider the Hilbert space $\ell^2(G/H)$ over the coset space $G/H=\{Hg: g\in G\}$.
For each $t>0$, define 
$$
\begin{aligned}
\varphi_t:\quad G/H\quad\longrightarrow&\quad\mathbb R\\
  Hg\quad\longmapsto   &\quad \mathrm{e}^{-td_S(H, Hg)}  
\end{aligned}
$$
Note that $\e{G/H}$ is the convergence radius of the series
$$
\sum_{Hg\in G/H}\mathrm{e}^{-t d_S(H, Hg)}
$$
Thus, if $t>\frac{\e{G/H}}{2}$ then $\varphi_t\in \ell^2(G/H)$; if $t<\frac{\e{G/H}}{2}$ then $\varphi_t\notin \ell^2(G/H)$.

\medskip
\noindent
For each $s>0$, define 
$$
\begin{aligned}
\phi_s:\quad G/H\quad\longrightarrow &\quad \mathbb R\\
  Hg\quad\longmapsto  &\quad \sum_{h\in H} \mathrm{e}^{-sd_S(1, hg)}  
\end{aligned}
$$
The crucial ingredient is  the following, which is now possible thanks to Lemma \ref{lem:shadowBaby}. 
\begin{lem}\label{lem:phis_finite}
For any $s>\e H$, we have $\phi_s\in \ell^2(G/H)$.    
\end{lem}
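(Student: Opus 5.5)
We follow Coulon's scheme: write $\|\phi_s\|_2^2$ as a double sum over $H$, use Lemma \ref{lem:shadowBaby} to bound the inner sums by sums over cones, and then reassemble the cone sums into a single Poincaré series of $H$. Expanding,
\[
\|\phi_s\|_2^2=\sum_{Hg\in G/H}\Bigl(\sum_{h\in H}\mathrm e^{-s d_S(1,hg)}\Bigr)^2 .
\]
For each coset fix a representative $g$ with $\|g\|_S=d_S(H,Hg)$, so that $1$ is a shortest $d_S$-projection of $g$ to $H$. Substituting $h\mapsto h^{-1}$, we have $d_S(1,hg)=d_S(h^{-1},g)$. Grouping $H$ by the spheres $S_n(g)$ gives
\[
\phi_s(Hg)=\sum_{n\ge0}|H\cap S_n(g)|\,\mathrm e^{-sn}.
\]
Apply Lemma \ref{lem:shadowBaby} to each term. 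Each sphere count is at most three times the number of $h\in H$ with $|d_S(g,h)-n|\le\kappa(n+m)$ and $h\in\Omega(g,\kappa(m))$, where $m=\|g\|_S$. Every $h$ in the cone satisfies $\bigl|\|h\|_S-(m+d_S(g,h))\bigr|\le 2\kappa(m)$. Together with the annulus condition this yields $\|h\|_S\ge m+n-\kappa(m)-2\kappa(n+m)$. For each fixed $h$, the annulus condition allows at most $2\kappa(n+m)+1$ values of $n$. Hence
\[
\phi_s(Hg)\lesssim \sum_{h\in H\cap\Omega(g,\kappa(m))}(\kappa(\|h\|_S)+1)\,\mathrm e^{-s(\|h\|_S-m)+O(\kappa(\|h\|_S))}.
\]
Because $\kappa$ is sublinear, we fix $s'$ with $\e H<s'<s$ and absorb the factors $\mathrm e^{O(\kappa)}$ by passing from exponent $s$ to $s'$, at the cost of a multiplicative constant.

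To control the square, bound one factor by the full series $\phi_s(Hg)\le \mathcal P_H(s'')\,\mathrm e^{s'' m}$, which is finite for $s''>\e H$ but grows in $m$. For the other factor, keep the cone restriction. The squared sum then becomes
\[
\sum_{Hg}\;\sum_{h\in H\cap\Omega(g,\kappa(m))}\mathrm e^{-s'\|h\|_S+(s'+s'')m}.
\]
Exchange the order of summation and count, for each $h\in H$, the cosets $Hg$ with $h\in\Omega(g,\kappa(m))$. Such a $g$ lies within $\kappa(m)$ of a point on $[1,h]_S$ at distance about $m$ from $1$. The number of such $g$ at distance $m$ is at most $|B_{\kappa(m)}|\le \mathrm e^{\omega_0\kappa(m)}$, which is subexponential. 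This bounds the total by
\[
\sum_{h\in H}\mathrm e^{-s'\|h\|_S}\sum_{m\le\|h\|_S}\mathrm e^{(s'+s'')m+\omega_0\kappa(m)}.
\]
The main obstacle is the balancing of exponents in this last sum. Taking $s''=s'$ gives the inner sum $\approx \mathrm e^{2s'\|h\|_S}$, and the crude estimate then diverges. The remedy is to use the cone a second time instead of the trivial series bound: the factor $\mathrm e^{s'' m}$ must be paid by the cone structure, as in Coulon's $\ell^2$ argument.

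Concretely, in the product $\phi_s(Hg)^2$, restrict both copies to cones at $g$. This gives pairs $(h_1,h_2)\in H^2$ whose geodesics both pass within $\kappa(m)$ of $g$, so that $\|h_i\|_S\approx m+n_i$. The combination $h_1^{-1}h_2$ then has length $\approx n_1+n_2$, at least up to sublinear error; to ensure this we apply Lemma \ref{ExtensionViaMorse2} to form a genuine element of $H$ of length $\approx n_1+n_2$. The pair $(h_1,h_2)$ is recovered from $(h_1^{-1}h_2,\ \text{the split point})$ up to $\mathrm e^{O(\kappa)}$ choices. Each $g$ is determined by its coset together with the weight $\mathrm e^{-2s'(\|h_1\|_S+\|h_2\|_S)}\cdot\mathrm e^{2s'm}$, which is $\le \mathrm e^{-s'\|h_1^{-1}h_2\|_S}\cdot \mathrm e^{-s'(n_1+n_2)}$. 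The $\ell^2$ norm is therefore bounded by a constant times $\sum_{k\in H}(\|k\|_S+1)^{O(1)}\mathrm e^{O(\kappa(\|k\|_S))}\mathrm e^{-s'\|k\|_S}$. Since $s'>\e H$, this series is finite after adjusting $s'$ once more to absorb the subexponential factors.

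I expect the careful bookkeeping of the triangle $(1,g,h_1,h_2)$ with sublinear errors, and the injectivity of the recovery map, to be the hardest step.
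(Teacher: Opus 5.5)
\textbf{Verdict: there is a genuine gap.} The decisive counting step is only gestured at, and as written it is incorrect.

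\textbf{What works.} Your expansion of $\|\phi_s\|^2$ is fine, as is applying Lemma \ref{lem:shadowBaby} at the minimal coset representative $g$. You also correctly diagnose that the trivial bound on one factor diverges.

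\textbf{What fails.} For two coset elements $x_i=h_i^{-1}g$ with $\|x_i\|_S=n_i$, the weight of the pair in $\|\phi_s\|^2$ is exactly $\mathrm e^{-s(n_1+n_2)}$. It has \emph{no} decay in $m=\|g\|_S$. The weight you display, $\mathrm e^{-2s'(\|h_1\|_S+\|h_2\|_S)+2s'm}\approx \mathrm e^{-2s'm-2s'(n_1+n_2)}$, carries a spurious factor $\mathrm e^{-2s'm}$ and a doubled exponent. Your subsequent bound by $\mathrm e^{-s'\|h_1^{-1}h_2\|_S}\mathrm e^{-s'(n_1+n_2)}$ again counts the decay twice.

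The real difficulty is precisely that the sum over cosets carries no decay. It can only be absorbed by showing that the pair, and hence the coset, is recovered, up to subexponentially many choices, from a single element of $H$ of length $\approx n_1+n_2$. You leave exactly this recovery/injectivity step open.

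Restricting $h_1,h_2$ to cones at $g$ does not help here. It gives no lower bound on $d_S(h_1,h_2)$, and the relevant split point is $x_1=h_1^{-1}g$, not $g$. Likewise, recovering $(h_1,h_2)$ from $(h_1^{-1}h_2,x_1)$ is trivial. The subexponential count has to come from $x_1$ lying sublinearly close to a geodesic $[1,\mathbf h]_S$ at distance $\approx n_1$. That is a property of a modified element $\mathbf h$, not of $h_1^{-1}h_2$.

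\textbf{The missing idea.} Unfold one factor:
\[
\|\phi_s\|^2=\sum_{x\in G}\mathrm e^{-s\|x\|_S}\sum_{k\in H}\mathrm e^{-s d_S(x,k)}.
\]
In your notation $x=x_1$ ranges over all of $G$, $k=h_1^{-1}h_2$, and $d_S(x,k)=n_2$. Then apply Lemma \ref{lem:shadowBaby} at the arbitrary element $x$, not at a minimal representative.

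This replaces $k$ by $\mathbf h=x\hat f p\hat f^{-1}x^{-1}k\in H$. This is exactly the element your Lemma \ref{ExtensionViaMorse2} sketch should produce: membership in $H$ uses confinement, and admissibility of $(x,p,x^{-1}k)$ comes from Lemma \ref{ExtensionLemConfined}. Each $\mathbf h$ has at most $3$ preimages. Moreover $\bigl|\|\mathbf h\|_S-(m+n)\bigr|$ is sublinear, and every geodesic $[1,\mathbf h]_S$ passes within $r_m$ of $x$.

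Consequently, for fixed $\mathbf h$ and $m$, the number of $x\in S_m$ with $\mathbf h\in S_n(x,R_{m,n})\cap\Omega(x,r_m)$ is at most $(2r_m+1)|B(1,r_m)|$. Exchanging the sums then gives
\[
\|\phi_s\|^2\le\sum_l D_l\,\mathrm e^{-sl}|H\cap S_l|
\]
with $D_l$ subexponential, which converges for $s>\e H$. Once this is done, your first application of the lemma at $g$ and the passage to $s'$ are unnecessary.
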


\begin{proof}
We first reformulate the square of the $\ell^2$-norm  $\|\phi_s\|$ as follows 
\begin{equation}\label{eq:phi_s_norm}
\begin{aligned}
\|\phi_s\|^2\quad &=\quad\sum_{\underset{Hg_1=Hg_2}{(g_1,g_2)\in G\times G}} \mathrm{e}^{-s(\|g_1\|_S+\|g_2\|_S)}\\
&=\quad\sum_{g\in G}  \mathrm{e}^{-s\|g\|_S} \left(\sum_{h\in H} \mathrm{e}^{-sd_S(g, h)}\right)\\
&=\quad\sum_{m= 0}^\infty \mathrm{e}^{-s m} \left(\sum_{g\in S_m}\, \sum_{h \in H} \mathrm{e}^{-s d_S(g, h)} \right)\\
&=\quad\sum_{m= 0}^\infty \mathrm{e}^{-s m}  \sum_{g\in S_m}\sum_{n =0}^{\infty} \mathrm{e}^{-s n} \, \big| H \cap S_n(g) \big|
\end{aligned}  
\end{equation}
where $S_m$ denotes the $m$-sphere. Denote $R_{m,n}=\kappa(n+m)$ and $r_m=\kappa(m)$, where $\kappa$ is given by Lemma \ref{lem:shadowBaby}. For clarity, let us write $A_g= S_n(g, R_{m,n}) \cap \Omega(g,r_m)$, 
\begin{equation}\label{eq:A_g_spheresum}
\begin{aligned}
  \quad\sum_{g\in S_m} \big| H \cap S_n(g) \big| \le &   \sum_{g\in S_m}   3 \, \big|H \cap A_g \big|
\end{aligned}
\end{equation}
Therefore, the next step is to give an upper bound on the RHS of Eq. (\ref{eq:A_g_spheresum}). 
\medskip

For each $\mathbf h\in H$, fix a geodesic
$\gamma_{\mathbf h}=[1,\mathbf h]_S$. If  $\mathbf h$ is an element in $A_g$, then
$d_S(g,\gamma_{\mathbf h})\le r_m.$
Choose $v\in\gamma_{\mathbf h}$ with $d_S(g,v)\le r_m$. Since
$|g|_S=m$, we have $\bigl||v|_S-m\bigr|\le r_m.$
Hence there are at most $2r_m+1$ possible choices for $v$, and for
each such $v$ there are at most $|B(1,r_m)|$ possible choices for $g$.
It follows that
\[
\#\{g\in S_m:\mathbf h\in A_g\}
\le (2r_m+1)|B(1,r_m)|.
\]
Therefore,
\begin{align*}
\sum_{g\in S_m} |H\cap A_g|
&=\sum_{\mathbf h\in H}
  |\{g\in S_m:\mathbf h\in A_g\}|\\
&\le (2r_m+1)|B(1,r_m)|
  \Bigl|\bigcup_{g\in S_m}(H\cap A_g)\Bigr|\\
&\le (2r_m+1)|B(1,r_m)|
  |H\cap S_{m+n}(1,R_{m,n}+2r_m)|.
\end{align*}
Indeed, if $v\in\gamma_{\mathbf h}$ satisfies $d_S(g,v)\le r_m$, then
\[
\bigl|\|\mathbf h\|_S-(m+n)\bigr|
\le \bigl|\|v\|_S-m\bigr|
 +\bigl|d_S(v,\mathbf h)-d_S(g,\mathbf h)\bigr|
 +\bigl|d_S(g,\mathbf h)-n\bigr|
\le R_{m,n}+2r_m.
\]
Set $\rho_{m,n}:=R_{m,n}+2r_m.$
Since
\[
H\cap S_{m+n}(1,\rho_{m,n})
\quad \subseteq \quad
\bigcup_{\lvert l-(m+n)\rvert\le \rho_{m,n}}
(H\cap S_l),
\]
we have, for every $s>0$,
\begin{align*}
 \mathrm{e}^{-s(m+n)}
\left|H\cap S_{m+n}(1,\rho_{m,n})\right|
&\le
\sum_{\lvert l-(m+n)\rvert\le \rho_{m,n}}
\mathrm{e}^{-s(m+n)}|H\cap S_l|\\
&\le
\mathrm{e}^{s\rho_{m,n}}
\sum_{\lvert l-(m+n)\rvert\le \rho_{m,n}}
\mathrm{e}^{-sl}|H\cap S_l|.
\end{align*}
Since $S$ is finite, there exists $\omega>0$ such that
$|B(1,r)|\le e^{\omega r}$ for every $r\ge0$. It follows that
\begin{equation}\label{eq:A_g_spheresum2}
\begin{aligned}
\mathrm{e}^{-s(m+n)}
3\sum_{g\in S_m}|H\cap A_g|
&\le
3(2r_m+1)|B(1,r_m)|
\mathrm{e}^{s(R_{m,n}+2r_m)}
\sum_{\lvert l-(m+n)\rvert\le \rho_{m,n}}
\mathrm{e}^{-sl}|H\cap S_l|\\
&\le C_{m,n}\; \sum_{\lvert l-(m+n)\rvert\le \rho_{m,n}}
\mathrm{e}^{-sl}|H\cap S_l|.
\end{aligned}
\end{equation}
where  
\[
C_{m,n}
:=
3(2r_m+1)\mathrm e^{\omega r_m}
\mathrm e^{s(R_{m,n}+2r_m)} =\exp(o(m+n))
\]
is a subexponential function. 
\medskip

Finally, plugging (\ref{eq:A_g_spheresum}) and (\ref{eq:A_g_spheresum2}) into Eq. (\ref{eq:phi_s_norm}) gives
\[
\begin{aligned}
\|\phi_s\|^2
&\le
\sum_{m,n\ge 0}
\mathrm e^{-s(m+n)}
\sum_{g\in S_m}3|H\cap A_g|\\
&\le
\sum_{m,n\ge0}
C_{m,n}
\sum_{\lvert l-(m+n)\rvert\le \rho_{m,n}}
\mathrm e^{-sl}|H\cap S_l|.
\end{aligned}
\tag{\(\ast\)}
\]
The estimate \((\ast)\) is finite for $s>\omega(H)$, since multiplying the coefficients of a Poincaré series by any subexponential sequence does not change its critical exponent. 
Indeed, writing \(k=m+n\), there are exactly \(k+1\) pairs
\((m,n)\) with \(m+n=k\). Since \(r_m\le \kappa(k)\) and
\(\rho_{m,n}\le 3\kappa(k)\), the estimate \((\ast)\) gives
\[
\|\phi_s\|^2
\le
\sum_{k=0}^{\infty}
(k+1)\mathrm e^{o(k)}
\sum_{|l-k|\le 3\kappa(k)}
\mathrm e^{-sl}|H\cap S_l|.
\]
Interchanging the nonnegative sums, we obtain
\[
\|\phi_s\|^2
\le
\sum_{l=0}^{\infty}
D_l\,\mathrm e^{-sl}|H\cap S_l|,
\]
where
\[
D_l:=
\sum_{\substack{k\ge0\\|l-k|\le3\kappa(k)}}
(k+1)\mathrm e^{o(k)}
\]
is a sub-exponential function.
Hence \(\phi_s\in\ell^2(G/H)\).
\end{proof}

We now conclude the proof of Theorem \ref{thm:growthcogrowthformula}.

Given any $s>\e H$ and $t>\e {G/H}/2$, we shall prove  $s+t\ge \e G$.   Recall that $$\p_G(s+t)=\sum_{g\in G} \mathrm{e}^{-(s+t)\|g\|_S}.$$  Grouping the elements of $G$ in the same $Hg$ and noting $d_S(H, Hg)\le d_S(1,Hg)$, we have  $$\p_G(s+t)\le \sum_{Hg\in G/H} \left(\mathrm{e}^{-td_S(H,Hg)} \sum_{h\in H} \mathrm{e}^{-sd_S(1,hg)}\right)$$ 
The Cauchy--Schwarz inequality gives the finiteness of the scalar product of $\phi_s$ and $\varphi_t$:
$$
\begin{aligned}
(\phi_s, \varphi_t) &=\sum_{Hg\in G/H}  \left(\mathrm{e}^{-td_S(H, Hg)} \sum_{h\in H} \mathrm{e}^{-sd_S(1, hg)}\right)\\
&\le \|\phi_s\|\|\varphi_t\| <\infty  
\end{aligned}
$$
Hence, $\p_G(s+t)<\infty$ for any $s>\e H$ and $t>\e {G/H}/2$. By definition of the critical exponent, $s+t\ge \e G$ and thus $\e H+\e {G/H}/2 \ge \e G$.
Theorem \ref{thm:growthcogrowthformula} is proved.



\bibliographystyle{alpha}
 \bibliography{bibliography}

\end{document}